\DeclareRobustCommand{\SkipTocEntry}[4]{}  
\newcommand{\ov}{\overline}
\newtheorem{lem}{Lemma}[subsection]
\newtheorem{thm}[lem]{Theorem}
\newtheorem{prop}[lem]{Proposition}
\newtheorem{cor}[lem]{Corollary}
\newtheorem{defn}[lem]{Definition}
\theoremstyle{definition}
\newtheorem{example}[lem]{Example}
\newtheorem{remark}[lem]{Remark}
\numberwithin{figure}{subsection}
\numberwithin{equation}{subsection}
\newcommand{\MS}{{\medskip}}
\newcommand{\NI}{{\noindent}}
\DeclareMathOperator{\supp}{supp}
\DeclareMathOperator{\im}{Im}
\DeclareMathOperator{\id}{id}
\DeclareMathOperator{\Int}{Int\,}
\DeclareMathOperator{\aff}{aff}
\DeclareMathOperator{\Ham}{Ham}
\renewcommand{\Tilde}{\widetilde}
\newcommand{\INT}{{\rm Int\,}}
\newcommand{\ip}[1]{\langle #1 \rangle}
\newcommand{\abs}[1]{\left| #1 \right|}
\newcommand{\norm}[1]{\left\| #1 \right\|}
\def\co{\colon\thinspace}
\def\C{\mathbb{C}}
\def\R{\mathbb{R}}
\def\Z{\mathbb{Z}}
\def\T{\mathbb{T}}
\def\CP{\mathbb{CP}}
\def\bP{\mathbb{P}}
\def\bD{\mathbb{D}}
\def\bA{\mathbb{A}}
\def\cDP{\mathcal{FP}}
\def\cSP{\mathcal{SP}}
\def\cE{\mathcal{E}}
\def\cF{\mathcal{F}}
\def\cH{\mathcal{H}}
\def\cL{\mathcal{L}}
\def\cN{\mathcal{N}}
\def\cO{\mathcal{O}}
\def\cP{\mathcal{P}}
\def\cU{\mathcal{U}}
\def\mf{\mathfrak}
\def\ft{{\bf\mf t}}
\def\a{\alpha}
\def\b{\beta}
\def\g{\gamma}
\def\d{\partial}
\def\De{\Delta}
\def\de{\delta}
\def\e{\epsilon}
\def\eps{\epsilon}
\def\ka{\kappa}
\def\l{\lambda}
\def\la{\lambda}
\def\r{\rho}
\def\th{\theta}
\def\vp{\varphi}
\def\w{\omega}
\def\z{\zeta}
\def\vp{\varphi}
\def\om{\omega}
\def\La{\Lambda}
\def\G{\Gamma}
\begin{document}

\title[Extended probes]{Displacing Lagrangian toric fibers by extended probes}

\author[M. Abreu]{Miguel Abreu}
\address{Centro de An\'{a}lise Mathem\'{a}tica, Geometria e Sistemas Din\^{a}micos, Departamento de Mathem\'{a}tica\\
Instituto Superior T\'{e}cnico}
\email{mabreu@math.ist.utl.pt}

\author[M. S. Borman]{Matthew Strom Borman}
\address{Department of Mathematics\\ 
University of Chicago}
\email{borman@math.uchicago.edu}

\author[D. McDuff]{Dusa McDuff}
\address{Department of Mathematics, Barnard College, Columbia University}
\email{dusa@math.columbia.edu}


\subjclass[2010]{Primary: 53D12, 14M25, 53D40.}


\begin{abstract}
	In this paper we introduce a new way of displacing Lagrangian
	fibers in toric symplectic manifolds, 
	a generalization of McDuff's original method of probes.
	Extended probes are formed by deflecting one 
	 probe by another auxiliary probe.  
	Using them, we are able to displace 
	all fibers in Hirzebruch surfaces 
	except those already known to be nondisplaceable, 
	and can also displace an open dense set of
	fibers in the weighted projective space 
	$\bP(1,3,5)$ after resolving the singularities.
	We also investigate the displaceability question 
	in sectors and their resolutions.  There are still many cases 
	in which there is an open set of fibers whose displaceability status is unknown.
\end{abstract} 

\maketitle



\tableofcontents

\section{Introduction and main results}

\subsection{Introduction}

Let $(M, \w)$ be a 
connected, but possibly noncompact, symplectic manifold without boundary.
A subset $X \subset M$ is said to be \emph{displaceable} if there is a compactly supported
Hamiltonian diffeomorphism $\phi \in \Ham(M, \w)$ such that $\phi(X) \cap \ov{X} = \emptyset$,
and if no such $\phi$ exists, then $X$ is said to be \emph{nondisplaceable}. 
Ever since Arnold conjectured that certain Lagrangian submanifolds are nondisplaceable, a central theme 
in symplectic topology has been to determine what subsets are displaceable and what subsets are not.

We will work with symplectic toric manifolds, where a $2n$-dimensional symplectic manifold
$(M^{2n}, \w)$ is \emph{toric} if it is equipped with an effective Hamiltonian action
of an $n$-torus $\T^{n}$.
Associated to a toric manifold is a 
moment map
$$	
	\Phi\co M^{2n} \to \R^{n} \quad\mbox{given by}\quad \Phi(x) = (\Phi_{1}(x), \dots, \Phi_{n}(x))
$$
where
 the Hamiltonian flow of $\Phi_{k}$ generates the action of the $k$-th component circle in
$\T^{n}$, and if 
 $\Phi$ is  proper the image of $\Phi$ is a polytope $\Delta = \Phi(M)$,
called the \emph{moment polytope}.
Symplectic toric manifolds come with a natural family of Lagrangian tori, namely for each 
$u \in \Int \Phi(M)$ the fiber $L_{u} = \Phi^{-1}(u)$ is a Lagrangian torus and an orbit of the $\T^{n}$-action.
In this paper we will restrict ourselves to the study of the displaceability of the Lagrangian toric fibers in 
symplectic toric manifolds, and will assume unless explicit mention is made to contrary
that the moment map is proper. 

Determining which Lagrangian fibers are nondisplaceable 
involves two complementary tasks: 
building and computing invariants that obstruct displaceability, and finding general sufficient criteria for 
when Lagrangian fibers are displaceable.  There are now many well-developed Floer-theoretic tools that can 
be used to prove that certain Lagrangian fibers are nondisplaceable: quasi-states from Hamiltonian Floer 
homology \cite{Bo11, EP06, EP09,FOOO11a}, Lagrangian Floer homology \cite{BC09, Ch08, FOOO10a, FOOO11,FOOO10}, 
and quasi-map Floer homology \cite{Wd11, WW11}.  In contrast the only known general method 
for proving that a Lagrangian fiber is displaceable was introduced by McDuff in \cite{Mc11} and
involves the affine geometric notion of a probe in the moment polytope. 
Chekanov--Schlenk \cite{CS10} also used this method of displacement in a slightly different context.
The method of probes was later reinterpreted in \cite{AbM11} in terms of symplectic reduction.  However the main contribution of that paper was to the other side of the problem in that it allowed 
one to deduce many nondisplaceability results from a few basic examples.

For very simple examples the method of probes perfectly complements the proven nondisplaceability results. For instance if $(M^{2n}, \w, \Phi)$ is closed and monotone with $2n \leq 6$, then the method of probes displaces everything except an identified fiber $L_{u_{0}}$ \cite[Theorem 1.1]{Mc11}, and $L_{u_{0}}$ is known to be nondisplaceable.  

However, in general  the method of probes does not perfectly complement the proven nondisplaceability results.  The simplest such example is a Hirzebruch surface $F_{2k+1}$ for $k\ge 1$, which is
the projectivization $\bP(\cO_{2k+1}\oplus\C)$ where $\cO_{2k+1}\to \C P^1$ is a line bundle of Chern class $2k+1$.   Here these methods leave a line segment 
of points with unknown displaceability properties.
The next basic example is (a blow up of) the weighted projective space $\bP(1,3,5)$, the quotient of $\C^3\smallsetminus \{0\}$ by the group action
$$
e^{2\pi i t}\cdot\bigl(z _0, z_{1} ,z_2\bigr)= \bigr(e^{2\pi i t} z_0,\, e^{2\pi i 3t}z_1,\, e^{2\pi i 5t}z_2\bigr).
$$
It was pointed out in \cite{Mc11} that it is possible to resolve the singularities of 
$\bP(1,3,5)$ by small blow ups in such a way that there is an open set of points not displaceable by probes.  On the other hand for smooth toric $4$-manifolds, the nondisplaceable fibers detected by the Floer theoretic methods of \cite{FOOO10} lie on a finite number of line segments (cf.\ the proof of 
Proposition~\ref{p:noghost}).
One expects that all fibers with vanishing invariants are displaceable.
Our current methods do give better results than standard probes in many cases, but still are not powerful enough to prove this even in four dimensions.

As we show in Proposition~\ref{p:noghost}, the method of quasi-map Floer homology, developed by Woodward \cite{Wd11}, 
gives no more information than 
standard Floer theoretic methods in the closed smooth case.   However, it applies also in the orbifold and noncompact cases and in those cases can give open sets of fibers that are 
nondisplaceable because they have nonvanishing quasi-map invariants (called qW invariants, for short).
See also the orbifold version of the standard approach by Cho and Poddar \cite{Ch11}.  
Figure \ref{f:135} illustrates the current knowledge about the displaceability of points in $\bP(1,3,5)$.  Here the displaceable points are displaced by standard probes.\footnote
{As remarked in \S\ref{ss:gen}  below,
extended probes do not help in triangles.}
As we explain in \S\ref{ss:pot} the open set of
nondisplaceable points
comes from varying the position of certain \lq\lq ghost" facets; 
cf.\ also the proof of Theorem~\ref{t:DND}(i).
These facets are precisely the ones that can be used to resolve the singular points, and once one has used them for this purpose, so that their position is fixed, the numbers of points with nonvanishing invariants decreases.  Thus as one resolves singularities by blowing up, the set of points with nonvanishing qW invariants tends to decrease. 

\subsection{Main results}

In this paper we will introduce a technique for extending a given probe by deflecting it by an auxiliary probe.
In contrast to the nondisplaceability results explained above, this technique
gives no new information for very simple orbifolds such as $\bP(1,3,5)$.  Instead it starts to displace more fibers as we resolve the singularities by blow up.  Because it is a geometric method, it is very sensitive to the exact choice of blow up, i.e.\ to the choice of support constant that determines where the 
new facet is in relation to the others.  
 In higher dimensions one could use this technique in more elaborate ways, for example
 by deflecting a probe several times in different directions. 
However, we will restrict to the $2$ dimensional case since, even in this simple case, 
the results are quite complicated to work out precisely.
The next paragraph describes our results rather informally.  More complete definitions 
and statements are given later.

\subsubsection{The method of extended probes}

We  say that two integral vectors in $\R^2$ are {\bf complementary} if they form a basis for the integral lattice $\Z^2$.  Let $\De$ be a rational polygon in $\R^2$, i.e. the direction vectors $d_F$ of the edges $F$ are integral.  We call it {\bf smooth} 
if the direction vectors  
 at each vertex are complementary.
A {\bf probe} $P$ in $\De$  is a line segment in $\De$ starting at an interior point of some edge $F$ (called its {\bf base facet}), whose direction $v_P$ is integral and complementary to $d_F$. 
  By \cite{Mc11}, if $u\in \De$ lies less than halfway along a probe $P$ then the corresponding fiber $L_u$ is displaceable.  For short, we will say that the point $u$ itself is displaceable.

A probe  $Q$ is said to be {\bf symmetric} if it is also a probe when its direction is reversed. That means that  its exit point also lies at an interior point of an edge $F'$ and also that $d_{F'}$ is complementary to $v_Q$.  All points other than the midpoint of a symmetric probe are displaceable.    Moreover there is an affine reflection $A_Q$ of a neighborhood of $Q$ in $\De$ that reverses its direction. 

In this paper we show how to lengthen a probe $P$  so that it still has the property that points less than 
halfway 
along are displaceable. There are three basic methods whose effects are described in the following theorems.
\begin{itemize}
\item[(a)] Theorem~\ref{t:noflag}: deflecting $P$ via a {\bf symmetric probe}.
\item[(b)] Theorem~\ref{t:pdeflected}: deflecting $P$ by a {\bf parallel probe}  $Q$, i.e.\ one whose base facet 
$F_Q$ is parallel to the direction $v_P$  of $P$; these probes have  flags that are parallelograms.
\item[(c)] Theorem~\ref{t:deflected}:  deflecting $P$ by an arbitrary probe $Q$; these probes have  trapezoidal flags.
\end{itemize}
In  case (a), the extended probe $\cP$ (often denoted $\cSP$ for clarity) is a union of  line segments,  and can be used to displace points less than halfway along it, whether these points lie before or after the 
 intersection with $Q$.
In  cases (b) and (c), the extended probe $\cP$ (or more precisely $\cDP$)
 is the union of the initial segment of $P$ together with a \lq\lq flag"  emanating from $Q$, a parallelogram in case (b) 
 (cf. Figure~\ref{f:extendedprobe}) or trapezoid in case (c)  (cf. Figure~\ref{f:epNP}).  One can 
 only displace points less than halfway along $\cDP$  that also lie before  the 
 intersection with $Q$.
  Another difficulty with case (c) is that the flag may taper to a point, which severely restricts its length.  In particular, if 
  the ray in direction $P$ 
   meets the base facet $F_Q$ of $Q$ at $y_{PQ}$
  then
   $\cDP$ cannot be longer than the line segment  from the initial point $b_P$ 
   of $P$ to its intersection $y_{PQ}$ with $F_Q$.
     Therefore this variant is less useful, though it does displace some new points 
     in certain bounded regions: see Proposition~\ref{p:fvAnR}.

\subsubsection{Examples}

\begin{itemize}
\item {\it Symmetric extended probes solve the displaceability problem for   
Hirzebruch surfaces.}
Previous results show that all Hirzebruch surfaces $F_n$ for $n\ge 0$, with the exception of certain surfaces $F_1$ that are (small) blow ups of $\C P^2$, have precisely one fiber with nonvanishing Floer homology.  Proposition~\ref{p:H} shows that all the other fibers are displaceable by symmetric extended probes.  
(The case $n=1$ can be fully understood using standard probes.)

\begin{figure}[ht]
  
  \centering
  \includegraphics[width=5in]{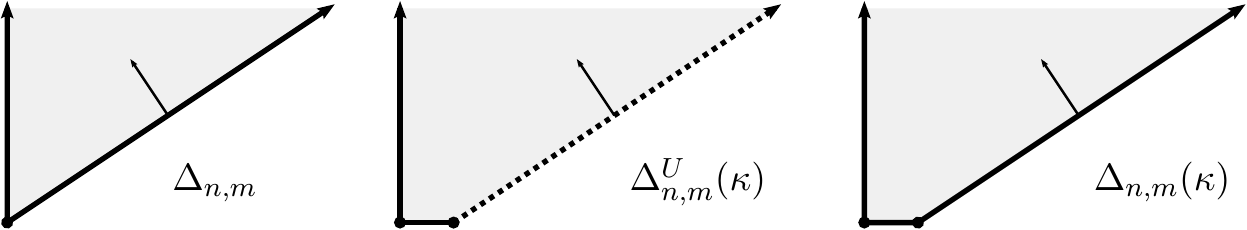} 
   \caption{Three basic moment polytopes; in each case, the slant edge has normal vector $(-n,m)$.}
   \label{fig:X1}
\end{figure}

\item {\it Using parallel extended probes.}
 We next consider open regions of two elementary types.
Section~\ref{s:DinC2} 
considers the regions
$$
	U_{n,m}(\ka) =\bigl \{z \in \C^{2} : -n\abs{z_{1}}^{2} + m\abs{z_{2}}^{2} + \ka > 0\bigr\}
$$
where $m > n \geq 1$ are relatively prime integers and $\ka > 0$, with moment polytope 
$$
\De^U_{n,m}(\ka)=\bigl\{(x_1,x_2): x_1, x_{2} \ge 0, -nx_1+mx_2+\ka>0\bigr\}
$$ 
as in  Figure~\ref{fig:X1}. 
Lemmas~\ref{l:DOsecP} and \ref{l:openEP}  
specify the points that can be displaced by probes and extended probes.   
Figure~\ref{f:DXnm} shows 
 that there are open regions of points in the moment polytope of $U_{n,m}(\ka)$ where extended probes are needed.

The second basic region represents the quotient $M_{n,m}= \C^2/\Gamma$,  where the generator
$\zeta = e^{2\pi i/m}$ of  $\Gamma =\Z/m\Z$ acts via
$$
\zeta\cdot\bigl(z_1,z_2\bigr) = \bigl(\zeta^{n}z_1,\zeta z_2\bigr).
$$
Its moment polytope is the sector
$$
\De_{n,m}: = \bigl\{(x_1,x_2): x_1,x_2\ge 0,  -nx_1+mx_2\ge0\bigr\};
$$
cf.\ Figure~\ref{fig:X1} and \S\ref{s:Mnm}.
Here we contrast the set of probe displaceable points with
the set of points that are nondisplaceable because they 
have nonvanishing qW invariants.  
Theorem~\ref{t:DND} states the precise result.
There is an open set of points with unknown behavior whenever the Hirzebruch--Jung continued fraction 
\begin{equation}\label{eq:HJk}
\frac nm: =\frac{1}{E_1 - \frac 1{E_2-\dots\frac 1{E_k}}} =: (E_1,\dots,E_k)
\end{equation}
has at least one of $E_1,E_k$ greater than  $2$.
\MS

\item  {\it Resolving a singular vertex: unbounded case.} \,\,
As one resolves the singular point of $\De_{n,m}$ by blowing up, the set of probe displaceable points increases while that of points with nontrivial invariants decreases.
Proposition~\ref{p:asympresolve} explains what happens after a single blow up, with 
exceptional divisor corresponding to the horizontal edge $x_2=\ka$. The moment polytope is then $\De_{n,m}(\ka)$, the closure of $\De_{n,m}^U(\ka)$.   
Remark ~\ref{r:resolveP} (ii) points out that
usually there is an open set of unknown points.

In Subsection~\ref{s:EMR} we look at some complete minimal resolutions 
$\ov\De_{n,m}$.
In this case all qW invariants vanish, while 
typically there is still an open subset of points that we do not know how to displace; cf. 
Figure~\ref{f:5-8TR}.  As we show in Corollary~\ref{c:An}, even in the easy case of an $A_n$ singularity (which has all $E_i=2$) there are $k-1$ lines of  points of unknown status,
where $k$ is as in equation~\eqref{eq:HJk}.  
\MS

\item {\it Special cases of $\De_{n,m}$ and $\ov\De_{n,m}$.}\,\,
The first, the case  $n=1$, is treated  in \S\ref{ss:Om}.    Here the status of all interior points can be determined by our methods.
In fact, Lemmas~\ref{l:DNDm} and \ref{l:O-m} show that 
all points are displaceable except in the case of $\De_{1,m}$ with $m$ odd, in which case there is a ray of points with nontrivial qW invariants: see Figure~\ref{f:1m}.

The second is when $m/n = (E_1,E_2)$  
 in the notation of equation~\eqref{eq:HJk}.  Then, although there may be an open set of 
 unknown points of $\De_{n,m}$, Corollary~\ref{c:k1} shows that there is at 
 most one line of
 such points in the resolution $\ov\De_{n,m}$, namely the (affine) bisector of the angle between the two new edges: see Figure~\ref{f:k1}.
\MS

\item {\it Using general extended probes in
regions of finite area.}\,\, 
 The above results only use extended probes of types (a) and (b), and it is easy to see that 
extended probes of type (c) would give nothing new.
  To demonstrate the use of this kind of extension, Proposition~\ref{p:fvAnR} considers 
  the resolution of  an open finite volume $A_n$-singularity,
  i.e. one whose moment polytope has an open upper boundary $x_2<K$.  
  As shown in Figure~\ref{f:fvAnR}, there are some points that can be reached only
  by these new probes; however there is still an open set of
  unknown points.\MS
\end{itemize}

\subsubsection{Displaceability in compact toric orbifolds and their resolutions}

 Finally, we discuss a simple family of examples with closed moment polytope,
namely the weighted projective spaces $\bP(1,p,q)$  (with $1< p<q$ relatively prime)
and its resolutions.
The moment polytope of $\bP(1,p,q)$ is a triangle with vertices at $(0,0), (p,0)$ and 
$(0,q)$.  Thus it has two singular vertices, the one at $(p,0)$ modelled on
$\De_{p,q}$
and the one at $(0,q)$ modelled on $\De_{q-kp,p}$ where $0<q-kp < p$.
  If these sectors (or their resolutions) have unknown points, and the new edges coming from the resolutions are sufficiently short, one would expect
there to be corresponding  unknown points  in  the compactifcation
$\bP(1,p,q)$ and its resolutions.
This is the case for $\bP(1,p,q)$  itself, since there are no symmetric or parallel extended probes and there are not enough edges for there to be any 
useful probes of type (c): cf.\ \S\ref{ss:gen} below.   
Figure~\ref{f:135} shows the situation for $\bP(1,3,5)$: there is an open set of points with nonzero qW invariants, as well as an open set of points with unknown behavior.

One can always displace more points in  the  resolutions.   Some of these newly displaceable points can be displaced  by the same parallel extended probes that are used in  the resolved sectors $\ov{\De}_{n,m}$.  
However, there is another kind of extended probe,
constructed using a symmetric probe $Q$, that always displaces 
at least a few more points, though typically there is still an open set of
 points with unknown behavior; cf. Proposition~\ref{p:openset}.

The case $\bP(1,3,5)$ is special since in this case one can reverse 
the direction of the deflected 
probes formed from $Q$.
As illustrated in Figure~\ref{f:135R03DaND}, when 
the singularity   of $\bP(1,3,5)$  at $(3,0)$  is resolved,
one can displace an open dense set of fibers by probes or extended probes, leaving just a line segment of points 
 together with one more  point that
cannot be determined by our methods.  
 In particular, all points near $(3,0)$ can be displaced 
 with the help of the symmetric probes $Q$, though they are not all displaceable in 
 the corresponding resolved sector $\ov{\De}_{3,5}$.
The picture does not change significantly when one fully resolves both singular vertices 
in $\bP(1,3,5)$: cf.\ Proposition~\ref{p:135TRDaND}  and Figure~\ref{f:135TRDaND}.

\subsubsection{General results}\label{ss:gen}

 Proposition~\ref{p:noghost} shows that qW invariants give no more information than
standard Floer homology in the
case where the moment polytope $\De$ is a
smooth closed polytope of $\R^{n}$.
If in addition $\De$ is compact and $2$-dimensional,
it is easy to understand geometrically when the qW invariant $qW(u)$ does not vanish.
  For a typical point with $qW(u)\ne 0$,  the set of facets that are closest to it has at least three elements.
 The exception is when there are two closest facets that are parallel.
 Thus the set of
  points with nontrivial invariants 
is the union of a finite  set together with at most one line segment.

By Remarks~\ref{r:nohelp} and \ref{r:extprob},  one 
cannot lengthen a probe $P$ by deflecting it 
by a  probe $Q$ that starts from the facet at which $P$ exits $\De$.  
Thus extended probes do not displace extra points in the sectors $\De_{n,m}$, 
since these have only two edges.   It is also not hard to check that they also do 
not help in triangles such as $\bP(1,p,q)$, though they do help with the blow up 
$\De_{n,m}(\ka)$ of $\De_{n,m}$.

\subsubsection{Organization of the paper}

After a brief introduction to affine geometry, we describe probes and symmetric extended probes.  Theorem~\ref{t:noflag} explains which points can be displaced by these new 
probes, and the section ends by illustrating their use  in the Hirzebruch surfaces $F_n$. 
Theorem~\ref{t:pdeflected} describes the points that can be 
displaced by parallel  extended probes.  
The rest of  \S\ref{s:3}   illustrates how to use
this result
to understand the displaceability of  points in the open sectors
$\De_{n,m}^U(\ka)$.   Next, in \S\ref{s:DinTOR}  we describe the qW invariants, and use them to prove the results stated above about  closed sectors $\De_{n,m}$, weighted projective planes $\bP(1,p,q)$,  and their resolutions.  Finally in \S\ref{s:EPwFNP} we describe  extended
probes with trapezoidal flags, and use them in an open polytope of finite area.
 The last section \S\ref{s:proofs} contains the proofs of all the main theorems about probes.

\subsubsection{Acknowledgements}  We thank Andrew Fanoe, Yael Karshon, Egor Shelukhin, and Chris Woodward for useful discussions.
We would also like to thank the referee for their comments and corrections.
The first named author was partially supported by Funda\c{c}\~{a}o para a Ci\^{e}ncia e a Tecnologia
(FCT/Portugal), the second named author by NSF-grant DMS 1006610, and the third named author by 
NSF-grant DMS 0905191.


\section{Symmetric extended probes}\label{s:two}

\subsection{Moment polytopes and integral affine geometry}

Let $(M^{2n}, \om, \T)$ be a toric symplectic manifold with 
moment map $\Phi\co M \to \ft^{*}$, where
$\ft^{*}$ is the dual of the Lie algebra $\ft$ of the torus $\T$. 
We will identify $\ft$ together with its 
integer lattice $\ft_\Z$ with $(\R^n,\Z^n)$, and, using the natural pairing $\langle\cdot,\cdot \rangle$,
will also identify the pair $(\ft^*,\ft^*_\Z)$ with $(\R^n,\Z^n)$.  Thus we write
$$
\ft\times \ft^*: = \R^n\times \R^n\to \R,\quad (\eta,x)
\mapsto \langle\eta,x\rangle.
$$
For clarity, we use Greek letters for elements in $\ft$ and Latin letters for elements of $\ft^*$.

A polytope $\De \subset \ft^*\equiv\R^{n}$ is \emph{rational} if it is the
finite
intersection of half-spaces 
$$ \De = \bigcap_{i=1}^{N} \{x \in \R^{n} \mid \ip{\eta_{i}, x} + \ka_{i}\geq 0\}$$
where $\eta_{i} \in \Z^{n}$ are primitive vectors and are the interior conormals for the half-spaces.
A rational polytope $\De \subset \R^{n}$ is \emph{simple} if  
each codimension $k$ face of $\De$ meets exactly $k$ facets.
A rational simple polytope $\De \subset \R^{n}$ is \emph{smooth},
if at each codimension $k$ face, the $k$ conormal vectors for the facets meeting at the face can be extended to an integral basis of $\Z^{n}\equiv \ft_\Z$. 
Moment polytopes for symplectic toric manifolds are smooth.  Delzant \cite{D88} proved that the moment map gives a bijective correspondence between closed symplectic toric manifolds of dimension $2n$, up to equivariant symplectomorphism, and smooth 
compact 
polytopes in $\R^{n}$, up to integral affine equivalences by elements of $\R^{n} \rtimes GL_{n}(\Z)$.	

\begin{remark} \label{r:non-compact-orbifolds}
Lerman and Tolman \cite{LT97} generalized Delzant's classification result to closed symplectic toric orbifolds, 
showing that these are uniquely determined by the image of their moment map, a rational simple 
compact
polytope 
together with a positive integer \emph{label} attached to each of its facets. 
Equivalently, we can allow the conormal vectors $\eta_i$ to the facets to be nonprimitive, interpreting the label as the g.c.d.\ of their entries.
Using the convexity and connectedness results in \cite{LMTW98}, 
Karshon and Lerman \cite{KL09} have further extended this classification to the non-compact setting, provided one assumes that the moment map is \emph{proper} 
as a map $\Phi\co M^{2n} \to U \subset \R^{n}$, where $U$ is a convex open set.
In this case $\Phi(M^{2n})$ is an open polytope as in \cite[Definition~3.1]{WW11};
cf.\ the polytope $\De^U_{m,n}(\ka)$ in Figure~\ref{fig:X1} above.
\end{remark}

An affine hyperplane $A \subset \ft^*\equiv \R^{n}$ 
is \emph{rational} if it has a primitive conormal vector $\eta \in \Z^{n}$, in
which case $A = \{x \in \R^{n} \mid \ip{\eta, x} + \ka = 0 \}$ for some $\ka \in \R$.
The \emph{affine distance} between a rational affine hyperplane $A$, as above, and a point $x \in \R^{n}$ is
$$
	d_{\aff}(x, A) := \abs{\ip{\eta, x} + \ka}.
$$
For a rational hyperplane $A$, as above, a vector $v \in \R^{n}$ is \emph{parallel} to $A$
if $\ip{\eta, v} = 0$ where $\eta$ is 
the defining conormal for $A$.
An integral vector $v$
is \emph{integrally transverse} to $A$ if there is an integral basis of $\Z^{n}\subset\ft^*$ consisting of 
$v$ and vectors $w \in \Z^{n}$ parallel to $A$, or equivalently if
$\abs{\ip{\eta, v}} = 1$.
Note also that if
$v$ is integrally transverse to $A$, then there is
an integral affine equivalence of $\R^n$
taking $A$ and $v$ to 
$ \{x_{1} = 0\}$ and $ (1, 0, \dots, 0)$.

An affine line $L = z + \R v$ in $\R^{n}$ is \emph{rational} if the direction vector $v$ can be taken to be a primitive integral vector in $\Z^{n}$.  Given a rational line $L$ with 
primitive direction vector $v \in \Z^{n}$, the \emph{affine distance} $d_{\aff}(x,y)$ between two points $x,y \in L$ is defined by
$$
	d_{\aff}(x,y) := \abs{t} \quad\mbox{where $t \in \R$ is such that}\quad x-y = tv \in \R^{n}.
$$
For a primitive vector $v \in \Z^{n}$ and a rational hyperplane $A$, 
the \emph{affine distance along $v$} between a point 
$x \in \R^{n}$ and $A$ is defined as
$$
	d_{v}(x, A): = d_{\aff}(x,y) \quad\mbox{if $y \in A$ is on the rational ray $x + \R_{\geq 0}v$.}
$$
If $x + \R_{\geq 0}v$ does not meet $A$, then $d_{v}(x, A): = \infty$.  If $v_{0}$ is integrally transverse to $A$ and the ray $x + \R_{\geq 0}v_{0}$ meets $A$, then $d_{v_{0}}(x, A) = d_{\aff}(x, A)$,
but otherwise, somewhat paradoxically,  we have
$$
d_{v_{0}}(x, A) < d_{\aff}(x, A).
$$
  For example, if $A = \{x_{1} = 0\}$, $x = (1, 0, \dots, 0)$ and $v_0 = (-2,3)$, then $d_{\aff}(x, A)=1$ while $d_{v_{0}}(x, A)=\frac 12$.

\subsection{Probes and extended probes}
We now recall the definition of a probe in a rational polytope from \cite{Mc11} and state the method
of probes.

\begin{defn}
	A { \bf probe} $P$ in a rational polytope $\De \subset \ft^{*} \equiv \R^{n}$, is a 
	directed rational line 
	segment contained
	in $\De$ whose initial point $b_{P}$ lies
	in the interior of a facet $F_{P}$ of $\De$ 
	and whose  {\bf direction vector} $v_{P} \in \ft^{*}_{\Z} \equiv \Z^{n}$
	 is primitive and integrally transverse to the {\bf base facet} $F_{P}$.
	If $e_{P}$ is the endpoint of $P$, then the {\bf length}
	 $\ell(P)$ of $P$ is defined as the affine
	distance $d_{\aff}(e_{P}, b_{P})$.
\end{defn}

Note that if $\eta_{F_{P}}$ is the interior conormal for the facet $F_{P}$, then the definition requires
that $\ip{\eta_{F_{P}}, v_{P}} = 1$ since $v_{P}$ needs to be integrally transverse and be
inward pointing into $\De$.

\begin{lem}[{\cite[Lemma 2.4]{Mc11}}]\label{l:probe}
	Let $P$ be a probe in a moment polytope $\De$ 
	for a toric symplectic orbifold 
	$(M^{2n}, \w, \Phi)$.
	If a point $u$ on the probe $P$ is less than halfway along $P$, meaning that
	$d_{\aff}(u, F_{P}) < \tfrac{1}{2}\ell(P)$, then the Lagrangian fiber $L_{u} = \Phi^{-1}(u)$ 
	is displaceable.
\end{lem}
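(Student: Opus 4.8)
The plan is to reduce the statement to a local model via an integral affine change of coordinates, and then displace the fiber by an explicit compactly supported Hamiltonian built in the reduced space. First I would use the hypothesis that $v_P$ is integrally transverse to the base facet $F_P$ to choose an element of $\R^n\rtimes GL_n(\Z)$ carrying $F_P$ to the hyperplane $\{x_1=0\}$ and $v_P$ to $(1,0,\dots,0)$; such a map exists precisely because $\langle \eta_{F_P},v_P\rangle = 1$, as noted after the definition. Under the corresponding equivariant symplectomorphism the moment polytope is carried to an affinely equivalent one, so it suffices to displace $L_u$ in this normalized picture, where the probe points straight into $\De$ in the $x_1$-direction and $d_{\aff}(\cdot,F_P)$ becomes the first coordinate $x_1$.

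Next I would pass to symplectic reduction by the first circle factor of $\T^n$, following the reinterpretation in \cite{AbM11}. The key observation is that the probe $P$ lies along a fiber of the projection $\R^n\to\R^{n-1}$ forgetting $x_1$, so all the points on $P$ lie over a single point $\bar u\in\R^{n-1}$. Reducing $M$ at the moment level $\bar u$ for the Hamiltonian $\T^{n-1}$-action on the remaining factors, or rather reducing by the single circle whose moment map is $\Phi_1=x_1$, produces a symplectic manifold (the reduced space) in which the segment $P$ becomes a segment transverse to the reduced boundary facets. The integral transversality condition is exactly what guarantees that this reduction is smooth near $L_u$ and that the reduced space looks locally like a standard disk or cylinder. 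In this reduced picture the fiber $L_u$ descends to a single $S^1$-orbit sitting at radius determined by $d_{\aff}(u,F_P)$.

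I would then construct the displacing Hamiltonian directly in the reduced space. The circle orbit at affine distance $t<\tfrac12\ell(P)$ from the base facet corresponds, in the standard action-angle coordinates on a disk of capacity $\ell(P)$, to a circle of area $\pi t$ strictly less than half the total disk area; such a circle bounds a disk of area less than half the disk, hence is displaceable off itself by a rotation-type Hamiltonian supported in an annular neighborhood. Lifting this $S^1$-equivariant Hamiltonian back up through the reduction to a $\T^n$-invariant, compactly supported function on $M$ yields a Hamiltonian diffeomorphism $\phi$ displacing $L_u$. The condition $d_{\aff}(u,F_P)<\tfrac12\ell(P)$ is used precisely to ensure the relevant reduced circle encloses less than half the area and so can be moved off itself.

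The main obstacle will be making the reduction rigorous in the orbifold and noncompact setting asserted in the lemma, rather than in the smooth closed case: one must check that the reduced space is genuinely smooth (or has at worst mild orbifold singularities away from $L_u$) near the relevant level, that the integral transversality of $v_P$ to $F_P$ suffices to keep $L_u$ in the smooth locus, and that the constructed Hamiltonian is honestly compactly supported and $\T^n$-invariant so that it descends from and lifts to $M$ correctly. Since this is exactly the content of \cite[Lemma 2.4]{Mc11}, I would appeal to that argument for these technical points rather than reprove them, and the displaceability claim then follows.
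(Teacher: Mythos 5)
Your overall strategy --- normalize by an integral affine transformation and then treat the probe via symplectic reduction by the complementary torus, as in \cite{AbM11} --- is a legitimate alternative route: the paper instead constructs an explicit coisotropic embedding $\psi_{P}\co\bD(\ell(P))\times\T^{n-1}\to (M,\w)$ in action-angle coordinates (the remarks after \eqref{e:FE} together with the construction of $\psi_P$ in Section~\ref{s:proofs}) and then displaces the circle in the disc factor. However, as written your displacing/lifting step contains errors that would make the argument fail if taken literally. First, the Hamiltonian on the reduced disc that displaces the circle cannot be ``rotation-type'' or $S^{1}$-equivariant: a Hamiltonian invariant under the residual circle action is a function of the action variable alone, so its flow preserves every concentric circle and displaces nothing; the displacing isotopy of a circle enclosing area less than half the disc is necessarily non-invariant. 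Second, and more seriously, the lift to $M$ cannot be $\T^{n}$-invariant: a $\T^{n}$-invariant Hamiltonian is a function of the moment map, its flow preserves every level set of $\Phi$, and hence it can never displace any fiber $L_{u}$. The correct statement is that the lifted Hamiltonian is invariant only under the $(n-1)$-dimensional subtorus you reduce by --- so that it descends to the reduced space and its flow preserves the level set of the remaining moment-map components containing $L_{u}$ --- while being genuinely non-invariant in the circle direction along the probe. Relatedly, your aside ``or rather reducing by the single circle whose moment map is $\Phi_{1}=x_{1}$'' has the reduction backwards: reducing by that circle yields a $(2n-2)$-dimensional space transverse to the probe and discards exactly the two-dimensional disc in which the displacement takes place; the reduction must be by the complementary $\T^{n-1}$, as in your first (correct) formulation.

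With those corrections your argument goes through and coincides with the reduction reinterpretation that the paper attributes to \cite{AbM11}. Note also that your closing appeal to \cite[Lemma 2.4]{Mc11} for the remaining technical points is circular if the goal is to prove that lemma; the honest way to finish is the cut-off construction the paper carries out in Section~\ref{s:proofs} (cf.\ Lemma~\ref{l:deflecting}), where the Hamiltonian defined along $\Phi^{-1}(P)$ is extended to a compactly supported function on $M$ by multiplying by a bump function of the transverse moment-map coordinates, so that its flow still preserves the relevant level set near $L_{u}$.
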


Since the inverse image of $P$ by the moment map is symplectomorphic to the product of a disc $\bD$ of area $\ell(P)$ with $\T^{n-1}$, it is easy to construct 
a proof of this lemma from the remarks after 
equation~\eqref{e:FE} below.
To generalize this method, let us first introduce the notion of a symmetric probe and the associated
reflections.

\begin{defn}
	A probe $Q$ in a rational polytope $\De \subset \ft^{*} \equiv \R^{n}$ is
	{\bf symmetric} if the endpoint $e_{Q}$ lies on the interior of
	a facet $F_{Q}'$ that is integrally transverse to $v_{Q}$.  
\end{defn}

Associated to a symmetric probe $Q$ is an affine reflection $A_{Q}\co\ft^{*} \to \ft^{*}$
	\begin{equation}\label{e:Areflect}
		A_{Q}(x) =x + \ip{\eta_{F'_{Q}}-\eta_{F_{Q}}, x} v_{Q} + (\kappa' - \kappa)v_{Q}
	\end{equation}
that swaps the two facets
$$
	F_{Q} = \{x \in \ft^{*} \mid \ip{\eta_{F_{Q}}, x} + \kappa = 0\}
	\quad\mbox{and}\quad
	F_{Q}' = \{x \in \ft^{*} \mid \ip{\eta_{F_{Q}'}, x} + \kappa' = 0\},
$$
since $\ip{\eta_{F_{Q}}, v_{Q}} = 1$ and $\ip{\eta_{F_{Q}'}, v_{Q}} = -1$.	
Let $\widehat{A}_{Q}\co\ft^{*} \to \ft^{*}$ be the
associated linear reflection 
\begin{equation}\label{e:reflect}
	\widehat{A}_{Q}(x) = x + \ip{\eta_{F_{Q}'} - \eta_{F_{Q}}, x} v_{Q}.
\end{equation}
Observe that $A_{Q}$ preserves $Q$ set-wise, $\widehat{A}_{Q}(v_{Q}) = -v_{Q}$,
and $\widehat{A}_{Q}^{*}(\eta_{F_{Q}}) = \eta_{F_{Q}'}$.

Note that $A_{Q}$ need not preserve the polytope $\De$, but it does preserve a small neighborhood of
$Q$.  Our first generalization of the method of probes 
deflects a given probe by a symmetric probe.

\begin{defn}	
	Let $Q$ be a symmetric probe in a rational polytope $\De \subset \ft^{*} \equiv \R^{n}$,
	and let $P$ be another probe with direction $v_{P}$ such that $P$ ends at the point 
	$x_{PQ}$ in the interior of $Q$.  
	A {\bf symmetric extended probe} $\cSP$
	formed by deflecting $P$ with $Q$ is a union
	$$
		\cSP = P \cup Q \cup P' \subset \De,
	$$
	where the extension $P'$ is a rational line segment in $\De$ with direction $v_{P'}$ starting at
	 $x_{PQ}'$,
	where
	$$
		v_{P'}: = \widehat{A}_{Q}(v_{P}), \quad
		x_{PQ}' := A_{Q}(x_{PQ}) .  
	$$
	We define the {\bf  length} of the extended probe $\cSP$ to be
	$\ell(\cSP): = \ell(P) + \ell(P')$. Thus the {\bf endpoint} $e_{P'}$ of $P'$ is
	$e_{P'} := x'_{PQ} + \ell(P')\,v_{P'}$. 
\end{defn}

\begin{remark}
	A visual description of $x_{PQ}'$ is that it is the unique point
	on $Q$ so that 
	$$
	d_{\aff}(x_{PQ}, F_{Q}) = d_{\aff}(x_{PQ}', F_{Q}');
	$$
	cf.\ Figure~\ref{f:epNF}.
	Note that if $v_{P}$ is parallel to $F_{Q}$, then 
	$\widehat{A}_{Q}(v_{P})$ is parallel to $F_{Q}'$, being the projection of $v_{P}$ to
	the linear hyperplane $\{x \in \ft^{*} \mid \ip{\eta_{F_{Q}'}, x} = 0\}$ along $-v_{Q}$.
\end{remark}

\begin{figure}[h]

	\begin{center} 
	\leavevmode 
	\includegraphics[width=4in]{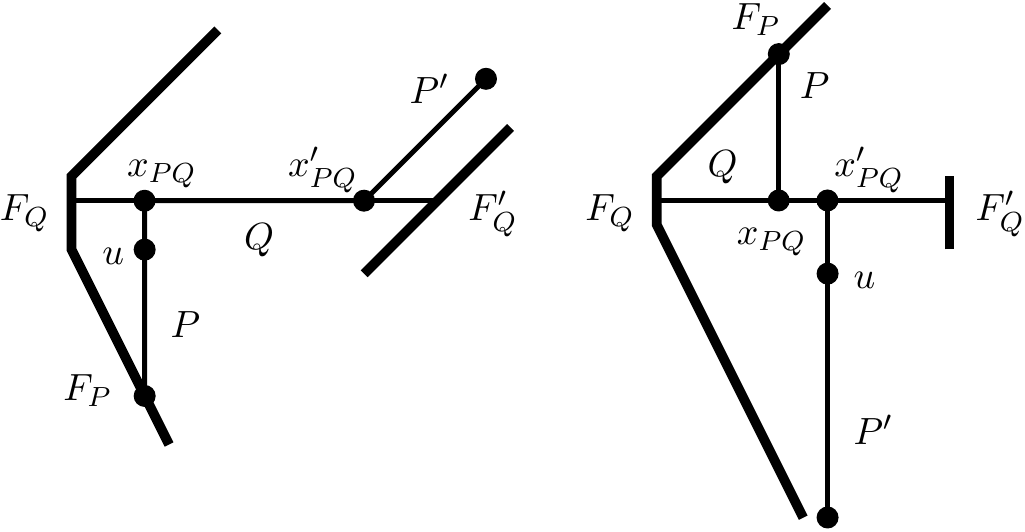}
	\end{center} 

	\caption{Two ways of using Theorem~\ref{t:noflag}.  
	Left: $u$ is on $P$.  Right: $u$ is on $P'$.}
	\label{f:epNF}
\end{figure}

\begin{thm}\label{t:noflag} 
Let $\cSP$ be a symmetric extended probe formed by deflecting the probe $P$
with the symmetric probe $Q$, in a moment polytope $\De = \Phi(M)$ for a toric symplectic 
orbifold $(M^{2n}, \w, \T, \Phi)$.  For a point $u$ in the moment polytope $\De$: 
\begin{itemize}
\item If $u$ is in the interior of $P$ and $d_{\aff}(u, F_{P}) < \tfrac{1}{2}\,\ell(\cSP)$, or 
\item If $u$ is in the interior of $P'$ and $\ell(P) + d_{\aff}(x'_{PQ}, u) < \tfrac{1}{2}\,\ell(\cSP)$,
\end{itemize}
then the Lagrangian torus fiber $L_{u} = \Phi^{-1}(u)$ is displaceable in $(M, \w)$.
\end{thm}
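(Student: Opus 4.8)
The plan is to reduce Theorem~\ref{t:noflag} to the ordinary probe Lemma~\ref{l:probe} by using the symmetric probe $Q$ to \emph{straighten} the bent configuration $P\cup P'$. First recall the mechanism behind Lemma~\ref{l:probe}. Because $v_P$ is integrally transverse to $F_P$, one may choose integral affine coordinates in which $F_P=\{x_1=0\}$ and $v_P=(1,0,\dots,0)$, and then the preimage under $\Phi$ of a neighborhood of $P$ is equivariantly symplectomorphic to $\bD(\ell(P))\times\T^{n-1}$, where the disc factor carries the standard $S^1$-action with moment map the radial coordinate and total area $\ell(P)$, and $\T^{n-1}$ corresponds to the lattice of directions parallel to $F_P$. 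Under this identification the fiber $L_u$ over a point with $d_{\aff}(u,F_P)=t$ is the product of the circle at moment level $t$ in $\bD$ with $\T^{n-1}$, and such a circle is displaceable inside $\bD(\ell(P))$, by a compactly supported isotopy of the disc extended by the identity over $\T^{n-1}$, exactly when $t<\tfrac12\ell(P)$.

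The new ingredient is the affine reflection $A_Q$ of \eqref{e:Areflect}, which preserves a neighborhood $N$ of $Q$, swaps $F_Q,F'_Q$, and preserves affine distance. The first step is to realize $A_Q$ symplectically: since $Q$ is symmetric, $v_Q$ is integrally transverse to both $F_Q$ and $F'_Q$, so on $\Phi^{-1}(N)$ there is a symplectomorphism $\Psi_Q$ covering $A_Q$, that is $\Phi\circ\Psi_Q=A_Q\circ\Phi$ near $Q$, hence $\Psi_Q(L_u)=L_{A_Q(u)}$. Applying $A_Q$ to $P$ produces a straight continuation of $P'$: because $A_Q(x_{PQ})=x'_{PQ}$ and $\widehat A_Q(v_P)=v_{P'}$ by \eqref{e:reflect}, the image $A_Q(P)$ is a segment ending at $x'_{PQ}$ with direction $v_{P'}$, so $\tilde P:=A_Q(P)\cup P'$ is a single straight rational segment with direction $v_{P'}$, base facet $\tilde F:=A_Q(F_P)$, and length $\ell(A_Q(P))+\ell(P')=\ell(P)+\ell(P')=\ell(\cSP)$. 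Thus, after folding by $\Psi_Q$, the extended probe becomes an ordinary straight probe $\tilde P$ of length $\ell(\cSP)$.

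With $\tilde P$ in hand the two bullets collapse to the single criterion ``less than halfway along $\tilde P$''. Since $A_Q$ preserves affine distance, a point $u$ in the interior of $P$ satisfies $d_{\aff}(A_Q(u),\tilde F)=d_{\aff}(u,F_P)$, while a point $u$ in the interior of $P'$ has $d_{\aff}(u,\tilde F)=\ell(A_Q(P))+d_{\aff}(x'_{PQ},u)=\ell(P)+d_{\aff}(x'_{PQ},u)$; in both cases the hypothesis is precisely $d_{\aff}(\cdot,\tilde F)<\tfrac12\ell(\tilde P)$. I would then assemble a single model $\bD(\ell(\cSP))\times\T^{n-1}$ for $\Phi^{-1}(P\cup P')$ by taking the chart over $P$ from the first paragraph, the analogous chart over $P'$, and gluing them across the overlap near $\Phi^{-1}(Q)$ using $\Psi_Q$; the radial moment coordinate then runs from $0$ on $F_P$ to $\ell(\cSP)$ at $e_{P'}$, passing through level $\ell(P)$ at the crossing $x_{PQ}\leftrightarrow x'_{PQ}$, and $L_u$ is again the circle at level $d_{\aff}(\cdot,\tilde F)$ times $\T^{n-1}$. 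Displacing this circle inside $\bD(\ell(\cSP))$ and pulling back through the charts produces the desired $\phi\in\Ham(M,\w)$.

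The heart of the argument, and the step I expect to be the main obstacle, is this gluing of the two disc-times-torus charts across $\Psi_Q$: one must check that $\widehat A_Q$ carries the lattice of $F_P$-parallel torus directions over $P$ to the corresponding lattice over $P'$, so that the two $\T^{n-1}$ factors are identified and the two half-discs assemble into one smooth symplectic disc of area exactly $\ell(\cSP)$ with the correct moment scaling; this is where the relations $v_{P'}=\widehat A_Q(v_P)$ and $x'_{PQ}=A_Q(x_{PQ})$, together with affine-distance preservation by $A_Q$, are used. One must also take care that $\Psi_Q$, and hence the whole model, lives only over the neighborhood $N$ of the interior segment $Q$, so that although $A_Q$ need not preserve $\De$ globally and the reflected facet $\tilde F$ may lie outside $\De$, the displacing Hamiltonian is genuinely compactly supported in $M$. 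Orbifold singular points cause no trouble, since $Q$, $x_{PQ}$ and the probes involved meet only interior points and facet interiors.
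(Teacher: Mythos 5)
Your affine bookkeeping agrees with the paper's own proof in \S\ref{s:proofs}: the coisotropic chart over $P$, the chart over $P'$ built from the image basis $\widehat A_Q^*(\eta_{F_P}),\widehat A_Q^*(\eta_2'),\dots,\widehat A_Q^*(\eta_n')$, and the translation of both bullet hypotheses into the single condition ``less than halfway along the straightened segment'' are exactly Stages 1 and 2 there. However, the step you single out as the main obstacle --- checking that $\widehat A_Q$ carries the lattice of $F_P$-parallel torus directions to the corresponding lattice over $P'$ --- is the easy part: since $\widehat A_Q^*\in GL(\ft_\Z)$ and $(\widehat A_Q)^2=\id$, the image basis automatically satisfies \eqref{e:LB} with respect to $v_{P'}$, which is all that the chart \eqref{e:NFS2C} requires.

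The genuine gap is in your last step, ``gluing them across the overlap near $\Phi^{-1}(Q)$ using $\Psi_Q$ \dots\ and pulling back through the charts.'' Your $\Psi_Q$ is defined only on $\Phi^{-1}(N)$, so what this produces is an abstract copy of $\bD(\ell(\cSP))\times\T^{n-1}$ carrying two coisotropic charts into $M$ whose images near the transition circle $S^1(\ell(P))\times\T^{n-1}$ are \emph{different} (indeed disjoint) subsets of $M$: a piece of $\Phi^{-1}(P)$ near $L_{x_{PQ}}$ and a piece of $\Phi^{-1}(P')$ near $L_{x'_{PQ}}$. Hence there is no single map from the model into $M$, and a displacing isotopy of the model --- which must move points across the transition circle whenever $\ell(P)\le 2\,d_{\aff}(u,F_P)$, the case of interest --- has no push-forward: any Hamiltonian on $M$ obtained by transporting the model Hamiltonian through one chart and then extending it over $M$ generates a flow whose behaviour at the end of the tube $\Phi^{-1}(P)$ is governed by the arbitrary extension and cannot jump to the disjoint tube $\Phi^{-1}(P')$. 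What is actually needed is the paper's Proposition~\ref{p:LHI}: a Hamiltonian diffeomorphism $\Psi\in\Ham(M,\w)$, supported in an arbitrarily small neighborhood of $\Phi^{-1}(Q)$, agreeing with the lift $\Psi_{A_Q}(v,\eta)=(A_Q(v),\widehat A_Q^*(\eta))$ of \eqref{e:PsiLNF} on a smaller neighborhood of $\Phi^{-1}(Q)$. With such a globally defined $\Psi$, the composition $\Psi\circ\psi_P$ makes sense on all of $\bD(\ell(P))\times\T^{n-1}$ and literally coincides with $\psi_{P'}$ on an overlap, so the two charts genuinely assemble into one embedding of $\bD(\ell(\cSP))\times\T^{n-1}$ into $M$, to which the standard probe argument applies. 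Producing this $\Psi$ is the real technical heart of the theorem and is precisely where the symmetry of $Q$ enters: the paper realizes a local toric model around $\Phi^{-1}(Q)$ as a symplectic reduction of $\C^N$, observes that the swap $B(z_1,z_{n+1})=(z_{n+1},z_1)$ of a Hamiltonian $U(2)$-action descends to $\Psi_{A_Q}$, writes $B=\exp(X)$, and cuts off the generating autonomous Hamiltonian near $\Phi^{-1}(Q)$. Your proposal asserts only the local lift $\Psi_Q$, which does exist but is strictly weaker and does not by itself yield any compactly supported Hamiltonian isotopy of $M$.
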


See Section~\ref{s:proofs} for the proof. 
The idea is to join $P$ to $P'$ using a symplectomorphism of   
$\Phi^{-1}({\rm nbhd}\, Q)\subset M$ that equals the identity on one boundary component and the lift of the reflection $A_Q$ on the other.

\begin{figure}[h]

	\begin{center} 
	\leavevmode 
	\includegraphics[width=2.5in]{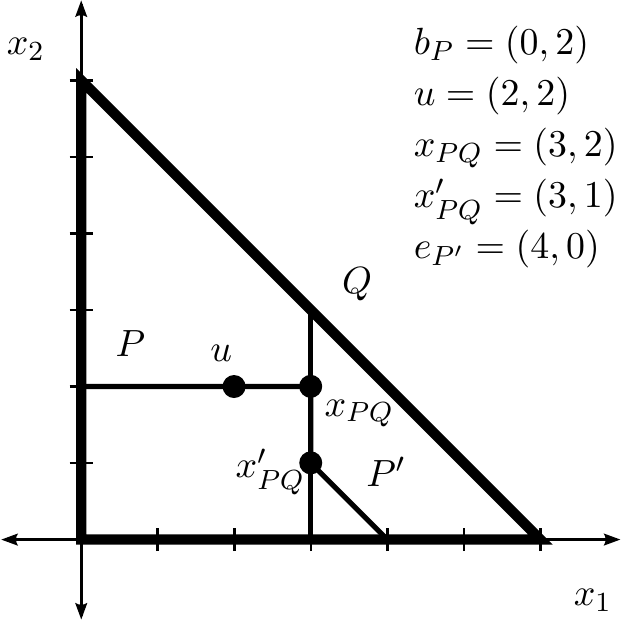}
	\end{center} 

	\caption{Illustration of Remark~\ref{r:nohelp}.  If $P$ were extended past
	$x_{PQ}$ it would exit the polytope at the point $(4,2)$, so its length is at most $4$.
	This is not enough to displace $u$ since
	$d_{\aff}(b_{P}, u) = 2$.  Using $Q$ as a deflecting probe does not help;
	the length of the resulting extended probe is equal to $4$.  Of course
	the fiber $L_{u}$ is nondisplaceable since it is the Clifford torus in $\CP^{2}$.}
	\label{f:nohelp}
	\end{figure}

\begin{remark}\label{r:nohelp}
	Let $\cSP = P \cup Q \cup P'$ be a symmetric extended probe.
	Because $e_{P'}$ must stay in $\De$, which is convex, it follows that
	$\ell(P') \leq d_{v_{P'}}(x_{PQ}', F_{Q}')$.  Also because $A_{Q}$ is an integral affine equivalence in 
	$\ft^{*} \rtimes GL(\ft^{*}_{\Z})$ it follows that
	$d_{v_{P}}(x_{PQ}, F_{Q}) = d_{v_{P'}}(x_{PQ}', F_{Q}').$
	Combining these two we see that
	$$
		\ell(\cSP)  \leq \ell(P) + d_{v_{P}}(x_{PQ}, F_{Q}) = d_{v_{P}}(b_{P}, F_{Q})
	$$
	the length of the extended probe $\ell(\cSP)$ is less than $d_{v_{P}}(b_{P}, F_{Q})$,
	which is the maximum length the probe $P$ can have before it hits the affine hyperplane $F_{Q}$.
	In particular, this means that one cannot make a probe $P$ displace more points by 
	deflecting it with a symmetric probe $Q$ that is based at the facet on which $P$ would exit 
	$\De$.  See Figure~\ref{f:nohelp}.
\end{remark}

In the next section we will generalize the notion of extended probes to the case where $Q$ is not a symmetric probe.  But before that let us first explain how
Theorem~\ref{t:noflag} suffices to settle the question
of which Lagrangian fibers in Hirzebruch surfaces are displaceable.

\subsection{Hirzebruch surfaces}\label{s:H}

Let $m \geq 0$ be an integer and $\ka$ a real number so that $\ka > m$.  
The moment polytope for the $m$th Hirzebruch surface is
$$ 
		\De_{1,m}(\ka) = \Big\{(x_{1}, x_{2}) \in \R^{2} \mid x_{1} \geq 0\,,\,\, x_{2} \geq 0\,,\,\,
		-x_{2}+2 \geq 0\,,\,\, -x_{1} - mx_{2}  +\ka + m \geq 0\Big\}.
$$

\begin{figure}[h]

\begin{center} 
\leavevmode 
\includegraphics[width=4.5in]{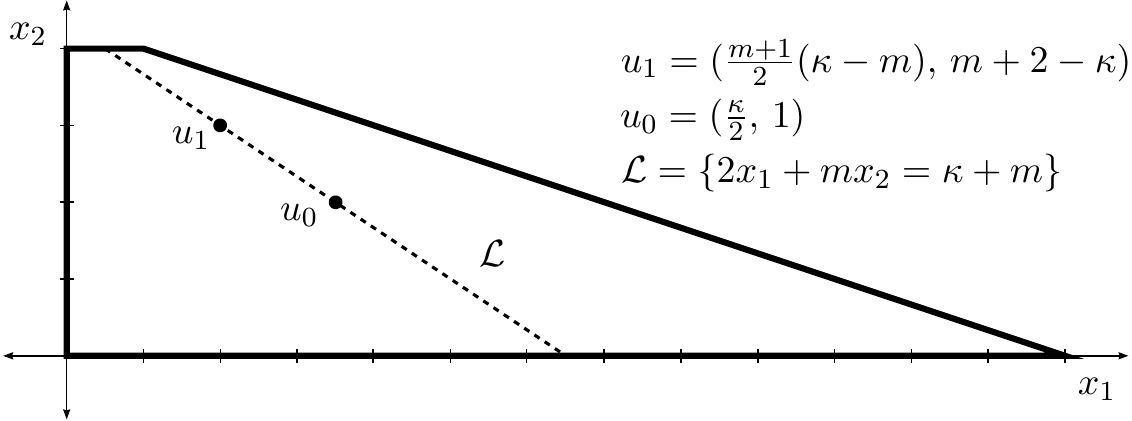}
\end{center} 
\caption{The moment polytope $\Delta_{1,m}(\ka)$ for $m=3$ and $\ka = \tfrac{7}{2}$.  Points
not on the line $\cL$ 
can be displaced by horizontal probes.  
When $m$ is even, probes
with directions $(\mp\tfrac{m}{2}, \pm1)$ based on the two horizontal facets displace everything on
$\cL$ except $u_{0}$. }
\label{f:H3}
\end{figure}
	
When $m\not=1$, the only known nondisplaceable fiber is $u_{0} = (\tfrac{\ka}{2}, 1)$
\cite{AbM11, FOOO10, Wd11} and when $m$ is even, standard probes displace every other fiber.
However for odd $m$ and $\ka < m+1$, \cite[Lemma 4.1]{Mc11} proves that the point 
$$
u_{1} = \big(\tfrac{m+1}{2}(\ka - m),\, m+2-\ka\big),
$$
which is different from $u_{0}$, cannot be displaced by probes,
so in these cases probes do not perfectly complement
the known nondisplaceablility results.  In fact, in these cases one cannot
use probes to displace any of the points on a line segment that starts at 
$u_{1}$ and runs through and a bit past $u_{0}$.  
We will now show that these unknown fibers can all be displaced using Theorem~\ref{t:noflag}.

\begin{prop}\label{p:H}
	When $m\not=1$, then every fiber in the polytope $\De_{m}(\ka)$ of
	the $m$-th Hirzebruch surface is displaceable
	except for
	$$
	u_{0} = (\tfrac{\ka}{2}, 1) \in \De_{m}(\ka).
	$$
	Hence $u_{0}$ is a stem and therefore is nondisplaceable.
\end{prop}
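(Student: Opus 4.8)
The plan is to peel off all the fibers displaceable by the ordinary probes of Lemma~\ref{l:probe}, reduce the problem to a single segment, and then treat that segment by cases on the parity of $m$. First I would observe that at each height $x_2\in(0,2)$ there are two horizontal probes: one based on the left facet $\{x_1=0\}$ with direction $(1,0)$, and one based on the slant facet with direction $(-1,0)$. Each has length $\ka+m-mx_2$, and their common midpoint sits at $x_1=\tfrac12(\ka+m-mx_2)$. By Lemma~\ref{l:probe} the first displaces every point with $x_1<\tfrac12(\ka+m-mx_2)$ and the second every point with $x_1>\tfrac12(\ka+m-mx_2)$. Hence all fibers are displaceable except possibly those on the midpoint locus $\cL$, the segment $\{x_1=\tfrac12(\ka+m-mx_2)\}$ through $u_0=(\tfrac\ka2,1)$, whose direction is $(-\tfrac m2,1)$. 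It remains to displace $\cL\setminus\{u_0\}$.

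For even $m$ this is immediate from ordinary probes: then $(-\tfrac m2,1)$ is an integral vector that is integrally transverse to both horizontal facets, so the probe in this direction based on the lower facet runs along $\cL$ from $(\tfrac{\ka+m}2,0)$ to $(\tfrac{\ka-m}2,2)$, has length $2$, and has midpoint exactly $u_0$; by Lemma~\ref{l:probe} it displaces the part of $\cL$ below $u_0$, and the reversed probe based on the upper facet displaces the part above. For odd $m$ the vector $(-\tfrac m2,1)$ is not integral, so no probe runs along $\cL$, and indeed \cite[Lemma 4.1]{Mc11} shows that a whole subsegment of $\cL$ — from $u_1$ through and slightly past $u_0$ when $\ka<m+1$ — is missed by every probe. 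This is exactly where I would invoke Theorem~\ref{t:noflag}.

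The feature I would exploit is that, $m$ being odd, the two vectors $(-\tfrac{m-1}2,1)$ and $(-\tfrac{m+1}2,1)$ are integral, are legitimate probe directions on the horizontal facets, and average to the direction $(-\tfrac m2,1)$ of $\cL$; they straddle $\cL$, one leaning to either side. I would take a probe $P$ in one of them and deflect it by a suitably chosen symmetric probe $Q$ into a probe $P'$ whose direction lies on the other side of $\cL$, forming a symmetric extended probe $\cSP=P\cup Q\cup P'$ whose two legs lie on opposite sides of $\cL$. Theorem~\ref{t:noflag} then displaces any point of $P$ or $P'$ lying in the first half of $\cSP$; because $P'$ sits on the far side of $\cL$, this reaches points of $\cL$ that no single probe can. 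Sweeping the base point of $P$ together with the deflecting probe $Q$ moves these reachable points along $\cL$.

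I expect the main obstacle to be organizing this family so that, together with the ordinary probes, it covers all of $\cL\setminus\{u_0\}$ with $u_0$ the unique exception. The delicate regime is $\ka<m+1$, where $\De_m(\ka)$ is narrow near $u_0$ and the binding constraint is the length bound of Remark~\ref{r:nohelp}, namely $\ell(\cSP)\le d_{v_P}(b_P,F_Q)$: one must choose $Q$ and the facet carrying $P$ so that the reflection $A_Q$ carries the target point of $\cL$ close to the base facet of $P$ (hence strictly into the first half) while the endpoint $e_{P'}$ stays inside $\De$. I would verify that the admissible extended probes degenerate precisely in the limit as the target tends to $u_0$, so that every $u\in\cL\setminus\{u_0\}$ is displaced but $u_0$ is never reached. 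Granting this, every fiber of $\De_m(\ka)$ except $L_{u_0}$ is displaceable, so $u_0$ is a stem; by the Entov--Polterovich theory of (super)heavy fibers a stem is nondisplaceable, which gives the last clause.
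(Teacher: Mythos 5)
Your reduction to the median $\cL$ via horizontal probes and your treatment of even $m$ coincide with the paper's proof, and your family does correctly handle the part of $\cL$ below $u_0$. But in the odd case the gap is not the deferred bookkeeping you describe at the end: the family you commit to provably cannot reach exactly the points that make the proposition nontrivial. First note that the only symmetric probes $Q$ in $\De_{1,m}(\ka)$ whose reflection $\widehat{A}_Q$ interchanges your two straddling directions are the horizontal ones, with $v_Q=(1,0)$, joining $\{x_1=0\}$ to the slant edge: a deflector joining the two horizontal edges sends $(a,1)$ to $(a-2c,-1)$, i.e.\ changes the unoriented slope by the even integer $2c$, while passing from $-\tfrac{m-1}{2}$ to $-\tfrac{m+1}{2}$ requires $2c=-m$, impossible for odd $m$ (the corner probes fail similarly). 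Now for a horizontal $Q$ the affine reflection $A_Q(x_1,x_2)=(\ka+m-x_1-mx_2,\,x_2)$ fixes $\cL$ \emph{pointwise}, so deflection gains nothing along the median: your extended probe $\cSP$ meets $\cL$ in a single point $w$ at height $t$, whose affine distance from the base facet along $\cSP$ is exactly $t$ (base $\{x_2=0\}$) or $2-t$ (base $\{x_2=2\}$), while both legs are transverse to the horizontal edges, so $\ell(\cSP)\le 2$. Hence Theorem~\ref{t:noflag} can only apply when $t<1$ (from below) or $t>1$ (from above). From above there is a second, lattice obstruction: the top edge has affine length $\ka-m$, and the line through $(\mu,2)$, $0<\mu<\ka-m$, with direction $(\tfrac{m-1}{2},-1)$ resp.\ $(\tfrac{m+1}{2},-1)$ meets $\cL$ at height $\ka-m+2-2\mu$ resp.\ $2+m-\ka+2\mu$, which in either case is forced to exceed $2+m-\ka$. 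So your probes displace points of $\cL$ only at heights in $(0,1)\cup(2+m-\ka,2)$, and never touch the segment of heights $(1,\,m+2-\ka]$, i.e.\ the half-open segment from $u_0$ to $u_1$. That segment is nonempty precisely when $\ka<m+1$, which by \cite[Lemma 4.1]{Mc11} is precisely the regime where standard probes fail and Proposition~\ref{p:H} has any content; your "degeneration only at $u_0$" claim is false there, since the admissible probes degenerate along the whole segment $[u_0,u_1]$.

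The paper escapes this by making the \emph{legs} horizontal and the \emph{deflector} diagonal, the opposite of your configuration: $P$ is based on $\{x_1=0\}$ or the slant edge with $v_P=(\pm1,0)$, and $Q$ has $v_Q=(1,-1)$ joining $\{x_2=2\}$ to $\{x_2=0\}$. Then $A_Q$ genuinely moves points, exchanging heights $w_2\leftrightarrow 2-w_2$ and sending the second leg into the wider part of the trapezoid, and the relevant lengths are horizontal, of order $\ka$ rather than capped by $2$: for $w\in\cL$ with $1<w_2<2$ one gets $\ell(\cSP)=\ka+(m-2)(1-w_2)$, strictly larger than $2\,d_{\aff}(w,F_P)=\ka+m(1-w_2)$. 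The failure really is tied to your straddling choice and not to basing $P$ on a horizontal facet per se: for instance $v_P=(1,1)$ from the bottom edge, deflected by $Q$ with $v_Q=(\tfrac{1-m}{2},1)$ into $v_{P'}=(m,-1)$ (parallel to the slant edge, hence of length $2-y_{PQ}$ when dropped from height $2-y_{PQ}$), does displace the points between $u_0$ and $u_1$ — but no such configuration has legs in your two straddling directions. So in the regime $\ka<m+1$ your plan must be replaced, not merely completed.
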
	
Recall that a fiber of a moment polytope is called a \emph{stem} if every other fiber is displaceable.  It is proven in \cite[Theorem 2.1]{EP06} using the theory of quasi-states that every stem is nondisplaceable.

Proposition~\ref{p:H} stands in contrast to the well-studied case of $m = 1$, which corresponds to a toric blow-up of $\CP^{2}$, where standard probes do complement the known nondisplaceability results. 
Methods in \cite{AbM11,Bo11,Ch08,FOOO10,Wd11} prove that when $1 < \ka < 2$,
the fibers 
$$u_{0} = (\tfrac{\ka}{2}, 1) \quad \mbox{and}\quad u_{1} = (\ka -1, 3-\ka)$$ 
in $\De_{1}(\ka)$ are nondisplaceable, and when $\ka \geq 2$ only the fiber $u_{0}$ in $\De_{1}(\ka)$ is nondisplaceable.

\begin{proof}[Proof of Proposition~\ref{p:H}]
	The vector $(1, 0)$ is integrally transverse to 
	$\{x_{1} = 0\}$
	and $(-1,0)$ is integrally transverse to $\{x_{1} + mx_{2} = \ka + m\}$.
	It is easy to check that probes in these directions  displace every point $x \in \Int \De_{m}(\ka)$ 
	not on the median 
	$$
		\cL = \{2x_{1} + mx_{2} = \ka + m\}.
	$$
	When $m$ is even, one can use a probe with direction $(\tfrac{m}{2}, -1)$
	based on the facet $\{x_{2} = 2\}$ and a probe with direction 
	$(-\tfrac{m}{2}, 1)$ based on the facet $\{x_{2} = 0\}$ to displace all the
	points on $\cL$ except $u_{0}$.  
	
	When $m \geq 3$ is odd, we will use symmetric extended probes to show
	that every point on $\cL$, except $u_{0}$, is displaceable.
	Such points $w$ can be written as 
	\begin{equation}\label{e:w}
		w = \big(\tfrac{1}{2}(\ka + m(1-w_{2})),\, w_{2}\big),\quad w_2\ne 1
	\end{equation}
	and we will divide this into two cases:
	$1< w_{2} < 2$ and $0 < w_{2} < 1$.
	In both cases we will use the symmetric deflecting probe $Q$ where
	$$ 
	b_{Q} = (\tfrac{1}{2}(\ka - m),\, 2)\,, \quad v_{Q} = (1, -1)\,, \quad
	F_{Q} = \{x_{2} = 2\}\,, \quad \mbox{and} \quad F_{Q}' = \{x_{2} = 0\}.
	$$
	We will use probes with direction $v_{P} = (\pm 1, 0)$, which
	are parallel to both $F_{Q}$ and $F_{Q}'$, so it follows that $v_{P'} = \widehat{A}_{Q}(v_{P}) = v_{P}$.
	
	For $1 < w_{2} < 2$, cf. Figure~\ref{f:H3ep}, let $P$ be the probe with
	$$ 
	b_{P} = \big(\ka + m(1-w_{2}),\, w_{2}\big)\,, \quad 
	v_{P} = (-1, 0)\,, \quad\mbox{and}\quad 
	F_{P} = \{x_{1} + mx_{2} = \ka +m\},
	$$
	and let $\cSP = P \cup Q \cup P'$ be the associated extended probe, where
	\begin{align*}
		x_{PQ} &= \big(\tfrac{1}{2}(\ka -m) + 2-w_{2},\, w_{2}\big) &&
		&\ell(P) &= \tfrac{1}{2}(\ka+3m)+ (1-m)w_{2}-2\\
		x'_{PQ} &=  \big(\tfrac{1}{2}(\ka-m) + w_{2},\,2-w_{2}\big)&&
		&\ell(P')&= \tfrac{1}{2}(\ka-m) + w_{2}\\
		e_{P'} &= (0, 2-w_{2}) && 
		&\ell(\cSP)& = \ka + (m-2)(1-w_{2}).
	\end{align*}
	
	\begin{figure}[h]

	\begin{center} 
	\leavevmode 
	\includegraphics[width=4.5in]{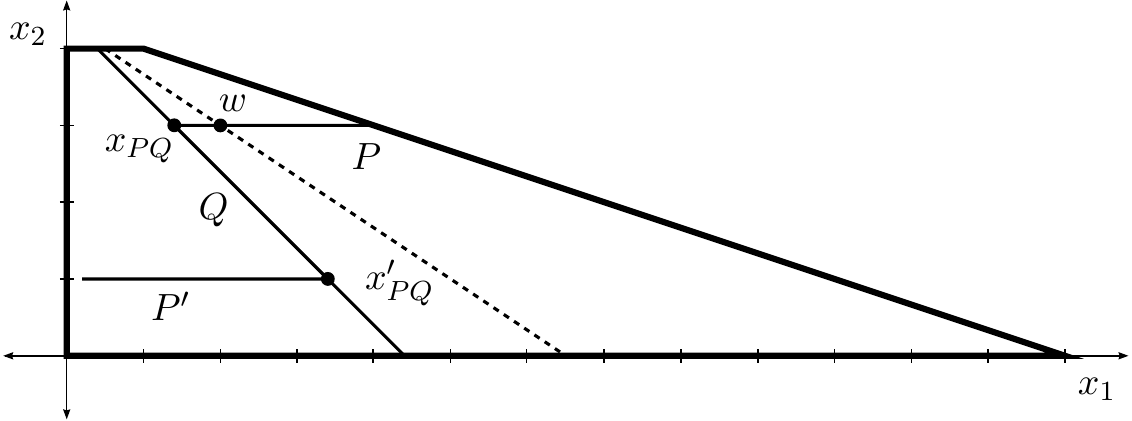}
	\end{center} 

	\caption{The symmetric extended probe $\cSP$ that displaces
	$w$ when $1 < w_{2} < 2$.}
	\label{f:H3ep}
	\end{figure}
	
	The point $w$ from \eqref{e:w} lies on $P$ and therefore using $1 < w_{2}$ we have
	\begin{align*}
		d_{\aff}(w, F_{P}) &= \tfrac{1}{2}\Big(\ka + m(1-w_{2})\Big) < \tfrac{1}{2}\,\ell(\cSP).
	\end{align*}
	Hence $L_w$ is displaceable by Theorem~\ref{t:noflag}.
	
	For $0 < w_{2} < 1$, cf. Figure~\ref{f:H3epNF}, let $P$ be the probe with
	$$
	b_{P} = \big(0,\, 2 - w_{2}\big)\,, \quad v_{P} = (1,0)\,, \quad\mbox{and}\quad
	F_{P} = \{x_{1} = 0\}
	$$
	and let $\cSP = P \cup Q \cup P'$ be the associated extended probe, where
	\begin{align*}
		x_{PQ} &= \big(\tfrac{1}{2}(\ka -m) + w_{2},\, 2-w_{2}\big) && 
		&\ell(P) &= \tfrac{1}{2}(\ka - m) + w_{2}\\
		x'_{PQ} &= \big(\tfrac{1}{2}(\ka-m) + 2-w_{2},\,w_{2}\big) &&
		&\ell(P') &= \tfrac{1}{2}(\ka +3m) + (1-m)w_{2} - 2\\
		e_{P'} &= \big(\ka + m(1-w_{2}),\,w_{2}\big) &&
		&\ell(\cSP) &= \ka + m(1-w_{2}) + 2(w_{2} -1)
	\end{align*}
	and the point $w$ is on $P'$.  
	\begin{figure}[h]

	\begin{center} 
	\leavevmode 
	\includegraphics[width=4.5in]{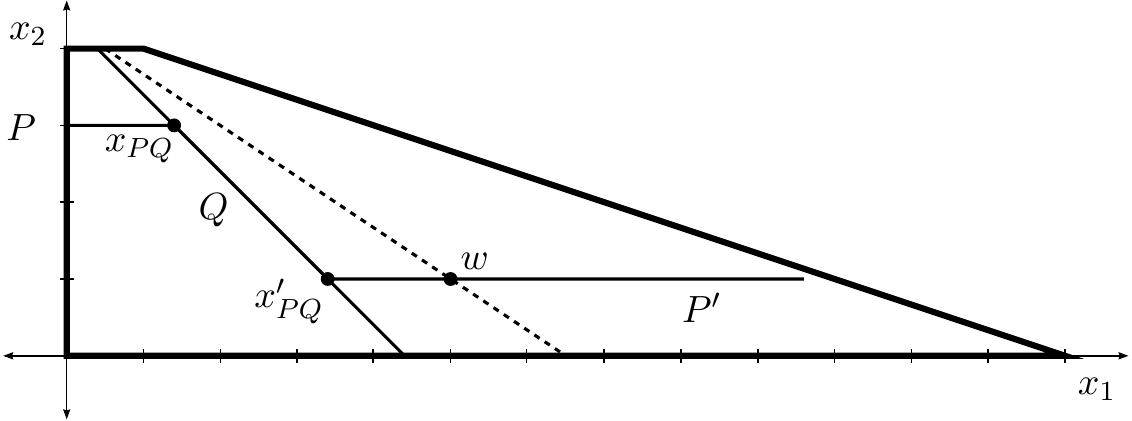}
	\end{center} 

	\caption{The symmetric extended probe $\cSP$ that displaces $w$ when $0 < w_{2} < 1$.}
	\label{f:H3epNF}
	\end{figure}
	
	Since $w_{2} < 1$ we have
	\begin{align*}
		\ell(s_{1}) + d_{\aff}(x'_{PQ}, w) &= \tfrac{1}{2}\Big(\ka + m(1-w_{2})\Big) + 2(w_{2} -1)
		< \tfrac{1}{2} \,\ell(\cSP),
	\end{align*}
	and therefore Theorem~\ref{t:noflag} implies that  $L_{w}$ is displaceable.
\end{proof}


\section{Extended probes with flags: parallel case}\label{s:3}

The use of symmetric extended probes is fairly restrictive since a symmetric probe $Q$ represents a torus bundle over $S^{2}$.  In cases where $Q$ does not exit the polytope (or does so non-transversally) then the following construction can be used with $Q$ to deflect probes.

\subsection{The definition and the displaceability method}

\begin{defn}\label{d:PEPwF}
Let $P$ and $Q$ be probes in a rational polytope $\De \subset \R^{n}$ where 
the probe $P$ ends at the point $x_{PQ}$ on $Q$, and suppose that $v_{P}$ is parallel to the base facet 
$F_{Q}$  of $Q$.
The {\bf parallel extended probe with flag} $\cDP$ formed by
deflecting $P$ with $Q$ is the subset
$$
	\cDP = P \cup Q \cup \cF\  \subset\ \De.
$$
Here the flag $\cF$ is the convex hull of the points $\{x_{\cF},\, x_{\cF}',\,e_{\cF},\, e'_{\cF}\}$, where
$x_{\cF}$ and $x_{\cF}'$ lie on $Q$, and the vector
$
x_{\cF} -e_{\cF}  =  x'_{\cF} -e'_{\cF} 
$
is parallel to $P$ (see Figure~\ref{f:extendedprobe}).  
The {\bf length} of the flag $\ell(\cF)$ is $d_{\aff}(x_{\cF} , e_{\cF})$
so that 
\begin{equation}\label{e:PFL}
x_{\cF} -e_{\cF}  =  x'_{\cF} -e'_{\cF} = \ell(\cF)\, v_{P}
\end{equation}
The {\bf  length} of the extended probe 
is $\ell(\cDP) = \ell(P) + \ell(\cF)$.
\end{defn}

\begin{figure}[h]
	
	\begin{center} 
	\leavevmode 
	\includegraphics[width=2.5in]{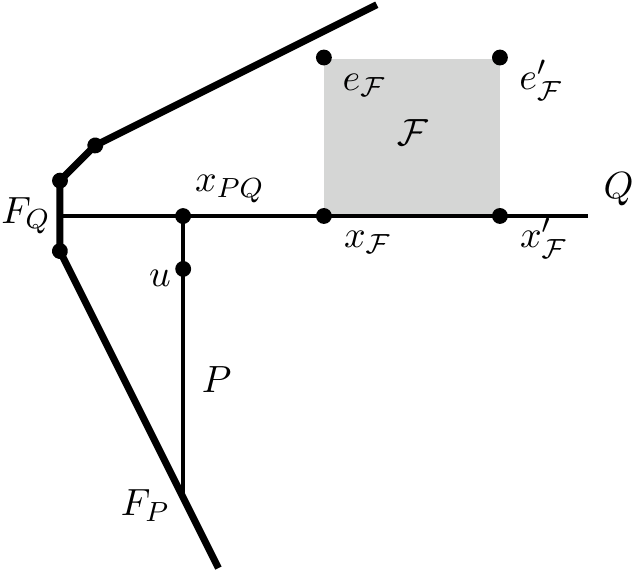}
	\end{center} 

	\caption{An extended probe with flag $\cDP = P \cup Q \cup \cF$.}
	\label{f:extendedprobe}
\end{figure}

The following theorem explains how one can use parallel extended probes  to displace Lagrangian torus fibers. 
 
\begin{thm}\label{t:pdeflected}
	Let $\cDP = P \cup Q \cup \cF \subset \De$ be a parallel extended probe
	in the moment polytope $\De = \Phi(M)$ of the toric symplectic orbifold
	$(M^{2n}, \w, \T, \Phi)$, and let $u\in \INT (P) \subset \INT(\De)$.  
	Then the Lagrangian fiber $L_{u} = \Phi^{-1}(u)$ is displaceable if the following conditions both hold:
	\begin{itemize} \item the affine distance from $u$ to the facet $F_{P}$
	satisfies
	\begin{equation}\label{e:p1half}
		d_{\aff}(u, F_{P}) < \tfrac{1}{2}\,\ell(\cDP)\,,
	\end{equation}
\item and the flag $\cF$ satisfies the inequality
	\begin{equation}\label{e:pflag}
	d_{\aff}(x_{PQ}, F_{Q}) < d_{\aff}(x_{\cF}, x_{\cF}')\,.
	\end{equation}
	\end{itemize}
	\end{thm}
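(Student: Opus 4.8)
The plan is to put the configuration into a normal form and then realize the flag as a symplectic enlargement of the disc underlying the ordinary probe $P$, following the template of the symplectomorphism used for Theorem~\ref{t:noflag}. First I would apply an element of $\ft^*\rtimes GL(\ft^*_\Z)$ to place the data in standard position: since $v_P$ is parallel to $F_Q$ while $v_Q$ is integrally transverse to $F_Q$, one can arrange $v_P=(1,0,\dots,0)$, $\eta_{F_Q}=(0,1,0,\dots,0)$ with $F_Q=\{x_2=0\}$, and, after a shear $x_1\mapsto x_1-a x_2$ fixing both, $v_Q=(0,1,0,\dots,0)$. In these coordinates $P$ lies at the constant level $x_2=d_{\aff}(x_{PQ},F_Q)=:h$, the probe $Q$ is vertical through $x_{PQ}$, and by~\eqref{e:PFL} the parallelogram $\cF$ occupies an $x_2$-interval along $Q$ of length $d_{\aff}(x_{\cF},x_{\cF}')$, extending a depth $\ell(\cF)$ in the $-v_P$ direction.

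Next I would recall the disc-bundle model underlying Lemma~\ref{l:probe}: $\Phi^{-1}(P)$ is symplectomorphic to $\bD(\ell(P))\times\T^{n-1}$, where the disc carries the circle action generated by $\eta_{F_P}$, its radial (area) coordinate equals $d_{\aff}(\cdot,F_P)$, and the complementary $\T^{n-1}$ contains the $\eta_{F_Q}$-circle. Thus the fiber $L_u$ over $u\in\INT(P)$ appears as the circle of enclosed area $d_{\aff}(u,F_P)$ times $\T^{n-1}$. The heart of the argument is to construct a symplectomorphism $\Psi$ supported in $\Phi^{-1}({\rm nbhd}\,(Q\cup\cF))$ that is the identity on the boundary component facing $F_P$ and that folds $\Phi^{-1}(\cF)$ onto the outward radial continuation of this disc, using the circle generated by $\eta_{F_Q}$, whose moment map is exactly the coordinate $x_2$. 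The effect of $\Psi$ is to bend the $v_P$-direction of the incoming disc into the flag, converting the depth $\ell(\cF)$ of $\cF$ into additional disc area, so that $L_u$ becomes the circle of enclosed area $d_{\aff}(u,F_P)$ inside an enlarged disc of total area $\ell(P)+\ell(\cF)=\ell(\cDP)$.

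Granting $\Psi$, the theorem follows exactly as for a single probe: a compactly supported rotation of the enlarged disc displaces any boundary circle enclosing strictly less than half the total area, which by~\eqref{e:p1half} is precisely the hypothesis $d_{\aff}(u,F_P)<\tfrac12\,\ell(\cDP)$; the $\T^{n-1}$ factor is carried along and, since $\Psi$ and the rotation are supported away from $\partial M$, the composite extends by the identity to a compactly supported $\phi\in\Ham(M,\w)$. This controls only points $u\in\INT(P)$, that is, before the crossing $x_{PQ}$, as claimed, because points of the flag are smeared over a range of $x_2$-levels and no longer correspond to circles of a single enclosed area.

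The main obstacle is the construction and control of the folding symplectomorphism $\Psi$, and this is exactly where hypothesis~\eqref{e:pflag} enters. The $\eta_{F_Q}$-circle collapses along $F_Q=\{x_2=0\}$, so over $P$ it sits at level $h$ and bounds a region of affine size $h=d_{\aff}(x_{PQ},F_Q)$; when it is unrolled into the flag during the fold, its image must occupy an $x_2$-interval of at least this length inside $\Phi^{-1}(\cF)$. For $\Psi$ to be an honest area-preserving embedding — neither pinching the enlarged disc nor pushing it out of $\De$ — the flag must therefore be at least this wide along $Q$, which is precisely the requirement $d_{\aff}(x_{PQ},F_Q)<d_{\aff}(x_{\cF},x_{\cF}')$ of~\eqref{e:pflag}. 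Verifying that the fold transfers exactly the area $\ell(\cF)$, remains an embedding into $\Phi^{-1}(\cF)$, and has compact support disjoint from the $F_P$-boundary is the technical crux; once this is in place the remaining estimates are the routine bookkeeping already assembled in the normalization step.
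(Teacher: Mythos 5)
Your proposal is correct and follows essentially the same route as the paper's proof: the same action-angle normalization with $v_P=e_1$, $v_Q=e_2$, $\eta_{F_Q}=\eta_2$, the same folding symplectomorphism supported near $\Phi^{-1}(Q)$ built from an area-preserving map of the disc whose area coordinate is $x_2$ (this is exactly what the paper packages in Lemmas~\ref{l:APD} and~\ref{l:deflecting}), and the same reading of \eqref{e:pflag} as the area condition that lets the circle enclosing area $d_{\aff}(x_{PQ},F_Q)$ be carried into the annulus of area $d_{\aff}(x_{\cF},x_{\cF}')$ lying over the flag. The conclusion via a compactly supported rotation of the enlarged disc of area $\ell(\cDP)$, extended by the $\T^{n-1}$ factor, is also identical to the paper's Stage 3 and gluing argument.
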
 

In Section~\ref{s:EPwFNP} we will further generalize this to the case where $P$ is not parallel to the facet $F_{Q}$. 
See Section~\ref{s:proofs} for the proofs.


\subsection{Example: displaceability in the open region $U_{n,m}(\ka)$}\label{s:DinC2}

Consider the standard toric structure $(\C^{N}, \w_{0}, (S^{1})^{N},\Phi_{0})$.  
The symplectic form is $\w_{0} = \tfrac{1}{\pi}\, \sum_{k=1}^{N} dx_{k} \wedge dy_{k}$,
the torus action is
$$
	(t_{1}, \dots, t_{N}) \cdot (z_{1}, \dots, z_{N}) = (e^{2\pi i t_{1}}z_{1}, \dots, e^{2\pi i t_{N}}z_{N})
$$
where $t_{k} \in S^{1} = \R/\Z$, the moment map is
$$
	\Phi_{0}\co\C^{N} \to \R^{N} \quad\mbox{where}\quad
	\Phi_{0}(z_{1}, \dots, z_{N}) = (\abs{z_{1}}^{2}, \dots, \abs{z_{N}}^{2})
$$
and the moment polytope is $\R^{N}_{+} \subset \R^{N}$.

For ease of notation let us now specialize to the case $\C^{2}$.  Let $m > n \geq 1$ be relatively prime integers
and consider the open subset of $\C^{2}$ 
$$
	U_{n,m}(\ka) = \big\{z = (z_{1}, z_{2}) \in \C^{2} : -n\abs{z_{1}}^{2} + m\abs{z_{2}}^{2} + \ka > 0\big\}
$$
where $\ka > 0$ is any positive constant.  
Its image under the moment map is
$$
	\De^{U}_{n,m} := \Phi_{0}(U_{n,m}) = \big\{x = (x_{1}, x_{2}) \in \R^{2} : x_{1} \geq 0\,,\,\, 
	x_{2} \geq 0\,,\,\, -n x_{1} + mx_{2} + \ka > 0\big\}.	
$$
We will now turn to the investigation of 
 the displaceability of Lagrangian toric fibers in
$(U_{n,m}(\ka), \w_{0}, \De^{U}_{n,m}(\ka))$.

\begin{figure}[h]

	\begin{center} 
	\leavevmode 
	\includegraphics[width=2.2in]{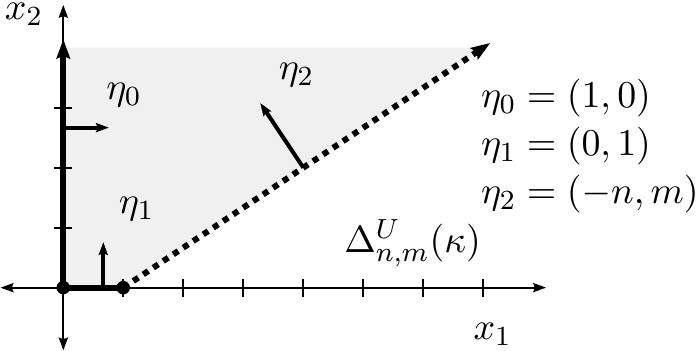}	
	\end{center} 

	\caption{The moment polytope $\De_{n,m}^{U}(\ka)$ for $(n,m) = (2,3)$
	and $\ka = 2$.}
	\label{f:Xnm}
\end{figure}

Let us first explain what  is displaceable in $(U_{n,m}, \w_{0}, \De^{U}_{n,m})$
 by standard probes.

\begin{lem}\label{l:DOsecP}
	The following points $x \in \De^{U}_{n,m}$ can be displaced by probes
	based on the facet $\{x_{1} = 0\} \subset \De_{n,m}^{U}$:
	\begin{itemize}
	\item 
	points in $\{x_{1} < x_{2}\}$ by probes with direction $(1,1)$.			
	\item 
	points in $\{x_{1} < \tfrac{m}{2n}\,x_{2} + \tfrac{\ka}{2n}\}$ by probes with 
	direction $(1,0)$.
	\end{itemize}
	The following points $x \in \De^{U}_{n,m}$ can be displaced by probes based
	on 
	$\{x_{2} = 0\} \subset \De_{n,m}^{U}$:
	$$
		\{cx_{2} < x_{1} < cx_{2} + \ka/n\} \mbox{ by probes with direction $(c, 1)$ for 
		$c=0, 1, \dots, \lceil m/n \rceil-1$}.
	$$
\end{lem}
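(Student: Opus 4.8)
The plan is to handle each of the three families directly by the method of probes (Lemma~\ref{l:probe}): for a given interior point $u=(u_1,u_2)$ I trace the line through $u$ in the prescribed direction $v_P$ back to its base facet, verify integral transversality, locate the exit point and hence $\ell(P)$, and then check the halfway condition $d_{\aff}(u,F_P)<\tfrac12\ell(P)$. The three relevant facets are $F_1=\{x_1=0\}$, $F_2=\{x_2=0\}$ and the open slant facet $F_3=\{-nx_1+mx_2+\ka=0\}$, with interior conormals $\eta_1=(1,0)$, $\eta_2=(0,1)$ and $\eta_3=(-n,m)$, the last primitive because $\gcd(n,m)=1$. Transversality is immediate since $\ip{\eta_1,(1,1)}=\ip{\eta_1,(1,0)}=1$ for the two families based on $F_1$, and $\ip{\eta_2,(c,1)}=1$ for the family based on $F_2$; each of these (primitive) directions points into $\De^U_{n,m}$.

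The observation that organizes everything is that along a probe $b_P+tv_P$ the slant constraint takes the value $\ip{\eta_3,b_P}+\ka+t\,\ip{\eta_3,v_P}$, so the sign of $\ip{\eta_3,v_P}$ decides whether the probe ever reaches $F_3$. For direction $(1,1)$ one has $\ip{\eta_3,(1,1)}=m-n>0$, and for direction $(c,1)$ with $c\le\lceil m/n\rceil-1<m/n$ one has $\ip{\eta_3,(c,1)}=m-nc>0$; in both cases the slant value starts positive and increases, so the forward ray never meets $F_3$ and the probe has infinite length, whence \emph{every} point on it is displaceable. For the $(1,1)$-family the base through $u$ is $(0,u_2-u_1)$, which lies in the interior of $F_1$ exactly when $u_1<u_2$, giving the first region; for the $(c,1)$-family the base is $(u_1-cu_2,0)$, which lies in the interior of $F_2$, i.e.\ $0<u_1-cu_2<\ka/n$, exactly when $cu_2<u_1<cu_2+\ka/n$, giving the third region.

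Only the $(1,0)$-family is finite, since $\ip{\eta_3,(1,0)}=-n<0$. Here the base through $u$ is $(0,u_2)$, the probe exits $F_3$ at affine parameter $t=\tfrac1n(mu_2+\ka)$, so $\ell(P)=\tfrac1n(mu_2+\ka)$ while $d_{\aff}(u,F_P)=u_1$. The halfway condition $u_1<\tfrac12\ell(P)$ then reads $u_1<\tfrac{m}{2n}u_2+\tfrac{\ka}{2n}$, which is precisely the second region.

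The one point requiring care is that $F_3$ is an \emph{open} facet and that two of the families produce probes of infinite length, so Lemma~\ref{l:probe} is not being quoted verbatim. I would dispatch this via the noncompact framework of Remark~\ref{r:non-compact-orbifolds}: $\Phi_0$ is proper onto the open polytope $\De^U_{n,m}(\ka)$, and the preimage of a probe is symplectically the product of a disc of area $\ell(P)$ with $\T^{1}$ — a disc that is genuinely $\C$ when $\ell(P)=\infty$ — so the displacement of Lemma~\ref{l:probe} applies with $\ell(P)$ read as the affine distance from $b_P$ to the bounding hyperplane of $F_3$ (equal to $+\infty$ when the forward ray stays in $\De^U_{n,m}$). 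I expect this reconciliation of the open facet and the infinite-length probes with the statement of Lemma~\ref{l:probe} to be the only genuine obstacle; everything else is the routine linear algebra sketched above.
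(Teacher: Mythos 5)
Your computations are correct, and this is essentially the paper's own (unwritten) argument: the paper dismisses this lemma with ``An elementary calculation,'' and your verification of transversality, base points, exit parameters, and the halfway condition is exactly that calculation. The one ``obstacle'' you flag --- probes of infinite length along rays that never meet the open slant facet --- can in fact be handled entirely within the paper's definitions, since a probe need not end on $\partial\De$: any point on such a ray lies less than halfway along a sufficiently long \emph{finite} truncation, so Lemma~\ref{l:probe} applies verbatim and no appeal to infinite-area discs is required.
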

\begin{proof} An elementary calculation.
\end{proof}

\begin{figure}[h]
	
	\begin{center} 
	\leavevmode 
	\includegraphics[width=4.5in]{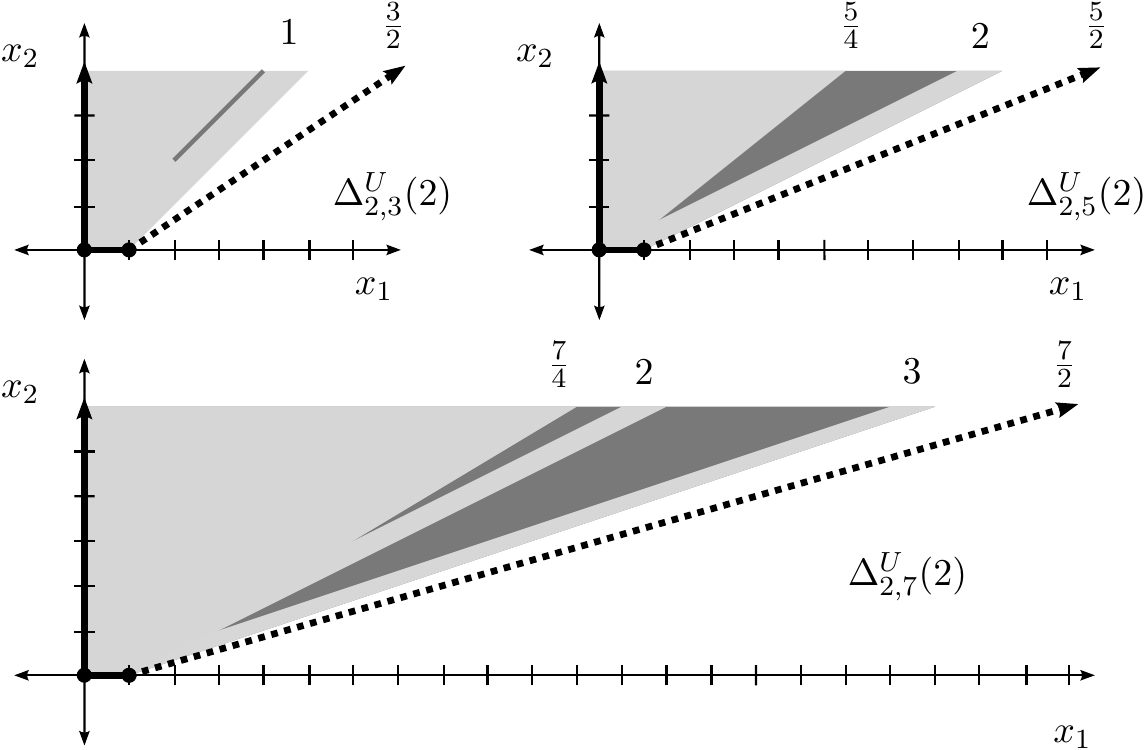}
	\end{center} 

	\caption{Comparing Lemma~\ref{l:DOsecP} and Lemma~\ref{l:openEP}.
	The points in the gray regions are displaceable, but extended probes
	are needed for points in the dark gray regions.  See Remark~\ref{r:EPvsP}.
	In this figure and others the numbers $1, \tfrac{3}{2}, \tfrac{5}{4}, 2, \ldots$ refer to the
	slope $\lambda$ for the line $x_{1} = \lambda x_{2} + b$ defining the boundary between differently shaded regions.
	}
	\label{f:DXnm}
\end{figure}

Here is what we can do with extended probes. 

\begin{lem}\label{l:openEP}
	Let $x= (x_{1}, x_{2}) \in \De^{U}_{n,m}$.  If
	$x$ is to the left of the line passing through $(\ka/n, 0)$ with slope $1/(\lceil m/n \rceil - 1)$,
	that is 
	\begin{equation}\label{e:knd}
		x_{1} < (\lceil m/n \rceil - 1)\,x_{2} + \ka/n\,,
	\end{equation} 
	then $x$ can be displaced by a parallel extended probe with flag in $\De^{U}_{n,m}$.
\end{lem}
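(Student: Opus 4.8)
The plan is to produce, for every target point $x = (x_1, x_2) \in \De^U_{n,m}$ satisfying \eqref{e:knd}, a single explicit parallel extended probe and then quote Theorem~\ref{t:pdeflected}. Write $c_0 = \lceil m/n\rceil - 1$; since $m > n$ are coprime one has $c_0 \geq 1$ and $m - n c_0 > 0$. I take for $P$ the horizontal probe based on $\{x_1 = 0\}$ at height $x_2$, i.e. $b_P = (0, x_2)$ and $v_P = (1, 0)$, so that $x$ lies on $P$ with $d_{\aff}(x, F_P) = x_1$. For the deflecting probe I take $Q$ based at $b_Q = (b, 0)$ on the facet $\{x_2 = 0\}$, with $0 < b < \ka/n$ and direction $v_Q = (c_0, 1)$; this is a genuine probe, its direction is integrally transverse to $\{x_2 = 0\}$, and $v_P$ is parallel to $F_Q = \{x_2 = 0\}$ as the parallel construction requires. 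The inequality $m - nc_0 > 0$ together with $b < \ka/n$ gives $-n x_1 + m x_2 + \ka = (m - nc_0)t + (\ka - nb) > 0$ all along $Q$, so $Q$ never meets the open slant facet and runs off to infinity; this is exactly the situation (described before Definition~\ref{d:PEPwF}) in which the flag construction, rather than a symmetric extension, must be used. Since $P$ meets $Q$ at $x_{PQ} = (b + c_0 x_2, x_2)$, we get $\ell(P) = b + c_0 x_2$, and $x$ lies in the interior of $P$ precisely when $x_1 < b + c_0 x_2$.

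For the flag I would erect the parallelogram having $P$ itself as its lower edge, sheared upward along $Q$: set $x_{\cF} = x_{PQ}$, let $x_{\cF}'$ be the point of $Q$ at any height exceeding $2x_2$, and take $\ell(\cF) = \ell(P) = b + c_0 x_2$, so that $e_{\cF} = x_{\cF} - \ell(\cF)v_P = (0, x_2) = b_P$ and $e_{\cF}' = x_{\cF}' - \ell(\cF)v_P$. The flag then lies in $\De^U_{n,m}$: its left edge runs from $(0, x_2)$ to $e_{\cF}'$ along the shifted line $x_1 = c_0(t - x_2) \geq 0$ for $t \geq x_2$, its right edge is a subsegment of $Q \subset \De^U_{n,m}$, and convexity of $\De^U_{n,m}$ does the rest. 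The flag condition \eqref{e:pflag} reads $d_{\aff}(x_{PQ}, F_Q) = x_2 < d_{\aff}(x_{\cF}, x_{\cF}') = (\text{height of } x_{\cF}') - x_2$, which holds because that height exceeds $2x_2$. Finally $\ell(\cDP) = \ell(P) + \ell(\cF) = 2(b + c_0 x_2)$, so the halfway condition \eqref{e:p1half} becomes $x_1 < b + c_0 x_2$, the same condition as $x \in \INT(P)$. Given \eqref{e:knd}, namely $x_1 < c_0 x_2 + \ka/n$, we have $x_1 - c_0 x_2 < \ka/n$, so we may choose $b \in (\max(0,\, x_1 - c_0 x_2),\, \ka/n)$, and then all hypotheses of Theorem~\ref{t:pdeflected} hold and $L_x$ is displaceable.

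The one genuinely substantive point — the rest being bookkeeping — is the observation that the flag may be taken exactly as long as $P$, its far edge just reaching the facet $\{x_1 = 0\}$. This is what doubles the effective length to $\ell(\cDP) = 2\ell(P)$ and pushes the reach of the halfway criterion from the standard-probe value $\tfrac{m}{2n}x_2 + \tfrac{\ka}{2n}$ of Lemma~\ref{l:DOsecP} out to the line $x_1 = c_0 x_2 + \ka/n$. The thing to watch is that the two flag inequalities must be met simultaneously and compatibly with the polytope: the halfway bound wants $\ell(\cF)$ as large as possible, while \eqref{e:pflag} forces the flag to span more than height $2x_2$ along $Q$, and one must check that the far corner $e_{\cF}'$ still satisfies the strict (open) slant constraint. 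Because $m - nc_0 > 0$, raising the top of the flag only moves $e_{\cF}'$ deeper into $\De^U_{n,m}$, so there is no tension, and letting $b \uparrow \ka/n$ recovers the sharp constant $\ka/n$ in \eqref{e:knd}.
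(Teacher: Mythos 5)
Your choice of probes matches the paper's proof exactly: $P$ horizontal at the height of the target point, $Q$ based on $\{x_2=0\}$ just left of $(\ka/n,0)$ with direction $(c_0,1)$, and the inequality $m-nc_0>0$ is indeed what keeps everything built on $Q$ inside $\De^U_{n,m}$. The gap is in the flag. You erect the flag on the \emph{same} side of $Q$ as $P$, folded back so that $e_{\cF}=b_P$ and the flag contains $P$ as its lower edge; this is what produces the doubling $\ell(\cDP)=2\ell(P)$ and reduces the halfway condition \eqref{e:p1half} to mere membership in $\INT(P)$. But the flag of a parallel extended probe must emanate from $Q$ in the direction $+v_P$, i.e.\ on the side of $Q$ \emph{opposite} to $P$: this is what Figure~\ref{f:extendedprobe} depicts, what the general Definition~\ref{d:EPwF} specializes to when $\ip{\eta_{Q},v_P}=0$ (cf.\ Remark~\ref{r:flag}(i)), and what the proof of Theorem~\ref{t:pdeflected} in \S\ref{s:proofs} actually uses, where the flag occupies $\{\ell_P\le x_1\le \ell\}$ while $P$ occupies $\{0\le x_1\le \ell_P\}$. (Equation~\eqref{e:PFL} as printed has the opposite sign, which may be what misled you, but it is inconsistent with the rest of the paper --- in particular with the paper's own proof of this lemma, where $e_{\cF}=b_Q+\a(c_0,1)+\ell_{\cF}(1,0)$.)

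This is not a convention quibble: the displacement mechanism glues a standard disc bundle $\bD(\ell(P))\times\T^{n-1}$ over $P$ to a standard annulus bundle of width $\ell(\cF)$ over the flag, via a symplectomorphism supported near $\Phi^{-1}(Q)$, and the glued map must be an \emph{embedding} of $\bD(\ell(P)+\ell(\cF))\times\T^{n-1}$. That requires the two pieces to lie over disjoint regions, which is exactly what the far-side placement guarantees. With your folded flag the annulus would have to retrace the region over $P$ that the disc already fills; the natural formula for such an annulus coincides point-for-point with the deflected disc, so the glued map is not injective and no embedded disc of area $2\ell(P)$ is obtained. Indeed, if folding were legitimate, then \emph{every} probe ending on such a $Q$ (with a tall enough flag) would displace all of its interior points rather than only those less than halfway along the extended probe --- a far stronger statement than Theorem~\ref{t:pdeflected} makes, and one its proof cannot deliver. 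The repair is the paper's argument: place the flag beyond $Q$. Since $m-nc_0>0$, the region to the right of $Q$ widens as one moves up along $Q$, so basing the flag at height parameters $\a,\a'>\tfrac{n}{m-nc_0}\,\ell(\cF)$ allows $\ell(\cF)$ to be taken arbitrarily large; then \eqref{e:p1half} holds for $\ell(\cF)$ large, \eqref{e:pflag} holds once $\a'>\a+x_2$, and the binding constraint is $x\in\INT(P)$, which as $b\uparrow\ka/n$ is precisely \eqref{e:knd}.
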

\begin{proof}
	Let $d = \lceil m/n \rceil - 1$ and let $w = (w_{1}, w_{2}) \in \De^{U}_{n,m}$ be a point 
	satisfying \eqref{e:knd}.
	Consider the probes $P$ and $Q$ where,
	for some small $\eps>0$,
	$$
		b_{P} = (0, w_{2})\,,\,\, v_{P} = (1,0) \quad\mbox{and}\quad b_{Q} = (\tfrac{\ka}{n}-\eps, 0)\,,\,\, 
		v_{Q} = (d,1),
	$$
	and $P$ ends at the point $x_{PQ}$ on $Q$.  
	Observe that $Q$ 
	is parallel to the line defined by an equality sign in \eqref{e:knd}, 
	so since $w$ satisfies \eqref{e:knd} it follows that $w$ lies in the interior of $P$ for sufficiently 
	small $\eps$.

	Note that $v_{P}$ is parallel to the base facet $F_{Q} = \{x_{2} = 0\}$ of $Q$.
	
	\begin{figure}[h]
	
	\begin{center} 
	\leavevmode 
	\includegraphics[width=4.5in]{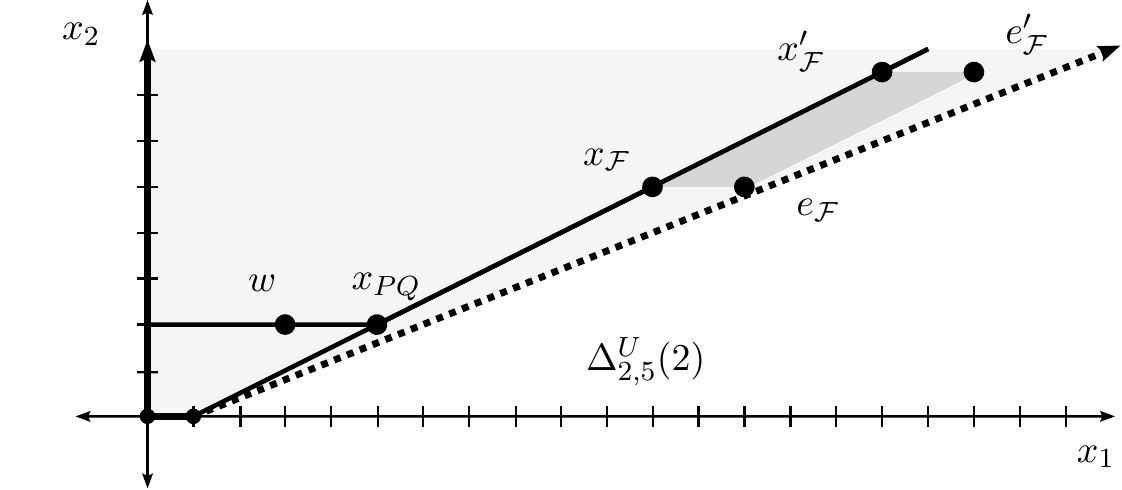}
	\end{center} 

	\caption{An extended probe with flag used in the proof of Lemma~\ref{l:openEP}.}
	\label{f:EPinC2}
	\end{figure}
	 
	For the three parameters $\a, \a', \ell_{\cF} > 0$, consider
	the parallel extended probe with flag $\cDP = P \cup Q \cup \cF$ where
	\begin{align*}
		x_{PQ} &= b_{Q} + w_{2}(d,1) && x_{\cF} = b_{Q} + \a(d,1) && x'_{\cF} = b_{Q} + \a'(d,1) \\
		\ell(P) &= \tfrac{\ka}{n} + w_{2}d && \ell(\cF) =  \ell_{\cF} &&
		\ell(\cDP) =  \tfrac{\ka}{n} + w_{2}d + \ell_{\cF}
	\end{align*}
	See Figure~\ref{f:EPinC2} for an example.
	First take $\ell_{\cF}$ sufficiently
	large so that \eqref{e:p1half} is satisfied as
	$$
		d_{\aff}(w, b_{P}) = w_{1}  < \tfrac{1}{2}(\tfrac{\ka}{n} + w_{2}d + \ell_{\cF}) = \tfrac{1}{2}\, \ell(\cDP).
	$$ 
	Then take $\a, \a' > \frac{n}{m-nd}\, \ell_{\cF}$ to ensure that the endpoints of the flag
	$$
		e_{\cF} = b_{Q} + \a(d,1) + \ell_{\cF}(1,0)  \quad \mbox{and}\quad
		e_{\cF}' = b_{Q} + \a'(d,1) + \ell_{\cF}(1,0) = (\tfrac{\ka}{n} + \a' d + \ell_{\cF}, \a')
	$$
	stay in $\De_{n,m}^{U}$.  Finally taking $\a' > \a + w_{2}$ ensures that condition 
	\eqref{e:pflag} in Theorem~\ref{t:pdeflected} is satisfied.  Therefore for these parameter
	values the extended probe $\cDP$ displaces the Lagrangian fiber $L_{w} \subset U_{n,m}$.
\end{proof}

\begin{remark}\label{r:EPvsP} 
	If $\lceil m/n \rceil > 2$, then there are $\lceil m/n \rceil - \lceil m/2n \rceil$ regions of 
	infinite measure in $\De_{n,m}^{U}$ consisting of points 
	that can be displaced by extended probes but not by standard probes.
	If $\lceil m/n \rceil = 2$, then the only points in $\De_{n,m}^{U}$ where extended probes
	are needed are the points on a ray with direction $(1,1)$.  See Figure~\ref{f:DXnm} for examples.
\end{remark}


\section{Displaceability in toric orbifolds and their resolutions}\label{s:DinTOR}

Wilson and Woodward \cite{WW11} recently observed that the quasi-map Floer homology developed
in \cite{Wd11}, can be used in orbifold and noncompact settings to obtain large families of nondisplaceable Lagrangian fibers.  As we will see, after (partially) resolving an orbifold singularity, many of these nondisplaceable fibers become displaceable by extended probes.  In this section we investigate this phenomenon.

\subsection{Proving nondisplaceability results with potential functions}\label{ss:pot}

Given a presentation of a rational 
simple 
polytope
\begin{equation}\label{e:mp}
	\De = \{x \in \R^{n} \mid \ell_{i}(x) \geq 0\,,\,i=1, \ldots, N\} \mbox{ with }
	\ell_{i}(x) = \ip{\eta_{i}, x} + \ka_{i}
\end{equation}
where $\ka_{i} \in \R$ and $\eta_{i} \in \Z^{n}$ are 
integer vectors, one can build a symplectic toric   
orbifold $(M^{2n}_{\De}, \w, \T^{n}, \Phi)$ such that $\Phi\co M_{\De} \to \R^{n}$ is proper and 
$\De = \Phi(M_{\De})$, which is
unique up to equivariant symplectomorphism by \cite{KL09}.

By \cite[Proposition~6.8]{Wd11},  
$\De$ has at least one vertex exactly if 
 $M_{\De}$ can be represented as a symplectic 
reduction  $\C^{N}\!/\!\!/G$ 
of the standard $(\C^{N}, \w_{0}, (S^{1})^{N}, \Phi_{0})$, where 
$G \subset (S^{1})^{N}$ 
is a suitable 
$(N-n)$ dimensional subtorus and 
$\Phi\co M_{\De} \to \R^{n} \equiv \ft^{*}$ is the moment map for the action of $\T=(S^{1})^{N}/G$ on $M_{\De}$.
If $\De$ does not have a vertex, then, as was noted in 
\cite[Corollary 6.9]{Wd11},
$\De \cong \De' \times V$ where $\De'$ is a rational simple
polytope with a vertex, $V = \{x \in \R^{n} \mid \ip{\eta_{i}, x} = 0 \mbox{ for } i = 1, \dots, N\}$, and one can take $M_{\De} = M_{\De'} \times (T^{*}S^{1})^{r}$ where $r = \dim(V)$.
By \cite[Proposition~6.10]{Wd11}, the results of \cite{Wd11,WW11} hold in both cases.

Consider  the field of generalized Laurent series in the variable $q$
$$
\La = \left\{\sum_{d\in\R} a_{d}q^{d} \mid a_{d} \in \C \mbox{ and $\{d \mid a_{d} \not=0\}
\subset \R$ is discrete and bounded below}\right\}.
$$
The field $\La$ is complete with respect to the norm $\norm{\cdot} = e^{-\nu(\cdot)}$
induced by the non-Archimedian valuation 
$$
\nu\co\La \to \R \cup \{\infty\} \quad\mbox{where}\quad
\nu\left(\sum_{d} a_{d}q^{d}\right) = \min(d \mid a_{d} \not=0),
$$
with the convention that $\nu(0) = \infty$, and $\nu$ satisfies
$$
	\nu(xy) = \nu(x)+\nu(y)\quad\mbox{and}\quad \nu(x + y) \geq \min(\nu(x), \nu(y))
$$
where the inequality is an equality if $\nu(x) \not= \nu(y)$.
The subring of elements with only non-negative powers of $q$,
$\La_{0} = \{\nu \geq 0\}$ is a local ring with maximum ideal
$\La_{+} = \{\nu > 0\}$.  Completeness gives that the exponential function $\exp\co\La_{0} \to \La_{0}$,
defined via the standard power series, is surjective onto the units
$\La_{0}^{\times} = \{\nu = 0\}$.

Associated to a rational simple polytope $\De$
there is the 
\emph{bulk deformed potential}
that for each
$x \in \Int \De$ and $\a = (\a_{1}, \ldots, \a_{N}) \in \La_{0}^{N}$ is a function
\begin{equation}\label{e:P}
	W_{x,\a}\co\La_{0}^{n} \to \La_{0} \quad\mbox{defined by}\quad 
	W_{x,\a}(\b) = \sum_{i=1}^{N} \exp(\ip{\eta_{i}, \b} + \a_{i})\,\, q^{\ell_{i}(x)}
\end{equation}
where $\eta_{i}$ and $\ell_{i}$ are from \eqref{e:mp}.

The following theorem allows one to prove nondisplaceability results merely by finding critical points of the potential function $W_{x,\a}$.  It was proved by Fukaya--Oh--Ohta--Ono \cite[Theorem 9.6]{FOOO10} for smooth closed toric manifolds (and for geometric $W$),
by Woodward \cite[Proposition 6.10 and Theorem 7.1]{Wd11} for rational simple polytopes,
and by Wilson--Woodward \cite[Theorem 4.7]{WW11} for rational simple polytopes for open symplectic toric orbifolds.

\begin{thm}\label{t:potential}
	For a toric orbifold
	$(M_{\De}, \w, \Phi)$
	as above and $x\in\INT(\De)$, if there exists $\a\in\La_{0}^{N}$ 
	such that
	$W_{x,\a}\co\La_{0}^{n} \to \La_{0}$ has a critical point, 
	then the Lagrangian torus fiber
	$\Phi^{-1}(x) = L_{x} \subset (M_{\De}, \w)$ is nondisplaceable. 
\end{thm}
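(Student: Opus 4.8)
The plan is to reduce the assertion to the standard Floer-theoretic dichotomy: a critical point of $W_{x,\a}$ produces a weak bounding cochain for the fiber $L_x$ whose associated bulk-deformed Lagrangian Floer homology is nonzero, and nonvanishing self-Floer homology is incompatible with displaceability. Since this is the fundamental nondisplaceability criterion of toric Floer theory, the cleanest route is to invoke the machinery already built in \cite{FOOO10} for the smooth closed case, in \cite{Wd11} for rational simple polytopes, and in \cite{WW11} for the open orbifold setting, after checking that our $(M_\De, \w, \Phi)$ falls into one of these frameworks. It does: by \cite[Proposition~6.8]{Wd11} recalled above, $M_\De$ is realized as a reduction $\C^N/\!\!/G$ (or a product of such with $(T^*S^1)^r$), so the hypotheses of the cited theorems apply verbatim.

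First I would set up the filtered $A_\infty$-algebra attached to $L_x$, whose operations $\mathfrak{m}_k$ count pseudoholomorphic discs with boundary on $L_x$ and boundary marked points, weighted by powers of $q$ recording symplectic area and deformed by the bulk class $\a$. The key structural input is that $L_x$ is a torus orbit, so $H^*(L_x) \cong H^*(\T^n)$ and $\La_0^n \cong H^1(L_x;\La_0)$, while the Maslov index $2$ discs are in bijection with the facets of $\De$: the disc meeting the $i$-th facet has symplectic area $\ell_i(x)$ and intersection data recorded by $\eta_i$. This is precisely what makes the leading part of the superpotential equal to $W_{x,\a}(\b) = \sum_i \exp(\ip{\eta_i,\b}+\a_i)\, q^{\ell_i(x)}$, with $\b \in \La_0^n$ parametrizing the degree-one bounding cochains (equivalently, flat line bundles on $L_x$).

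Next I would use that a critical point $\b_0$ of $W_{x,\a}$ is exactly the condition for $\b_0$ to solve the (weak) Maurer--Cartan equation, so that the bulk- and cochain-deformed differential $\mathfrak{m}_1^{\b_0,\a}$ squares to zero and $W_{x,\a}(\b_0)$ appears as the resulting curvature, a multiple of the unit. The central computation in the references is that at such a critical point the Floer differential vanishes on $H^*(L_x;\La)$, giving $HF((L_x,\b_0,\a)) \cong H^*(\T^n;\La) \neq 0$. Nondisplaceability then follows by contradiction from Hamiltonian invariance of Floer homology: if some $\phi \in \Ham(M,\w)$ displaced $L_x$, then $HF(L_x, \phi(L_x))$ would vanish for degree reasons (disjoint supports) yet be isomorphic to the nonzero group $HF((L_x,\b_0,\a))$.

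The hard part lies entirely in the analytic foundations rather than in this formal argument. Constructing the filtered $A_\infty$-algebra requires the full virtual perturbation / Kuranishi package to achieve transversality, compactness, and coherent orientations for the moduli of holomorphic discs with boundary on $L_x$; the orbifold and noncompact cases demand the extra care of \cite{Wd11,WW11} in controlling discs that pass through singular strata and in securing the properness-type compactness needed when $\De$ is open. Granting that package, identifying $W_{x,\a}$ with the leading disc potential and deducing nonvanishing Floer homology from a critical point is the standard toric computation, so I would lean on \cite[Theorem~9.6]{FOOO10}, \cite[Theorem~7.1]{Wd11}, and \cite[Theorem~4.7]{WW11} for this step rather than reproving it here.
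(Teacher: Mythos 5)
Your proposal takes essentially the same route as the paper: the paper offers no independent proof of this theorem, but rather cites exactly the results you invoke (\cite[Theorem 9.6]{FOOO10} for the smooth closed case, \cite[Proposition 6.10 and Theorem 7.1]{Wd11} for rational simple polytopes, and \cite[Theorem 4.7]{WW11} for the open orbifold setting) and sketches the same Cho--Oh mechanism, namely that a critical point of $W_{x,\a}$ corresponds to a bulk deformation and weak bounding cochain for which the (quasi-map) Floer homology of $L_x$ is defined and nonzero, whence nondisplaceability. Your additional detail on the filtered $A_\infty$-structure and Maurer--Cartan interpretation is a faithful expansion of that sketch, with the analytic foundations correctly delegated to the cited references just as the paper does.
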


The basic idea is due to Cho--Oh \cite{ChO06}, where the holomorphic disks used to define
the $A_{\infty}$-structure associated to the Lagrangian Floer homology and quasi-map Floer homology for $L_{x}$ are explicitly classified.  It turns out that if $\b$ is a critical 
point for $W_{x,\a}$
then, in the smooth case, Lagrangian Floer homology 
with differential $d^\b$ depending on $\b$
is defined and nonzero for $L_x$, so that
$L_{x}$ is nondisplaceable.  
For general $W_{x,\a}$ a similar statement holds for  the  quasi-map 
Floer homology of $L_{x}$.  Note that the parameters $\a$ and $\b$ correspond to bulk
deformations and weak bounding cochains, in the language of Fukaya--Oh--Ohta--Ono.
If $x\in\INT(\De)$ is a critical point of $W_{x,\a}$ we will say that it has
{\bf nontrivial (or nonzero) qW invariants}.

\begin{remark}\label{r:WW} (i) Observe that the potential 
function $W_{x,\a}$ 
depends on the presentation of $\De$ in \eqref{e:mp} as a polytope and not just on $\De$ as a subset of
$\R^{n}$.  Equivalently $W_{x,\a}$ depends on the presentation of $M_{\De}$ as a reduction of $\C^{N}$ and not just on $M_{\De}$ as a symplectic toric orbifold. 
In papers such as \cite{ChO06,FOOO10} that work in the context of Lagrangian Floer homology on smooth manifolds  it is assumed that the polytope $\De$ has precisely $N$ facets, and one builds the invariant by counting holomorphic discs in 
$M_{\De}$
that intersect these facets.   
In this case, we call $W_{x,\a}$ the ``geometric" potential function.
However,  in the quasi-map approach of Woodward \cite{Wd11}, the invariant is built from 
holomorphic discs in $\C^N$, that intersect the $N$ facets of $\R^N_+$.  Since the geometry takes place in $\C^N$ there is no need for each of these $N$ facets to descend to a geometric facet of $\De$; some of them may be \lq\lq ghosts" with constants $\ka_i$ chosen so large that
$\ell_i(x)> 0$ for all $x\in  \De\setminus f$ where $f$ is a (possibly empty) face of dimension 
less than $n-1$.
\MS

\NI (ii)
It is clear from equation \eqref{e:P} above, that if a ghost facet is parallel to a geometric facet of $\De$ then we can amalgamate the two corresponding terms in $W_{x,\a}$: if the geometric facet
has $\ell_1(x) = \langle\eta_1,x\rangle +\ka_1$ then the ghost facet is $\langle\eta_1,x\rangle + \ka_1'=0$ where 
$\ka_1'>\ka_1$,
so that if $W$ is the original potential and $W'$ is the potential with the ghost facet,
we have that $W'_{x,\a'} = W_{x,\a}$ where
$$
\a' = (\a'_1, \a_1, \a_2, \dots, \a_N)\ \mbox{ and }\ 
\a = (\a_1+\log(1+ \exp(\a'_1 -\a_1) q^{\ka_1'-\ka_1}),\a_2,\dots, \a_N).
$$ 
Thus, the parallel ghost facet affects the terms in $\a$ with positive $q$ weight; in the language of \cite{FOOO11} it is a bulk deformation.
\end{remark}

\begin{remark}
If $\De$ is a smooth compact moment polytope with rational support constants, it follows from 
\cite[Proposition 4.7]{FOOO10a} and \cite[Theorem 4.5]{FOOO11} that there is always a $u \in \Int(\De)$
such that the geometric potential $W_{u,\a}$ has a critical point for some $\a \in \La_{0}^{N}$.
\end{remark}

 In \cite{WW11}, Wilson--Woodward observed that 
 ghost facets can give new information if $\De$ has singularities or 
 corresponds to an open symplectic toric orbifold.
 Lemma~\ref{l:DNDm} and Remark~\ref{r:DMDm}  below show precisely how 
 ghost facets may create lines with nontrivial invariants and Theorem~\ref{t:DND}(i) is
 an example where ghost facets create open sets with nontrivial invariants.
 In contrast to this we will now prove that ghost facets give no new information if the polytope
 is smooth and closed,
 which explains \cite[Remark 6.11]{Wd11}.
 Note that part (i) of the next proposition has analogs in all dimensions, but we
 restrict to dimension $2$ for simplicity.

\begin{prop}\label{p:noghost}  Let $\De$ be a smooth 
closed polytope in $\R^{n}$.
\begin{itemize}\item[(i)]  If $\De$ is compact and $2$-dimensional, the set of points in $\De$ with nontrivial qW invariants is the union of a finite number of points with at most one line segment.  
\item[(ii)]  In any dimension, adding ghost facets to the potential does not change the set of points
$u \in \Int \De$ such that $W_{u,\a}$ has a critical point for some $\a \in \La_{0}^{N}$.
\end{itemize}
\end{prop}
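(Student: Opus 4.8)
The plan is to analyze the critical-point equation for $W_{x,\a}$ directly, exploiting the smoothness of $\De$ to control how many facet-terms can contribute to the minimum of the $q$-valuations. First I would record the basic valuation principle: for $\b \in \La_0^n$, writing $\b = \b_0 + \b_+$ with $\b_0$ the constant (degree-zero) part and $\b_+ \in \La_+^n$, each summand $\exp(\ip{\eta_i,\b}+\a_i)\,q^{\ell_i(x)}$ has valuation $\ell_i(x) + \min(0, \nu(\text{lower terms}))$, and the leading $q$-exponent is $\ell_i(x)$ with leading coefficient a unit in $\La_0^\times$. Thus the valuation of each term of $W_{x,\a}$ is governed by $\ell_i(x)$, and the derivative $\partial_{\b_j} W_{x,\a} = \sum_i (\eta_i)_j \exp(\ip{\eta_i,\b}+\a_i)\,q^{\ell_i(x)}$ is a $\La_0$-linear combination of the same terms. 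For $\b$ to be a critical point we need cancellation in each $\partial_{\b_j}W$, and cancellation at the lowest $q$-order can only occur among those terms whose $\ell_i(x)$ is minimal, i.e.\ among the facets closest to $x$ in affine distance. Call this set of closest facets $I(x)$.

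For part (i), I would argue geometrically. At a lowest-order cancellation, the vectors $\{\eta_i : i \in I(x)\}$ must be positively dependent in the sense that there is no linear functional strictly positive on all of them: precisely, the origin must lie in the convex hull of $\{\eta_i : i \in I(x)\}$, for otherwise some $\partial_{\b_j}W$ would have a single lowest-valuation term with unit coefficient and could not vanish. Since $\De$ is a smooth compact polygon, for a generic $x$ the set $I(x)$ has at most two elements; when $|I(x)| = 2$ the two conormals $\eta_i,\eta_{i'}$ can only satisfy $0 \in \mathrm{conv}\{\eta_i,\eta_{i'}\}$ if they are antiparallel, i.e.\ the two closest facets are parallel, and the locus of points equidistant from two fixed parallel facets is a single line segment. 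The points with $|I(x)| \geq 3$ form a finite set, being intersections of the (finitely many) affine equidistance hyperplanes $d_{\aff}(x,F_i) = d_{\aff}(x,F_j)$. Hence the set of $x$ that could carry a nonvanishing invariant is a finite set together with at most one segment; one must then note this upper bound is what is claimed, so no further existence argument is needed.

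For part (ii) the plan is to show the critical-point equation is insensitive to adding a ghost facet $\ell_{N+1}(x) = \ip{\eta_{N+1},x}+\ka_{N+1}$. Here $\eta_{N+1}$ is (up to sign) one of the existing conormals $\eta_i$, and $\ka_{N+1}$ is so large that $\ell_{N+1}(x) > \ell_i(x)$ for every $u \in \Int\De$. If the ghost is parallel to an existing facet (the case $\eta_{N+1} = \eta_i$), Remark~\ref{r:WW}(ii) already shows the potential is unchanged after reindexing the bulk parameters $\a$, so a critical point for the enlarged potential corresponds to one for the original and vice versa; this handles the parallel case verbatim. The genuinely new content is the antiparallel/transverse ghost, but in all cases the key point is that $\ell_{N+1}(x)$ strictly exceeds the minimal $\ell_i(x)$, so the extra term $\exp(\ip{\eta_{N+1},\b}+\a_{N+1})q^{\ell_{N+1}(x)}$ and its contribution to each $\partial_{\b_j}W$ have strictly higher valuation than the terms it must be balanced against. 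I would therefore argue that one can solve the critical-point equation order-by-order in $q$: the lowest-order balance involves only the original closest facets and is unaffected by the ghost, and the ghost's contribution can be absorbed into a correction of $\b$ (or of $\a_{N+1}$) at higher order without destroying criticality, using completeness of $\La$ and surjectivity of $\exp\co\La_0\to\La_0^\times$.

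The main obstacle I anticipate is part (ii) in the transverse case: one must show that the higher-order ghost term can always be compensated so that a critical point of the original $W$ genuinely persists (and no spurious new critical points appear), rather than merely showing the leading order is unchanged. The cleanest route is probably to show that adding a ghost facet amounts to a bulk deformation that can be reabsorbed, reducing to the parallel case of Remark~\ref{r:WW}(ii) after an integral affine change of coordinates that straightens $\eta_{N+1}$; the smoothness hypothesis guarantees such a change exists locally near each vertex. I would present part (i) in full since it is geometric and short, and treat part (ii) by reduction to Remark~\ref{r:WW}(ii), flagging the valuation-separation estimate $\ell_{N+1}(x) > \min_i \ell_i(x)$ as the one computation that makes the reabsorption go through.
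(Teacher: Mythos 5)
There are genuine gaps in both parts of your proposal.

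For part (i), your leading-order analysis of the closest-facet set $I(x)$ follows the paper's strategy (there the set is called $E_1(u)$), but as written your argument only yields: finitely many points where $\#I(x)\ge 3$, plus at most one segment \emph{for each pair of parallel facets}. That is weaker than the claim. A smooth compact polygon can have several parallel pairs --- a rectangle has two, and the hexagon with conormals $\pm(1,0),\,\pm(0,1),\,\pm(1,1)$ has three --- so you must still rule out two or more segments occurring simultaneously. The paper closes exactly this gap with a separate geometric step: the equidistant locus of a parallel pair is convex, so two segments would force $\De$ to have two pairs of parallel sides and hence be a blow up of a rectangle; but in a non-square rectangle only the pair of long sides can ever be the two \emph{closest} facets at a point equidistant from them, and in a square no point has exactly two closest facets. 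This step is absent from your proposal. (A minor further point: over $\La$ the leading coefficients are arbitrary elements of $\C^{*}$, so the correct leading-order criterion is a linear dependence $\sum_{i\in I(x)} z_i\eta_i=0$ with all $z_i\ne 0$, not $0\in\mathrm{conv}\{\eta_i\}$; for $\#I(x)=2$ the two conditions agree, so your case analysis survives, but your justification via a separating functional does not, since complex coefficients can cancel even when all pairings $\ip{\xi,\eta_i}$ are positive.)

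For part (ii), your setup misdescribes ghost facets in two ways that matter. A ghost conormal need not be $\pm$ an existing conormal, and its constant need not be so large that $\ell_{N+1}(x)>\ell_i(x)$ everywhere: by Remark~\ref{r:WW}(i) a ghost only needs $\ell_g>0$ off a face of dimension less than $n-1$, so its hyperplane may touch $\De$ along such a face, and then $\ell_g(x)<\ell_j(x)$ for facets $j$ away from that face. This touching case is precisely the nontrivial content of (ii), and it is the case the paper's proof is organized around. Moreover, your proposed ``cleanest route'' cannot work: integral affine transformations preserve parallelism, so no change of coordinates can turn a ghost transverse to the facets of $\De$ into one parallel to a facet; there is no reduction to Remark~\ref{r:WW}(ii). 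What the paper does instead is choose coordinates adapted to the touching face, $\ell_i(x)=x_i$ for $i\le d$ and $\ell_g(x)=\sum_{i\le d}a_ix_i$ with $a_i\ge 1$ (this is where smoothness enters), deduce $0<\ell_i(x)<\ell_g(x)+\eps$ for interior $x$, and then observe that $\partial_{\b_j}W$ is untouched by the ghost for $j>d$ and has untouched \emph{leading} terms for $j\le d$; citing \cite[Theorem 4.5]{FOOO11}, the set of $u$ admitting critical points depends only on the leading-order system, which settles both persistence and the absence of spurious new critical points in one stroke. Note finally that the estimate you single out, $\ell_g(x)>\min_i\ell_i(x)$, is not the right one even where it holds: what is needed is that for each $j$ with $(\eta_g)_j\ne 0$ the ghost's valuation exceeds the smallest valuation among the terms actually occurring in $\partial_{\b_j}W$ --- a per-derivative comparison, which is exactly what the adapted coordinates are designed to guarantee.
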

\begin{proof}    We use the notation of \eqref{e:mp}. We first prove (i) in the case of the geometric qW potential to explain the idea in a simple case.  We then prove (ii), which implies (i) in the general case.

For each point $u\in  \De$
define $s(u): = \min_{i\le N} \ell_i(u)$ and denote the set of edges that are closest to $u$
by $E_1(u) = \{i\in \{1,\dots,N\}: \ell_i(u) = s(u)\}$. 
If $\#E_1(u) = 2$, and the edges in $E_1(u)$ are not parallel
then we can choose coordinates so that one edge in $E_1(u)$ has equation 
$x_1 =0$, while the other has the form $ax_1+bx_2 =0$ 
where $b\ne 0$.  Then, because  
for all $\a\in \La_0$ we have $e^\a = z+ \mbox{positive powers of $q$}$, 
where $z \in \C^{*}$, we find that
$$
\partial_{\b_2}  W_{u,\a} = z\, b\, e^{a\b_1+b\b_2}q^{s(u)} + O(q^c),\quad c>s(u),
$$
which means that $u$ is not a critical point of $W_{u,\a}$.  A similar argument shows that $u$ is not critical when 
$\#E_{1}(u) = 1$. On the other hand if the two edges in $E_1(u)$ are parallel and we choose 
coordinates so that these have equations 
$\pm x_{1} + \ka = 0$,
then $\partial_{\b_1}  W_{u,\a} =0$ can be solved to lowest order in $q$.
Further the equation $ \partial_{\b_2}  W_{u,\a} =0$ starts with terms involving $q^c$ where $c>s(u)$, and its lowest order terms also 
have a solution  if at least two of these involve the same power of $q$.
Equivalently, we need $\#E_2(u)>1$, where $E_2(u)$ consists of those facets not in $E_1(u)$ that are closest to $u$.
We may now appeal to
\cite[Theorem 4.5]{FOOO11} which says that if the system of 
lowest order equations has a solution, then one can choose the higher order terms in $\a,\b$ to obtain a solution of the full system of
 equations $ \partial_{\b_1}  W_{u,\a}(\b)=0, \partial_{\b_2}  W_{u,\a}(\b)=0$.

All the other critical points  have 
  $\#E_1(u) \ge 3$.  Since there are only  finitely many such points,
it remains to check that there is at most one line segment  consisting of points with $\#E_1(u) = 2$.

To see this, note first that for any two parallel edges, the set of points equidistant from them is convex.  Hence if there are two such line segments, $\De$ must have two sets of parallel sides, and hence be the blow up of a rectangle.  But in a rectangle only one set of parallel lines can appear as $E_1(u)$, and if it is a square there are no points with $\#E_1(u)= 2$.  This proves (i).

Now consider (ii). Remark~\ref{r:WW}  deals with the case when
the ghost facet is parallel to some facet of $\De$.  Therefore suppose it is not.
Without loss of generality, we may
suppose that $\ell_g(x) + \eps \geq 0$ defines a ghost facet for $\eps \geq 0$ 
that intersects $\De$  when $\eps=0$ in a
codimension $d$ face  
$\{\ell_g = \ell_1 = \dots = \ell_d = 0\}$.
Then we may choose coordinates so that $\ell_i(x) = x_i$ and 
$\ell_g(x) = a_1 x_1 + \dots + a_d x_d$ where $a_i \geq 1$.  So in particular we see that for
any interior point $x$ and any $\eps \geq 0$:
\begin{equation}\label{e:eg}
	\mbox{$0 < \ell_i(x) < \ell_g(x) + \eps$ for $i = 1,\dots,d$}.
\end{equation}
The potential with the ghost facet added is given by 
$W(g)_{u,\a} = W_{u,\a} + e^{\ell_g(\b) + \a_g} q^{\ell_g(u)}$.  Observe for $j > d$ that
$\partial_{\b_j}W$ is unaffected by the ghost term.  For $j \leq d$, it follows from \eqref{e:eg} that
the leading order terms in $\partial_{\b_j}W$ are unaffected by the ghost term.  Therefore the leading order term critical point equation for $W(g)_{u,\a}$ and $W_{u,\a}$ are the same, so again by 
\cite[Theorem 4.5]{FOOO11} both potentials have the same set of points $u \in \Int(\De)$ that give rise to critical points.  This proves (ii).
\end{proof}


\subsection{A simple example and its resolution to $\cO(-m)$, for $m \geq 2$}\label{ss:Om}
For an integer $m \geq 2$, consider the orbifold $(M_{m}, \w, \Phi_{m})$ whose
moment polytope is the sector
$$
	\De_{1,m} = \Big\{x = (x_{1}, x_{2}) \in \R^{2} \mid \ell_{0}(x):= x_{1} \geq 0\,,\,\,
	\ell_{2}(x):= -x_{1} + mx_{2} \geq 0\Big\}.
$$
Here $M_{m} = \C^{2}/\Z_{m}$ where the generator in the cyclic group 
acts by diagonal multiplication by $e^{2\pi i/m}$.
If the torus $\T^2$ acts via
$$
(\la_1,\la_2)\cdot(z_1,z_2) = (\la_1\la_2 z_1,\la_2z_2),
$$
the moment map is given by
$$
	\Phi_{1,m}(z_{1}, z_{2}) = \big( \abs{z_{1}}^{2},\, 
	\tfrac{1}{m}\abs{z_{1}}^{2} + \tfrac{1}{m} \abs{z_{2}}^{2} \big).
$$
The orbifold singularity at the origin can be resolved with the facet 
$$ 	
\{\ell_{1}^{\ka}(x) := x_{2} + \ka \geq 0\},\quad \ka < 0.
$$ 
In fact,  if $\ka < 0$, then the polytope
\begin{equation}\label{e:R1m}
	\ov{\De}_{1,m}(\ka) = \{x \in \R^{2} \mid \ell_{0}(x) \geq 0\,,\,\ell_{1}^{\ka}(x) \geq 0\,,\,
	\ell_{2}(x) \geq 0 \}
\end{equation}
is smooth and is the moment polytope for the standard toric structure on the line bundle 
$\cO(-m) \to \CP^{1}$.  On the other hand if $\ka \geq 0$ then $\ov{\De}_{1,m}(\ka) = \De_{1,m}$ 
as subsets of $\R^{2}$, so $\{\ell_{1}^{\ka}(x) \geq 0\}$ defines a ghost facet in the presentation 
\eqref{e:R1m}.  The effect of resolving the orbifold singularity in this case is easy to explain, while the answers become more complicated for the later examples.  

\subsubsection{The displaceable and nondisplaceable fibers before resolving}\label{ss:DNDm}

\begin{figure}[h]

\begin{center} 
\leavevmode 
\includegraphics[width=3in]{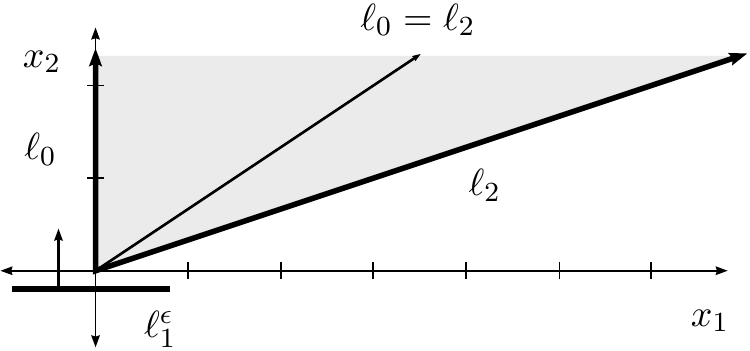}
\end{center} 

	\caption{The moment polytope $\De_{1,m}$ when $m=3$.
	The displaceable points are in light gray
	and the nondisplaceable points are on the line $\ell_{0} = \ell_{2}$.
	The nondisplaceability proof uses ghost facets $\{\ell^{\e}_{1}(x) \geq 0\}$ for varying
	$\e \geq 0$.}
	\label{f:1m}
\end{figure}

\begin{lem}\label{l:DNDm}
	If $x \in \De_{1,m}$ is on the line $\ell_{0}(x) = \ell_{2}(x)$,
	then the Lagrangian fiber $L_{x}$ is nondisplaceable.
	If $x$ is not on the line $\ell_{0}(x) = \ell_{2}(x)$, then $L_{x}$ is displaceable
	by a horizontal probe.
\end{lem}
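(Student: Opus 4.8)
The plan is to handle the two assertions separately: the displaceable case by a direct appeal to the method of probes (Lemma~\ref{l:probe}), and nondisplaceability on the bisector $\ell_{0}(x)=\ell_{2}(x)$ by exhibiting a critical point of a bulk-deformed potential $W_{x,\a}$ built with a suitable \emph{ghost} facet (Theorem~\ref{t:potential}).

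For the displaceable case, suppose first that $\ell_{0}(x)<\ell_{2}(x)$. The vector $(1,0)$ is primitive and integrally transverse to the facet $\{x_{1}=0\}$, so there is a horizontal probe $P$ based there through $x$; it exits $\De_{1,m}$ on $\{-x_{1}+mx_{2}=0\}$ at $(mx_{2},x_{2})$, so $\ell(P)=mx_{2}$ and its midpoint is $x_{1}=\tfrac{1}{2}mx_{2}$. Since $\ell_{0}(x)<\ell_{2}(x)$ is equivalent to $2x_{1}<mx_{2}$, the point $x$ lies strictly less than halfway along $P$, and Lemma~\ref{l:probe} displaces $L_{x}$. If instead $\ell_{0}(x)>\ell_{2}(x)$, the symmetric argument with the probe based on $\{-x_{1}+mx_{2}=0\}$ in direction $(-1,0)$ (again primitive and integrally transverse) applies. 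This leaves only the bisector $\ell_{0}(x)=\ell_{2}(x)$, where $x$ sits exactly at the midpoint of each horizontal probe, so no horizontal probe can reach it.

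For nondisplaceability on the bisector I would present $\De_{1,m}$ using its two geometric facets together with the ghost facet $\ell_{1}^{\e}(x)=x_{2}+\e\ge 0$, whose interior conormals are $\eta_{0}=(1,0)$, $\eta_{2}=(-1,m)$ and $\eta_{1}=(0,1)$. The structural observation that drives everything is the linear relation $\eta_{0}+\eta_{2}=m\,\eta_{1}$, which is why the ghost term can interact nontrivially with the two geometric ones; note that with the two geometric facets alone $\partial_{\b_{2}}W=m\,e^{-\b_{1}+m\b_{2}+\a_{2}}q^{-x_{1}+mx_{2}}$ never vanishes, consistently with Proposition~\ref{p:noghost}, so the ghost facet is essential. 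Writing $\b=(\b_{1},\b_{2})$, the potential is
$$
W_{x,\a}(\b)=e^{\b_{1}+\a_{0}}q^{x_{1}}+e^{\b_{2}+\a_{1}}q^{x_{2}+\e}+e^{-\b_{1}+m\b_{2}+\a_{2}}q^{-x_{1}+mx_{2}},
$$
and, denoting the three summands by $T_{0},T_{1},T_{2}$, the equations $\partial_{\b_{1}}W=T_{0}-T_{2}=0$ and $\partial_{\b_{2}}W=T_{1}+mT_{2}=0$ read $T_{0}=T_{2}$ and $T_{1}=-mT_{2}$. On the bisector $2x_{1}=mx_{2}$ the powers of $T_{0}$ and $T_{2}$ agree, and choosing the ghost parameter $\e=\tfrac{1}{2}(m-2)\,x_{2}\ge 0$ (legitimate since $m\ge 2$) forces $x_{2}+\e=x_{1}$ as well, so all three terms lie in the same lowest $q$-degree.

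It then remains to solve the leading-order system for the leading coefficients $w_{1},w_{2}\in\C^{*}$ of $e^{\b_{1}},e^{\b_{2}}$, with $a_{i}\in\C^{*}$ the leading coefficients of $e^{\a_{i}}$. The equations become $w_{1}^{2}a_{0}=w_{2}^{m}a_{2}$ and $w_{1}a_{1}=-m\,w_{2}^{m-1}a_{2}$, which combine to $w_{2}^{m-2}=a_{1}^{2}/(m^{2}a_{0}a_{2})$. For $m>2$ this has a root $w_{2}\in\C^{*}$ for any choice of the $a_{i}$; for $m=2$ the relation degenerates to the constraint $a_{1}^{2}=4a_{0}a_{2}$, which I arrange through the bulk parameters $\a$, after which $w_{2}$ is free and $w_{1}$ is determined. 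Having solved the lowest-order system, I would invoke \cite[Theorem 4.5]{FOOO11} to promote the solution to a genuine critical point of $W_{x,\a}$ over $\La_{0}$, so that $L_{x}$ is nondisplaceable by Theorem~\ref{t:potential}. The step I expect to be the main obstacle is precisely the $m=2$ endpoint, where the leading-order system loses its dependence on $w_{2}$ and solvability must be secured via $\a$ rather than via $\b$; everything else is the routine bookkeeping of balancing $q$-powers and matching leading coefficients.
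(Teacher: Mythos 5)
Your proposal is correct and takes essentially the same route as the paper: off the bisector the paper uses the same horizontal probes, and on the bisector it adds the same ghost facet $\{x_{2}+\e\ge 0\}$ with $\e$ chosen so that $\ell_{0}(x)=\ell_{2}(x)=\ell_{1}^{\e}(x)$, then applies Theorem~\ref{t:potential}, except that instead of your case analysis it simply exhibits the explicit solution $(\b_{1},\b_{2},\a)=(0,0,\log(-m))$ of the critical point equations. The only inessential difference is your appeal to \cite[Theorem 4.5]{FOOO11}: since all three terms of $W_{x,\a}$ carry the same power of $q$, a solution of the leading-order system with constant $\b,\a$ is already an exact critical point, so no promotion step is needed (and your observation about the $m=2$ degeneration is exactly the paper's Remark~\ref{r:DMDm}).
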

\begin{proof}
	The displaceability statement is straightforward.
	
	For each $x \in \De_{1,m}$ on the line $\ell_{0}(x) = \ell_{2}(x)$, there is some $\e \geq 0$
	such that
	\begin{equation}\label{e:m1F}
		\ell_{0}(x) = \ell_{2}(x) = \ell_{1}^{\e}(x)
	\end{equation}
	For a given $x$ and the associated $\e \geq 0$, 
	consider the presentation $\ov{\De}_{1,m}(\e)$ from \eqref{e:R1m}, which has the ghost facet
	$\{\ell_{1}^{\e}(x) \geq 0\}$.  Its potential function is
	$$ 
		W_{x,\a}(\b_{1}, \b_{2}) = 
		e^{\b_{1}} q^{\ell_{0}(x)}+
		e^{-\b_{1} +m\b_{2}} q^{\ell_{2}(x)} + e^{\b_{2}+\a} q^{\ell_{1}^{\e}(x)}.
	$$ 
	Setting $\partial_{\b_{1}}W_{x,\a} = \partial_{\b_{2}}W_{x,\a} = 0$ 
	and using \eqref{e:m1F} to cancel out the powers of $q$ gives
	\begin{equation}\label{e:pm1gc}
		e^{\b_{1}} -e^{-\b_{1} +m\b_{2}}= 0 \quad \mbox{and}\quad
		m\, e^{-\b_{1}+m\b_{2}}  + e^{\b_{2}+\a} = 0.
	\end{equation}
	These equations are solved by $(\b_{1}, \b_{2}, \a) = (0,0,\log(-m))$ over $\C$.
	Therefore by Theorem~\ref{t:potential}, the Lagrangian
	fiber $L_{x}$ is nondisplaceable.
\end{proof}
	
	\begin{remark} \label{r:DMDm} (i)
	The ghost facet was needed, for without it the potential function is
	$$
		W_{x,\a}(\b_{1}, \b_{2}) = 
		e^{\b_{1}+\a_{2}} q^{x_{1}} + e^{-\b_{1} + m\b_{2}+\a_{1}}q^{-x_{1} + mx_{2}},
	$$
	which has no critical points since $\partial_{\b_{2}}W_{x,\a} $ has just one non-zero term
	for all $x$ and $\a$.

(ii)   The bulk deformation $e^{\a}$ is also needed when $m=2$.  To see this, note
that	under the substitution $y_{1} = e^{\b_{1}}$ and $y_{2} = e^{\b_{2}}$
	\eqref{e:pm1gc} becomes
	$$
		y_{1}^{2} = y_{2}^{m} \quad\mbox{and}\quad y_{1} = -m y_{2}^{m-1}.
	$$ 
	For $m=2$ this says
	$$
		y_{1}^{2} = y_{2}^{2} \quad\mbox{and}\quad y_{1} = -2 y_{2},
	$$
	which has no solution except $y_{1} = y_{2} = 0$.
	\end{remark}

\subsubsection{Resolving $\De_{1,m}$ to $\cO(-m)$}\label{s:1m}

If $\ka < 0$, then $\ov{\De}_{1,m}(\ka)$ is a resolution
of $\De_{1,m}$. 
The next result shows that all the previously nondisplaceable fibers can now be
displaced, either by standard probes based on the new facet $\{\ell_{1}^{\ka}(x) = 0\}$ 
or by parallel extended probes deflected by a 
probe $Q$ that is based on the new facet.

\begin{lem}\label{l:O-m} 
	If $\ka < 0$, then the Lagrangian
	fiber $L_{u} \subset \cO(-m)$ is displaceable by extended probes for
	all   $u \in \ov{\De}_{1,m}(\ka)$.
\end{lem}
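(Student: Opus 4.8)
The plan is to reduce to the median line and then split according to the parity of $m$. Write $c := -\ka > 0$, so that the new facet $\{\ell_1^\ka = 0\}$ is the horizontal segment $\{x_2 = c\}$ with $x_1 \in [0, mc]$, and recall that the median line is $\{\ell_0 = \ell_2\} = \{2x_1 = mx_2\}$, which meets this facet at its midpoint $(\tfrac m2 c, c)$. For an interior point $u = (u_1,u_2)$ off the median, the horizontal probes based on the two slant facets $\{x_1 = 0\}$ and $\{-x_1 + mx_2 = 0\}$ that are used in Lemma~\ref{l:DNDm} remain valid here: for $u_2 \geq c$ the horizontal segment $[0, mu_2]\times\{u_2\}$ lies in $\ov\De_{1,m}(\ka)$ and the horizontal facet does not obstruct them, so Lemma~\ref{l:probe} displaces $L_u$. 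Thus it remains only to treat the interior median points $u = (\tfrac m2 h,\, h)$ with $h > c$, which are precisely the fibers that were nondisplaceable in the unresolved sector.

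When $m$ is even, the median direction $(\tfrac m2, 1)$ is a primitive integral vector that is integrally transverse to the new facet. Hence the probe $P$ with base $b_P = (\tfrac m2 c, c)$ and direction $v_P = (\tfrac m2, 1)$ parametrizes the median ray; since its slope $\tfrac m2 < m$ it never meets either slant facet, so $P$ has infinite length and every median point lies less than halfway along it. By Lemma~\ref{l:probe} all such fibers are displaceable. (This use of an infinite probe in a noncompact polytope is exactly as in Lemma~\ref{l:DOsecP}.)

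When $m$ is odd, $(\tfrac m2, 1)$ is no longer integral, and I would instead use a parallel extended probe. Fix a median point $u = (\tfrac m2 h, h)$, $h > c$. Let $P$ be the horizontal probe with $b_P = (0,h)$, $v_P = (1,0)$, and let $Q$ be the probe based on the new facet with direction $v_Q = (m-1,1)$ and base $b_Q = (\beta, c)$, where $\beta \in (0, mc)$ is chosen so that the point $x_{PQ} = (\beta + (m-1)(h-c),\, h)$ at which $P$ meets $Q$ satisfies $\tfrac m2 h < \beta + (m-1)(h-c) < mh$. Such a $\beta$ exists because, as $\beta$ ranges over $(0, mc)$, the first coordinate of $x_{PQ}$ sweeps $\big((m-1)(h-c),\, mc+(m-1)(h-c)\big)$, and for $m \geq 3$ one checks directly that this interval meets $(\tfrac m2 h,\, mh)$ for every $h > c$. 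Since $v_P$ is parallel to $F_Q = \{x_2 = c\}$, the data $P,Q$ together with a flag $\cF$ attached to $Q$ form a parallel extended probe $\cDP = P \cup Q \cup \cF$, and $u$ lies on $P$ strictly before the intersection point $x_{PQ}$.

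It then remains to choose $\cF$ so that the hypotheses of Theorem~\ref{t:pdeflected} hold. Because $v_Q$ has slope $m-1 < m$, the probe $Q$ diverges from the slant facet $\{-x_1 + mx_2 = 0\}$, so the horizontal room to the right of $Q$ at height $y$ grows without bound as $y \to \infty$. I would place $x_\cF, x_\cF'$ high up on $Q$, spaced so that $d_{\aff}(x_\cF, x_\cF') > h - c = d_{\aff}(x_{PQ}, F_Q)$, which is \eqref{e:pflag}, and take $\ell(\cF)$ large enough that $\ell(\cDP) = \big(\beta + (m-1)(h-c)\big) + \ell(\cF) > mh$; then $d_{\aff}(u, F_P) = \tfrac m2 h < \tfrac12 \ell(\cDP)$, which is \eqref{e:p1half}. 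Choosing the base height of $\cF$ large enough keeps the four vertices of the parallelogram in $\ov\De_{1,m}(\ka)$, hence $\cF \subset \ov\De_{1,m}(\ka)$ by convexity, and Theorem~\ref{t:pdeflected} displaces $L_u$. The coordinate bookkeeping is routine; the crux of the odd case, and the one point that needs genuine care, is checking that the flag can simultaneously be long enough to push the halfway mark past $u$ and still fit inside the unbounded polytope — which is exactly what the divergence of $Q$ from the right-hand facet guarantees.
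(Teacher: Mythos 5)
Your proof is correct and takes essentially the same route as the paper's: horizontal probes dispose of everything off the median $\{\ell_{0}=\ell_{2}\}$, the even case is handled by the infinite probe in direction $(\tfrac{m}{2},1)$ based on the new facet, and your odd-case construction (horizontal $P$ from the vertical facet, deflecting probe $Q$ of direction $(m-1,1)$ based on the new facet, flag placed far up where the polytope widens) is exactly the construction in the proof of Lemma~\ref{l:openEP}, which the paper invokes after translating the median line into $\De^{U}_{1,m}(-\ka)$ and noting $\tfrac{m}{2}<m-1$. The only difference is that you inline that lemma's proof rather than cite it.
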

\begin{proof}
Just as in the Hirzebruch surface case, for even
$m \geq 2$ standard probes displace everything.
When $m > 2$ is odd, horizontal probes displace everything except the points
$x \in \ov{\De}_{1,m}(\ka)$ on the line $\ell_{0}(x) = \ell_{2}(x)$, which up to translation
is identified with   
\begin{equation}\label{e:DX1m}
	\{x_{1} = \tfrac{m}{2}x_{2} + \tfrac{-\ka}{2}\} \subset 
	\{x \in \R^{2}_{+} \mid -x_{1} + mx_{2} -\ka > 0\} =: \De^{U}_{1,m}(-\ka).
\end{equation}
So it suffices to prove that points on the line in \eqref{e:DX1m} are displaceable
and this follows from Lemma~\ref{l:openEP} since $m/2 < m-1$.
\end{proof}


\subsection{Cyclic surface singularities and their Hirzebruch--Jung resolutions}\label{s:HJR}

\subsubsection{The orbifolds}\label{s:Mnm}

For relatively prime positive integers $m > n \geq 2$ consider the complex orbifold
$M_{n,m} = \C^{2}/\G$ where $\G = \G_{n,m}$ is the cyclic subgroup of $U(2)$ generated by
the matrix
$$
	\begin{pmatrix}
	\exp(2\pi i n/m) & 0 \\
	0 & \exp(2\pi i/m)
	\end{pmatrix}.
$$
The standard symplectic toric structure on $(\C^{2}, \w_{0}, (S^{1})^{2}, \Phi_{0})$ induces a symplectic
toric orbifold structure on $(M_{n,m},\, \w,\, \T,\, \Phi_{n,m})$ with an orbifold singularity at the origin, where 
$$
\Phi_{n,m}([z_{1}, z_{2}]) = (\abs{z_{1}}^{2},\, \tfrac{n}{m}\abs{z_{1}}^{2} + \tfrac{1}{m}\abs{z_{2}}^{2})
$$
and the moment polytope is $\Phi_{n,m}(M_{n,m})$
\begin{equation}\label{e:nm}
	\De_{n,m} = \{x \in \R^{2} \mid \ell^{v}(x) := x_{1} \geq 0\,,\,\, 
	\ell^{s}(x):=-nx_{1} + mx_{2} \geq 0\}.
\end{equation}
Note that interior conormals for $\De_{n,m}$ are
$$
	\ell^{v}(x) = \ip{\eta^{v}, x} \mbox{ where } \eta^{v} = (1,0) \quad\mbox{and}\quad
	\ell^{s}(x) = \ip{\eta^{s}, x} \mbox{ where } \eta^{s} = (-n,m)
$$
and we will call $\{\ell^{v} = 0\}$ the \emph{vertical edge} and $\{\ell^{s} = 0\}$ the \emph{slant edge}.
Note $\{\ell^{v}(x) = \ell^{s}(x)\}$ defines the line $\{x_{1} = \tfrac{m}{n+1}\,x_{2}\}$, which we will call the \emph{midline}.

\begin{figure}[h]

	\begin{center} 
	\leavevmode 
	\includegraphics[width=3in]{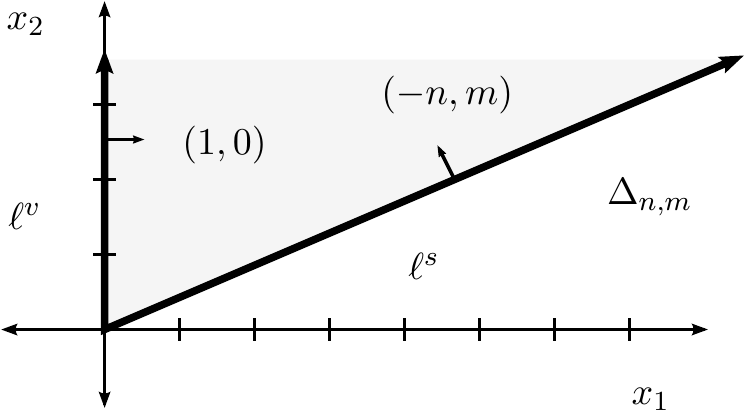}
	\end{center} 

	\caption{The moment polytope $\De_{n,m}$ for $(n,m) = (3,7)$.}
	\label{f:nm}
\end{figure}
In terms of the polytope, the assumption that $m > n$ is harmless since if $n > m \geq 2$,
then by applying a shear $(x_{1}, x_{2}) \mapsto (x_{1}, -x_{1} + x_{2})$, which is in $GL_{2}(\Z)$,
we see that $(M_{n,m},\,\De_{n,m})$ is equivalent to $(M_{(n-m,m)},\, \De_{(n-m,m)})$.

\subsubsection{Hirzebruch--Jung resolutions}

Associated to $(M_{n,m},\,\De_{n,m})$ is a minimal resolution of the symplectic toric orbifold singularity at the origin which is known in algebraic geometry as a Hirzebruch--Jung resolution.  The version in the symplectic toric setting is due to Orlik--Raymond \cite{OR70}, see also \cite{CS04}.  To find the resolution, one writes $n/m$ as a continued fraction using positive integers $E_{j} \geq 2$:
\begin{equation}\label{e:cf}
	\frac{n}{m} = \frac{1}{E_{1} - \frac{1}{E_{2} - \cdots \frac{1}{E_{k}}}} =: (E_1,\dots,E_k).
\end{equation}
The positive integers $E_{j}$ are given by the Euclidean 
algorithm where $0 \leq r_{j+1} < r_{j}$,
\begin{equation}\label{e:ea}
	r_{-1} = m, \quad r_{0} = n,\quad E_{j+1} = \left\lceil \tfrac{r_{j-1}}{r_{j}}\right\rceil,
	\quad r_{j+1} = E_{j+1}r_{j} - r_{j-1},
\end{equation}
and $k$ is the smallest number such that $r_{k} = 0$.  The sequence of integers $E_{1}, \dots, E_{k}$ determine a sequence of $k+2$ interior conormals in $\ft_{\Z}$, starting with the conormal for the vertical edge
$$
	\eta_{0} = \eta^{v} = (1,0) \quad\mbox{then}\quad \eta_{1} = (0,1)
$$
and then defined recursively for $1 \leq j \leq k$
$$ 
	\eta_{j+1} = E_{j} \eta_{j} - \eta_{j-1}
$$
where the last one is the conormal for the slant edge
$$
	 \eta_{k+1} = \eta^{s} = (-n,m).
$$
These conormals are such that if $\eta_{j+1} = (-n_{j}, m_{j})$, then
$$
	\frac{n_{j+1}}{m_{j+1}} = \frac{1}{E_{1} - \frac{1}{E_{2} - \cdots \frac{1}{E_{j}}}}
	= (E_1,\dots,E_j).
$$
For appropriate support constants $\ka = (\ka_{1}, \dots, \ka_{k}) \in \R_{< 0}^{k}$ the polytope
\begin{equation}\label{e:mrp}
	\ov{\De}_{n,m}(\ka) = \{x \in \R^{2} \mid \ell_{0}(x) \geq 0,\, \ell_{1}^{\ka_{1}}(x) \geq 0,
	\dots ,\, \ell_{k}^{\ka_{k}}(x) \geq 0,\, \ell_{k+1}(x) \geq 0\}
\end{equation}
where
$$ 
	\ell_{0}(x) = \ell^{v}(x) = x_{1}\,, \quad \ell_{j}^{\ka_{j}}(x):= \ip{\eta_{j}, x} + \ka_{j}\,, \quad
	\ell_{k+1}(x) = \ell^{s}(x) = -nx_{1} + mx_{2}\,,
$$
has  $k+2$ edges, is smooth, and corresponds to a symplectic toric manifold
$$
	(\ov{M}_{n,m},\, \ov{\w}_{\ka},\, \ov{\Phi}_{n,m})
$$
that is called a {\bf minimal resolution} of $(M_{n.m},\,  \De_{n,m})$.

\subsubsection{Symmetries}

The class of examples $(M_{n,m},\,\De_{n,m})$ where $m > n \geq 2$ has the following symmetry, which we will exploit to shorten the proofs below. Let $(\tilde{n}, q)$ be the integers that solve
\begin{equation}\label{e:pq}
	mq-n\tilde{n}=-1 \quad\mbox{for minimum positive $\tilde{n}$},
\end{equation}
so that $1 < \tilde{n} < m$ and $0 < q < \tilde{n}$.  Then 
the matrix
\begin{equation}\label{e:M}
		S = \begin{pmatrix} -\tilde{n} & m\\ -q& n \end{pmatrix}
		\mbox{,  with } S\begin{pmatrix} 0\\ 1\end{pmatrix} =
		\begin{pmatrix} m\\ n\end{pmatrix} \mbox{ and }
		S\begin{pmatrix} m\\ \tilde{n}\end{pmatrix} =
		\begin{pmatrix} 0\\ 1\end{pmatrix},
\end{equation}
has $\det S = -1$ and $S(\De_{\tilde{n},m}) = \De_{n,m}$.
Furthermore, $S$ interchanges the roles of the vertical edge and the slant edge, while
mapping the midline to the midline.  
\begin{figure}[h]

	\begin{center} 
	\leavevmode 
	\includegraphics[width=5in]{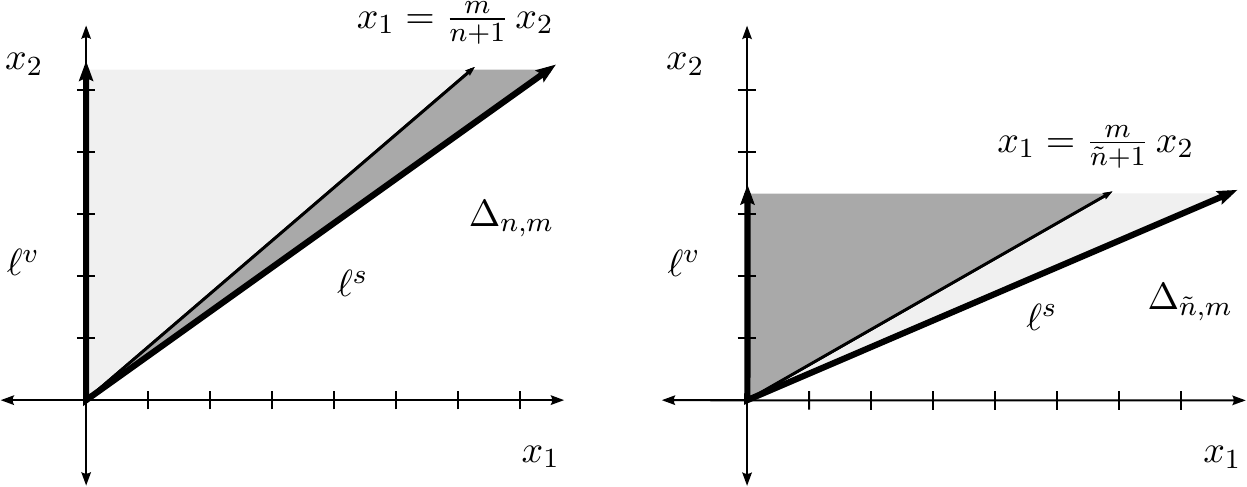}
	\end{center} 

	\caption{	
	Left: $\De_{n,m}$.
	Right: $\De_{\tilde{n},m}$.
	The matrix $S$ in \eqref{e:M} maps $\De_{\tilde{n},m}$ to $\De_{n,m}$,
	mapping the light gray region to the light grey region,
	and likewise for the dark grey regions.	
	Here $(n,m) = (5,7)$ and $(\tilde{n},q) = (3,2)$.}
	\label{f:nmS}
\end{figure}
Therefore we will often only need to prove a result for points to the left of the midline:  
the properties of the points to the right of the midline will be deduced by applying the matrix $S$.

This symmetry provided by $S \in GL(\ft^{*}_{\Z})$ is compatible with the resolution given by the continued fraction expansion.  Namely if $E_{1}, \dots, E_{k}$ are associated with the pair $(n, m)$, then 
$$
	\frac{\tilde{n}}{m} = (E_k,\dots, E_1),
$$
so that $(\tilde{n}, m)$ is given by reversing
the order of the $E_{j}$'s.  If $\eta_{0}, \dots, \eta_{k+1}$
are the conormals associated with $\ov{\De}_{n,m}$, 
and $\tilde{\eta}_{0}, \dots, \tilde{\eta}_{k+1}$
are the conormals associated with $\ov{\De}_{\tilde{n}, m}$, then one can check that 
$$
	 S^{*}\eta_{j} = \tilde{\eta}_{k+1-j},
$$
where $S^{*} \in GL(\ft_{\Z})$ is the transpose.
Therefore $S$ maps one minimal resolution from \eqref{e:mrp} to the other
$$
S(\ov{\De}_{\tilde{n},m}(\tilde{\ka})) = \ov{\De}_{n,m}(\ka) \quad\mbox{where}\quad 
\tilde{\ka}_{k+1-j} = \ka_{j}.
$$
In particular we have that the first three and the last three conormals for $\ov{\De}_{n,m}$ are
\begin{align}
	\eta_{0} &= (1,0) && \eta_{1} = (0,1) && \eta_{2} = (-1, E_{1}) \label{e:nfr}\\
	\eta_{k+1} &= (-n,m) && \eta_{k} = (-q, \tilde{n}) && \eta_{k-1} 
	= (-(E_{k}q-n), E_{k}\tilde{n} - m). \label{e:nfr1}
\end{align}


\subsection{Displaceability in sectors and their blowups}\label{ss:DS}

For relatively prime positive integers $m > n \geq 2$, consider the symplectic
toric orbifold $(M_{n,m},\, \De_{n,m})$ from \eqref{e:nm}. 
Let $\tilde{n}, q$ be given by \eqref{e:pq} and let $E_{1}, \dots, E_{k}$ be the sequence of integers from \eqref{e:ea} associated to $(n,m)$.
In this section we will use the notation
$$
	E = E_{1} := \left\lceil\tfrac{m}{n}\right\rceil \quad\mbox{and} \quad
	\Tilde{E} = E_{k} := \left\lceil\tfrac{m}{\tilde{n}}\right\rceil.
$$

\subsubsection{The displaceable and nondisplaceable fibers in $\De_{n,m}$}

\begin{thm}\label{t:DND}
	Let $x = (x_{1}, x_{2}) \in \De_{n,m}$.
	\begin{enumerate}
	\item[(i)] The Lagrangian fiber $L_{x}$ is nondisplaceable if
	\begin{equation}\label{e:snd}
		\frac{E}{2}\, x_{2} \leq x_{1} \leq \frac{2m-\Tilde{E}\,\tilde{n}}{2n-\Tilde{E}\,q}\, x_{2}.
	\end{equation}
	\item[(ii)] If $E = \Tilde{E} = 2$, then all other fibers $L_{x}$ are displaceable.
	\item[(iii)] If $E > 2$, then all fibers with $x_{1} < \tfrac{E}{2} x_{2}$ are displaceable
	except possibly for those with $\tfrac{m}{2n}\,x_{2} \leq x_{1} < \tfrac{E}{2}\,x_{2}$.
	\item[(iv)] If $\Tilde{E} > 2$, then all fibers with 
	$\tfrac{2m-\Tilde{E}\tilde{n}}{2n-\Tilde{E}q}\,x_{2} < x_{1}$
	are displaceable except possibly for those with 
	$\tfrac{2m-\Tilde{E}\tilde{n}}{2n-\Tilde{E}q}\,x_{2} < x_{1} \leq \tfrac{m\tilde{n}}{n\tilde{n}+1}\,x_{2}$.
	\end{enumerate}
\end{thm}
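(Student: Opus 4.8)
The plan is to handle the nondisplaceability claim (i) by the potential-function method and the displaceability claims (ii)--(iv) by standard probes, using the symmetry $S$ of \eqref{e:M} to halve the work in each case. First I would record the relevant facts about $S\in GL(\ft^*_{\Z})$: it carries $\De_{\tilde{n},m}$ to $\De_{n,m}$, exchanges the vertical and slant edges, fixes the midline $\{x_1 = \tfrac{m}{n+1}x_2\}$, and intertwines the two minimal resolutions, with the roles of $E_1$ and $E_k$ reversed (so the ``$E$'' of $\De_{\tilde n,m}$ is $\Tilde{E}$ and its ``$\Tilde E$'' is $E$). Hence it suffices to prove each statement for points at or to the left of the midline and then transport the conclusion across by $S$. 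Under this transport the lines $\{x_1 = \tfrac{\Tilde E}{2}x_2\}$ and $\{x_1 = \tfrac{m}{2\tilde n}x_2\}$ for $\De_{\tilde n,m}$ become $\{x_1 = \tfrac{2m-\Tilde E\tilde n}{2n-\Tilde E q}x_2\}$ and $\{x_1 = \tfrac{m\tilde n}{n\tilde n+1}x_2\}$ in $\De_{n,m}$, the two identities using $mq-n\tilde n = -1$ from \eqref{e:pq}; these are precisely the right-hand boundaries appearing in (i) and (iv).

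For statement (i), I would proceed as in Lemma~\ref{l:DNDm} but with \emph{two} ghost facets. For $x$ with $\tfrac E2 x_2 \le x_1 \le \tfrac{m}{n+1}x_2$ (left of the midline, so $\ell_{k+1}(x)\ge \ell_0(x) = x_1$), present $\De_{n,m}$ using the vertical facet together with the Hirzebruch--Jung conormals $\eta_1 = (0,1)$ and $\eta_2 = (-1,E)$ from \eqref{e:nfr} as ghost facets, with support constants $\ka_1 = x_1 - x_2$ and $\ka_2 = 2x_1 - Ex_2$ chosen so that $\ell_0(x) = \ell_1^{\ka_1}(x) = \ell_2^{\ka_2}(x) = x_1 = s(x)$. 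The ghost conditions $\ka_1\ge 0$ and $\ka_2\ge 0$ read $x_1\ge x_2$ (automatic as $E\ge 2$) and $x_1\ge \tfrac E2 x_2$, which is exactly why the nondisplaceable region starts at $\{x_1 = \tfrac E2 x_2\}$. Since the three conormals $(1,0),(0,1),(-1,E)$ are pairwise linearly independent, the leading-order equations $\partial_{\b_1}W_{x,\a} = \partial_{\b_2}W_{x,\a} = 0$ reduce to a relation $\sum w_i\eta_i = 0$ solvable with all $w_i\in\C^{\times}$, exactly as in the proof of Proposition~\ref{p:noghost}(i); then \cite[Theorem 4.5]{FOOO11} upgrades this to a genuine critical point of $W_{x,\a}$, so $L_x$ is nondisplaceable by Theorem~\ref{t:potential}. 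The points between the midline and the right boundary follow by applying $S$.

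For the displaceability statements (ii) and (iii), which concern points left of $\{x_1 = \tfrac E2 x_2\}$, I would use only standard probes based on the vertical edge $\{x_1 = 0\}$, since extended probes give nothing new in a two-edged sector (Remark~\ref{r:nohelp} and the discussion in \S\ref{ss:gen}). A horizontal probe exits through the slant edge and displaces all fibers with $x_1 < \tfrac{m}{2n}x_2$, whereas the probe with direction $(1,1)$ is integrally transverse to the vertical edge, stays in the interior of the cone, hence has infinite length and by Lemma~\ref{l:probe} displaces every fiber with $x_1 < x_2$ (as in Lemma~\ref{l:DOsecP}). When $E = \Tilde E = 2$ one has $\tfrac m{2n} < 1$, so the $(1,1)$ probes already displace everything with $x_1 < x_2 = \tfrac E2 x_2$, giving (ii) on the left and, via $S$, on the right; when $E>2$ one has $\tfrac m{2n} > 1$, so the horizontal probes dominate and cover $x_1 < \tfrac m{2n}x_2$, which is (iii). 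Statement (iv) is then (iii) applied to the pair $(\tilde n,m)$ and transported through $S$, using the boundary computation from the first paragraph.

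The main obstacle is the bookkeeping in part (i): one must verify that the two chosen ghost conormals are the correct ones, so that exactly the three facets $\ell_0,\ell_1^{\ka_1},\ell_2^{\ka_2}$ attain $s(x)$ and their conormals are pairwise independent, and that the support constants keep them ghost facets throughout the stated region. Checking that $\eta_2 = (-1,E)$ remains a ghost --- i.e.\ that $-x_1 + Ex_2 > 0$ on all of $\De_{n,m}$ --- uses $E = \lceil m/n\rceil > m/n$, and the borderline case $\ka_2 = 0$, where the ghost facet passes through the cone point, is exactly what forces the sharp constant $\tfrac E2$ in \eqref{e:snd}.
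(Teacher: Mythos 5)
Your proposal follows the paper's proof in all essentials: for part (i) you use exactly the same two Hirzebruch--Jung ghost facets $\eta_{1}=(0,1)$ and $\eta_{2}=(-1,E)$, with support constants chosen so that $\ell^{v}(x)=\ell_{1}^{\ka_{1}}(x)=\ell_{2}^{\ka_{2}}(x)=x_{1}\leq\ell^{s}(x)$ on the region left of the midline; for parts (ii)--(iv) you use the same vertical-edge probes with directions $(1,1)$ and $(1,0)$ as the paper's Lemma~\ref{l:Dsec}; and everything right of the midline is handled by the symmetry $S$, whose effect on the boundary lines you compute correctly from $mq-n\tilde{n}=-1$.

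The one place you genuinely diverge is the final step of (i): you solve only the leading-order critical point equations and then invoke \cite[Theorem 4.5]{FOOO11} to upgrade to an honest critical point. That theorem is proved for the bulk-deformed potential of \emph{closed smooth} toric manifolds, whereas here the polytope is a noncompact orbifold sector and the relevant potential is the quasi-map potential with ghost facets; the citation does not literally cover this situation, and the paper itself only appeals to that theorem inside Proposition~\ref{p:noghost}, which is restricted to smooth closed polytopes. In its proof of Theorem~\ref{t:DND}(i) the paper deliberately avoids any perturbative upgrade: writing $y_{i}=e^{\b_{i}}$, it evaluates the \emph{full} critical point equations \eqref{e:Wy12} at $(y_{1},y_{2})=(1,1)$ and solves them exactly for the bulk parameters,
$$
e^{\a_{1}} = 1 + n\, q^{\ell^{s}(x)-\ell^{v}(x)}, \qquad
e^{\a_{2}} = -E - m\, q^{\ell^{s}(x)-\ell^{v}(x)},
$$
which are units in $\La_{0}$ (their leading coefficients $1$ and $-E$ are nonzero and $\ell^{s}(x)\geq\ell^{v}(x)$), hence of the form $e^{\a}$ with $\a\in\La_{0}$. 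Your argument is repaired by replacing the appeal to \cite{FOOO11} with this two-line exact solution; with that change your proof coincides with the paper's.
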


\begin{figure}[h]

	\begin{center} 
	\leavevmode 
	\includegraphics[width=6in]{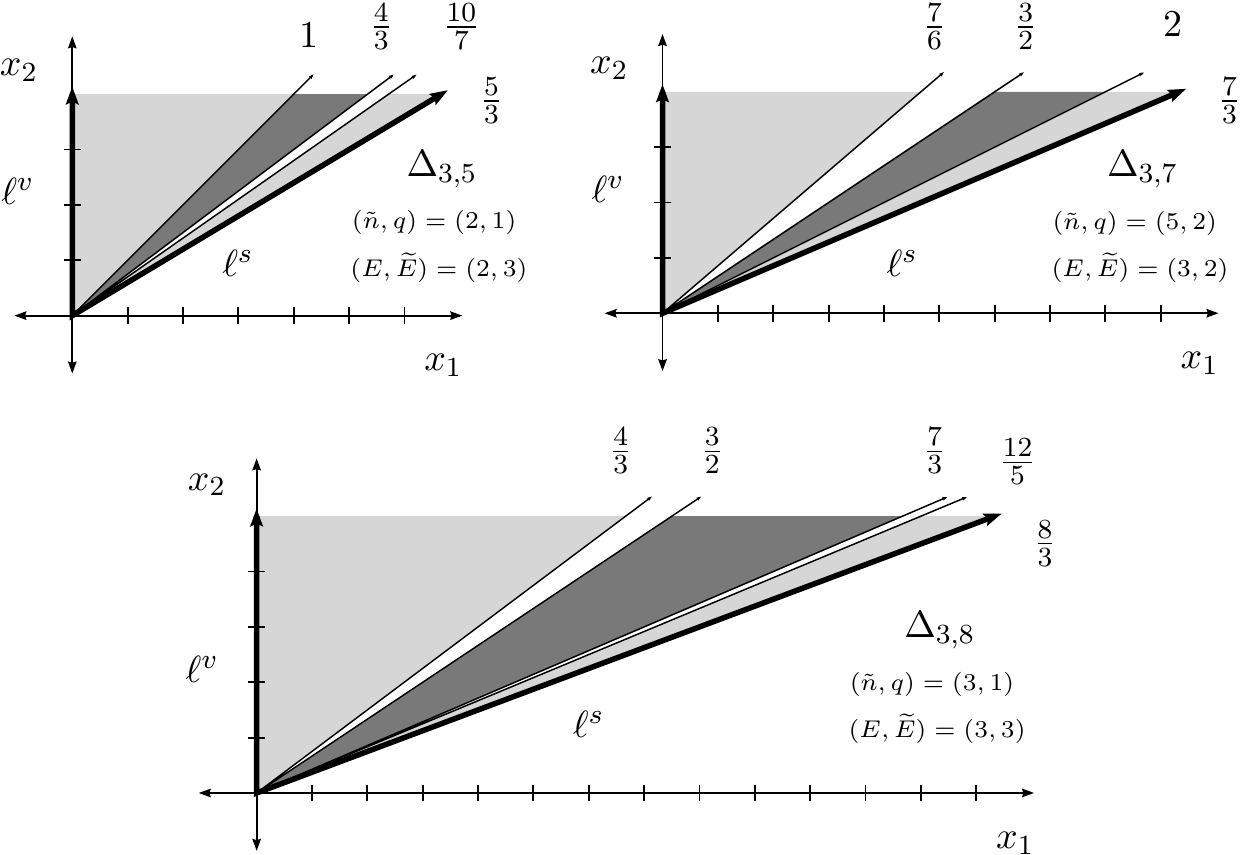}
	\end{center} 

	\caption{	
	Examples of Theorem~\ref{t:DND}. 
	The  dark grey regions
	are closed and  nondisplaceable, the  light grey regions are open and 
	displaceable, and the white regions
	are unknown.}
	\label{f:DaNDsectors}
\end{figure}

\begin{proof}[Proof of Theorem~\ref{t:DND}(i)]
	Recall that $\De_{n,m} = \{x \in \R^{2} \mid \ell^{v}(x) \geq 0\,,\,\, \ell^{s}(x) \geq 0\}$ where
	$$
		\ell^{v}(x) := x_{1} \quad\mbox{and}\quad \ell^{s}(x):= -nx_{1} + mx_{2}.
	$$
	For ghost facets, we will use the first two conormals $\eta_{1} = (0,1)$ and $\eta_{2}= (-1, E)$
	associated with the Hirzebruch--Jung resolution. Thus we take
	$$
		\ell_{1}^{\ka_{1}}(x):= x_{2} + \ka_{1} \quad\mbox{and}\quad 
		\ell_{2}^{\ka_{2}}(x):= -x_{1} + Ex_{2} + \ka_{2},
	$$
	which define ghost facets for $\De_{n,m}$ when $\ka_{1}, \ka_{2} \geq 0$ are non-negative.
		
	Observe that if the point  $x \in \De_{n,m}$ satisfies
	$$ 
		\frac{E}{2}\, x_{2} \leq x_{1} \leq \frac{m}{n+1}\,x_{2}
	$$
	then
	\begin{equation}\label{e:ell1}
		\ell^{v}(x) = \ell_{1}^{\e_{1}}(x) = \ell_{2}^{\e_{2}}(x) \leq \ell^{s}(x)
	\end{equation}
	for suitable 
	$\e_{1}, \e_{2} \geq 0$.
	The potential function of  $\De_{n,m}$ with the added
	 ghost facets $\{\ell_{1}^{\e_{1}} \geq 0\}$ and $\{\ell_{2}^{\e_{2}} \geq 0\}$ is
	$$
		W_{x,\a}(\b) = e^{\b_{1} + \a_{1}} q^{\ell^{v}(x)} + e^{-n\b_{1} + m\b_{2}}q^{\ell^{s}(x)}
		+ e^{\b_{2}+\a_{2}}q^{\ell_{1}^{\e_{1}}(x)} + e^{-\b_{1}+E\b_{2}}q^{\ell_{2}^{\e_{2}}(x)}.
	$$
	Changing variable so that $y_{1} = e^{\b_{1}}$ and $y_{2} = e^{\b_{2}}$
	and using \eqref{e:ell1}, the critical point equation becomes
	\begin{align}\label{e:Wy12}
		y_{1}\,\partial_{y_{1}} W_{x,\a} &= 
		(e^{\a_{1}}y_{1} - y_{1}^{-1}y_{2}^{E} - ny_{1}^{-n}y_{2}^{m} q^{\ell^{s}(x)-\ell^{v}(x)})\, 
		q^{\ell^{v}(x)}
		= 0\\ \notag
		y_{2}\,\partial_{y_{2}} W_{x,\a} &= 
		(e^{\a_{2}}y_{2} + E y_{1}^{-1}y_{2}^{E} + my_{1}^{-n}y_{2}^{m} q^{\ell^{s}(x)-\ell^{v}(x)})\, 
		q^{\ell^{v}(x)} = 0		
	\end{align}
	Then $(y_{1}, y_{2}) = (1,1)$ is a critical point of $W_{x,\a}$ when
	\begin{equation}\label{e:alphas}
		e^{\a_{1}} = 1 + n q^{\ell^{s}(x) -\ell^{v}(x)} \quad\mbox{and}\quad
		e^{\a_{2}} = -E - m q^{\ell^{s}(x) -\ell^{v}(x)}.
	\end{equation}
	These equations are solvable by $\a\in \La_0^2$ since $\ell^{s}(x) \geq \ell^{v}(x)$ and neither term in
	\eqref{e:alphas} is zero.
	Hence $L_{x}$ is nondisplaceable by Theorem~\ref{t:potential}.
	
	So far we have proved that for $x \in \De_{n,m}$ such that
	\begin{equation}\label{e:left}
		\frac{E}{2}\,x_{2} \leq x_{1} \leq \frac{m}{n+1}\,x_{2}
	\end{equation}
	the fiber $L_{x}$ is nondisplaceable.  Likewise, for $y \in \De_{\tilde{n},m}$,
	$L_{y}$ is nondisplaceable if
	$$
		\frac{\Tilde{E}}{2}\,y_{2} \leq y_{1} \leq \frac{m}{\tilde{n}+1}\,y_{2}.
	$$
	The image of this region under the symmetry $S$ from \eqref{e:M} is the subset of $\INT
	\De_{n,m}$
	where
	\begin{equation}\label{e:right}
		\frac{m}{n+1}\,x_{2} \leq x_{1} \leq 
		\frac{2m-\Tilde{E}\,\tilde{n}}{2n-\Tilde{E}\,q}\, x_{2}.
	\end{equation}
	Piecing \eqref{e:left} and \eqref{e:right} together, we have proved that if $x \in \De_{n,m}$ 
	satisfies 	\eqref{e:snd}, 
	then $L_{x}$ is nondisplaceable.
\end{proof}

The proof of Theorem~\ref{t:DND} is completed by the following lemma. 

\begin{lem}\label{l:Dsec}
	Probes based on the vertical edge in $\De_{n,m}$ displace the following points:
	\begin{itemize}
	\item  
	Points in $\{x_{1} < x_{2}\}$ by probes with direction $(1,1)$.			
	\item 
	Points in $\{x_{1} < \tfrac{m}{2n}\,x_{2}\}$ by probes with 
	direction $(1,0)$.
	\end{itemize}
	Probes based on the slant edge in $\De_{n,m}$ displace the following points:
	\begin{itemize}
	\item 
	Points in $\{x_{1} > \tfrac{m-\tilde{n}}{n-q}\, x_{2}\}$ by probes
	with direction $(m-\tilde{n},n-q)$.		
	\item 
	Points in $\{x_{1} > \tfrac{m\tilde{n}}{n\tilde{n}+1}\, x_{2}\}$ by probes
	with direction $(-\tilde{n},-q)$.
	\end{itemize}
\end{lem}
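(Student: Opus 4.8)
My plan is to prove the two statements for probes based on the vertical edge by direct computation, and then obtain the two statements for probes based on the slant edge by transporting the vertical-edge probes of $\De_{\tilde{n},m}$ through the symmetry $S$ of \eqref{e:M}. Recall from \S\ref{s:Mnm} that $S \in GL_2(\Z)$ satisfies $\det S = -1$ and $S(\De_{\tilde{n},m}) = \De_{n,m}$, and that it interchanges the vertical and slant edges. Being an integral affine equivalence, $S$ lifts to an equivariant symplectomorphism $M_{\tilde{n},m} \to M_{n,m}$ taking $L_y$ to $L_{Sy}$ and carrying probes to probes of the same affine length while preserving integral transversality. Hence it suffices to treat the vertical edge directly.

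For the vertical edge, the directions $(1,1)$ and $(1,0)$ are primitive and satisfy $\ip{\eta^{v}, (1,1)} = \ip{\eta^{v}, (1,0)} = 1$ with $\eta^{v} = (1,0)$, so each is integrally transverse to and points inward from $\{\ell^{v} = 0\}$. For the direction $(1,0)$, a probe based at $(0, x_{2})$ meets the slant edge $\{\ell^{s} = 0\}$ at $(\tfrac{m}{n}x_{2}, x_{2})$, hence $\ell(P) = \tfrac{m}{n}x_{2}$; since $d_{\aff}(x, F_{P}) = x_{1}$, the halfway condition of Lemma~\ref{l:probe} reads exactly $x_{1} < \tfrac{m}{2n}x_{2}$. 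For the direction $(1,1)$, the key point is that $\ell^{s}$ is strictly increasing along $(1,1)$, as $\ip{\eta^{s}, (1,1)} = m - n > 0$, while $\ell^{v} = x_{1} \geq 0$; hence the ray issuing from $(0, x_{2} - x_{1})$ in direction $(1,1)$ never leaves $\De_{n,m}$, so the probe may be taken arbitrarily long. Any point with $x_{1} < x_{2}$ (so that its base $(0, x_{2} - x_{1})$ is interior to the vertical edge) then lies in the first half of a sufficiently long such probe, and is therefore displaceable.

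To finish, I would apply $S$. Since $S$ carries the vertical edge of $\De_{\tilde{n},m}$ to the slant edge of $\De_{n,m}$ and $S(1,1)^{T} = (m - \tilde{n},\, n - q)^{T}$, $S(1,0)^{T} = (-\tilde{n},\, -q)^{T}$, the two vertical-edge probes of $\De_{\tilde{n},m}$ become precisely the two slant-edge probes in the statement. It then remains to identify the images of the regions $\{y_{1} < y_{2}\}$ and $\{y_{1} < \tfrac{m}{2\tilde{n}}y_{2}\}$ under $x = Sy$. Writing $y = S^{-1}x$ with $S^{-1} = \begin{pmatrix}-n & m \\ -q & \tilde{n}\end{pmatrix}$, so that $y_{1} = -nx_{1} + mx_{2}$ and $y_{2} = -qx_{1} + \tilde{n}x_{2}$, the first region becomes $(n - q)x_{1} > (m - \tilde{n})x_{2}$, and the second, after clearing denominators and substituting $mq = n\tilde{n} - 1$ from \eqref{e:pq}, becomes $(n\tilde{n} + 1)x_{1} > m\tilde{n}\,x_{2}$. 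These are exactly the two slant-edge regions claimed.

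The only genuine bookkeeping is this last substitution; everything else is the symmetry already recorded in \S\ref{s:Mnm} together with two one-line slope computations. The one conceptual point to handle with care is the $(1,1)$ probe, whose length is unbounded: rather than invoking a disc of infinite area, I would simply observe that the probe segment can be chosen long enough to place any prescribed interior point with $x_{1} < x_{2}$ strictly within its first half, which is all that Lemma~\ref{l:probe} requires.
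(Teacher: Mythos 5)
Your proposal is correct and follows exactly the paper's route: the paper's proof likewise verifies the two vertical-edge claims by direct computation (noting they mirror Lemma~\ref{l:DOsecP} with $\ka=0$) and obtains the two slant-edge claims as the transform under the symmetry $S$ of the vertical-edge claims for $\De_{\tilde{n},m}$. Your explicit computations (probe lengths, the unbounded $(1,1)$ probe handled by taking a sufficiently long finite segment, and the identification of the image regions using $mq = n\tilde{n}-1$) are all accurate and simply fill in the details the paper leaves to the reader.
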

\begin{proof}[Proof of Lemma~\ref{l:Dsec}]
	The first two claims are similar to Lemma~\ref{l:DOsecP}, and are straightforward to check.
	The last two claims are the transform under the symmetry $S$ 
	of the first two claims for the sector $\De_{\tilde{n},m}$.
\end{proof}

We next show that the lower bound $\tfrac{E}{2}$ in Theorem~\ref{t:DND} (i)
is optimal with our current methods.

\begin{lem}\label{l:DND2} The qW invariants vanish for points in $\De_{n,m}$ with
$x_1<\frac E2 x_2$.
\end{lem}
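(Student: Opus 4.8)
The plan is to reduce nonvanishing of the qW invariant at $x$ to a purely combinatorial balancing condition among the available conormals, and then to show that for $x_1 < \tfrac{E}{2}x_2$ there are too few conormals close to $x$ for that condition to be met.

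First I recall from \eqref{e:P} that for any presentation of $\De_{n,m}$ (its two geometric facets $\eta^v,\eta^s$ together with finitely many ghost facets) the critical point equations $\partial_{\b_j}W_{x,\a}=0$ amount to the single vector equation $\sum_i c_i\,\eta_i = 0$ in $\La^2$, where $c_i=\exp(\ip{\eta_i,\b}+\a_i)\,q^{\ell_i(x)}$. Since $\a_i\in\La_0$ and $\b\in\La_0^n$, each $c_i$ has valuation $\nu(c_i)=\ell_i(x)$ and a nonzero complex leading coefficient $z_i$. Taking the $q$-adically lowest-order part of this vector equation, a \emph{necessary} condition for the existence of a critical point is the leading balancing $\sum_{i\in E_1}z_i\eta_i=0$ with all $z_i\in\C^{\times}$, where $E_1$ denotes the set of facets realizing $s:=\min_i\ell_i(x)$. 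It therefore suffices to show that no choice of ghost facets makes this leading balancing solvable.

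Next I analyze which conormals are available and which can lie in $E_1$. A ghost facet of the cone $\De_{n,m}$ must have conormal $\eta_g$ in the dual cone $\sigma^{\vee}=\mathrm{cone}(\eta^v,\eta^s)$ and support constant $\ka_g\ge 0$, so $\ell_g(x)=\ip{\eta_g,x}+\ka_g\ge\ip{\eta_g,x}$. By the amalgamation observed in Remark~\ref{r:WW}(ii) I may assume no two facets have parallel conormals (parallel ones merge into a bulk deformation and create no new critical points); since $\sigma^{\vee}$ is pointed, its distinct primitive conormals are then pairwise non-parallel. Consequently $\#E_1=1$ and $\#E_1=2$ both make the leading balancing unsolvable, so a critical point forces $\#E_1\ge 3$. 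Because $x_1<\tfrac{E}{2}x_2\le\tfrac{m}{n+1}x_2$ — an inequality I will verify from $E=\lceil m/n\rceil$ and $m\ge n+1$ — the point $x$ lies strictly left of the midline, so $\ell^s(x)>\ell^v(x)=x_1$, hence $s\le x_1$ and $\eta^s\notin E_1$. Every conormal $\eta$ occurring in $E_1$ therefore satisfies $\ip{\eta,x}\le s\le x_1$.

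Finally I bound the number of such conormals. Using that consecutive Hirzebruch--Jung conormals $\eta_j,\eta_{j+1}$ from \eqref{e:nfr} form a lattice basis, every primitive $\eta\in\sigma^{\vee}\cap\Z^2$ is a nonnegative integer combination of two consecutive $\eta_j$'s, and the recursion $\eta_{j-1}+\eta_{j+1}=E_j\eta_j$ with $E_j\ge2$ shows the sequence $j\mapsto\ip{\eta_j,x}$ is convex. From $\ip{\eta_0,x}=x_1$, $\ip{\eta_1,x}=x_2$, $\ip{\eta_2,x}=-x_1+Ex_2$ and the hypothesis $x_1<\tfrac{E}{2}x_2$ one gets $\ip{\eta_2,x}>x_1$ and $\ip{\eta_{j+1},x}>\ip{\eta_j,x}$ for all $j\ge 1$; together with the elementary check that any primitive lattice point of $\mathrm{cone}(\eta_0,\eta_1)$ other than $\eta_0,\eta_1$ has value $\ge x_1+x_2>x_1$, this shows the only primitive conormals with $\ip{\eta,x}\le x_1$ are $\eta_0$ and (when $x_2\le x_1$) $\eta_1$. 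Hence $\#E_1\le 2$, contradicting $\#E_1\ge 3$, so no presentation yields a critical point and the qW invariant vanishes. The main obstacle is the reduction step: I must be confident that ranging over all presentations is correctly captured by the condition ``some set of at least three pairwise non-parallel conormals of pairing $\le x_1$ balances at a common level,'' which requires both the lowest-order necessity argument and a careful use of Remark~\ref{r:WW}(ii) to discard parallel (in particular non-primitive, labelled) ghost conormals; everything after that is the convexity-plus-lattice count, which is routine.
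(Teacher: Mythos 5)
Your replacement for the paper's key arithmetic step is fine: where the paper shows by a direct case analysis that any dual-cone conormal $(-b,a)$ with $a,b\ge 1$ satisfies $\tfrac{E}{2}\le\tfrac{a}{b+1}$, you get the same exclusion more geometrically from the Hirzebruch--Jung subdivision plus convexity of $j\mapsto\ip{\eta_j,x}$, and that part of your argument (steps 4--5) is correct. The genuine gap is exactly the reduction step you flagged. Remark~\ref{r:WW}(ii) amalgamates two facets with the \emph{same} conormal and different support constants (that is how the identity $W'_{x,\a'}=W_{x,\a}$ is produced there); it does \emph{not} let you discard proportional-but-distinct conormals such as $(0,1)$ and $(0,2)$, whose terms carry different monomials $e^{\b_2}$ versus $e^{2\b_2}$ and so cannot be merged into one term. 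Such conormals are allowed by the framework: \eqref{e:mp} permits arbitrary integer conormals, and Remark~\ref{r:non-compact-orbifolds} explicitly admits non-primitive ones. Once parallel conormals are allowed in your minimal set $E_1$, the dichotomy ``$\#E_1\le 2$ implies the leading balancing is unsolvable'' is false, and with it your contradiction against ``$\#E_1\ge 3$'' evaporates.

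Concretely, take $(n,m)=(2,9)$, so $E=5$, and a point with $2x_2<x_1<\tfrac{5}{2}x_2$ (inside the region of the lemma). The half-spaces $\{2y_2\ge 0\}$ (conormal $(0,2)$, constant $0$, vanishing only at the vertex) and $\{y_2+x_2\ge 0\}$ (conormal $(0,1)$, constant $x_2>0$) are both legitimate ghost facets, and both sit at level $2x_2$, which is strictly below $\ell^v(x)=x_1$ and $\ell^s(x)$. So $E_1$ consists of exactly these two facets, yet the leading balancing $z_1(0,1)+z_2(0,2)=0$ \emph{is} solvable with $z_1=-2z_2\in\C^{\times}$: your necessary condition is met, and your argument cannot conclude. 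The lemma is still true at such points, but seeing it requires the finer, coordinate-wise valuation argument that the paper in effect runs: facets with conormal proportional to $(0,1)$ contribute nothing to $\partial_{\b_1}W_{x,\a}$, and by your own step-5 count (extended to non-primitive vectors, since a multiple $c\eta'$ has value $c\ip{\eta',x}$) every facet with nonzero first conormal coordinate other than the vertical edge sits at level strictly greater than $x_1$; hence $\partial_{\b_1}W_{x,\a}$ has a unique lowest-order term $z\,q^{x_1}$, $z\in\C^{\times}$, and cannot vanish. (This is the same mechanism as the paper's observation about \eqref{e:Wy12} that with only $(0,1)$-type ghosts one of the two equations has a single lowest-order term.) Adding this case closes your proof; without it, the proof is incomplete precisely when $E\ge 5$, where proportional ghosts can tie below $x_1$.
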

\begin{proof}
	If the potential function $W_{x, \a}$ has a critical point at  a point
	$x \in \De_{n,m}$ left of the midline, there must be at least
	one  ghost facet $\ell^{\e}(x) = \ip{\eta, x} + \e$ such that 
	\eqref{e:ell1} holds for support
	constants $\e \geq 0$. 
	If $\eta = (-b,a)$ for some non-negative integers, 
	then $\tfrac{b}{a} \leq \tfrac{n}{m}$ must hold 
	in order for $\ell^{\e}$ to define a ghost facet.  Since  $\ell^{\e}(x) = \ell^{v}(x)$  
	implies
	$\tfrac{a}{b+1}\,x_{2} \leq x_{1}$, this  
	gives a potentially new   lower bound.
	The  choice  $\eta_{1} = (0,1)$ is optimal since it gives the bound $x_{2} \leq x_{1}$.
	However, adding just this ghost facet by itself is not enough
	since the second equation in \eqref{e:Wy12} would then have just one term and so have no solution.  Any other
	of $\eta$ must have $a,b > 0$ positive, and we claim that in this case 
	 $\tfrac{E}{2} \leq \tfrac{a}{b+1}$, so that the lower bound is no better than before.
	
	To see this, recall that $E = \lceil m/n \rceil$.
	If $a = b+1$, then $2 \geq \tfrac{a}{b} \geq \tfrac{m}{n}$, so $E=2$
	and hence $\tfrac{E}{2} = \tfrac{a}{b+1}$.  Suppose $a \geq b+2$, then
	since $E-1 < \tfrac{m}{n} \leq \tfrac{a}{b}$, we have that
	$\tfrac{E}{2} \leq \tfrac{1}{2} (\tfrac{a}{b} +1)$ and hence it suffices to
	prove $\tfrac{1}{2} (\tfrac{a}{b} +1) \leq \tfrac{a}{b+1}$.  This is equivalent to
	$a(b-1) \geq b^{2} +b$, which holds since $a \geq b+2$.
\end{proof}

\subsubsection{Displaceable fibers after a blow up}

Observe that in the proof of Theorem~\ref{t:DND}(i)
we used the ghost
facets with conormal $(0,1)$ 
to prove the nondisplaceability of the points in \eqref{e:left}, which are to the left of the midline, and implicitly we used their transforms under 
the symmetry in \eqref{e:M} with conormal $S^*(0,1) = (-q,\tilde n)$
to deal with the points to the right of the midline.
Hence, if we partially resolve the orbifold singularity with these two edges, many fibers with previously nonzero qW invariants now have vanishing invariants.
At the same time,
since probes with direction $(1,0)$ based on the vertical edge, are parallel to the new edge with conormal $(0,1)$ (and likewise on the right),
this partial resolution also  
causes many fibers to become displaceable using parallel extended probes with flags.  

\begin{prop}\label{p:asympresolve}
For the polytope $\De_{n,m}$ as in \eqref{e:nm}, 
consider a minimal resolution $(\ov{M}_{n,m},\, \ov{\De}_{n,m}(\ka)$)
given by \eqref{e:mrp} where $\ka = (\ka_{1}, \dots, \ka_{k}) \in \R_{<0}^{k}$ are the support constants.
If $x \in \Int \ov{\De}_{n,m}(\ka)$ is not in the region
$$
	(E_{1}-1)\,x_{2} \leq x_{1} \leq \frac{m-(E_{k}-1)\tilde{n}}{n-(E_{k}-1)q}\,x_{2}
$$ 
then the Lagrangian fiber $L_{x} \subset (\ov{M}_{n,m}, \ov{\w}_{\ka})$ can be displaced by extended probes in $\ov{\De}_{n,m}(\ka)$, provided that the terms in $\ka$ are sufficiently close to zero.
\end{prop}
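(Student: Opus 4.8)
The plan is to reduce to the parallel extended probe construction of Lemma~\ref{l:openEP} together with the symmetry $S$ of \eqref{e:M}. Recall that $S$ satisfies $S(\ov{\De}_{\tilde{n},m}(\tilde{\ka})) = \ov{\De}_{n,m}(\ka)$ with $\tilde{\ka}_{k+1-j} = \ka_{j}$, interchanges the vertical and slant edges, fixes the midline, and comes with $\tilde{n}/m = (E_{k}, \dots, E_{1})$ so that $\tilde{E} = E_{k}$. A direct computation shows that $S$ carries the line $\{y_{1} = (\tilde{E}-1)y_{2}\}$ of $\De_{\tilde{n},m}$ to the line $\{x_{1} = \tfrac{m-(E_{k}-1)\tilde{n}}{n-(E_{k}-1)q}\,x_{2}\}$ of $\De_{n,m}$, and, being orientation-reversing, sends the half $\{y_{1} < (\tilde{E}-1)y_{2}\}$ to the right-hand region $\{x_{1} > \tfrac{m-(E_{k}-1)\tilde{n}}{n-(E_{k}-1)q}\,x_{2}\}$. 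Hence it suffices to displace every interior point $x$ with $x_{1} < (E_{1}-1)x_{2}$; the right-hand region then follows by applying $S$ to the corresponding statement for $\ov{\De}_{\tilde{n},m}(\tilde{\ka})$.

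Points with $x_{1} < \tfrac{m}{2n}x_{2}$ (and with $x_{1} < x_{2}$) are already displaced by probes based on the vertical edge via Lemma~\ref{l:Dsec}, so the real content is the strip $\tfrac{m}{2n}x_{2} \le x_{1} < (E_{1}-1)x_{2}$. For a point $w$ there I would copy the construction of Lemma~\ref{l:openEP}: let $P$ be the probe with $v_{P} = (1,0)$ based at $(0, w_{2})$ on the vertical edge, and let $Q$ be the probe based on the new horizontal facet $\{x_{2} = -\ka_{1}\}$ (whose conormal is $\eta_{1} = (0,1)$ from \eqref{e:nfr}) with direction $v_{Q} = (E_{1}-1, 1)$. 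Then $v_{Q}$ is primitive and integrally transverse to $\eta_{1}$, and $v_{P}$ is parallel to $F_{Q}$, so Theorem~\ref{t:pdeflected} applies to $\cDP = P \cup Q \cup \cF$. The supporting line of $Q$ is $\{x_{1} - (E_{1}-1)x_{2} = c\}$ with $c = b + (E_{1}-1)\ka_{1}$ determined by the foot $b_{Q} = (b, -\ka_{1})$; placing $b_{Q}$ near the far endpoint $(-E_{1}\ka_{1}+\ka_{2}, -\ka_{1})$ of the horizontal facet pushes $c$ up toward $\ka_{2}-\ka_{1}$, so that $w$ (with $w_{1} - (E_{1}-1)w_{2} < 0$) lies strictly to the left of $Q$ and hence in $\INT(P)$.

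Next I would verify the two hypotheses of Theorem~\ref{t:pdeflected} exactly as in Lemma~\ref{l:openEP}: choose the flag length $\ell(\cF)$ large enough that $d_{\aff}(w, F_{P}) = w_{1} < \tfrac{1}{2}\ell(\cDP)$, and slide the flag far up along $Q$, making it wide enough, so that its four vertices stay inside $\ov{\De}_{n,m}(\ka)$ and so that \eqref{e:pflag} holds. Since $Q$ and the flag run toward the slant edge while the Hirzebruch--Jung staircase lies to the left, the only boundary that can obstruct the flag is the slant edge; the inequality $m - n(E_{1}-1) > 0$, which holds because $E_{1} = \lceil m/n \rceil$ gives $E_{1}-1 < m/n$, guarantees that moving up along $Q$ strictly increases the affine distance to the slant edge, so a flag of any prescribed length fits. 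This is the mechanism behind the bound $\a, \a' > \tfrac{n}{m-nd}\ell_{\cF}$ in the proof of Lemma~\ref{l:openEP}. One also checks that $Q$ itself stays left of the adjacent facet $\{\ell_{2}^{\ka_{2}} = 0\}$, because its slope $1/(E_{1}-1)$ exceeds the slope $1/E_{1}$ of that facet.

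The main obstacle is the quantitative reach near the boundary line $\{x_{1} = (E_{1}-1)x_{2}\}$. Because $b_{Q}$ must lie in the interior of the new horizontal facet, the largest attainable value of $c$ is $\ka_{2}-\ka_{1}$, so this single construction covers all of $\{x_{1} < (E_{1}-1)x_{2}\}$ precisely when $\ka_{2} > \ka_{1}$, i.e.\ when the horizontal facet (and, by symmetry, the topmost new facet, requiring $\ka_{k-1} > \ka_{k}$) is long enough. It is exactly here that the hypothesis that the terms of $\ka$ be sufficiently close to zero must be understood as also keeping these outermost exceptional facets long relative to their neighbors; the delicate point of the proof is to arrange this uniformly in $x$, and, where it fails, to mop up the remaining thin sliver of points abutting $\{x_{1} = (E_{1}-1)x_{2}\}$ by an auxiliary extended probe deflected off the facet $\{\ell_{2}^{\ka_{2}} = 0\}$.
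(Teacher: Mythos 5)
Your proposal is correct and follows essentially the paper's own route: displace the region $\{x_{1} < (E_{1}-1)x_{2}\}$ by the parallel extended probes of Lemma~\ref{l:openEP}, with $P$ horizontal from the vertical edge and $Q$ based on the new facet $\{\ell_{1}^{\ka_{1}} = 0\}$ with direction $(E_{1}-1,1)$, and obtain the region near the slant edge by applying the symmetry $S$ of \eqref{e:M}. Your quantitative observation that for fixed $\ka$ this construction reaches exactly $\{x_{1} - (E_{1}-1)x_{2} < \ka_{2} - \ka_{1}\}$ is precisely the paper's Remark~\ref{r:resolveP}(i), and it is handled by reading ``the terms in $\ka$ sufficiently close to zero'' as depending on the given point $x$ (so that $\abs{\ka_{1}-\ka_{2}}$ is smaller than the distance from $x$ to the excluded cone); with that reading the auxiliary ``mop-up'' probe off $\{\ell_{2}^{\ka_{2}} = 0\}$ in your last paragraph is unnecessary, and indeed for fixed $\ka$ with $\ka_{2} < \ka_{1}$ such trapezoidal-flag probes would not displace the remaining sliver (cf.\ the footnote in \S\ref{s:EMR} and the unknown rays in Corollaries~\ref{c:k1} and~\ref{c:An}).
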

\begin{proof}
	For $x_{1} < (E_{1}-1)\,x_{2}$, it follows from Lemma~\ref{l:openEP} that $L_{x}$ can be displaced
	by a parallel extended probe with flag $\cDP = P \cup Q \cup \cF$ in
	$\ov{\De}_{n,m}(\ka)$.  Here $P$ is based on the
	vertical edge with direction $v_{P} = (1,0)$ and $Q$ is based on the new edge
	$\{\ell_{1}^{\ka_{1}} = 0\}$ with direction $v_{Q} = (E-1, 1)$.
	
	Observe that $x_{1} > \tfrac{m-(E_{k}-1)\tilde{n}}{n-(E_{k}-1)q}\,x_{2}$
	is the transform of $x_{1} < (\Tilde{E}_{1}-1)\,x_{2}$ for $\De_{\tilde{n},m}$ under
	the transformation $S$ from \eqref{e:M}.  In $\ov{\De}_{n,m}(\ka)$ one builds
	an extended probe where $P$ is based on the slant edge, with direction $v_{P} = (-q,-\tilde{n})$,
	and the deflecting probe $Q$  has direction $v_{Q} = (m-(\Tilde{E}-1)\tilde{n}, n-(\Tilde{E}-1)q)$
	and is based on the new edge $\{\ell_{k}^{\ka_{k}} = 0\}$.
\end{proof}

\begin{remark}\label{r:resolveP} (i)
	Unwrapping how Proposition~\ref{p:asympresolve} uses Lemma~\ref{l:openEP}
	gives the following more precise version.
	Suppose that a 	(partial) resolution $\Tilde{\De}_{n,m}$ of $\De_{n,m}$
	contains 
	an  edge $\{\ell_{1}^{\ka_{1}} = 0\}$ with one endpoint on 
	the vertical edge $\{\ell^{v} = 0\}$ and the other at $(y_{1}, y_{2})$, where $y_1>0$.
	Then the points $x \in \Tilde{\De}_{n,m}$
	such that
	$$ 
		x_{1} - y_{1} < (E_{1}-1)\,(x_{2}-y_{2})
	$$ 
	are displaceable by extended probes using Lemma~\ref{l:openEP}.
	The analogous statement holds when
	$\{\ell_{k}^{\ka_{k}} = 0\}$ appears next to the slant edge $\{\ell^{s} = 0\}$.\medskip

\NI(ii)
	Comparing the results of Proposition~\ref{p:asympresolve}
	to Theorem~\ref{t:DND}, if $E = E_{1} > 2$ then the points $x \in \De_{n,m}$ in the region
	$$
		\tfrac{E}{2}x_{2} \leq x_{1} < (E-1)x_{2}
	$$
	can be displaced after we partially resolve $\De_{n,m}$ with $\{\ell_{1}^{\ka_{1}} \geq 0\}$
	and are nondisplaceable before partially resolving.  
	If $E_{1} = 2$, then this region is empty.
\end{remark}

\subsection{Examples of  minimal resolutions}\label{s:EMR}

In this section 
we discuss a few of the minimal resolutions
$\ov{\De}_{n,m}(\ka)$ in \eqref{e:mrp}.  
We will use standard probes as well as 
 the extended probes described in Proposition~\ref{p:asympresolve} and 
 Remark~\ref{r:resolveP}.
 \footnote{We leave it to the reader to check that the probes with trapezoidal flags defined in \S\ref{s:EPwFNP}
 displace no new points.}
 
In general for minimal resolutions
$(\ov{M}_{n,m},\, \ov{\De}_{n,m}(\ka))$
there is no known nondisplaceable fiber. 
In Section~\ref{s:1m} where $k=1$ and hence
$(n,m) = (1,m)$, we showed in Lemma~\ref{l:O-m} that every fiber is displaceable in a minimal resolution 
$\cO(-m) = \ov{M}_{1,m}$.  In every other case there will be fibers that we cannot displace with extended probes. 

\subsubsection{The case $k = 2$}

Suppose the continued fraction expansion for $(n,m)$ has length $k=2$ given by $E_{1}$ and $E_{2}$.  Then
$$
	(n,m) = (E_{2},\, E_{1}E_{2} - 1) \quad\mbox{and}\quad (\tilde{n}, q) = (E_{1}, 1)
$$
and the facets for a minimal resolution $\ov{\De}_{(E_{2},\, E_{1}E_{2} - 1)}$ have interior conormals
\begin{align*}
	\eta_{0} = (1,0) && \eta_{1} = (0,1) && \eta_{2} = (-1, E_{1}) && \eta_{3} = (-E_{2}, E_{1}E_{2} - 1).
\end{align*}
Note that in this case the upper and lower bounds of Proposition~\ref{p:asympresolve} coincide,
so we have the following corollary taking into account Remark~\ref{r:resolveP}. 

\begin{figure}[h]	

	\begin{center} 
	\leavevmode 
	\includegraphics[width=4in]{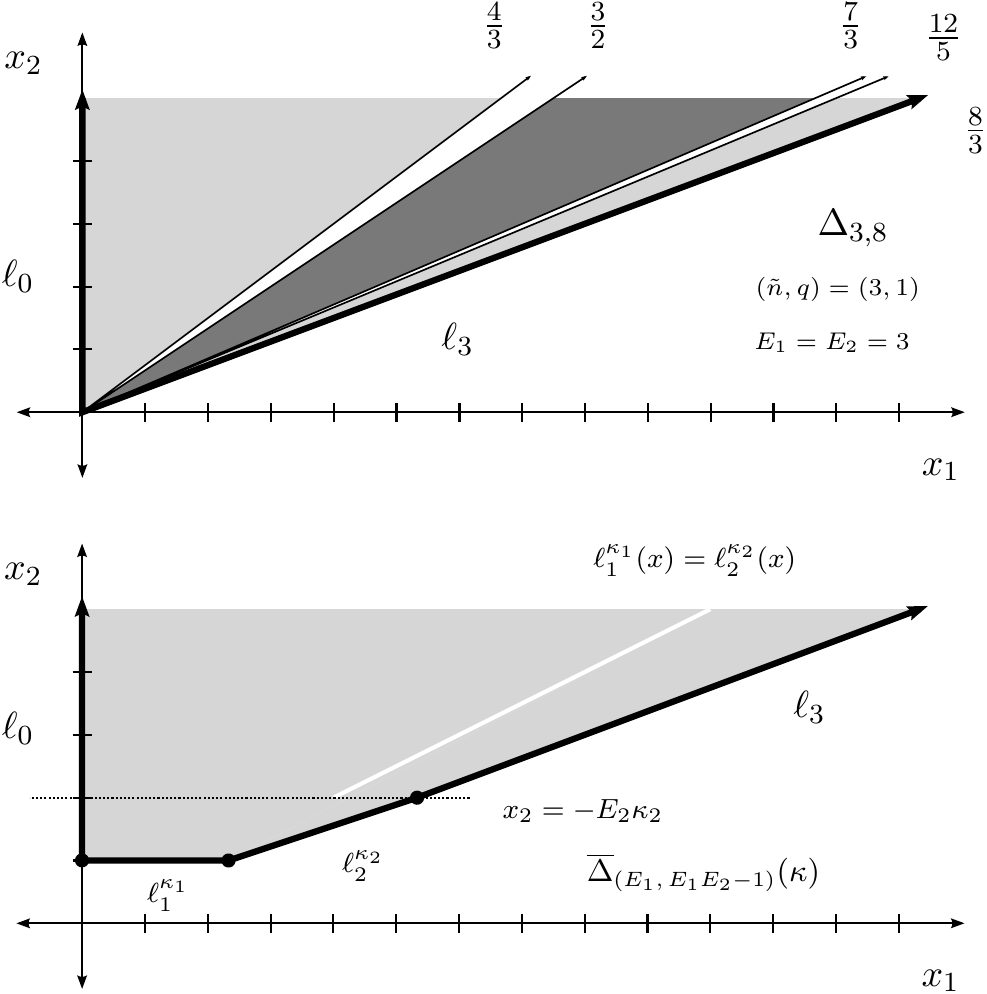}
	\end{center} 

	\caption{
	Illustration of Corollary~\ref{c:k1}.
	Above: $\De_{3,8}$ before resolving.  Below: The resolution $\ov{\De}_{(E_{1},\,E_{1}E_{2}-1)}$
	drawn in the case $(n,m) = (3,8)$.  The light gray regions are displaceable
	and the dark grey regions are nondisplaceable.
	It is unknown if the points on the ray with slope $E_{1}-1$ are displaceable.}
	\label{f:k1}
\end{figure}

\begin{cor}\label{c:k1}
	Let $\ka_{1}, \ka_{2} < 0$ be such that $\ov{\De}_{(E_{2},\, E_{1}E_{2} - 1)}(\ka)$
	is a minimal resolution from \eqref{e:mrp}.
	A Lagrangian fiber $L_{x}$ in $\ov{\De}_{(E_{2},\, E_{1}E_{2} - 1)}(\ka)$ 
	is displaceable by probes provided it is not
	on the ray given by
	$$
		\ell_{1}^{\ka_{1}}(x) = \ell^{\ka_{2}}_{2}(x)
		\quad\mbox{and}\quad
		\ell_{1}^{\ka_{1}}(x) \geq \max\Big\{\ell_{1}^{\ka_{1}}(y) \mid 
		y \in \ov{\De}_{(E_{2},\,E_{1}E_{2}-1)}(\ka)\,,\,\, \ell_{2}^{\ka_{2}}(y) = 0\Big\}
	$$
	that is
	\begin{equation}\label{e:?linek1}
	x_{1} = (E_{1}-1)\,x_{2} + \ka_{2}-\ka_{1} \quad\mbox{and}\quad
	x_{2} \geq -E_{2}\ka_{2}.
	\end{equation}
\end{cor}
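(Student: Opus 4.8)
The plan is to verify that the two descriptions of the exceptional ray agree, and then to displace everything else in three groups: the points strictly off the bisector $\{\ell_1^{\ka_1}=\ell_2^{\ka_2}\}$, and the two portions of the bisector itself. First I would record the relevant data. With $(n,m)=(E_2,E_1E_2-1)$ and conormals $\eta_0=(1,0)$, $\eta_1=(0,1)$, $\eta_2=(-1,E_1)$, $\eta_3=(-E_2,E_1E_2-1)$, the line $\ell_1^{\ka_1}(x)=\ell_2^{\ka_2}(x)$ is exactly $x_1=(E_1-1)x_2+\ka_2-\ka_1$, while the three vertices of the two new edges are $V_{01}=(0,-\ka_1)$, $V_{12}=(-E_1\ka_1+\ka_2,-\ka_1)$, and $V_{23}=((1-E_1E_2)\ka_2,-E_2\ka_2)$. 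The maximum $\max\{\ell_1^{\ka_1}(y)\mid\ell_2^{\ka_2}(y)=0\}$ is attained at $V_{23}$, so the condition $\ell_1^{\ka_1}(x)\geq\max\{\cdots\}$ reads $x_2\geq-E_2\ka_2$; this is the routine algebra identifying the two forms of \eqref{e:?linek1}. A consequence of $\ka_1,\ka_2<0$ that I will use repeatedly is that the first coordinate of $V_{12}$ is positive, i.e. $\ka_2>E_1\ka_1$.

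For points strictly off the bisector I would invoke Proposition~\ref{p:asympresolve} together with Remark~\ref{r:resolveP}. Since $k=2$ both bounds in Proposition~\ref{p:asympresolve} collapse onto the bisector: the lower bound is $x_1=(E_1-1)x_2$, and the upper bound is $\tfrac{m-(E_k-1)\tilde n}{n-(E_k-1)q}=\tfrac{(E_1E_2-1)-(E_2-1)E_1}{E_2-(E_2-1)}=E_1-1$ as well. Concretely, applying Remark~\ref{r:resolveP}(i) to the edge $\{\ell_1^{\ka_1}=0\}$, whose endpoints are $V_{01}$ on the vertical edge and $V_{12}$ with positive first coordinate, displaces every point with $x_1-y_1<(E_1-1)(x_2-y_2)$, which unwinds to $x_1<(E_1-1)x_2+\ka_2-\ka_1$, i.e. the open half $\{\ell_1^{\ka_1}<\ell_2^{\ka_2}\}$, via a parallel extended probe deflecting a horizontal probe off $\{\ell_1^{\ka_1}=0\}$; the flag fits since $\ov\De_{n,m}(\ka)$ is unbounded, exactly as in Lemma~\ref{l:openEP}. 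The statement analogous under the symmetry $S$ of \eqref{e:M}, deflecting a slant-edge probe off $\{\ell_2^{\ka_2}=0\}$, displaces the open half $\{\ell_1^{\ka_1}>\ell_2^{\ka_2}\}$. Together these handle all points off the bisector.

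It remains to treat the bisector, and this is where the cutoff $x_2=-E_2\ka_2$ enters; I expect this to be the main obstacle. For a bisector point at height $x_2$ with $-\ka_1<x_2<-E_2\ka_2$ I would use the horizontal probe $Q$ based at $(0,x_2)$ with direction $(1,0)$. Because $(1,0)$ is integrally transverse to both $\eta_0$ and $\eta_2$, and for such heights the segment reaches $\{\ell_2^{\ka_2}=0\}$ before the slant edge (one checks the slant edge lies farther right precisely when $x_2<-E_2\ka_2$), $Q$ is a \emph{symmetric} probe from the vertical edge to $\{\ell_2^{\ka_2}=0\}$. Hence every point of $Q$ except its midpoint is displaceable. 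The only bisector point that could coincide with such a midpoint satisfies $\ell_0=\ell_1^{\ka_1}=\ell_2^{\ka_2}$; when $E_1>2$ this forces height $\tfrac{2\ka_1-\ka_2}{E_1-2}$, which is strictly less than $-\ka_1$ exactly because $\ka_2>E_1\ka_1$, so the triple point lies below $V_{12}$, outside the polytope (and for $E_1=2$ no such point exists). Consequently every bisector point with $x_2<-E_2\ka_2$ is displaceable.

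Finally, for bisector points with $x_2\geq-E_2\ka_2$ the horizontal probe exits through the slant edge, ceases to be symmetric, and the construction no longer applies; these points make up the exceptional ray of \eqref{e:?linek1}, for which no displaceability claim is made. Combining the three groups shows that every $L_x$ with $x$ off that ray is displaceable by (possibly extended) probes, which is the assertion of the corollary. The delicate point throughout is the third paragraph: verifying that the symmetric-probe construction works for exactly the heights below $-E_2\ka_2$, and that the minimal-resolution hypothesis $\ka_2>E_1\ka_1$ pushes the triple-equidistant point harmlessly outside the polytope.
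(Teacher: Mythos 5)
Your proposal is correct and takes essentially the same route as the paper: points off the bisector are handled by Proposition~\ref{p:asympresolve} and Remark~\ref{r:resolveP} exactly as in the text, and the bisector points with $x_{2} < -E_{2}\ka_{2}$ are displaced along the very same horizontal segments the paper uses, just packaged as symmetric probes based on the vertical edge rather than as one-directional probes with direction $(-1,0)$ based on $\{\ell_{2}^{\ka_{2}} = 0\}$. Your midpoint-exclusion computation (the triple point $\ell_{0}=\ell_{1}^{\ka_{1}}=\ell_{2}^{\ka_{2}}$ lies below the vertex $V_{12}$ precisely because the minimal-resolution hypothesis forces $\ka_{2} > E_{1}\ka_{1}$) is exactly the less-than-halfway verification that the paper's probes require, so the two arguments coincide in substance.
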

\begin{proof}
	We only need to prove that if $x$ is on the line \eqref{e:?linek1}
	and $x_{2} < -E_{2}\ka_{2}$, then $L_{x}$ is displaceable.  These points
	can be displaced by probes with direction $(-1,0)$ based on the
	edge $\{\ell_{2}^{\ka_{2}} = 0\}$.  See Figure~\ref{f:k1}. 
\end{proof} 

\begin{remark}
	 If $x$ lies on the line in \eqref{e:?linek1}, then
	 $$
	 	\ell_{0}(x) > \ell_{1}^{\ka_{1}}(x)\,, \quad \ell_{1}^{\ka_{1}}(x) = \ell_{2}^{\ka_{2}}(x)\,,
		\quad \ell_{2}^{\ka_{2}}(x) < \ell_{3}(x)
	 $$
	 and hence the potential function $W_{x,\a}(\b_{1}, \b_{2})$ from \eqref{e:P} will not
	 have a critical point.  In particular one can check that 
	 $\partial_{\b_{1}} W_{x,\a}(\b) \not= 0$ for all $\b = (\b_{1}, \b_{2}) \in (\La_{0})^{2}$.
	 So Theorem~\ref{t:potential} cannot be used to prove $L_{x}$ is nondisplaceable.
\end{remark}

\subsubsection{$A_{n}$-singularities}

The $A_{n}$-singularity is $\C^{2}/\G$ where $\G$ is the subgroup of $SU(2)$ generated by
$\begin{pmatrix}
\z^{-1} & 0\\
0 & \z
\end{pmatrix}$
where $\z = e^{2\pi i/(n+1)}$.  Comparing with Section~\ref{s:Mnm}, we see that the $A_{n}$-singularity
is given by $(M_{(n,n+1)},\, \w,\, \De_{(n,n+1)})$ and note the associated $(\tilde{n}, q) = (n, n-1)$.
The continued fraction expansion for $(n,n+1)$ has length $k=n$ and is given by 
$$
E_{1} = \dots = E_{n} = 2
$$ 
and the facets for a minimal resolution $\ov{\De}_{(n, n+1)}(\ka)$ have interior conormals
\begin{align*}
	&\eta_{0} = (1,0) &&\eta_{1} = (0,1) &&\eta_{2} = (-1,2) &&\eta_{3}=(-2,3) &&
	\dots &&\eta_{n+1} = (-n,n+1).
\end{align*}
Note that in this case the upper and lower bounds of Proposition~\ref{p:asympresolve} coincide,
so we have the following corollary taking into account Remark~\ref{r:resolveP}. 

\begin{figure}[h]

\begin{center} 
\leavevmode 
\includegraphics[width=3in]{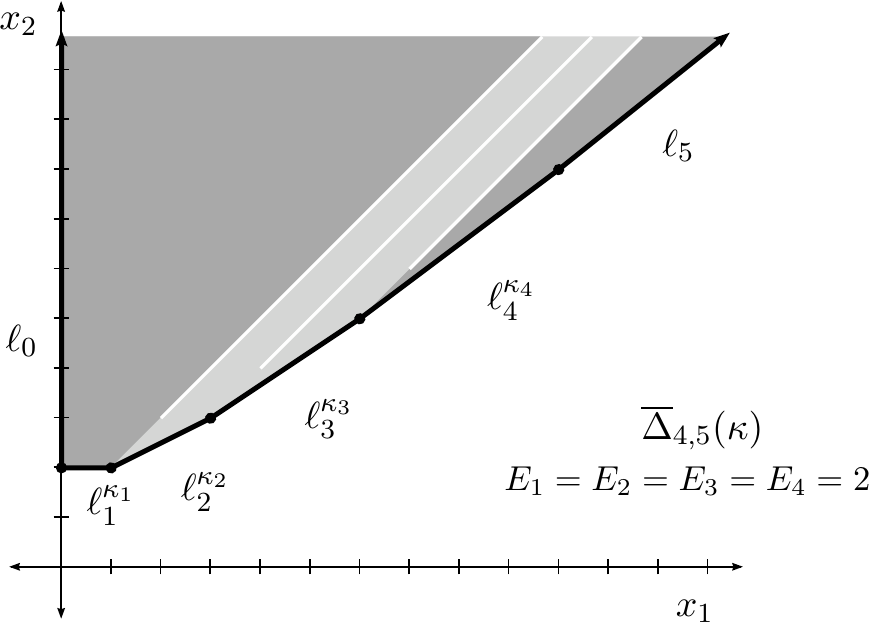}
\end{center} 

	\caption{
	Minimal resolution of $A_{n}$-singularity.
	The light grey regions are displaceable by standard probes
	with direction (1,1) or parallel to one of the added facets.
	The medium grey regions are displaceable by extended probes
	from Proposition~\ref{p:asympresolve}.
	Displaceability for the white rays is unknown.}
	\label{f:AnSing}
\end{figure}

\begin{cor}\label{c:An}
	Let $\ka \in \R_{<0}^{n}$ be such that $\ov{\De}_{(n,n+1)}(\ka)$
	is a minimal resolution from \eqref{e:mrp}.  A point $x \in \ov{\De}_{(n,n+1)}(\ka)$
	is displaceable by probes if it does not belong to one of the $(n-1)$ rays given by
	$$
		\ell_{j}^{\ka_{j}}(x) = \ell^{\ka_{j+1}}_{j+1}(x)
		\quad\mbox{and}\quad
		\ell_{j}^{\ka_{j}}(x) \geq \max\Big\{\ell_{j}^{\ka_{j}}(y) : 
		y \in \ov{\De}_{(n,n+1)}(\ka)\,,\,\, \ell_{j+1}^{\ka_{j+1}}(y) = 0\Big\}
	$$
	for $j=1, \dots, n-1$.
\end{cor}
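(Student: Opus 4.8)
The plan is to read displaceability off the single affine coordinate $x_1-x_2$. Write $\eta_j=(1-j,j)$ for the conormals in the statement and $d_j=(j,j-1)$ for the primitive direction of the $j$-th facet, and put $\ka_0=\ka_{n+1}=0$. Everything rests on two identities: $\ip{\eta_i,(1,1)}=1$ for every $i$, and $\ip{\eta_i,d_j}=j-i$. The first says $(1,1)$ is integrally transverse to every facet and preserves $x_1-x_2$; the second says $d_j$ is integrally transverse to exactly the two neighbouring facets $j\mp 1$. A one-line computation shows the defining equation $\ell_j^{\ka_j}(x)=\ell_{j+1}^{\ka_{j+1}}(x)$ of the $j$-th ray is the line $\{x_1-x_2=c_j\}$, $c_j:=\ka_{j+1}-\ka_j$, which contains the vertex $V_j=\{\ell_j^{\ka_j}=\ell_{j+1}^{\ka_{j+1}}=0\}$, and convexity of $\ov{\De}_{(n,n+1)}(\ka)$ forces $c_0<c_1<\dots<c_n$.

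First I would displace every fibre whose base point is off all of these lines. Since $(1,1)$ lies in the recession cone of the sector and is integrally transverse to $\eta_j$, the probe issued from an interior point of facet $j$ in direction $(1,1)$ is unbounded, hence (exactly as the direction-$(1,1)$ probes of Lemma~\ref{l:Dsec} displace all of $\{x_1<x_2\}$) it displaces every fibre over it. This probe keeps $x_1-x_2$ constant, and the interior of facet $j$ realises precisely the offsets in $(c_{j-1},c_j)$; letting $j$ run from $0$ (the facet $\{\ell_0=0\}$, giving $(-\infty,c_0)$) up to $n+1$ (the slant facet, giving $(c_n,\infty)$) therefore displaces every point with $x_1-x_2\notin\{c_0,\dots,c_n\}$. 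Only the points on the $n+1$ lines $\{x_1-x_2=c_j\}$ survive.

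The two outer lines are cleared by extended probes. As facet $1$ is adjacent to the semi-infinite vertical facet and $E_1=2$, Remark~\ref{r:resolveP}(i) gives a parallel extended probe displacing every point with $x_1-x_2<c_1$ at every height, so in particular the whole line $c_0$; the $S$-symmetric statement (the extended probe deflected off facet $n$ in Proposition~\ref{p:asympresolve}) removes the line $c_n$. For an inner line $c_j$ with $1\le j\le n-1$ I would use the probe based on facet $j+1$ with direction $-d_j$, which is integrally transverse to $\eta_{j+1}$ and parallel to facet $j$. As its base runs over the interior of facet $j+1$ this probe sweeps out the part of $\{x_1-x_2=c_j\}$ lying strictly below the level $\ell_j^{\ka_j}=\ell_j^{\ka_j}(V_{j+1})$; since the limiting probe based at $V_{j+1}$ sits entirely at that level (as $\ip{\eta_j,d_j}=0$) and meets the line exactly at the endpoint of the ray, this is precisely the complement in $\{x_1-x_2=c_j\}$ of the $j$-th ray.

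The verification of this last sweep is where the work lies, and is the step I expect to be the main obstacle. Transporting the corner at $V_j$ by the integral chart sending $\eta_{j-1},\eta_j$ to $(1,0),(0,1)$ turns the local picture into the $A_2$-corner with conormals $(1,0),(0,1),(-1,2)$, and the probe into the direction-$(-1,0)$ probe already used in Corollary~\ref{c:k1}. In these coordinates the probe issued at level $\ell_j^{\ka_j}=p$ meets the ray line at affine distance $p$ from its base and then continues a further distance $p+\beta$ (with $\beta>0$ a fixed local width) before leaving the polytope, so the halfway test $p<\tfrac12\ell(P)$ holds with room to spare. Because $-d_j$ points down and to the left, the probe always exits through facet $j-1$ or a lower facet, never through the facets above $V_j$; one still has to check that the shortening caused by these lower facets never destroys the halfway inequality, but the slack in the estimate (and the explicit length computation, as in Corollary~\ref{c:k1}) absorbs it. Letting the base tend to $V_{j+1}$ then pins the cut-off at $\ell_j^{\ka_j}=\ell_j^{\ka_j}(V_{j+1})$, matching the ray in the statement, and combining the three families shows every point off the $n-1$ rays is displaceable.
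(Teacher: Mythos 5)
Your three families are exactly the ones the paper (implicitly) uses: $(1,1)$ probes for everything off the lines $\{x_1-x_2=c_j\}$; the two parallel extended probes of Proposition~\ref{p:asympresolve} and Remark~\ref{r:resolveP}(i) for the half-planes $\{x_1-x_2<c_1\}$ and $\{x_1-x_2>c_{n-1}\}$, hence for the lines $c_0$ and $c_n$; and, on the inner lines, probes based on $F_{j+1}$ with direction $-d_j$, generalizing the $(-1,0)$ probes in the proof of Corollary~\ref{c:k1} (this is the argument behind Figure~\ref{f:AnSing}). Your steps 1 and 2 are correct and match the paper.

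The gap is precisely the step you flagged and then waved away. Write the base point as $y=V_j+p\,d_{j+1}$, so the probe crosses the line at distance $p$; then $\ell_i^{\ka_i}(y)=\ell_i^{\ka_i}(V_j)+p(j+1-i)$, while $\ell_i^{\ka_i}$ decreases at rate $j-i$ along $-d_j$, so the halfway test is equivalent to
$$
\ell_i^{\ka_i}(V_j)>(j-i-1)\,p \qquad\mbox{for every } 0\le i\le j-2 .
$$
The constraint from $i=j-1$ is vacuous (that is your $\beta>0$), and it is the only one visible in your $A_2$-corner chart; the constraints from $i\le j-2$ are not vacuous, and they must hold for all $p$ up to the ray start $\ell_j^{\ka_j}(V_{j+1})$. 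This fails for legitimate minimal resolutions. Take $n=3$ and $\ka=(-0.7,-1,-1.2)$: the polygon is smooth with all five edges nonempty (vertices $(0,0.7)$, $(0.4,0.7)$, $V_2=(0.6,0.8)$, $V_3=(4.8,3.6)$), the ray on $\{\ell_2^{\ka_2}=\ell_3^{\ka_3}\}$ starts at level $\ell_2^{\ka_2}(V_3)=1.4$, but your probes pass the halfway test only for $p<\ell_0(V_2)=0.6$. Moreover no other standard probe rescues these points: at $u=(1.6,1.8)$ (level $p=1$) the only directions that are integrally transverse to a base facet and whose backward ray exits through that facet's interior are $(1,c)$ with $c\le 0$ from the vertical edge, $(-2,-1)$ from $F_3$, and $(4k+1,3k+1)$ with $k\le -1$ from the slant edge (the $(1,1)$ line through $u$ leaves the polygon at the vertex $V_2$, so it is not a probe), and each of these puts $u$ strictly past its halfway point. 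Since the obstruction depends only on the ratios of the $\ka_i$, it survives scaling $\ka$ toward $0$; so step 3 cannot be completed as written, for any reading of ``$\ka$ small''.

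The statement itself survives, but only because ``probes'' must be read as including extended probes on the inner lines as well: in the example, $u$ is displaced by a parallel extended probe (Theorem~\ref{t:pdeflected}) in which $P$ is your probe based on $F_3$ with $v_P=(-2,-1)$, the deflecting probe $Q$ is based on $F_2$ with direction $(1,1)$ (note $v_P$ is parallel to $F_2$), and the flag is placed far out along $Q$: since $(1,1)$ lies in the recession cone, $Q$ is unbounded and both $\ell_0$ and $\ell_1^{\ka_1}$ grow linearly along it, so a flag of any required length fits there. Running this deflection for every point below the ray on every inner line is what closes the argument for general admissible $\ka$; without it, neither your step 3 nor the figure-caption justification (``standard probes \dots parallel to one of the added facets'') is correct.
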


\subsubsection{Open regions of unknown points}

If $E_{1}, \dots, E_{k}$ is the continued fraction decomposition of $(n,m)$, suppose that
its length is at least $k \geq 3$ and some $E_{j} \geq 3$ for $j\not=1,k$.  Then in any
minimal resolution $\ov{\De}_{n,m}(\ka)$ there will be open regions of points that extended probes do not displace.

\begin{figure}[h]

	\begin{center} 
	\leavevmode 
	\includegraphics[width=4in]{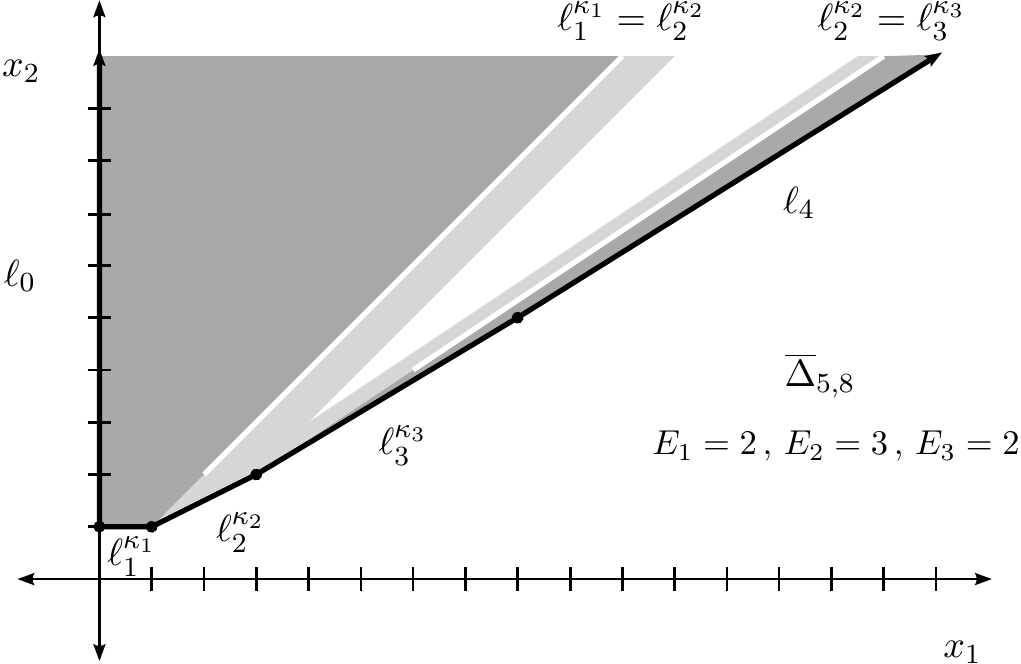}
	\end{center} 

	\caption{
	Displaceable fibers in $\ov{\De}_{5,8}(\ka)$.
	The light grey regions are displaceable by standard probes
	with direction $(1,1)$, $(3,2)$, or parallel to one of the added facets.
	The medium gray regions are displaceable by extended probes
	from Proposition~\ref{p:asympresolve}.
	Displaceability is unknown for points in the white rays and regions.}
	\label{f:5-8TR}
\end{figure}

\begin{example}\label{ex:FR5-8}
	Consider a minimal resolution of $\ov{\De}_{5,8}(\ka)$.  
	Since $(n,m) = (5,8)$ we have $(\tilde{n}, q) = (5, 3)$ and the upper
	and lower bounds in Proposition~\ref{p:asympresolve} do not coincide.
	Since the continued fraction expansion of $(5,8)$ is given by 
	$E_{1} = 2$, $E_{2} = 3$, and $E_{3} = 2$,
	the conormals for $\ov{\De}_{5,8}(\ka)$ are
	\begin{align*}
		&\eta_{0} = (1,0) &&\eta_{1} = (0,1) &&\eta_{2} = (-1,2) &&\eta_{3}=(-3,5) &&\eta_{4} = (-5,8).
	\end{align*}
	The displaceable fibers in $\ov{\De}_{5,8}(\ka)$ are displayed in Figure~\ref{f:5-8TR};
	as we can see there is an open region of unknown fibers.
\end{example}

\subsection{The weighted projective planes $\bP(1,p,q)$}\label{s:135}

Consider the weighted projective plane  $\bP(1,3,5)$, with moment polytope
\begin{equation}\label{e:P135}
	\Big\{x \in \R^{2} \mid \ell_{1}(x):= x_{1} \geq 0,\,
	\ell_{2}(x):= x_{2} \geq 0,\, \ell_{3}(x):= -5x_{1} -3x_{2} + 15 \geq 0\Big\}.
\end{equation}
McDuff showed in \cite[Lemma 4.4]{Mc11} that $\De$ has an open subset of points that 
cannot be displaceable by probes; moreover, this open subset persists even
 after resolving the orbifold singularities.
Wilson--Woodward on the other hand showed in \cite[Example 4.11]{WW11} that many, but not all, 
of the fibers that cannot be displaced by probes are actually nondisplaceable in $\De$.
Figure~\ref{f:135} summarizes their results.  
Using Remarks \ref{r:nohelp} and \ref{r:extprob}(ii), one can see that one cannot do better 
by using extended probes.  In this section we  work out 
which points can be displaced by extended probes  when we resolve the singularities.

\begin{figure}[h]
	\begin{center} 
	\leavevmode 
	\includegraphics[width=2in]{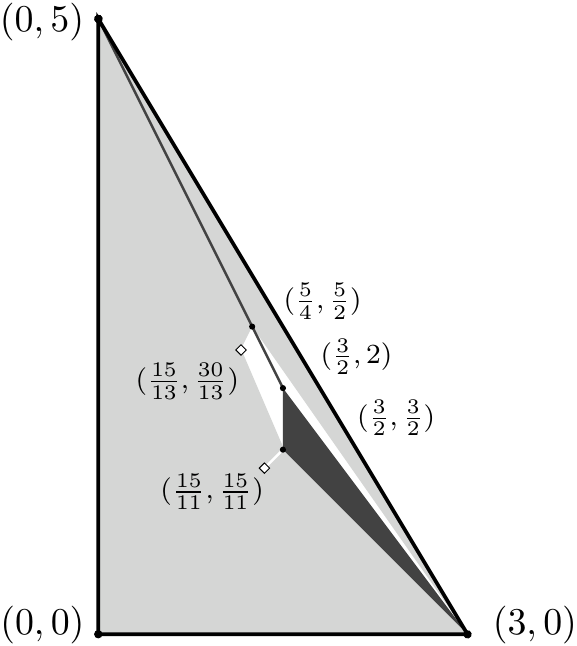}
	\end{center} 

	\caption{The moment polytope for $\bP(1,3,5)$. 
	The probe displaceable points in light gray, the nondisplaceable points in dark grey,
	and the unknown points in white.}
	\label{f:135}
\end{figure}
Observe that $\De$ near the vertex $(3,0)$ is locally equivariantly symplectomorphic
to a neighborhood of the origin in $\De_{3,5}$, and hence locally the results in Figure~\ref{f:135}
match those in Figure~\ref{f:DaNDsectors}.  
The region near $(0,5)$ in $\De$ is literally of the form $\De_{5,3}$,
which by shearing is equivalent to $\De_{2,3}$ and Theorem~\ref{t:DND} says that in $\De_{2,3}$ there is one line of nondisplaceable fibers and everything else is displaceable.  This is what we see in the region near $(0,5)$ in Figure~\ref{f:135}.

The normals for $\bP(1,3,5)$ are given by
$$ 
	\eta_{1} = (1,0)\,,\quad \eta_{2} = (0,1)\,,\quad \eta_{3} = (-5, -3)
$$
and the interior conormals for a minimal resolution of $\bP(1,3,5)$ are given by
\begin{equation}\label{e:NR135}
\eta_{4} = (-1, -1)\,,\quad \eta_{5} = (-3,-2)\,,\quad \eta_{6} = (-2, -1)\,,\quad \eta_{7} = (-1, 0).
\end{equation}
In what follows we include facets 
$$ 
F_j = \{\ell_{j}^{\ka_{j}}(x) := \ip{\eta_{j}, x} + \ka_{j} \geq 0\} 
$$
 for
$j = 4,5,6,7$ into the presentation of $\De$ from \eqref{e:P135}.  For reference let us note that these half spaces define
ghost facets in $\De$ when
$$
	\ka_{4} \geq 5\,, \quad \ka_{5} \geq 10\,,\quad \ka_{6} \geq 6\,,\quad \ka_{7} \geq 3.
$$

\subsubsection{Resolution of singularity at $(3,0)$ in $\bP(1,3,5)$}

Consider the resolution of the singularity at $(3,0)$ given by
$\De \cap \{\ell_{6}^{\ka_{6}} \geq 0\,,\,\, \ell_{7}^{\ka_{7}} \geq 0\}$, which we assume
has vertices
\begin{equation}\label{e:a456}
	(0,0),\, (0,5),\, a_{4} = \big(3(\ka_{6} -5), 5(6-\ka_{6})\big),\,
	a_{5} = (\ka_{7}, \ka_{6} - 2\ka_{7}),\, a_{6} = (\ka_{7}, 0).
\end{equation}
This resolution corresponds to a minimal resolution of 
the sector $\De_{3,5}$. 
Since 
the continued fraction expansion of $\frac{5}{3}$ is given by $(E_1,E_2) = (2,3)$,
it follows from 
Corollary~\ref{c:k1} that there is a line of nondisplaceable points near the resolved vertex
lying on the bisector of the edges $F_6,F_7$ and hence in direction $(-1,1)$.
These points cannot be displaced in 
the minimal resolution
$\ov{\De}_{3,5}$
 because, although we can deflect  a vertical probe  $P$
starting on the horizontal base facet $F_2$ by  a  $(-1,0)$ probe $Q$ starting on $F_6$,
the resulting deflected probe $\cF$ is not parallel. Rather it has a trapezoidal flag and tapers to a point as it reaches this line; cf.\ Remark~\ref{r:extprob} (ii). 
However 
because of the vertical edge $F_{1}$,
the probe $Q$ 
is symmetric  in the partial resolution of $\bP(1,3,5)$
so that 
the deflected probe $\cF$ has no flag.
Moreover, in the case $\bP(1,3,5)$ 
(but not in other $\bP(1,n,m)$),
the deflected probe $\cF$ is
symmetric, i.e.\ it exits the polygon transversally so that its direction can be reversed.   Hence this type of probe displaces all but
a codimension $1$ subset: see Figure~\ref{f:135R03DaND}.
Proposition~\ref{p:135RDaND} gives the details.

\begin{figure}[h]	

	\begin{center} 
	\leavevmode 
	\includegraphics[width=6in]{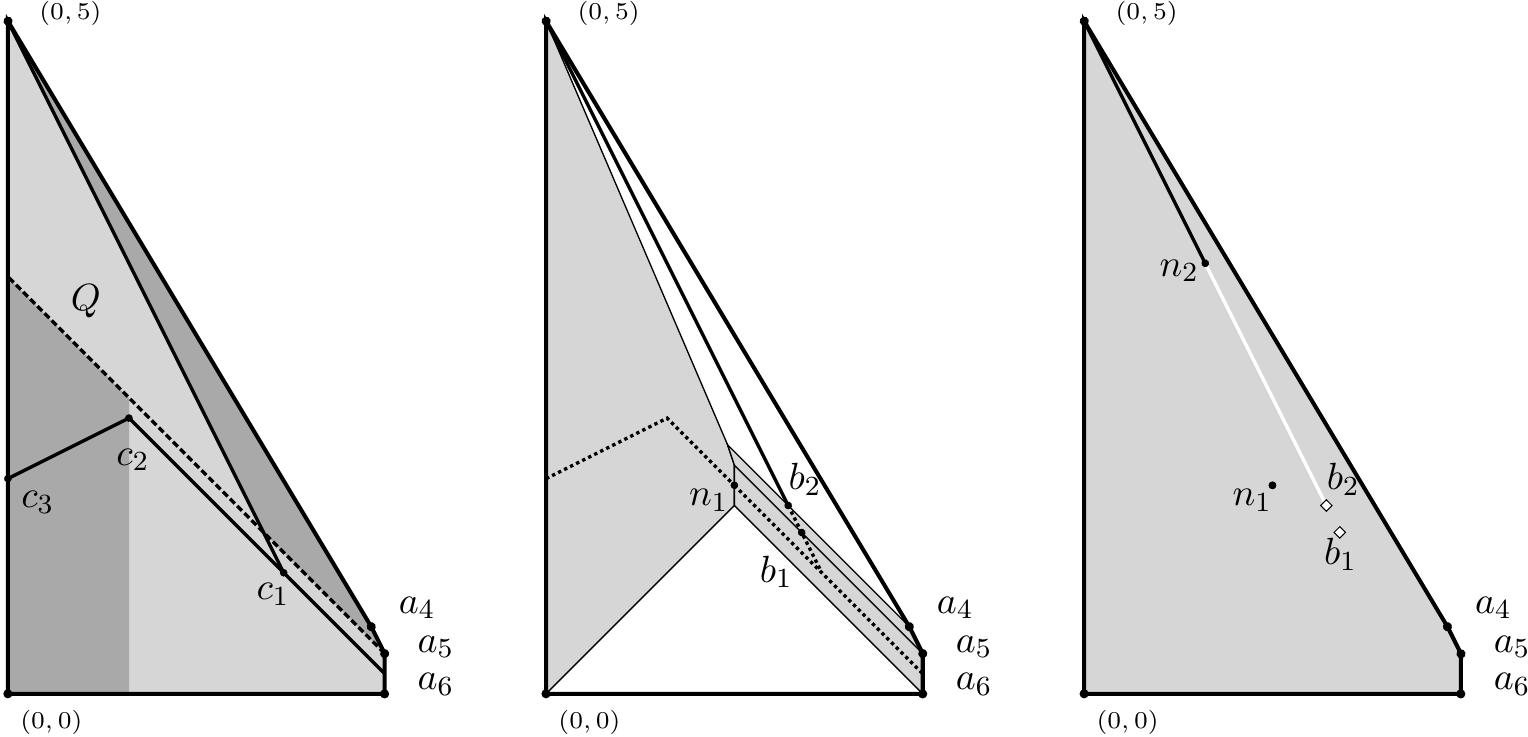}
	\end{center} 

	\caption{Displaceable and nondisplaceable fibers when the singularity
	at $(3,0)$ in $\bP(1,3,5)$ is resolved as in \eqref{e:135R03}.
	\textbf{Left}: In grey, points displaceable by symmetric extended probes deflected 
	by a  probe  $Q$,
	that is based on the edge $F_6$ with conormal $(-2,-1)$. 
	\textbf{Middle}: Displacing some more points with standard probes that have direction
	$\pm (1, -1)$.
	\textbf{Right}: In light grey are the fibers that were displaced in the previous two pictures.
	The point $n_{1}$ and the points on the black line segment connecting $(0,5)$ and $n_{2}$
	are nondisplaceable.
	Displaceability is unknown for points on the white line segment, and the points $b_{1}$, and $b_{2}$.}
	\label{f:135R03DaND}
\end{figure}

\begin{prop}\label{p:135RDaND} 
In the resolution of the singularity at $(3,0)$ in $\De$ from \eqref{e:P135} where
\begin{equation}\label{e:135R03}
\De \cap \{\ell_{6}^{\ka_{6}}(x):= -2x_{1} - x_{2} + \ka_{6} \geq 0\} \cap 
\{\ell_{7}^{\ka_{7}}(x):= -x_{1} + \ka_{7} \geq 0\}, 
\end{equation}
let $\ka_{6}, \ka_{7}$ be such that the vertices of \eqref{e:135R03} are given
by \eqref{e:a456}.
\begin{enumerate}\item [(i)]
	There is an open dense set of points in
	$\De \cap \{\ell_{6}^{\ka_{6}} \geq 0\,,\,\, \ell_{7}^{\ka_{7}} \geq 0\}$,
	whose Lagrangian fibers are displaceable by probes or extended probes.
	\item[(ii)]
	If $x$ is on the line segment connecting
	\begin{equation}\label{e:NDS1}
		(0,5) \mbox{ and } n_{2} = (\ka_{6}-5, 15-2\ka_{6})
	\end{equation}
	or the point $n_{1} = (\ka_{7}/2\,, \ka_{6}/2 - \ka_{7}/2)$
	then the Lagrangian fiber $L_{x}$ is nondisplaceable.
	\end{enumerate}
\end{prop}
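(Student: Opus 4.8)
The proof splits into the displaceability statement (i) and the two nondisplaceability statements in (ii).

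The plan for (i) is to combine three kinds of displacing moves and to observe that together they miss only a nowhere-dense set. First I would record the standard probes and the parallel extended probes: away from the two ``bad'' directions these are exactly the probes of Lemma~\ref{l:Dsec}, Proposition~\ref{p:asympresolve} and Corollary~\ref{c:k1} applied to the resolved sector $\ov{\De}_{3,5}$ sitting at $(3,0)$, and they displace every interior fiber except those on the local midline emanating from $(0,5)$ and on the affine bisector ray of $F_6, F_7$ emanating from the resolved corner, together with finitely many points. The new input, special to $\bP(1,3,5)$, is the symmetric extended probe of Theorem~\ref{t:noflag}: take $P$ vertical, based on $F_2$ with $v_P=(0,1)$, and deflect it by the probe $Q$ based on $F_6$ with direction $v_Q=(-1,1)$, which is integrally transverse both to $F_6$ (conormal $(-2,-1)$) and to the vertical edge $F_1$ (conormal $(1,0)$); since $Q$ exits transversally through $F_1$ it is symmetric, so the extended probe carries no flag. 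Because here the extension $P'$ again meets the boundary transversally, $P'$ is itself symmetric and its direction may be reversed; running both orientations, and filling in with the auxiliary $\pm(1,-1)$ probes of the middle panel of Figure~\ref{f:135R03DaND}, sweeps out the whole bisector ray except its balance point $n_1$. The fibers left undisplaced then lie on the segment from $(0,5)$ to $n_2$, on $\{n_1\}$, and on finitely many other points, a set with empty interior, so the displaceable fibers form an open dense subset.

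For the nondisplaceable segment in (ii), I would use that a neighborhood of $(0,5)$ in $\De$ is integrally affine equivalent to the sector $\De_{5,3}\cong\De_{2,3}$, whose nondisplaceable locus is exactly its midline by Theorem~\ref{t:DND}(i) (here $E=\Tilde{E}=2$). A direct check identifies this midline with the line $2x_1+x_2=5$ through $(0,5)$, on which $\ell_1=\ell_3=x_1$. Feeding the two geometric facets $F_1$ (conormal $(1,0)$) and $F_3$ (conormal $(-5,-3)$) together with the ghost facets $F_4, F_5$ from \eqref{e:NR135} (conormals $(-1,-1)$ and $(-3,-2)$), with support constants chosen so that $\ell_1=\ell_3=\ell_4^{\e}=\ell_5^{\e'}$, into the potential \eqref{e:P} produces critical-point equations that are solved over $\C$ exactly as in \eqref{e:Wy12}--\eqref{e:alphas}; Theorem~\ref{t:potential} then gives nondisplaceability. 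Since $\ell_6$ is constant ($=\ka_6-5$) along this midline, the four facets $F_1,F_3,F_4,F_5$ stay closest precisely while $x_1<\ka_6-5$, and the endpoint $x_1=\ka_6-5$ is exactly $n_2=(\ka_6-5,15-2\ka_6)$, where $\ell_3=\ell_6$; beyond $n_2$ the facet $F_6$ is strictly closer than $F_3$, which is why the nondisplaceable segment stops at $n_2$.

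For the point $n_1=(\ka_7/2,\,\ka_6/2-\ka_7/2)$ I would invoke the parallel case of Proposition~\ref{p:noghost}(i). A short computation gives $\ell_1(n_1)=\ell_7(n_1)=\ka_7/2$ and $\ell_2(n_1)=\ell_6(n_1)=\ka_6/2-\ka_7/2$, and under the standing inequality $\ka_6>2\ka_7$ (which holds since $a_5$ lies strictly above the horizontal edge) the first value is the smaller. Hence the two closest facets are the parallel pair $F_1$ (conormal $(1,0)$) and $F_7$ (conormal $(-1,0)$), while the next-closest facets are $F_2$ and $F_6$. In the geometric potential built from $F_1,F_2,F_6,F_7$ the equation $\partial_{\b_1}W_{x,\a}=0$ is solved to lowest order in $q$ by the parallel pair (giving $\b_1=0$), and $\partial_{\b_2}W_{x,\a}=0$ then has two terms of equal $q$-power coming from $F_2$ and $F_6$, which balance for a suitable choice of the bulk parameter. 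By \cite[Theorem~4.5]{FOOO11} these leading-order solutions extend to an honest critical point, so $L_{n_1}$ is nondisplaceable by Theorem~\ref{t:potential}.

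The main obstacle is the bookkeeping in (i): one must verify that the reversed symmetric extended probe, together with the diagonal probes, genuinely covers the entire bisector ray except the single point $n_1$, and that no two-dimensional region survives undisplaced. The reversibility of $P'$ is special to $\bP(1,3,5)$---in the bare resolved sector $\ov{\De}_{3,5}$ the same deflection produces a trapezoidal flag that tapers to a point and fails to reach these fibers (cf.\ Remark~\ref{r:extprob})---so checking it carefully is the crux of the displaceability statement.
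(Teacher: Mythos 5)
Your overall strategy coincides with the paper's own proof: for (i) the paper uses exactly the symmetric extended probes you describe (vertical $P$ based on $F_2$, deflected by the symmetric probe $Q$ on $F_6$ with $v_Q=(-1,1)$, whose extension $P'$ of direction $(-1,2)$ exits transversally through $F_1$ or $F_3$ and can therefore be reversed), filled in by standard $\pm(1,-1)$ probes; for (ii) it exhibits critical points of the potential, using a ghost facet at the $(0,5)$-corner for the segment and the pairing $\ell_1=\ell_7$, $\ell_2=\ell_6$ at $n_1$. Two issues in your write-up deserve attention.

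First, at $n_1$ you pass from a leading-order solution to an honest critical point by invoking \cite[Theorem~4.5]{FOOO11}. That theorem concerns smooth compact toric manifolds, and the paper only ever appeals to it in that setting (Proposition~\ref{p:noghost}); the polytope \eqref{e:135R03} still has the singular vertex at $(0,5)$, so the underlying space is an orbifold, where only the quasimap criterion of Theorem~\ref{t:potential} is available. The paper avoids this by solving the critical point equations exactly: at $n_1$, with $(y_1,y_2)=(1,1)$ one takes $e^{\a_1}=1+2q^{\ell_2(x)-\ell_1(x)}+5q^{\ell_3(x)-\ell_1(x)}$ and $e^{\a_2}=1+3q^{\ell_3(x)-\ell_2(x)}$, which lie in $\exp(\La_0)$ precisely because of the ordering \eqref{e:lNDS2}. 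Your systems admit equally explicit solutions, so the gap is easily repaired, but as written the step is unjustified. The same remark applies to the segment, where the paper needs only the single ghost facet $F_4$ (with $\ell_1=\ell_3=\ell_4^{\ka_4}\le\ell_6^{\ka_6}$, see \eqref{e:lNDS1}) rather than both $F_4$ and $F_5$; solving exactly also covers the endpoint $n_2$, where $\ell_6$ becomes equal to the other three and your ``strictly closest facets'' dichotomy breaks down.

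Second, a geometric slip in (i): $n_1$ is not the balance point of the bisector ray $\{\ell_6^{\ka_6}=\ell_7^{\ka_7}\}$. The vertex condition in \eqref{e:a456} forces $\ka_6>2\ka_7$, so $\ell_6^{\ka_6}(n_1)\ne\ell_7^{\ka_7}(n_1)$; instead $n_1$ is characterized by $\ell_1=\ell_7^{\ka_7}$ and $\ell_2=\ell_6^{\ka_6}$, and it lies on the line $x_1+x_2=\ka_6/2$ swept out by the midpoints of the symmetric extended probes. Moreover the undisplaced set also contains the segment from $n_2$ to $b_1$ and the points $b_1$, $b_2$ of Figure~\ref{f:135R03DaND}, not just the segment \eqref{e:NDS1} and $n_1$. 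None of this harms the conclusion of (i), since the surviving set is still a finite union of segments and points and hence nowhere dense, but the inventory matters if one wants the sharper picture the paper records.
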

\begin{proof}[Proof of Proposition~\ref{p:135RDaND}(ii)]
	If $x$ is on the segment \eqref{e:NDS1}, then for some $\ka_{4} \geq 5$ one can add a ghost facet
	 $\{\ell_{4}^{\ka_{4}}(x):= -x_{1} -x_{2} + \ka_{4} \geq 0\}$ to \eqref{e:135R03} so that
	\begin{equation}\label{e:lNDS1}
		\ell_{1}(x) = \ell_{3}(x) = \ell_{4}^{\ka_{4}}(x) \leq \ell_{6}^{\ka_{6}}(x)\,, \quad
		\ell_{1}(x) < \ell_{2}(x)\,, \quad \ell_{1}(x) < \ell_{7}^{\ka_{7}}(x)
	\end{equation}
	Using \eqref{e:lNDS1} and the change of variables $y_{1} = e^{\b_{1}}$
	and $y_{2} = e^{-\b_{1}-\b_{2}}$, the potential function with the ghost facet added is
	\begin{align*}
		W_{x,\a} = \Big(e^{\a_{1}}y_{1} + e^{\a_{2}}y_{2} + y_{1}^{-2}y_{2}^{3}& +
		y_{1}^{-1}y_{2}\,q^{\ell_{6}^{\ka_{6}}(x)- \ell_{1}(x)}\\
		& + y_{1}^{-1}y_{2}^{-1}\,q^{\ell_{2}(x)-\ell_{1}(x)}
		+ y_{1}^{-1}\,q^{\ell_{7}^{\ka_{7}}(x)-\ell_{1}(x)}\Big)\, q^{\ell_{1}(x)}.
	\end{align*}
	Hence the critical point equations at $(y_{1}, y_{2}) = (1,1)$ are
	\begin{align*}
		\partial_{y_{1}} W_{x,\a} &= 
		\Big(e^{\a_{1}} - 2 -  q^{\ell_{6}^{\ka_{6}}(x)- \ell_{1}(x)} - q^{\ell_{2}(x)-\ell_{1}(x)}
		- q^{\ell_{7}^{\ka_{7}}(x)-\ell_{1}(x)}\Big)\, q^{\ell_{1}(x)} = 0\\
		\partial_{y_{2}} W_{x,\a} &=
		\Big(e^{\a_{2}} + 3 + q^{\ell_{6}^{\ka_{6}}(x)- \ell_{1}(x)} - q^{\ell_{2}(x)-\ell_{1}(x)}\Big)\, q^{\ell_{1}(x)} = 0
	\end{align*}
	and therefore $(y_{1}, y_{2}) = (1,1)$ is a critical point of $W_{x,\a}$ when
	\begin{align*}
		e^{\a_{1}} &= 2 + q^{\ell_{6}^{\ka_{6}}(x)- \ell_{1}(x)} + q^{\ell_{2}(x)-\ell_{1}(x)} + 
			q^{\ell_{7}^{\ka_{7}}(x)-\ell_{1}(x)}
		\quad\mbox{and}\quad\\
		e^{\a_{2}} &= -3 - q^{\ell_{6}^{\ka_{6}}(x)- \ell_{1}(x)} + q^{\ell_{2}(x)-\ell_{1}(x)}.
	\end{align*}
	It follows from \eqref{e:lNDS1} that such $\a_{1}, \a_{2} \in \La_{0}$ exist.
	Hence $L_{x}$ is nondisplaceable by Theorem~\ref{t:potential}.
	Points between $n_{2}$ and $b_{1}$, see Figure~\ref{f:135R03DaND} 
	are closer to the facet $\{\ell_{6}^{\ka} = 0\}$ than any other facet,
	so we cannot prove they are nondisplaceable with a potential; 
	cf.\ Proposition~\ref{p:noghost}.
\MS

	If $x = n_{1} = (\ka_{7}/2\,, \ka_{6}/2 - \ka_{7}/2)$, then 
	\begin{equation}\label{e:lNDS2}
		\ell_{7}^{\ka_{7}}(x) = \ell_{1}(x) < \ell_{2}(x) = \ell_{6}^{\ka_{6}}(x) < \ell_{3}(x).
	\end{equation}
	By \eqref{e:lNDS2} and the change of variables $y_{1} = e^{\b_{1}}$
	and $y_{2} = e^{\b_{2}}$, the potential function is
	$$
		W_{x,\a} = \Big(e^{\a_{1}}y_{1} + y_{1}^{-1}\Big)\,q^{\ell_{1}(x)}
		+ \Big(e^{\a_{2}}y_{2} + y_{1}^{-2}y_{1}^{-1}\Big)\,q^{\ell_{2}(x)}
		+ y_{1}^{-5}y_{2}^{-3}\,q^{\ell_{3}(x)}.	
	$$
	Hence the critical point equations at $(y_{1}, y_{2}) = (1,1)$ are
	\begin{align*}
		\partial_{y_{1}} W_{x,\a} &= 
		\Big(e^{\a_{1}} - 1 -2 q^{\ell_{2}(x)-\ell_{1}(x)} - 5q^{\ell_{3}(x)-\ell_{1}(x)}\Big)\, q^{\ell_{1}(x)} = 0\\
		\partial_{y_{2}} W_{x,\a} &=
		\Big(e^{\a_{2}} - 1 - 3q^{\ell_{3}(x)-\ell_{2}(x)}\Big)\, q^{\ell_{2}(x)} = 0
	\end{align*}
	and therefore $(y_{1}, y_{2}) = (1,1)$ is a critical point of $W_{x,\a}$ when
	$$	
		e^{\a_{1}} = 1+ 2q^{\ell_{2}(x)-\ell_{1}(x)} + 5q^{\ell_{3}(x)-\ell_{1}(x)}
		\quad\mbox{and}\quad
		e^{\a_{2}} = 1 + 3q^{\ell_{3}(x)-\ell_{2}(x)}.
	$$
	By \eqref{e:lNDS2}, such $\a_{1}, \a_{2} \in \La_{0}$ exist.
	Hence $L_{x}$ is nondisplaceable by Theorem~\ref{t:potential}.
\end{proof}	 

\begin{proof}[Proof of Proposition~\ref{p:135RDaND}(i)]
\textbf{Symmetric extended probes:}
	We will displace everything except the points on the black solid lines on the left
	in Figure~\ref{f:135R03DaND}.  The points are
	$$
		c_{1} = (5-\ka_{6}/2\,, \ka_{6} - 5)\,,\quad
		c_{2} = (\ka_{6} - 5\,, 5 - \ka_{6}/2)\,,\quad\mbox{and}\quad c_{3} = (0, 15/2 - \ka_{6}).
	$$
	We will use a symmetric probe
	$Q$ based arbitrarily close to $a_{5}$ on $\{\ell_{6}^{\ka_{6}}(x) = 0\}$,
	with direction $v_{Q} = (-1,1)$ and in particular lies on the line
	$\{x_{1}+x_{2} - \ka_{6} + \ka_{7} - \e = 0\}$ for $0 < \e \ll 1$.  
	The associated reflection is given by
	$$
		A_{Q}(x_{1}, x_{2}) = (-2x_{1} - x_{2} + \ka_{6},\, 3x_{1} +2 x_{2} - \ka_{6}). 
	$$
	
	\textbf{Part 1:}			
	Let $P$ be based at $b_{P}(\l) = (\l, 0)$ for $\ka_{6} - 5 \leq \l < \ka_{7}$,
	with direction $v_{P} = (0,1)$ and form the symmetric extended probe
	$\cSP = P \cup Q \cup P'$ where
	\begin{align*}
		x_{PQ}(\l)&= (\l\,, \ka_{6} - \ka_{7} - \l + \e) && \ell(P) = \ka_{6} - \ka_{7} - \l + \e\\
		x'_{PQ}(\l) &= (\ka_{7}- \l -\e\,, \l + \ka_{6}-2\ka_{7} + 2\e) && \ell(P') = \ka_{7}- \l -\e\\
		e_{P'}(\l) &= (0\,, \ka_{6}-\l) && \ell(\cSP) = \ka_{6} - 2\l
	\end{align*}
	where the direction $v_{P'} = \widehat{A}_{Q}(v_{P}) = (-1, 2)$.  The assumption on
	$\l$ ensures that $e_{P'}$ exists the polytope on $\{\ell_{2} = 0\}$.
	One can check that $\ell(P') < \ell(P)$ so the midpoint of the extended probe always lies on $P$,
	in fact it lies on the line $\{x_{1} + x_{2} - \ka_{6}/2 = 0\}$, which appears as the line
	connecting $c_{1}$ and $c_{2}$ on the left in Figure~\ref{f:135R03DaND}.  
	
	Therefore by Theorem~\ref{t:noflag},
	every point on $P$ before the midpoint is displaceable.  Now observe that since 
	$v_{P'} = (1, -2)$ is integrally transverse to the facet $\{\ell_{1} = 0\}$,
	on which $P'$ exits the polytope, we can swap the roles of $P$ and $P'$.  
	Hence everything on these extended probes past the midpoint are displaceable as well,
	with the exception of when $e_{P'} = (0, 5)$, since then $e_{P'}$ is not on the interior of a facet.
	As $\l$ and $\e$ vary, this sweeps out the points in the regions
	\begin{align*}
		\{ x_{1} \geq \ka_{6}-5\} \cap \{ x_{1} + x_{2} - \ka_{6} + \ka_{7} \leq 0\} \quad\mbox{and}\quad\\
		\{ 2x_{1} + x_{2} - 5 < 0\} \cap \{ x_{1} + x_{2} - \ka_{6} + \ka_{7} \geq 0\},
	\end{align*}
	that are not on the line $\{x_{1} + x_{2} - \ka_{6}/2 = 0\}$.
	These are the light gray regions on the left in Figure~\ref{f:135R03DaND}.

	\textbf{Part 2:}	
	Now let $P$ be based at $b_{P}(\l) = (\l, 0)$,
	with direction $v_{P} = (0,1)$ for $0 < \l \leq \ka_{6} - 5$ and form the associated symmetric 
	extended probe
	$\cSP = P \cup Q \cup P'$ given by
	\begin{align*}
		x_{PQ}(\l)&= (\l,\, \ka_{6}-\ka_{7} - \l + \e) && \ell(P) = \ka_{6}-\ka_{7} - \l + \e\\
		x'_{PQ}(\l) &= (\ka_{7} - \l - \e,\, \l + \ka_{6} - 2\ka_{7} + 2\e) && 
		\ell(P') = 15+2\l-3\ka_{6} + \ka_{7} -\e\\
		e_{P'}(\l) &= \big(3(\ka_{6}-5-\l),\, 5(6-\ka_{6}+\l)\big) && \ell(\cP) = 15-\l -2\ka_{6}
	\end{align*}
	where the direction $v_{P'} = \widehat{A}_{Q}(v_{P}) = (-1, 2)$.  Note that the restriction
	on $\l$ is to ensure that $e_{P'}(\l)$ lies on $\{\ell_{3} = 0\}$.  One can check that
	$\ell(P') < \ell(P)$ so the midpoint of the extended probe always lies on $P$.
	
	Therefore by Theorem~\ref{t:noflag},
	every point on $P$ before the midpoint is displaceable.  Now observe that since 
	$v_{P'} = (1, -2)$ is integrally transverse to the facet $\{\ell_{1} = 0\}$,
	on which $P'$ exits the polytope, we can swap the roles of $P$ and $P'$.
	Hence everything on these extended probes past the midpoint are displaceable as well,
	with the exception of when $e_{P'} = (0, 5)$, since then $e_{P'}$ is not on the interior of a facet.
	As $\l$ and $\e$ vary, this sweeps out the points in the regions
	\begin{align*}
		\{ x_{1} \leq \ka_{6}-5\} \cap \{ x_{1} + x_{2} - \ka_{6} + \ka_{7} \leq 0\} \quad\mbox{and}\quad\\
		\{ 2x_{1} + x_{2} - 5 < 0\} \cap \{ x_{1} + x_{2} - \ka_{6} + \ka_{7} \geq 0\},
	\end{align*}
	that are not on the line segment connecting $c_{2}$ and $c_{3}$.
	These are the dark gray regions on the left in Figure~\ref{f:135R03DaND}.

\textbf{Standard probes:}
	It is straightforward to check that standard probes with directions $\pm(-1,1)$ 
	displace everything not already displaced except the points
	\begin{align}
		n_{1} &= (\ka_{7}/2\,,\,\, \ka_{6}/2 - \ka_{7}/2) &\mbox{where }
		\ell_{1} = \ell_{7}^{\ka_{7}}\,,\,\, \ell_{2} = \ell_{6}^{\ka_{6}}\\
		b_{2} &= (5-\ka_{6}+\ka_{7}\,,\,\, 2\ka_{6} - 2\ka_{7} - 5) &\mbox{where } 
		\ell_{1} = \ell_{3}\,,\,\, \ell_{6}^{\ka_{6}} = \ell_{7}^{\ka_{7}}
	\end{align}
	and the points on the line $\{2x_{1} + x_{2} = 5\}$ such that $(x - a_{4}) \cdot (1,1) \geq 0$,
	whose endpoint is denoted $b_{2}$ in Figure~\ref{f:135R03DaND}.  See the middle
	polytope in Figure~\ref{f:135R03DaND}.
\end{proof}


\subsubsection{Resolution of both singularities of $\bP(1,3,5)$}

One can carry out a similar analysis of the points in the full minimal resolution of 
$\bP(1,3,5)$.  The result is qualitatively the same:  there are $4$ isolated points that are known to be non displaceable because their qW invariants are nonzero, there are a finite number of line segments of unknown properties (more than before because there are more vertices), and otherwise everything is displaceable.
Here are the details.

The polytope is given by
\begin{equation}\label{e:135TR}
	\ov{\De}(\ka) = \{ x \in \R^{2} \mid \ell_{1} \geq 0,\, \ell_{2} \geq 0,\, \ell_{3} \geq 0,\, \ell_{j}^{\ka_{j}} \geq 	0 \mbox{ for $j=4,5,6,7$} \}
\end{equation}
where $\ell_{j}^{\ka_{j}}(x)$ are from \eqref{e:NR135}, and the support constants satisfy
$$
	0 < 5 - \ka_{4} \ll 1, \quad 0 < 10 - \ka_{5} \ll 1,\quad 0 < 6-\ka_{6} \ll 1,\quad 0 < 3-\ka_{7} \ll 1,
$$
and are such that the vertices of $\ov{\De}(\ka)$ are
\begin{equation*}
	(0,0),\, a_{1} = (0, \ka_{4}),\, a_{2} = (\ka_{5}- 2\ka_{4}, 3\ka_{4}-\ka_{5}),\, 
	a_{3} = \big( 3(10-\ka_{5}), 5(\ka_{5}-9)\big),\, a_{4},\, a_{5},\, a_{6}
\end{equation*}
where $a_{4}, a_{5}, a_{6}$ are given in \eqref{e:a456}.

\begin{figure}[h]	

	\begin{center} 
	\leavevmode 
	\includegraphics[width=6in]{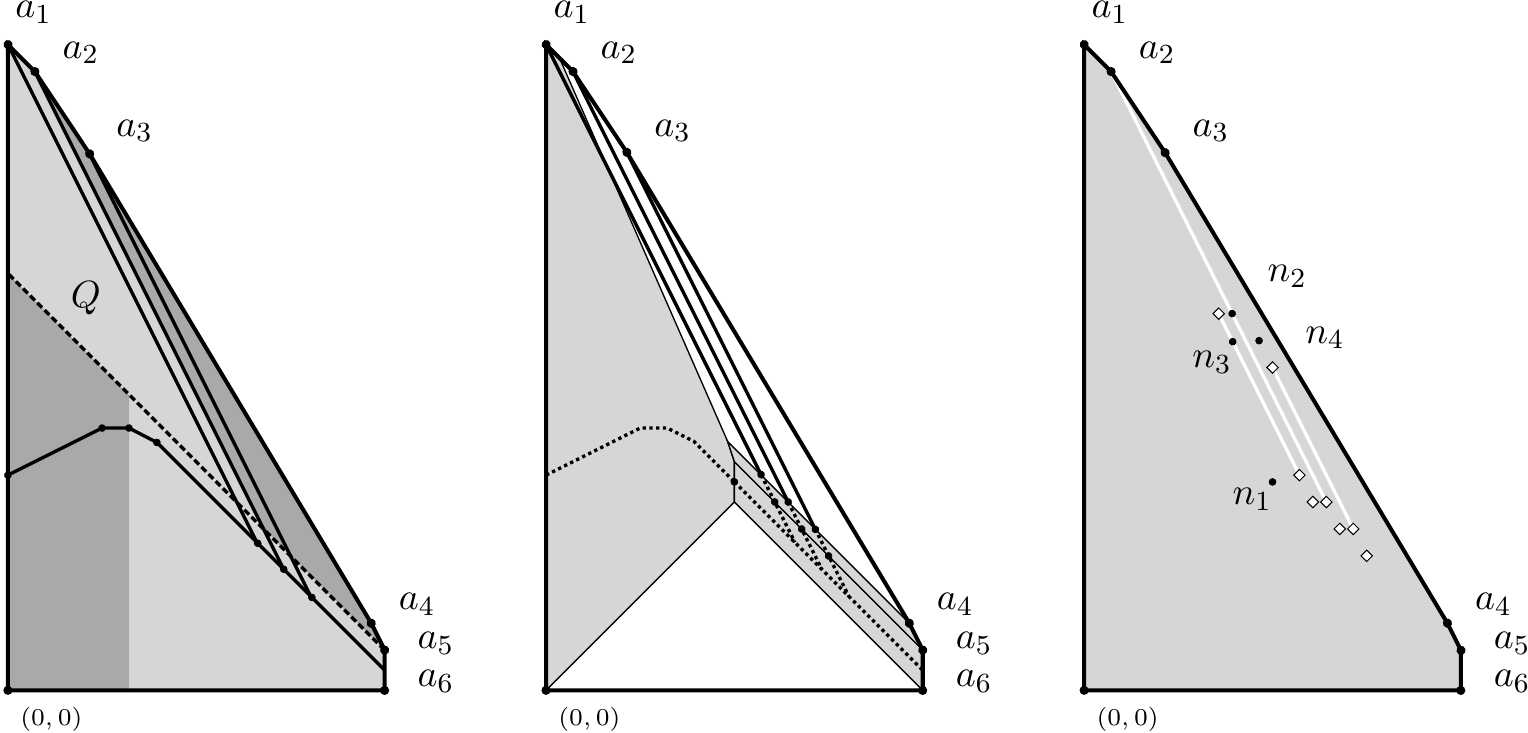}
	\end{center} 

	\caption{Displaceable and nondisplaceable fibers when both singularities
	in $\bP(1,3,5)$ are resolved as in \eqref{e:135TR}.
	\textbf{Left}: In grey, points displaceable by symmetric extended probes that use $Q$,
	which is based on $\{\ell_{6}^{\ka_{6}} = 0\}$, as the deflecting probe. 
	\textbf{Middle}: Displacing some more points with standard probes that have direction
	$\pm (1, -1)$.
	\textbf{Right}: In light grey are the fibers that were displaced in the previous two pictures.
	The white line segments are on the lines from \eqref{e:lineseg135}.
	The point $n_{1}$ is nondisplaceable just as in Proposition~\ref{p:135RDaND}.
	The point $n_{3}$ satisfies $\ell_{1} = \ell_{4}^{\ka_{4}} = \ell_{6}^{\ka_{6}}$,
	the point $n_{2}$ satisfies $\ell_{4}^{\ka_{4}} = \ell_{5}^{\ka_{5}} = \ell_{6}^{\ka_{6}}$,
	the point $n_{4}$ satisfies $\ell_{5}^{\ka_{5}} = \ell_{3} = \ell_{6}^{\ka_{6}}$,
	and all three points are nondisplaceable. 	
	Displaceability is unknown for points on the white line segments and the points
	marked with white diamonds.  See Remark~\ref{r:135TR}.}
	\label{f:135TRDaND}
\end{figure}

\begin{prop}\label{p:135TRDaND}
	In a resolution of $\bP(1,3,5)$ given by $\ov{\De}(\ka)$,
	if $x \in \ov{\De}(\ka)$ does not lie on one of the lines
	\begin{equation}\label{e:lineseg135}
		\{\ell_{1} = \ell_{4}^{\ka_{4}}\}\,,\quad \{\ell_{4}^{\ka_{4}} = \ell_{5}^{\ka_{5}}\}\,,\quad\mbox{or}\quad
		\{\ell_{5}^{\ka_{5}} = \ell_{3}\}
	\end{equation}
	and is not the point $n_{1} = (\ka_{7}/2\,, \ka_{6}/2 - \ka_{7}/2)$,
	then $L_{x}$ is displaceable by probes or extended probes.
\end{prop}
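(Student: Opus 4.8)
The plan is to adapt the three-part argument of Proposition~\ref{p:135RDaND}(i) to the fully resolved polytope $\ov\De(\ka)$, exploiting the fact that the resolution near $(3,0)$ (the edges $F_6,F_7$) is literally the one treated there, so the only genuinely new ingredient is the resolution of the corner $(0,5)$ by the edges $F_4,F_5$. I would keep the same symmetric deflecting probe $Q$, based on $F_6$ with direction $v_Q=(-1,1)$, and recycle the two families of symmetric extended probes $\cSP=P\cup Q\cup P'$ from Parts 1 and 2 of that proof, where $P$ is a vertical probe based on the horizontal edge $F_2$ and $v_{P'}=\widehat A_Q(v_P)=(-1,2)$.

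The one point requiring re-examination is the exit behaviour of the reflected segment $P'$. In $\ov\De(\ka)$ the vertical edge is truncated at $a_1=(0,\ka_4)$ and replaced near the top by $F_4$ and $F_5$, so some of the segments $P'$ now leave the polytope through $F_4$ or $F_5$ rather than through the vertical edge. Since $\abs{\ip{\eta_4,(-1,2)}}=\abs{\ip{\eta_5,(-1,2)}}=1$, the direction $(-1,2)$ stays integrally transverse to both new facets; hence $P'$ still exits transversally, and the role-reversal step (interchanging $P$ and $P'$ to displace points on both sides of the midpoint) goes through verbatim.

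The geometric reason the three exceptional lines survive is worth isolating: each of the lines in \eqref{e:lineseg135} has the form $2x_1+x_2=\text{const}$, so all three are parallel to $v_{P'}=(-1,2)$. Thus these extended probes sweep out the open strips cut off by the lines but never cross a line. To fill each strip and to reach the corner $(0,5)$ I would then add standard probes with directions $\pm(1,-1)$, exactly as in the Standard probes step of that proof, together with parallel extended probes based on the new edges. Since the $(0,5)$ corner is an $A_2$-type resolution (one checks from the conormals $(1,0),(-1,-1),(-3,-2),(-5,-3)$ that the relevant continued fraction is $(2,2)$), Proposition~\ref{p:asympresolve} and its localized form Remark~\ref{r:resolveP}(i) apply to the short resolution edges $F_4,F_5$ and guarantee that every point off the three lines is reached. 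Assembling the displaceable regions, their union is the complement of the three lines \eqref{e:lineseg135} together with the single point $n_1$; note that $n_1$, where $\ell_1=\ell_7$ (a parallel pair) and $\ell_2=\ell_6$, is the point already shown nondisplaceable in Proposition~\ref{p:135RDaND}(ii), and a direct check shows it lies on none of the three lines, so it is a genuine isolated exception.

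The main obstacle is the bookkeeping where the two resolution regions overlap: one must confirm that the symmetric extended probes, the $\pm(1,-1)$ probes, and the corner probes together leave nothing uncovered except the claimed lines and $n_1$. In particular the points $n_2,n_3,n_4$, at which three facets are equidistant, and the boundary points $b_1,b_2$ and the white diamonds of Figure~\ref{f:135TRDaND} each demand individual verification, exactly as their analogues did in Proposition~\ref{p:135RDaND}. This is a finite but delicate collection of boundary cases rather than a new conceptual difficulty.
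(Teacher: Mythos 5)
Your proposal is correct and is essentially the paper's own argument: the paper proves this proposition by declaring it ``the same as the proof of Proposition~\ref{p:135RDaND}(i)'' (symmetric extended probes deflected by the probe $Q$ based on $F_6$, with the role-reversal trick, followed by standard probes of direction $\pm(1,-1)$), and your verification that $\abs{\ip{\eta_4,(-1,2)}}=\abs{\ip{\eta_5,(-1,2)}}=1$ is precisely the point that lets the role reversal carry over when $P'$ now exits through the new facets $F_4$ or $F_5$.

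Two side assertions deserve correction, though neither undermines the conclusion. First, the extended probes \emph{do} cross the three lines of \eqref{e:lineseg135}: the vertical segments $P$ and the deflecting probe $Q$ meet them, and points of these lines lying in the region below $Q$ are in fact displaced; it is only the reflected segments $P'$ that are parallel to the lines, which is why the surviving white segments of Figure~\ref{f:135TRDaND} lie in the upper region that is swept exclusively by the $P'$'s (and where the $\pm(1,-1)$ probes exit non-transversally through $\ell_3=0$ or run parallel to $F_4$). Second, Proposition~\ref{p:asympresolve} and Remark~\ref{r:resolveP}(i) cannot be cited verbatim here: they rest on Lemma~\ref{l:openEP}, whose proof takes the flag of a parallel extended probe arbitrarily long, which is impossible in a compact polytope. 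Fortunately this citation is redundant: the reversed segments $P'$ exiting transversally through $F_4$ and $F_5$, together with the $\pm(1,-1)$ probes (note $(1,-1)$ is parallel to $F_4$ but integrally transverse to $F_5$), already cover the corner at $(0,5)$ away from the three bisector lines, which is exactly what the paper's figure records.
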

\begin{proof}
	The proof is the same as the proof of Proposition~\ref{p:135RDaND}(i).  See Figure~\ref{f:135TRDaND}.
\end{proof}

\begin{remark}\label{r:135TR}
The polytope on the right in Figure~\ref{f:135TRDaND} depicts 
 stronger displaceability results than
Proposition~\ref{p:135TRDaND}.    
They are obtained by   using the extended probes with trapezoidal flags which 
are introduced in the next section.
One can reach some points on $\ell_{1} = \ell_{4}^{\ka_{4}}$ above $n_{3}$ 
by a probe formed by deflecting  $P$ based on $\ell_{1} = 0$ 
with direction $v_{P} = (1, -1)$ by
$Q$ based on $\ell_{5}^{\ka_{5}} = 0$ with direction $v_{Q} = (1, -2)$.
Also,  points on $\ell_{5}^{\ka_{5}} = \ell_{3}$ above and slightly below $n_{4}$,
are displaceable by  probes formed by deflecting
$P$, based on $\ell_{3} = 0$ or $\ell_{6}^{\ka_{6}} = 0$ with direction $v_{P} = (2,3)$, with a probe
$Q$ based on $\ell_{5}^{\ka_{5}} = 0$ with direction $v_{Q} = (1, -2)$.
\end{remark}

\subsubsection{Resolving  $\bP(1,5,8)$}

\begin{prop}\label{p:openset}  The full minimal resolution of $\bP(1,5,8)$ has an open set of points with trivial qW invariants that cannot be displaced by extended probes.
\end{prop}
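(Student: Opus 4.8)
The plan is to reduce the statement to a local analysis at the singular vertex $(5,0)$, where the full minimal resolution of $\bP(1,5,8)$ looks like the minimal resolution $\ov{\De}_{5,8}(\ka)$ already studied in Example~\ref{ex:FR5-8}, and then to combine the open undisplaceable region found there with Proposition~\ref{p:noghost}.

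First I would write down a full minimal resolution $\ov{\De}(\ka)$ of $\bP(1,5,8)$, whose moment polytope is the triangle with vertices $(0,0),(5,0),(0,8)$. Its two singular corners are modelled on $\De_{5,8}$ at $(5,0)$ and on $\De_{3,5}$ at $(0,8)$, since $8-5=3$. Near $(5,0)$ the resolved polytope is integrally affine equivalent to a neighbourhood of the origin in $\ov{\De}_{5,8}(\ka)$. The continued fraction of $5/8$ is $(E_1,E_2,E_3)=(2,3,2)$, so $E_2=3$ with $2\ne 1,3$, placing us exactly in the situation of the subsection on open regions of unknown points. Hence by Example~\ref{ex:FR5-8} (cf.\ Figure~\ref{f:5-8TR}) there is a genuine two-dimensional open region $R$ of points of $\ov{\De}_{5,8}(\ka)$ that are displaceable neither by standard probes nor by parallel extended probes. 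By contrast, the other corner, modelled on $\De_{3,5}$ with continued fraction $(E_1,E_2)=(2,3)$, is of the $k=2$ type of Corollary~\ref{c:k1} and contributes only a line, not an open region, of unknown points.

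The central step is to show that $R$ survives in the compact polytope $\ov{\De}(\ka)$, i.e.\ that no probe or extended probe existing in $\ov{\De}(\ka)$ but not in the sector resolution $\ov{\De}_{5,8}(\ka)$ can reach a point of $R$. Here I would choose the support constants so that all exceptional edges are short and the two resolved corners are widely separated, which is the ``sufficiently short edges'' hypothesis of the introduction. The facets that are new relative to the local model are the vertical edge $\{x_1=0\}$ coming from the compactification, the far portion of the hypotenuse, and the edges resolving the corner at $(0,8)$; all of these are far from $R$. By convexity, a probe or the initial segment of an extended probe that reaches a point of $R$ and is based on one of these distant facets would have to cross essentially the whole polytope, so either it exits $\ov{\De}(\ka)$ before reaching $R$ or its length is too small for the midpoint inequalities of Theorems~\ref{t:noflag} and \ref{t:pdeflected} to hold. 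For the deflecting step I would invoke Remarks~\ref{r:nohelp} and \ref{r:extprob} together with \S\ref{ss:gen}: one gains nothing by deflecting a probe with a probe based on the facet where it exits, and the trapezoidal flags of \S\ref{s:EPwFNP} taper to a point and so cannot cover a region of positive area. This checking of all admissible integral directions against each facet is the main obstacle; but since the geometry near $(5,0)$ is identical to that worked out for $\ov{\De}_{5,8}(\ka)$ in \S\ref{s:EMR}, the only genuinely new work is to bound the reach of the distant facets, which the shortness of the exceptional edges makes routine.

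Finally I would identify the qW-trivial points inside $R$. The polytope $\ov{\De}(\ka)$ is smooth, compact and two-dimensional, so Proposition~\ref{p:noghost}(i) shows that the set of points with nontrivial qW invariants is a finite set of points together with at most one line segment, and hence has measure zero. The open region $R$ has positive measure, so it cannot be contained in such a set; therefore $R$ contains a nonempty open subset $R'$ all of whose points have trivial qW invariants. By the persistence step every point of $R'$ is undisplaceable by extended probes, and $R'$ is the required open set, proving the proposition.
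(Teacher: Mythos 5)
Your reduction to the local model at $(5,0)$ (via Example~\ref{ex:FR5-8}) and your concluding appeal to Proposition~\ref{p:noghost}(i) both match the paper, but your central ``persistence'' step has a genuine gap, and the way you argue it cannot be repaired. You claim that any probe or extended probe existing in $\ov{\De}(\ka)$ but not in the sector resolution $\ov{\De}_{5,8}(\ka)$ is based on a facet far from $R$, so that convexity, length bounds, and the shortness of the exceptional edges rule it out. This reasoning is contradicted by the paper's own analysis of $\bP(1,3,5)$ (Proposition~\ref{p:135RDaND}): there, the probe $Q$ based on the short exceptional edge $F_{6}$ near the resolved vertex becomes \emph{symmetric} in the compact polytope precisely because of the distant vertical edge $\{x_{1}=0\}$, and the symmetric extended probes $P\cup Q\cup P'$, with $P$ based on the nearby bottom edge, then displace \emph{all} points near $(3,0)$ --- including points that are not displaceable in the resolved sector $\ov{\De}_{3,5}$. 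Distant facets therefore do change the displaceability picture near a resolved vertex: they can make deflecting probes symmetric, and the halfway condition of Theorem~\ref{t:noflag} involves the total length $\ell(P)+\ell(P')$, so points more than halfway along $P$ in the sector can become less than halfway along in the compactification. Remarks~\ref{r:nohelp} and \ref{r:extprob}, which you invoke, only forbid deflecting $P$ by a probe based on the facet through which $P$ exits; they say nothing about this configuration. Applied verbatim to $\bP(1,3,5)$, your argument would ``prove'' that the unknown points of $\ov{\De}_{3,5}$ near the vertex persist in the resolution of $\bP(1,3,5)$, which is false.

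What the proof actually requires --- and what the paper does --- is a concrete check that the new mechanisms available in the compact resolution fail to reach the region above the ray of direction $(-1,1)$ at $(5,0)$. Namely: (a) the analogue of the probes that rescued $\bP(1,3,5)$, i.e.\ probes based on the slant edge with direction $(1,-2)$ which are then deflected by $Q$ into the region, do not exist in $\ov{\bP}(1,5,8)$, because $(1,-2)$ is not complementary to the slant direction $(5,-8)$ (the relevant determinant is $2$, not $\pm 1$); and (b) the vertical probes from the base that are deflected by the symmetric probes $Q$ do pass through the region, but the points in question lie more than halfway along them, so Theorem~\ref{t:noflag} gives nothing. Without steps (a) and (b) the proposition is not established; with them, your final measure-theoretic step (Proposition~\ref{p:noghost}(i) confines the qW-nontrivial points to a finite set plus at most one segment, so the region contains a nonempty open set of qW-trivial, non-displaceable points) goes through essentially as in the paper.
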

\begin{proof}
As illustrated in Figure~\ref{f:5-8TR}, the full resolution $\ov{\De}_{5,8}$ 
has an open set of  unknown points.  In  the resolution $\ov{\bP}(1,5,8)$, these would 
lie near the vertex $(5,0)$
in the region above the ray with direction $(-1,1)$.  Therefore, when we 
resolve at $(5,0)$ they would lie above 
the symmetric probes $Q$ starting on the facet $F$ with conormal $(-2,1)$.
In the case $\bP(1,3,5)$ such points were reached by probes starting on the slant edge
with direction $(1,-2)$  and then deflected by $Q$ to be vertical.  
But the corresponding probes do not exist in $\ov{\bP}(1,5,8)$ because $(1,-2)$
is not complementary to  $(5,-8)$.   Points in this region  do lie on the extensions of vertical  probes from the base that are deflected by $Q$, but they lie more than halfway along such probes.  
Therefore these points cannot be displaced. On the other hand, by Proposition~\ref{p:noghost}, there are at most finitely many points in this region with nonvanishing qW invariants. 
\end{proof}

Similarly,  the singularity of $\bP(1,8,13)$ at $(0,13)$ is modelled on $\De_{5,8}$ and has a nearby open set  of points that are not probe-displaceable.
 These arguments generalize to show that typically the resolution of 
 $\bP(1,q,p)$ has an open set of points with unknown properties.


\section{Extended probes with flags: general case}\label{s:EPwFNP}

In this section we will generalize extended probes with flags, Definition~\ref{d:PEPwF} and 
Theorem~\ref{t:pdeflected}, to the case where the probe $P$ is not parallel to 
the base facet $F_{Q}$ of $Q$.

\subsection{Cautionary counterexample}\label{s:MCE}

Before diving into the more complicated notation for the non-parallel extended probes, let us first demonstrate that Theorem~\ref{t:pdeflected} is not valid as stated when the probe $P$ is not parallel to $F_{Q}$.
We will do this by showing that if it was valid, then we could displace the Clifford torus
in $\CP^{2}$, which is known to be nondisplaceable \cite{BEP04, Ch04}.
If the moment polytope for $\CP^{2}$ is given by
$$
	\De = \{x \in \R^{2} \mid x_{1} \geq 0\,,\,\, x_{2} \geq 0\,,\,\, -x_{1} -x_{2} + 6 \geq 0\}
$$
then the fiber $L_{u}$ over $u= (2,2)$ is the Clifford torus. 
What follows is similar to Remark~\ref{r:nohelp} for symmetric extended probes.

\begin{figure}[h]

	\begin{center} 
	\leavevmode 
	\includegraphics[width=2.5in]{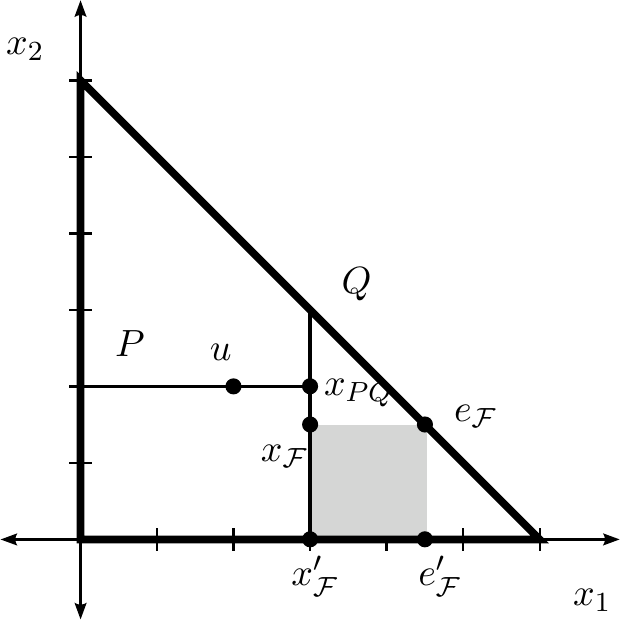}
	\end{center} 

	\caption{Illustration of Example~\ref{e:nohelpF}.  
	Why Theorem~\ref{t:pdeflected} is not valid if $P$ is not parallel to $F_{Q}$,
	the facet on which the deflecting probe is based.}
	\label{f:nohelpF}
\end{figure}

\begin{example}\label{e:nohelpF}
	Let $P$ and $Q$ be the probes where
	$$
		b_{P} = (0, 2), \quad v_{P} = (1, 0), \quad b_{Q} = (3, 3), 
		\quad v_{Q} = (0,-1).
	$$
	Form the `parallel' extended probe with flag $\cDP = P \cup Q \cup \cF$ where
	$$
		x_{PQ} = (3, 2)\,, \quad x_{\cF} = (3, \tfrac{3}{2})\,,
		\quad x_{\cF}' = (3, 0)\,, \quad \ell_{\cF} = \tfrac{3}{2}.
	$$
	We have that $P = [b_{P}, x_{PQ}]$ has length $\ell(P) = 3$ and passes through $u$.
	Since 
	$$d_{\aff}(u, F_{P}) = 2 \quad\mbox{and}\quad \ell(\cDP) = \tfrac{9}{2}$$ 
	if Theorem~\ref{t:pdeflected} applied then $\cDP$ would displace the fiber $L_{u}$.
	Of course it does not apply since $P$ is not parallel to $F_{Q}$.
\end{example}


\subsection{The definition and the displaceability method}

Despite the above failure, the parallel condition in Theorem~\ref{t:pdeflected} can be restrictive and in trying to relax it we are led to the following general notion of \emph{extended probes with flags}.

\begin{figure}[h]

	\begin{center} 
	\leavevmode 
	\includegraphics[width=3in]{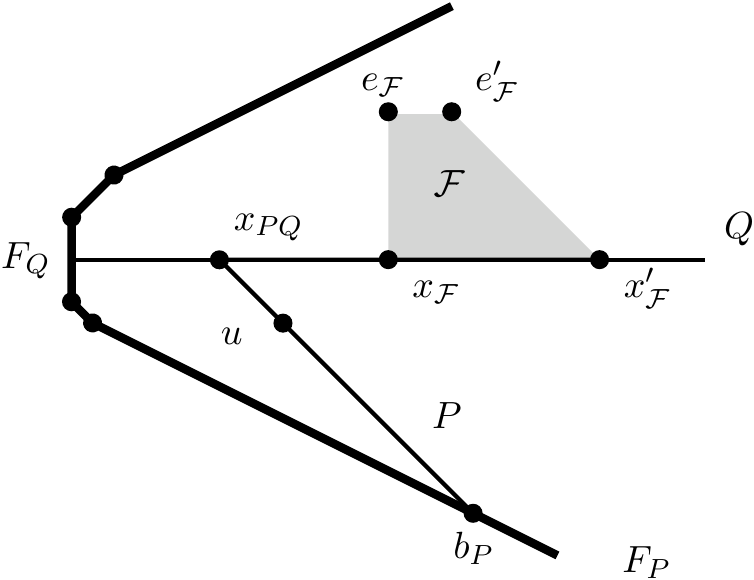}
	\end{center} 

	\caption{An extended probe with flag $\cDP = P \cup Q \cup \cF_{\mu}$,
	where the flag parameter $\mu = 0$.}
	\label{f:epNP}
\end{figure}

\begin{defn}\label{d:EPwF}
Let $P$ and $Q$ be probes in a rational polytope $\De \subset \R^{n}$ where
the probe $P$ ends at the point $x_{PQ}$ on $Q$.
The {\bf extended probe with flag} $\cDP$ formed by deflecting $P$ with $Q$ is the subset
$$
	\cP = P \cup Q \cup \cF_{\mu} \subset \De
$$
where the flag $\cF_{\mu}$ is the convex hull of the points 
$\{x_{\cF},\, x_{\cF}',\,e_{\cF},\, e'_{\cF}\}$ in $\De$.  The points $x_{\cF}$ and $x_{\cF}'$
are on $Q$, while
\begin{equation*} 
\begin{aligned}
		e_{\cF} &= x_{\cF} + \ell(\cF)\,v_{\cF_{\mu}} &\quad\mbox{where}\quad& 
		v_{\cF_{\mu}} = v_{P} - (1+\mu)\ip{\eta_{Q}, v_{P}}\,v_{Q}\\
		e'_{\cF} &= x_{\cF}' + \ell(\cF)\,v_{\cF_{\mu}}' &\quad\mbox{where}\quad& 
		v_{\cF_{\mu}}' = v_{P} - \mu\,\ip{\eta_{Q}, v_{P}}\,v_{Q}
\end{aligned}
\end{equation*}
The parameter $\mu \in [0,1]$ affects the shape of the flag, and the length of the flag
$\ell(\cF) \geq 0$ must be small enough so that $e_{\cF}$ and $e_{\cF}'$ stay in $\De$.

The {\bf  length} of the extended probe with flag $\cP$ is 
$\ell(\cP) = \ell(P) + \ell(\cF)$.
We also assume that the line segment $[x_{\cF}, e_{\cF}]$ 
does not cross the line segment $[x_{\cF}', e'_{\cF}]$, so that they are boundaries of the flag
as in Figure \ref{f:epNP}.
\end{defn}

\begin{remark}\label{r:flag}
(i) If $v_{P}$ is parallel to the facet $F_{Q}$ where
$v_{Q}$ is based, then $\ip{\eta_{Q}, v_{P}} = 0$ and hence by \eqref{e:PFL}, the shape of the flag 
$\cF$ is independent of the parameter $\mu$.  In this case we recover the definition of a parallel extended probe with flag, Definition~\ref{d:PEPwF}.

(ii) When $v_{P}$ is not parallel to the facet $F_{Q}$ the parameter $\mu$ affects the shape of the flag.
For the hyperplane $H_{Q} = \{x \in \R^{n} \mid \ip{\eta_{Q}, x} = 0\}$,
the projection $\pi_{Q}\co\R^{n} \to H_{Q}$ along $v_{Q}$ is 
$$
\pi_{Q}(w) = w- \ip{\eta_{Q}, w}\,v_{Q}
$$
and the reflection $r_{Q}\co\R^{n} \to \R^{n}$ across $H_{Q}$ via $v_{Q}$ is
$$
r_{Q}(w) = w- 2\ip{\eta_{Q}, w}\,v_{Q}.
$$
So as $\mu \in [0,1]$ varies the shape of the flag $\cF_{\mu}$ linearly interpolates between
\begin{align*}
	\cF_{0}:& \mbox{ where $v_{\cF_{0}} = \pi_{Q}(v_{P})$ and $v_{\cF_{0}}' = v_{P}$}\,,\quad\mbox{and}\\
	\cF_{1}:& \mbox{ where $v_{\cF_{1}} = r_{Q}(v_{P})$ and $v_{\cF_{1}}' = \pi_{Q}(v_{P})$}.
\end{align*}
\end{remark}

\begin{figure}[h]

	\begin{center} 
	\leavevmode 
	\includegraphics[width=5in]{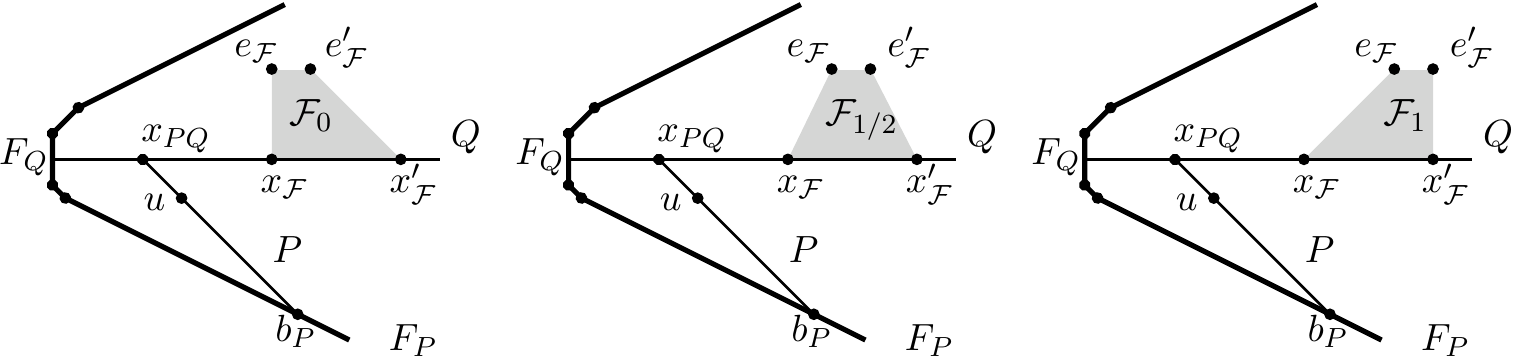}
	\end{center} 

	\caption{Extended probes with flags $\cP = P \cup Q \cup \cF_{\mu}$
	for varying flag parameters, as per Remark~\ref{r:flag}(ii).}
	\label{f:FlagParameter}
\end{figure}

The following theorem explains how one can use extended probes with flags to displace Lagrangian torus fibers. 
 
\begin{thm}\label{t:deflected}
	Let $\cP = P \cup Q \cup \cF$ be an extended probe with a flag constructed from probes 
	$P$ and $Q$ as above,
	in a moment polytope $\De = \Phi(M)$ for a toric symplectic orbifold 
	$(M^{2n}, \w, \T, \Phi)$.  
	
	For a point $u \in \De$, if $u$ is in the interior of $P$, the affine distance from $u$ to the facet $F_{P}$
	satisfies
	$$
		d_{\aff}(u, F_{P}) < \tfrac{1}{2}\,\ell(\cP)\,
	$$ 
	and the flag $\cF$ satisfies both inequalities
	\begin{equation}\label{e:flag}
	d_{\aff}(x_{PQ}, F_{Q}) < d_{\aff}(x_{\cF}, x_{\cF}') \quad\mbox{and}\quad 
	\ell(\cF) < d_{v_{P}}(x_{PQ}, F_{Q}),
	\end{equation}
	then the Lagrangian fiber $L_{u} = \Phi^{-1}(u)$	is displaceable. 
\end{thm}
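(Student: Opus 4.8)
The plan is to mirror the proof of Lemma~\ref{l:probe}, reducing the displacement of $L_u$ to a problem about a single disc, and then to show that the role of the flag is precisely to enlarge that disc from area $\ell(P)$ to area $\ell(\cP)=\ell(P)+\ell(\cF)$. First I would choose integral affine coordinates adapted to the deflecting probe $Q$, with base facet $F_Q=\{x_1=0\}$, conormal $\eta_Q=e_1$ and $v_Q=e_1$, so that the subcircle $S^1_{v_Q}\subset\T^n$ acts by rotation in the normal $\C$-direction over a neighborhood of $\Phi^{-1}(F_Q)$, with $|z_1|^2=\ip{\eta_Q,x}$. Since $P$, $Q$ and $\cF$ all lie in the affine $2$-plane spanned by $v_P$ and $v_Q$, the construction will be fibered over the torus $\T^{n-1}$ of directions parallel to $F_P$. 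As recalled after Lemma~\ref{l:probe}, $\Phi^{-1}(P)$ is symplectomorphic to $\bD_P\times\T^{n-1}$, where $\bD_P$ is a disc of area $\ell(P)$ and $L_u$ corresponds to $C_u\times\T^{n-1}$, with $C_u\subset\bD_P$ the circle enclosing area $d_{\aff}(u,F_P)$.

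The heart of the argument is to extend $\bD_P$ across $Q$ into the flag. I would construct a symplectomorphism $\psi$ supported in $\Phi^{-1}(\mathrm{nbhd}(Q\cup\cF))$, built from the rotation action $S^1_{v_Q}$, that \emph{unfolds} the region lying over $\cF$ and glues it onto the capped end of $\bD_P$ at $x_{PQ}$, yielding an embedded disc $\bD$ of area $\ell(P)+\ell(\cF)=\ell(\cP)$ whose image still lies in $\De$ and for which $L_u$ again sits at level $d_{\aff}(u,F_P)$. This is the flag analogue of the reflection used for symmetric extended probes in Theorem~\ref{t:noflag}: there the extension $P'$ is a single reflected segment, whereas here the directions $v_{\cF_\mu},v_{\cF_\mu}'$ of Definition~\ref{d:EPwF} force the extension to be smeared out over a two-dimensional flag sitting on $Q$. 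The two hypotheses in \eqref{e:flag} are exactly the conditions that make $\psi$ exist and land back in $M$: the inequality $d_{\aff}(x_{PQ},F_Q)<d_{\aff}(x_{\cF},x_{\cF}')$ says that the segment of $Q$ between $F_Q$ and $x_{PQ}$, which is the part that must be rotated across, fits inside the base edge $[x_{\cF},x_{\cF}']$ of the flag, giving enough room in the $v_Q$-direction; and $\ell(\cF)<d_{v_P}(x_{PQ},F_Q)$ guarantees that the tapering flag remains inside the triangle cut off by $F_Q$, so that its preimage stays in $M$ and the area added to $\bD_P$ is genuinely $\ell(\cF)$.

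With the enlarged disc $\bD$ of area $\ell(\cP)$ in hand, I would conclude exactly as for an ordinary probe. The circle $C_u\subset\bD$ encloses area $d_{\aff}(u,F_P)<\tfrac12\,\ell(\cP)$, hence lies strictly inside a half-disc and is displaceable by a compactly supported Hamiltonian isotopy of $\bD$; taking the product with the identity on $\T^{n-1}$ and cutting off near the boundary produces $\phi\in\Ham(M,\w)$ with $\phi(L_u)\cap L_u=\emptyset$.

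I expect the main obstacle to be the explicit construction of $\psi$ together with the verification that it does not escape $\De$: one must check that, under rotation by $S^1_{v_Q}$, the preimage of $\cF$ sweeps out exactly the cap needed to complete $\bD_P$ to a disc of area $\ell(\cP)$, with no overlap and no overflow across $F_Q$. The necessity of the tapering bound $\ell(\cF)<d_{v_P}(x_{PQ},F_Q)$ is precisely what Example~\ref{e:nohelpF} detects: dropping it would let $\bD$ spill past $F_Q$ and spuriously displace the nondisplaceable Clifford torus, so the quantitative content of the proof is showing that \eqref{e:flag} is exactly the margin preventing this.
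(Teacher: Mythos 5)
Your plan has the same architecture as the paper's proof: reduce to displacing a circle enclosing area $d_{\aff}(u,F_P)<\tfrac12\,\ell(\cP)$ inside a disc of area $\ell(\cP)$, built by gluing the standard coisotropic embedding $\bD(\ell(P))\times\T^{n-1}\cong\Phi^{-1}(P)$ to a cap lying in $\Phi^{-1}(\cF)$ by means of a symplectomorphism supported away from $L_u$; your reading of Example~\ref{e:nohelpF} also matches Remark~\ref{r:extprob}(iii). But there is a genuine gap at the one step that carries the content: the ``unfolding'' map cannot be built from the rotation action normal to $F_Q$, and you give no other construction. Any symplectomorphism assembled from that circle action (generated by Hamiltonians depending on the action coordinate $\ip{\eta_{F_Q},x}$) preserves every level set of $\ip{\eta_{F_Q},\Phi(\cdot)}$; in particular it maps the boundary torus $L_{x_{PQ}}$ of $\Phi^{-1}(P)$ back into the same level $\ip{\eta_{F_Q},x}=d_{\aff}(x_{PQ},F_Q)$, so it can never redistribute that torus across the annulus of levels $[\,d_{\aff}(x_{\cF},F_Q),\,d_{\aff}(x'_{\cF},F_Q)\,]$ that the cap over the flag must occupy. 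Rotations only reproduce the straight continuation of $P$, i.e.\ the ordinary probe, which is exactly what the flag construction is meant to improve on.

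What is actually needed, and what the paper constructs, is a genuinely two-dimensional area-preserving isotopy $\r$ of the disc fiber over $Q$: Lemma~\ref{l:APD} produces, by choosing a family of disjoint embedded curves with prescribed enclosed areas and applying Moser's method, a compactly supported $\r$ with $\r\bigl(\bD(s)\bigr)\subset\bA\bigl(a(s),a(s)+s+\eps\bigr)$, and Lemma~\ref{l:deflecting} cuts off its generating Hamiltonian in the remaining coordinates to get $\Psi^{\r}\in\Ham(M,\w)$ equal to $\id\times\r\times\id$ near $\Phi^{-1}(Q)$. The extended embedding is then $\Psi^{\r}\circ\psi_P$ glued to the cap $\psi_{\cF_\mu}$, which is the straight continuation of $\psi_P$ with $\r$ applied in the $(x_2,\th_2)$-coordinates, and the two inequalities in \eqref{e:flag} are used precisely to make this cap an embedding into $\Phi^{-1}(\cF_\mu)$: with $c_2=\ip{\eta_{F_Q},v_P}\le 0$, the curve $\r\bigl(S^1(r_{PQ}+\l c_2)\bigr)$ must fit in the annulus over the flag cross-section, whose area is $d_{\aff}(x_{\cF},x'_{\cF})+\l c_2$ (first inequality), and must never collapse, i.e.\ $r_{PQ}+\l c_2>0$ for $0\le\l\le\ell(\cF)$ (second inequality). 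Note also that your gloss of the second inequality is slightly off: the flag lies in $\De$ by definition, and it only tapers to a point at parameter $d_{\aff}(x_{\cF},x'_{\cF})/|c_2|$, which is strictly larger than $d_{v_P}(x_{PQ},F_Q)$; the bound is the stronger statement that the virtual continuation of $P$ stops short of $F_Q$, equivalently that the fiber circles of the cap stay embedded.
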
 

The second condition in \eqref{e:flag} did not appear in Theorem~\ref{t:pdeflected}, for there it is
trivially satisfied since if $v_{P}$ is parallel to $F_{Q}$, then $d_{v_{P}}(x_{PQ}, F_{Q}) = \infty$.
Compare the following remark with Remark~\ref{r:nohelp}.

\begin{remark}\label{r:extprob}
	(i) If $F_{P}$ and $F_{Q}$ were the only facets in the polytope, then $d_{v_{P}}(b_{P}, F_{Q})$ 
	represents the maximum length $P$ could be extended to before it hit the facet $F_{Q}$.
	The second condition in \eqref{e:flag} implies that this maximum length is an {\it a priori} upper bound
	\begin{equation}\label{e:aub}
		\ell(\cP) < d_{v_{P}}(b_{P}, F_{Q}).
	\end{equation}
	on the length $\ell(\cP)$ for an extended probe with flag 
	$\cP$ formed with probes $P$ and $Q$.
	
	(ii) This a priori upper bound \eqref{e:aub} has the following consequence: 
	Suppose a probe $P$ exits the polytope $\De$ through the facet
	$F$.  Then the displaceability results given by using $P$ as a standard probe cannot be improved on by
	using Theorem~\ref{t:deflected} with a deflecting probe $Q$ based on $F$.
	
	(iii) In the counter-example in Section~\ref{s:MCE}, the deflecting probe $Q$ is based on
	the facet through which the probe $P$ would exit the polytope.  So in this example,
	\eqref{e:aub} is violated and hence the second condition in \eqref{e:flag} is as well.
\end{remark}

\subsection{Resolution of a finite volume $A_{n}$-singularity}\label{fvAnR}

Let us consider a minimal resolution of an $A_{2}$-singularity that now has finite volume, so the moment polytope is given by
\begin{equation}\label{e:fvAnR}
	\Tilde{\De}_{2,3}(\ka) = \{x \in \R^{2} \mid \ell^{v}(x) \geq 0,\,
	\ell^{s}(x) \geq 0,\, \ell_{\infty}(x) > 0,\, 
	\ell_{1}^{\ka_{1}}(x) \geq 0,\, \ell_{2}^{\ka_{2}} \geq 0\} 
\end{equation}
where the finite volume $A_{2}$-singularity is defined by
$$
	\ell^{v}(x):=x_{1}\,,\,\, \ell^{s}(x):= -2x_{1} + 3x_{2}\,,\,\, \ell_{\infty}(x) := -x_{2} + 2
$$
and the minimal resolution at the origin uses
$$ 
	\ell_{1}^{\ka_{1}}(x):= x_{2} - \ka_{1}\,,\,\, \ell_{2}^{\ka_{2}}(x):= -x_{1} + 2x_{2} - \ka_{2}.
$$ 
Figure~\ref{f:fvAnR} depicts the polytope $\Tilde{\De}_{2,3}(\ka)$ when
$$ 
0 < \ka_{1} < 2\,,\quad 0 < \ka_{2} < 1\,,\quad \ka_{2} < 2\ka_{1} \quad\mbox{and}\quad \ka_{1} < 2 \ka_{2}.
$$ 

In this example there remains an open region of unknown points, even after using extended probes.

\begin{figure}[h]
	\begin{center} 
	\leavevmode 
	\includegraphics[width=4in]{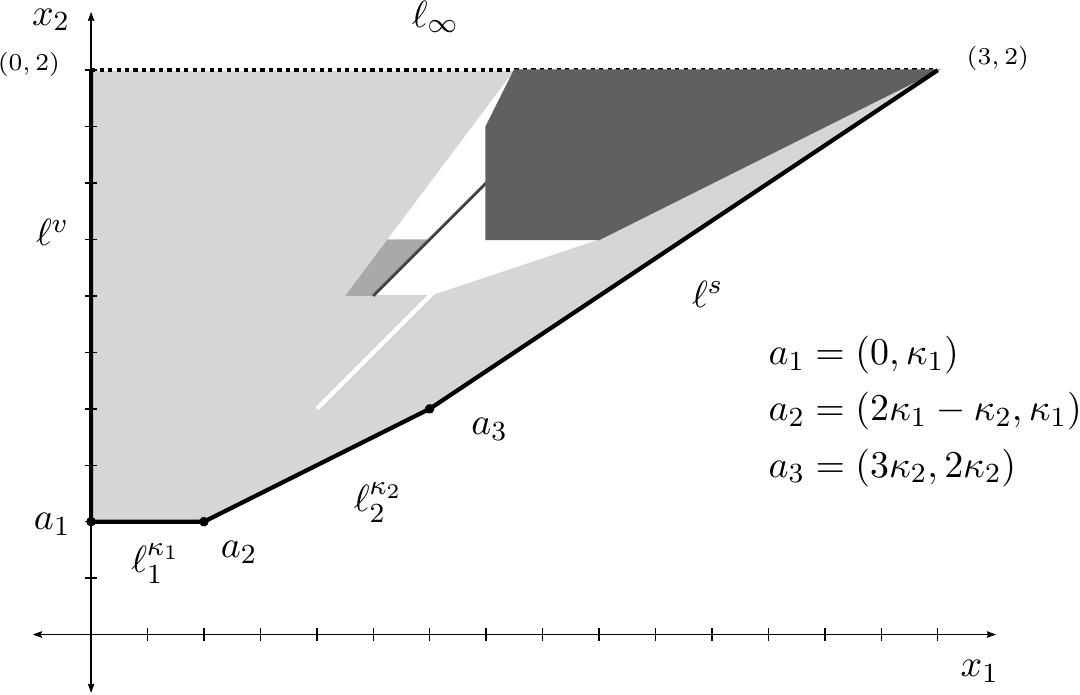}
	\end{center} 

	\caption{Displaceable and nondisplaceable fibers in $\Tilde{\De}_{2,3}(\ka)$.
	In light gray are points displaceable by probes, in medium gray are points displaceable
	by extended probes with trapezoidal flags, and in dark grey are nondisplaceable points.}	
	\label{f:fvAnR}
\end{figure}

\begin{prop}\label{p:fvAnR}
	In the resolution of the finite volume $A_{2}$-singularity given in \eqref{e:fvAnR}, then in
	the polytope $\Tilde{\De}_{2,3}(\ka)$:
	\begin{enumerate}
	\item[(i)] The Lagrangian fiber $L_{x}$ is displaceable by probes if $x$ is in one of
	the regions
	$$
		\{ 4x_{1} - 3x_{2} < 0\}\,,\,\, \{ x_{2} < 1 + \ka_{1}/2\}\,,\,\,
		\{-x_{1} + 3x_{2} < 2+ \ka_{2}\}\,,\,\,
		\{-x_{1} + 2x_{2} < 2\} 
	$$
	and not on the line segment $\{x_{1} - x_{2} = \ka_{1} - \ka_{2}\,,\,\,x_{2} \geq 2 \ka_{2}\}$.  
	\item[(ii)]  The Lagrangian fiber $L_{x}$ is displaceable by  
	an extended probe
	with trapezoidal flag if $x$ is in the region
	$$
		\{x_{2} < 1+ \ka_{2}\,,\,\, x_{1} - x_{2} < \ka_{2}/2\}.
	$$

	\item[(iii)] The Lagrangian fiber $L_{x}$ is nondisplaceable if $x$ is in the region
	$$
		\{ 2x_{1} - x_{2} \geq 1\,,\,\, x_{1} \geq 1+\ka_{1}\,,\,\, x_{2} \geq 1 + \ka_{2}\,,\,\,
		-x_{1} + 2x_{2} \geq 1\}
	$$
	or on the line segment $\{x_{1} - x_{2} = \ka_{2}/2\,,\,\, x_{2} \geq 1+\ka_{2}/2\}$.
	\end{enumerate}
\end{prop}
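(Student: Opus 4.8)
The plan is to prove the three parts independently, using the three displaceability tools assembled above: standard probes for (i), the trapezoidal extended probes of Theorem~\ref{t:deflected} for (ii), and the potential/qW criterion of Theorem~\ref{t:potential} for (iii).

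For (i) I would exhibit four families of standard probes and invoke Lemma~\ref{l:probe}, in the style of Lemmas~\ref{l:DOsecP} and~\ref{l:Dsec}. Horizontal probes $(1,0)$ based on $\{\ell^v=0\}$ (exiting transversally on the slant edge) handle $\{4x_1-3x_2<0\}$; vertical probes $(0,1)$ based on $\{\ell_1^{\ka_1}=0\}$ running up to the open top handle $\{x_2<1+\ka_1/2\}$; probes of direction $(1,1)$ based on $\{\ell_2^{\ka_2}=0\}$ handle $\{-x_1+3x_2<2+\ka_2\}$ (a direct computation shows their midline is exactly $-x_1+3x_2=2+\ka_2$); and a family based on the slant edge handles $\{-x_1+2x_2<2\}$. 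In each case one only checks integral transversality of the direction against the base facet and verifies that the stated inequality is precisely the ``less than halfway'' condition $d_{\aff}(u,F_P)<\tfrac12\ell(P)$, so this part is elementary; the excluded segment $\{x_1-x_2=\ka_1-\ka_2,\,x_2\ge 2\ka_2\}$ (where $\ell_1=\ell_2$) is exactly the gap these four families leave.

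For (ii) I would make one explicit use of Theorem~\ref{t:deflected}: pick a probe $P$ and a deflecting probe $Q$ whose base facet $F_Q$ is \emph{not} parallel to $v_P$, producing a genuinely trapezoidal flag $\cF_\mu$, and then verify the two conditions of \eqref{e:flag}, namely the width inequality $d_{\aff}(x_{PQ},F_Q)<d_{\aff}(x_{\cF},x_{\cF}')$ and the a priori bound $\ell(\cF)<d_{v_P}(x_{PQ},F_Q)$. By Remark~\ref{r:extprob}(i) this bound forces the flag to taper toward the line on which the ray in direction $v_P$ meets $F_Q$; I expect to choose $P$ and $Q$ so that this tapering line is exactly $\{x_1-x_2=\ka_2/2\}$, which simultaneously explains why $\cP$ can fill $\{x_2<1+\ka_2,\ x_1-x_2<\ka_2/2\}$ (the points just beyond the reach of the probes in (i)) and why it cannot cross that line. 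The work here is purely in writing down $P,Q$ and checking the inequalities.

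For (iii) I would imitate the critical point computations of Theorem~\ref{t:DND}(i) and Proposition~\ref{p:135RDaND}(ii): for each $x$ in the region, present $\De$ with extra ghost facets carrying $x$-dependent support constants, substitute $y_i=e^{\b_i}$ into $W_{x,\a}$, solve the lowest-order critical point equations over $\C^\times$, and lift via \cite[Theorem 4.5]{FOOO11}. The natural ghost to bring in is the one with conormal $(-1,1)$ resolving the corner ``at infinity'' $\{\ell^s=0\}\cap\{x_2=2\}$, whose support constant is pinned by $\sup_{\De}(x_1-x_2)=1$; indeed the four inequalities defining the region are exactly $\ell_g\le\ell^v,\ell_1,\ell_2,\ell^s$, so the region is the cone in which this ghost is a closest facet, while for the extra line segment $\{x_1-x_2=\ka_2/2\}$ I would look for the degenerate parallel configuration of Proposition~\ref{p:noghost}(i). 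The hard part, and the step I would spend the most care on, is exactly this: because the upper boundary is open, the face $\{x_2=2\}$ contributes no term to the potential, so one must produce a solvable leading-order system from the genuine facets and admissible ghosts alone, and confirm that a critical point persists throughout the two-dimensional region rather than only along the curves where three of the functionals $\ell_i$ happen to coincide.
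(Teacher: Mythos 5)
Your part (i) is fine and is essentially the paper's argument (the paper uses directions $(1,0)$ on $\{\ell^v=0\}$, $(1,1)$ on $\{\ell_1^{\ka_1}=0\}$ and $\{\ell_2^{\ka_2}=0\}$, $(-1,0)$ on $\{\ell_2^{\ka_2}=0\}$, and $(-2,-1)$ on $\{\ell^s=0\}$; your vertical probes on $\{\ell_1^{\ka_1}=0\}$ do the same job). Part (ii), however, is a genuine gap: the entire content of (ii) is the explicit choice of $P$ and $Q$, and you never make it, relying instead on the expectation that the tapering line can be arranged to be $\{x_1-x_2=\ka_2/2\}$. That expectation is not realizable. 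The paper's construction takes $P$ horizontal from $b_P=(0,\l)$, $\ka_1<\l<1+\ka_2$, deflected by $Q$ with $v_Q=(1,1)$ based on $\{\ell_2^{\ka_2}=0\}$ arbitrarily close to $a_3=(3\ka_2,2\ka_2)$, with flag parameter $\mu=0$; by Remark~\ref{r:extprob}(i) the displaceable part of $P$ is capped by $\tfrac12 d_{v_P}(b_P,F_Q)=\l-\ka_2/2$, so the boundary one obtains is the locus $\{\ell^v=\ell_2^{\ka_2}\}=\{x_1-x_2=-\ka_2/2\}$, and no other admissible base facet for $Q$ does better: deflecting by a probe based on $\{\ell^s=0\}$ (the facet through which horizontal probes exit) gains nothing by Remark~\ref{r:extprob}(ii), and the parallel deflection based on the short facet $\{\ell_1^{\ka_1}=0\}$ is shorter still. (Indeed the paper's own proof of (ii) produces exactly $\ell(\cP)<2\l-\ka_2$, so the $+\ka_2/2$ in the stated region — and in the segment of (iii) — appears to be a sign slip; chasing it, as you propose, would lead nowhere.)

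The more serious gap is in part (iii). Your observation that the four inequalities cutting out the region are exactly $\ell_g^1\le \ell^v,\,\ell_1^{\ka_1},\,\ell_2^{\ka_2},\,\ell^s$ for the ghost $\ell_g^{\ka_g}(x)=-x_1+x_2+\ka_g$, $\ka_g\ge 1$, is correct and is half of the proof. But one ghost can never suffice, and this is precisely what you defer as ``the hard part.'' If you tune $\ka_g$ so that $\ell_g$ ties with the (generically unique) nearest genuine facet, you have exactly two closest facets with non-parallel conormals — $(-1,1)$ is parallel to none of $(1,0),(0,1),(-1,2),(-2,3)$ — and the computation in the proof of Proposition~\ref{p:noghost}(i) shows the leading-order critical point system then forces both leading coefficients to vanish: no critical point, hence no conclusion from Theorem~\ref{t:potential}. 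The missing idea, which is the whole of the paper's proof, is that the open top edge is not an obstruction but an opportunity: it admits a second ghost $\ell_\infty^{\ka_\infty}(x)=-x_2+\ka_\infty$ for any $\ka_\infty\ge 2$, and the region of (iii) is exactly where both ghosts can be tuned to tie with the nearest genuine facet. With both ghosts one gets either three pairwise non-parallel closest conormals, or (where $\ell_1^{\ka_1}$ is the nearest genuine facet, so that the triple $(0,1),(0,-1),(-1,1)$ leaves a one-term $y_1$-equation) the parallel pair $(0,\pm1)$ together with a tied second-nearest pair, and in either case the leading system is solvable and lifts by \cite[Theorem 4.5]{FOOO11}. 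The same second ghost is also what your ``parallel configuration'' idea for the segment requires: the only parallel pair available is $\ell_1^{\ka_1}$ against $\ell_\infty^{\ka_\infty}$, with the segment sitting where the second-nearest facets $\ell^v$ and $\ell_2^{\ka_2}$ tie. So what you labelled a verification to be carried out carefully is in fact the key construction, and without the ghost behind the open edge the argument fails.
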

\begin{proof}
	Part (i) is straightforward with using probes with direction $(1,0)$ on $\{\ell^{v} = 0\}$,
	direction $(1,1)$ on $\{\ell_{1}^{\ka_{1}} = 0\}$ and $\{\ell_{2}^{\ka_{2}} = 0\}$,
	direction $(-1,0)$ on $\{\ell_{2}^{\ka_{2}} = 0\}$
	and direction $(-2, -1)$ on $\{\ell^{s} = 0\}$.
	
	Part (iii) is also straightforward.  For the points in the region one uses the ghost
	facets $\{\ell_{\infty}^{\ka_{\infty}}(x):= -x_{2}+\ka_{\infty}\geq 0\}$ and 
	$\{\ell_{g}^{\ka_{g}}(x):= -x_{1} + x_{2} + \ka_{g} \geq 0\}$ for varying $\ka_{\infty} \geq 2$
	and $\ka_{g} \geq 1$.
	
\begin{figure}[h]

	\begin{center} 
	\leavevmode 
	\includegraphics[width=4in]{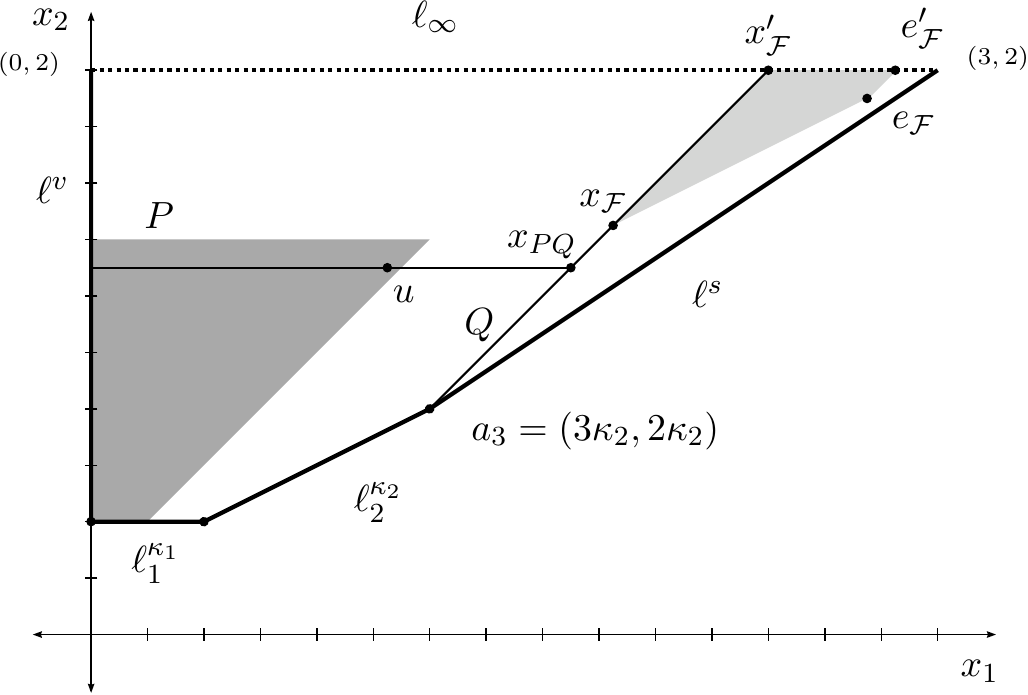}
	\end{center} 

	\caption{The non-parallel extended probe with flag from part (ii) of Proposition~\ref{p:fvAnR}
	displacing the point $u$.
	Points in the dark grey region can be displaced with these types of extended probes.}	
	\label{f:fvAnREP}
\end{figure}
	
	For part (ii), let $Q$ be a probe with direction $v_{Q} = (1,1)$ based on
	$\{\ell_{2}^{\ka_{2}} = 0\}$ arbitrary close to $a_{3} = (3\ka_{2}, 2\ka_{2})$.
	Let $P$ be based at $b_{P} = (0, \l)$ for $\ka_{1} < \l < 1 + \ka_{2}$, with direction $v_{P} = (1,0)$,
	and form the extended probe with flag $\cP = P \cup Q \cup \cF_{0}$ where
	$\mu= 0$ is the flag parameter.  The flag is given by
	\begin{align*}
		x_{PQ}&= (\l+\ka_{2}, \l) && \ell(P) = \l + \ka_{2}\\
		x_{\cF}&= (2-\l + 3\ka_{2}, 2+2\ka_{2} - \l) && x_{\cF}'= (2+\ka_{2}, 2)\\
		e_{\cF}&= x_{\cF} + \ell(\cF)(2, 1) && e_{\cF}' = x_{\cF}' + \ell(\cF)(1,0)\\
		\ell(\cF)&< \l - 2\ka_{2} && \ell(\cP) < 2\l - \ka_{2},
	\end{align*}
	where the upper bound on $\ell(\cF)$ comes from the second condition in
	\eqref{e:flag} and $x_{\cF}$ should be moved slightly closer to $x_{PQ}$ 
	so that the first condition in \eqref{e:flag} is satisfied.
	By Theorem~\ref{t:deflected}, this extended probe displaces everything
	on $P$ between $b_{P} = (0,\l)$ and $(\l - \ka_{2}, \l)$.  As $\l$ ranges
	over $\ka_{1} < \l < 1 + \ka_{2}$, these extended probes displace precisely the region in (ii).	
\end{proof}



\section{Proofs of results about probes}
\label{s:proofs}

Consider the symplectic form $\w_{0} = \tfrac{1}{\pi}\, dx \wedge dy$ on $\C$.  For this
symplectic form, the standard $\T^{1} = S^{1} = \R/\Z$ action on $\C$ by $t \cdot z = e^{2\pi i t}z$
is given by the moment map $\Phi_{0}\co\C \to \R$ where $\Phi_{0}(z) = \abs{z}^{2}$.
The symplectic form $\w_{0}$ is also normalized so that 
$$
	\int_{\bD(a)} \om_0= a
$$
where $\bD(a)$ is the disc
$$
	\bD(a): = \{z \in \C : \abs{z}^{2} \leq a\} \subset (\C, \w_{0}).
$$
We will denote its boundary by
$$
	S^{1}(a) = \partial \bD(a) = \{z \in \C : \abs{z}^{2} = a\},
$$
and the annulus by
$$ 
	\bA(b,c) = \bD(c) \setminus \Int \bD(b) \quad\mbox{for $0 \leq b < c$}.
$$ 

For each theorem we have an extended probe $\cP$ in a toric symplectic orbifold $(M^{2n}, \w, \T, \Phi)$
and our goal is to displace the Lagrangian torus fiber $L_{u} = \Phi^{-1}(u)$.  To displace $L_{u}$, 
it suffices to build an embedding
\begin{equation}\label{e:FE}
	\psi_{\cP}\co\bD(\ell) \times \T^{n-1} \to (M^{2n}, \w)
\end{equation}
such that for some $a < \tfrac{1}{2}\, \ell$
\begin{equation}\label{e:EC}
	\psi_{\cP}^{*}\,\w = \pi^{*}\w_{0} \quad\mbox{and}\quad \psi_{\cP}(S^{1}(a) \times \T^{n-1}) = L_{u} 
\end{equation}
where $\pi\co\bD(\ell) \times \T^{n-1} \to \bD(\ell)$ is the projection.  Since $a < \tfrac{1}{2}\,\ell$ there is
a Hamiltonian isotopy of $\bD(\ell)$ supported away from the boundary that displaces $S^{1}(a)$,
and therefore the embedding can be used to extend this to a Hamiltonian isotopy of $(M, \w)$ that
displaces $L_{u}$.

While the precise details for building $\psi_{\cP}$ vary depending on the type of extended probe,
the following outline describes the general process.  Here $P, Q, \cE$ are the three parts
of the extended probe $\cP$, where $\cE$ is either 
$P'$, $\cF$, or $\cF_{\mu}$ depending on the type of extended probe.
\begin{description}
\item[Stage 1] For $\ell_{P} = \ell(P)$, produce an embedding
\begin{equation}\label{e:psiP}
	\psi_{P}\co\bD(\ell_{P}) \times \T^{n-1} \to (M^{2n}, \w) 
	\quad\mbox{with  } \psi_{P}^{*}\,\w = \pi^{*}\w_{0}
	\mbox{  and  } \im(\psi_{P}) \subset \Phi^{-1}(P).
\end{equation}
Except for the second version of Theorem~\ref{t:noflag}, 
we have $u \in \Int P$ and we will show
$$ 
	\psi_{P}(S^{1}(a) \times \T^{n-1}) = L_{u}
$$ 
for $a = d_{\aff}(u, F_{P})$ where by assumption $a < \tfrac{1}{2}\, \ell$.

\item[Stage 2]
For $\ell = \ell(\cP)$, produce an embedding
\begin{equation}\label{e:psiE}
	\psi_{\cE}\co\bA(\ell_{P}, \ell) \times \T^{n-1} \to (M^{2n}, \w) 
	\quad\mbox{with  } \psi_{\cE}^{*}\,\w = \pi^{*}\w_{0}
	\mbox{  and  } \im(\psi_{\cE}) \subset \Phi^{-1}(\cE).
\end{equation}
In the second version of Theorem~\ref{t:noflag},
we have  $u \in \Int P'$ and we will show
$$ 
	\psi_{P'}(S^{1}(a) \times \T^{n-1}) = L_{u}
$$ 
for $a = d_{\aff}(u, x_{PQ}') + \ell_{P}$ where by assumption $a < \tfrac{1}{2}\, \ell$.

\item[Stage 3]
Use the deflecting probe $Q$ to build a symplectomorphism $\Psi$ of $(M^{2n}, \w)$ 
such that $\Psi \circ \psi_{P}$ and $\psi_{\cE}$ glue together to form an embedding
$$ 
	\psi_{\cP}\co\bD(\ell) \times \T^{n-1} \to (M^{2n}, \w)
$$ 
that satisfies \eqref{e:EC}.  Since the fiber $L_{u}$ is disjoint from $\Phi^{-1}(Q)$,
to ensure $\psi_{\cP}$ satisfies the second condition in
\eqref{e:EC} it suffices to prove that $\Psi$ can be built to be supported
in any given neighborhood of $\Phi^{-1}(Q) \subset M$.
\end{description}

\subsection{Action-angle coordinates}

The canonical symplectic form $d\l$ on $T^{*}\T = \ft^{*} \times \T$ is
\begin{equation}\label{e:CSF}
d\l\big((v,\eta), (v', \eta')\big) = \ip{\eta', v} - \ip{\eta, v'}.
\end{equation}
If $\{\eta_{1}, \dots, \eta_{n}\} \subset \ft_{\Z}$ and $\{v_{1}, \dots, v_{n}\} \subset \ft_{\Z}^{*}$
are dual bases, then in the associated coordinates 
$(x,\th) = (x_{1}, \dots, x_{n}, \th_{1}, \dots, \th_{n}) \in \ft^{*} \times \T$
the canonical  symplectic form is $d\l = dx \wedge d\th$ and it is clear
that the projection $\pi_{\ft^{*}}\co\ft^{*} \times \T \to \ft^{*}$ is the moment map for the obvious $\T$-action.
Now let $\De \subset \ft^{*}$ be the moment polytope
for a symplectic toric orbifold $(M^{2n}, \w, \T, \Phi)$. Then $(M^{2n}, \w, \T, \Phi)$ can be modeled by
$$
	(\De \times \T, d\l, \T, \pi_{\ft^{*}})
$$
by performing a symplectic cut along $F \times \T \subset \De \times \T$ for each facet $F \subset \De$.   
If $f \in \Int(F)$, then this amounts to replacing $f \times \T$ with $f \times \T/\T^{1}_{\eta_{F}}$
where $\T^{1}_{\eta_{F}} \subset \T$ is the circle generated by $F$'s
primitive interior conormal $\eta_{F} \in \ft_{\Z}$.  In this way we can consider 
the action-angle coordinates $(x, \th) \in \De \times \T$ as a global
coordinate system on $(M, \w, \T, \Phi)$.

For example the disk $(\bD(a), \w_{0})$ has action-angle coordinates
$(s, \phi) \in [0, a] \times \T^{1}$ where the circle $\{(0, \phi) \mid \phi \in \T^{1}\}$ is collapsed to a point.
The explicit identification $(s, \phi) \mapsto e^{2\pi i \phi} \sqrt{s}$ pulls 
$\w_{0}$ back to $ds \wedge d\phi$.  Likewise the annulus $(\bA(b,c), \w_{0})$ has
action-angle coordinates $(s, \phi) \in [b,c] \times \T^{1}$ with $\w_{0} = ds \wedge d\phi$.

\subsubsection{Coisotropic embeddings from probes}

Let $P$ be a probe with direction $v_{P} \in \ft_{\Z}^{*}$, length $\ell_{P}$, and based at the point $b_{P}$
on the interior of a facet $F_{P}$, which has primitive interior conormal $\eta_{F_{P}} \in \ft_{\Z}$.
Since $v_{P}$ is integrally transverse to $\eta_{F_{P}}$ and inward pointing, there is a lattice basis
for $\ft_{\Z}$ of the form $\{\eta_{1}' = \eta_{F_{P}}, \eta_{2}', \dots, \eta_{n}'\}$ where
\begin{equation}\label{e:LB}
	\ip{\eta_{1}', v_{P}}=1 \quad\mbox{and}\quad \ip{\eta_{k}', v_{P}} = 0\quad\mbox{for $k\geq 2$}.
\end{equation}
Using the model $(\De \times \T, d\l, \T, \pi_{\ft^{*}})$ for $(M, \w, \T, \Phi)$ define the embedding
$$ 
	\psi_{P}\co\bD(\ell_{P}) \times \T^{n-1} \to (M^{2n}, \w)
$$ 
where if $(s, \phi_{1}, \dots, \phi_{n})$ are coordinates for $\bD(\ell_{P}) \times \T^{n-1}$
such that $(s, \phi_{1})$ are action-angle coordinates for $(\bD(\ell_{P}), \w_{0})$, then $\psi_{P}$ is given by
\begin{equation}\label{e:EAAC}
	\psi_{P}(s, \phi_{1}, \phi_{2}, \dots, \phi_{n}) =
	(b_{P}+s\,v_{P}\,,\,\, \phi_{1}\eta_{F_{P}} + \phi_{2}\eta_{2}' + \dots + \phi_{n}\eta_{n}')
	 \in \De \times \T.
\end{equation}
This map is well-defined since for fixed $(\phi_{2}, \dots, \phi_{n}) \in \T^{n-1}$, the image of the map
$$
	\phi_{1} \mapsto \psi_{P}(0, \phi_{1}, \phi_{2}, \dots, \phi_{n}) = 
	(b_{P}, \phi_{1}\eta_{F_{P}} + \phi_{2}\eta_{2}' + \dots + \phi_{n}\eta_{n}')
$$ 
lies in $F_{P} \times \T$, which in $(M, \w)$ is replaced with $F_{P} \times \T/\T^{1}_{\eta_{F_{P}}}$.
By design
$$ 
	\psi_{P}^{*}\,\w = \pi^{*}\w_{0} \quad\mbox{and}\quad
	\psi_{P}(S^{1}(a) \times \T^{n-1}) = L_{u(a)}
$$ 
as in \eqref{e:EC}, where $u(a) = b_{P} + a\,v_{P}$ is the point on $P$ such that $d_{\aff}(u(a), b_{P}) = a$.
This embedding $\psi_{P}$ will serve as \eqref{e:psiP} in Stage 1 for all the extended probe theorems.

\subsubsection{Coisotropic embeddings from rational line segments}

Let $S \subset \De$ be a rational line segment starting at $b_{S}$ and ending at $e_{S}$ in $\Int \De$,
with length $\ell_{S} = d_{\aff}(b_{S}, e_{S})$ and direction $v_{S} \in \ft_{\Z}^{*}$.  Let
$\{\eta_{1}', \eta_{2}', \dots, \eta_{n}'\}$ be an integral basis for $\ft_{\Z}$ that satisfies
\eqref{e:LB} with respect to $v_{S}$.  Then similarly to the case of a probe,
using the model $(\De \times \T^{n}, d\l, \T, \pi_{\ft^{*}})$ for $(M, \w, \T, \Phi)$ we can define an embedding
$$
	\psi_{S}\co\bA(b, b+ \ell_{S}) \times \T^{n-1} \to (M, \w)
$$
for any $b > 0$, such that
\begin{equation}\label{e:EAAAC}
	\psi_{S}(s, \phi_{1}, \phi_{2}, \dots, \phi_{n}) = (b_{S} + (s-b)v_{S}\,,\,\, 
	\phi_{1}\eta_{1}' + \phi_{2}\eta_{2}' + \dots + \phi_{n}\eta_{n}')
	 \in \De \times \T
\end{equation}
where $(s, \phi_{1}) \in [b, b+\ell_{S}] \times \T^{1}$ are action-angle coordinates for 
$(\bA(b, b+\ell_{P}), \w_{0})$.  Again we have
$$ 
	\psi_{S}^{*}\,\w = \pi^{*}\w_{0} \quad\mbox{and}\quad
	\psi_{S}(S^{1}(a) \times \T^{n-1}) = L_{u(a)}
$$ 
where $u(a) = b_{S} + (a-b)\,v_{S}$ is the point on $S$ such that $d_{\aff}(u(a), b_{S}) = a-b$.
For $S = P'$, this embedding $\psi_{P'}$ will serve as \eqref{e:psiE} in Stage 2 for Theorem~\ref{t:noflag}.


\subsection{Proving Theorem~\ref{t:noflag}: Symmetric extended probes}

Let $\cP = P \cup Q \cup P'$ be a symmetric extended probe.
Let $A_{Q}, \widehat{A}_{Q}\co\ft^{*} \to \ft^{*}$ be the affine and linear reflections from
\eqref{e:Areflect} and \eqref{e:reflect} associated to the symmetric probe $Q$.

\subsubsection{Stage 1 and 2 for Theorem~\ref{t:noflag}}\label{s:S12NF}

The probe $P$ has length $\ell_{P}$, direction $v_{P} \in \ft_{\Z}^{*}$, starts at $b_{P} \in \Int F_{P}$, ends at 
the point $x_{PQ} = b_{P} + \ell_{P}v_{P} \in \Int Q$.  For a choice of lattice basis 
$\{\eta_{F_{P}}, \eta_{2}', \dots, \eta_{n}'\}$ for $\ft_{\Z}$ that satisfies \eqref{e:LB}, define the embedding for Stage 1
$$ 
	\psi_{P}\co\bD(\ell_{P}) \times \T^{n-1} \to (M, \w)
$$ 
so that for $(s, \phi_{1}) \in [0, \ell_{P}] \times \T^{1}$,
\begin{equation}\label{e:NFS1C}
	\psi_{P}(s, \phi_{1}, \phi_{2}, \dots, \phi_{n}) =
	\left(b_{P}+s\,v_{P}\,,\,\, \phi_{1}\eta_{F_{P}} + \sum_{k=2}^{n}\phi_{k}\eta_{k}'\right)
\end{equation}
as in \eqref{e:EAAC}.

The rational line segment $P'$ has length $\ell_{P'}$, direction 
$v_{P'} = \widehat{A}_{Q}(v_{P}) \in \ft_{\Z}^{*}$, starts
at the point $x_{PQ}' = A_{Q}(x_{PQ}) \in \Int Q$, and ends at $e_{P'} = x_{PQ}' + \ell_{P'}v_{P'} \in \Int \De$.  Since $\widehat{A}_{Q}$ is an element of $GL(\ft^{*}_{\Z})$, the image under 
$\widehat{A}_{Q}^{*}$, the dual of $\widehat{A}_{Q}$, of the lattice basis of $\ft_{\Z}$ used for $\psi_{P}$:
$$
\big\{\widehat{A}_{Q}^{*}(\eta_{F_{P}}), \widehat{A}_{Q}^{*}(\eta'_{2}), 
\dots, \widehat{A}_{Q}^{*}(\eta'_{n})\big\} \in \ft_{\Z}
$$
is still a lattice basis. This new basis satisfies \eqref{e:LB} with respect to 
$v_{P'} =\widehat{A}_{Q}(v_{P})$ since $(\widehat{A}_{Q})^{2} = \id$. 
Define the embedding for Stage 2, where $\ell = \ell_{P} + \ell_{P'}$,
$$ 
	\psi_{P'}\co\bA(\ell_{P}, \ell) \times \T^{n-1} \to (M,\w)
$$ 
so that for $(s, \phi_{1}) \in [\ell_{P}, \ell] \times \T^{1}$,
\begin{equation}\label{e:NFS2C}
\begin{split}
	\psi_{P'}(s, \phi_{1}, \phi_{2}, \dots, \phi_{n}) &= \left(x_{PQ}' + (s-\ell_{P})v_{P'}\,,\,\, 
	\phi_{1}\widehat{A}_{Q}^{*}(\eta_{F_{P}}) + \sum_{k=2}^{n}\phi_{k}\widehat{A}_{Q}^{*}(\eta'_{k})\right)\\
	&= \left(A_{Q}(x_{PQ} + (s-\ell_{P})v_{P})\,,\,\, \widehat{A}_{Q}^{*}
	\big(\phi_{1}\eta_{F_{P}} + \sum_{k=2}^{n} \phi_{k}\eta_{k}'\big) \right)
\end{split}
\end{equation}
as in \eqref{e:EAAAC}. 

\subsubsection{Stage 3 for Theorem~\ref{t:noflag}}

Recall that for our symmetric probe $Q \subset \De$, we have the affine reflection
$A_{Q}\co\ft^{*} \to \ft^{*}$ and the linear version $\widehat{A}_{Q}:\ft^{*} \to \ft^{*}$ from
\eqref{e:Areflect} and \eqref{e:reflect}.  Observe that since $(\widehat{A}_{Q})^{2} = \id$ it follows that
\begin{equation}\label{e:PsiLNF}
	\Psi_{A_{Q}}\co(\ft^{*} \times \T, d\l) \to (\ft^{*} \times \T, d\l) \quad\mbox{by}\quad
	\Psi_{A_{Q}}(v, \eta) = (A_{Q}(v), \widehat{A}_{Q}^{*}(\eta))
\end{equation}
is a symplectomorphism with respect to the canonical symplectic form \eqref{e:CSF}.  Comparing
\eqref{e:NFS1C} and \eqref{e:NFS2C} we see that to establish Stage 3 it suffices to prove
the following proposition, which can be seen as a local version of \cite[Proposition 5.5]{MT10}.

\begin{prop}\label{p:LHI}
	Let $Q \subset \De$ be a symmetric probe in the moment polytope for a
	symplectic toric orbifold $(M^{2n}, \w, \T, \Phi)$.  Then for any neighborhood
	of $\cN$ of $\Phi^{-1}(Q) \subset M$, there is a Hamiltonian isotopy of $(M, \w)$ supported
	in $\cN$ with time one map $\Psi$ such that 
	$$
		\Psi|_{\cU} = \Psi_{A_{Q}}|_{\cU}
	$$
	for a smaller neighborhood $\cU \subset \cN$ of $\Phi^{-1}(Q)$, where $\Psi_{A_{Q}}$
	is given by \eqref{e:PsiLNF}.
\end{prop}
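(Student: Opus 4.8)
The plan is to reduce Proposition~\ref{p:LHI} to a two-dimensional model by symplectic reduction along the subtorus of directions that $\widehat{A}_{Q}$ leaves fixed, and then to realize the reflection on the reduced space as an explicit, compactly supported Hamiltonian rotation.

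First I would record the linear-algebraic input. Since $v_{Q}$ is integrally transverse to $F_{Q}$, one may choose an integral basis $\eta_{1}=\eta_{F_{Q}},\eta_{2},\dots,\eta_{n}$ of $\ft_{\Z}$ with $\ip{\eta_{k},v_{Q}}=0$ for $k\ge 2$. From \eqref{e:reflect} one computes $\widehat{A}_{Q}^{*}\eta_{k}=\eta_{k}+\ip{\eta_{k},v_{Q}}(\eta_{F_{Q}'}-\eta_{F_{Q}})$, so $\widehat{A}_{Q}^{*}$ fixes $\eta_{2},\dots,\eta_{n}$ and sends $\eta_{F_{Q}}\mapsto\eta_{F_{Q}'}$, as already noted after \eqref{e:reflect}. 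The same computation applied to $A_{Q}$ in \eqref{e:Areflect} shows that the affine coordinates $x_{k}=\ip{\eta_{k},\cdot}$ for $k\ge 2$ are preserved by $A_{Q}$. Consequently $\Psi_{A_{Q}}$ from \eqref{e:PsiLNF} commutes with the action of the subtorus $\T^{n-1}\subset\T$ generated by $\eta_{2},\dots,\eta_{n}$ and preserves that subtorus's moment map $(x_{2},\dots,x_{n})$.

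Because $Q$ has direction $v_{Q}$, it lies in a single level set $(x_{2},\dots,x_{n})=c_{0}$ of this moment map, so I would symplectically reduce a neighborhood of $\Phi^{-1}(Q)$ by $\T^{n-1}$. Near $c_{0}$ the reduced spaces form a smooth family of $2$-dimensional symplectic orbifolds, and the one over $c_{0}$ contains the image $S$ of $\Phi^{-1}(Q)$, a chord running between the two fixed points produced by $F_{Q}$ and $F_{Q}'$; topologically $S$ is a (possibly orbifold) $2$-sphere carrying the residual $S^{1}$-action generated by $\eta_{1}$, with action-angle coordinates $(x_{1},\theta_{1})$ away from the poles. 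Since $\Psi_{A_{Q}}$ is $\T^{n-1}$-invariant it descends to $S$, and by $\widehat{A}_{Q}^{*}\eta_{F_{Q}}=\eta_{F_{Q}'}$ the descended map is the orientation-preserving involution that reverses the $x_{1}$-interval and interchanges the two poles, i.e.\ the ``flip'' given by rotating $S$ by $\pi$ about a horizontal axis. The relation $\widehat{A}_{Q}^{*}\eta_{F_{Q}}=\eta_{F_{Q}'}$ is exactly what makes the identification of the two collapsing circles compatible across the poles, so the flip is a genuine Hamiltonian symplectomorphism of $S$.

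It remains to build $\Psi$ and to localize. On $S$ the flip is the time-$\pi$ flow of the standard ``horizontal coordinate'' Hamiltonian $f=f(x_{1},\theta_{1})$, which is smooth through the poles. Pulling $f$ back to the action-angle coordinates $(x,\theta)$ on the neighborhood of $\Phi^{-1}(Q)$, where $\w=\sum_{i}dx_{i}\wedge d\theta_{i}$, yields a $\T^{n-1}$-invariant Hamiltonian depending only on $(x_{1},\theta_{1})$; its flow therefore moves only $(x_{1},\theta_{1})$ and fixes $x_{2},\dots,x_{n},\theta_{2},\dots,\theta_{n}$, exactly matching $\Psi_{A_{Q}}$. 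Multiplying this Hamiltonian by a cutoff $\chi(x_{2},\dots,x_{n})$ that equals $1$ on a slab $\cU\ni\Phi^{-1}(Q)$ and is supported in the slab defining $\cN$, and extending by zero, produces a Hamiltonian isotopy of $M$ supported in $\cN$; since $(x_{2},\dots,x_{n})$ is conserved and $d\chi=0$ on $\cU$, its time-one map (after rescaling the flow time by $\pi$) coincides with $\Psi_{A_{Q}}$ on $\cU$. The main obstacle is making the reduction and the normal form rigorous at the poles, over $F_{Q}$ and $F_{Q}'$, where circles collapse and the reduced space is only an orbifold: one must verify that the descended $\Psi_{A_{Q}}$ and the generator $f$ are genuinely smooth (in the orbifold sense) across the collapsed orbits. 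This is precisely where the interchange $\widehat{A}_{Q}^{*}\eta_{F_{Q}}=\eta_{F_{Q}'}$ is used, and it is the local analogue of the argument carried out globally in \cite[Proposition~5.5]{MT10}.
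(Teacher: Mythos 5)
Your overall strategy---reduce by the subtorus $\T^{n-1}$ generated by $\eta_{2},\dots,\eta_{n}$, realize the reflection as the rotation by $\pi$ of the reduced two-sphere, and cut off using the conserved transverse actions---is the reduced shadow of the mechanism the paper actually uses. But your reconstruction of $\Psi_{A_{Q}}$ upstairs from the reduced flip contains a genuine error, and the issue you defer as ``the main obstacle'' is not a removable technicality: it is exactly where the construction breaks.

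Concretely, write $\eta_{F_{Q}'}=-\eta_{F_{Q}}+\sum_{k\geq 2}c_{k}\eta_{k}$ (the $\eta_{1}$-coefficient is forced to be $-1$ by $\ip{\eta_{F_{Q}'},v_{Q}}=-1$). Computing \eqref{e:PsiLNF} in your coordinates gives
$$
\Psi_{A_{Q}}\colon\ \ x_{1}\mapsto -x_{1}+\textstyle\sum_{k\geq 2}c_{k}x_{k}+(\ka'-\ka),\quad
\th_{1}\mapsto -\th_{1},\quad x_{k}\mapsto x_{k},\quad \th_{k}\mapsto \th_{k}+c_{k}\th_{1}.
$$
So $\Psi_{A_{Q}}$ fixes the transverse angles only when every $c_{k}=0$, i.e.\ when $F_{Q}'$ is parallel to $F_{Q}$; in general it shears them, and the center of the $x_{1}$-reflection depends on $x_{2},\dots,x_{n}$. (The map you describe---a level-independent flip of $(x_{1},\th_{1})$ fixing everything else---is not even symplectic when some $c_{k}\neq 0$.) This case cannot be avoided: in the paper's own application to the resolution of $\bP(1,3,5)$ (Proposition~\ref{p:135RDaND}) one has $\eta_{F_{Q}}=(-2,-1)$ and $\eta_{F_{Q}'}=(1,0)$, which are not antiparallel. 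For the same reason no generator of the form $f(x_{1},\th_{1})$ can exist: such an $f$ induces the identical $(x_{1},\th_{1})$-dynamics on every level of $(x_{2},\dots,x_{n})$, whereas the flip of the level-$c$ reduced sphere reflects the interval $[a,b(c)]$ whose right endpoint $b(c)=\ka'+\sum_{k}c_{k}x_{k}$ varies affinely with the level; moreover the formula $\sqrt{(x_{1}-a)(b-x_{1})}\,\sin(2\pi\th_{1}+\cdot)$ with constant $b$ is not smooth (on some nearby levels not even defined) at the collapsed circles over $F_{Q}'$, since smoothness there requires the square-root factor to vanish exactly at $x_{1}=b(c)$, level by level. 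Repairing this forces the level-dependent generator, morally $\sqrt{\ell_{F_{Q}}(x)\,\ell_{F_{Q}'}(x)}\,\sin(\cdots)$, whose flow produces precisely the transverse shear above; the real content is then to show that its time-one map equals $\Psi_{A_{Q}}$ up to a \emph{constant} toric rotation. The paper does exactly this, but upstairs: it cuts out a polytope neighborhood of $Q$ all of whose extra facets are parallel to $v_{Q}$, presents it as a symplectic reduction of $\C^{N}$, realizes the reflection by the swap $B(z_{1},z_{n+1})=(z_{n+1},z_{1})$ inside a $U(2)$-action generated by a globally smooth quadratic Hamiltonian $H(z_{1},z_{n+1})$, and cuts off by a bump function of the conserved quantities $\abs{z_{2}}^{2},\dots,\abs{z_{n}}^{2}$. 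In that model, smoothness across the collapsed circles and the uniformity of the rotational discrepancy are automatic---and these are the two points your downstairs version asserts incorrectly or leaves unproved.
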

\begin{proof}[Proof of special case of Proposition~\ref{p:LHI}]
	Consider the special case where 
	$v_{Q}$ is parallel to every facet except $F_{Q}$ and $F_{Q}'$, meaning $\ip{\eta_{F}, v_{Q}} = 0$
	for all other interior conormals $\eta_{F} \in \ft_{\Z}$.  We have that
	$$
		\De = \bigcap_{j=1}^{N}\{x \in \ft^{*} \mid \ip{\eta_{j}, x} + \kappa_{j} \geq 0\}
	$$
	where without loss of generality $\{\eta_{1} = \eta_{F_{Q}}, \eta_{2}, \dots, \eta_{n}\}$
	is a lattice basis for $\ft_{\Z}$ and $\eta_{n+1} = \eta_{F_{Q}}'$.
	Let $\{v_{1} = v_{Q}, v_{2}, \dots, v_{n}\}$ be a dual basis for $\ft_{\Z}^{*}$.
	We have that
	$$
		\eta_{n+1} = -\eta_{1} + \sum_{i=2}^{n}a_{n+1}^{i}\, \eta_{i}
		\quad\mbox{and}\quad
		\eta_{n+j} = \sum_{i=2}^{n} a_{n+j}^{i}\,\eta_{i} \quad\mbox{for $j \geq 2$}.
	$$
	since $\ip{\eta_{1}, v_{Q}} = 1 = \ip{-\eta_{n+1}, v_{Q}}$ and $\ip{\eta_{j}, v_{Q}} = 0$ otherwise.
	
	We can identify $(M^{2n}, \w, \T, \Phi)$ with $(M_{\De}, \w_{\De}, \T, \Phi_{\De})$,
	which is built by performing symplectic reduction on the standard
	$(\C^{N}, \w_{0}, (S^{1})^{N}, \Phi_{0})$.
	In particular it has the form
	$(M_{\De}, \w_{\De}) = (Z/K, \overline{\w}_{0})$ where the level set
	\begin{equation*}
		Z = \{H_{n+1}(z) = c_{n+1}\,, \dots, H_{N}(z) = c_{N}\} \subset (\C^{N}, \w_{0})
	\end{equation*}
	for the Hamiltonians
	\begin{align}\notag
		H_{n+1}(z) &:= \abs{z_{1}}^{2} + \abs{z_{n+1}}^{2} - \sum_{i=2}^{n} a^{i}_{n+1}\, \abs{z_{i}}^{2}\,;
		&c_{n+1} := \kappa_{1} + \kappa_{n+1} - \sum_{i=2}^{n}a_{n+1}^{i}\,\kappa_{i} 
		\label{e:Hn+1}\\
		H_{n+j}(z) &:= \abs{z_{n+j}}^{2} - \sum_{i=2}^{n} a_{n+j}^{i}\, \abs{z_{i}}^{2}\,;
		&c_{n+j} := \kappa_{n+j} - \sum_{i=2}^{n} a_{n+j}^{i}\, \kappa_{i} \quad\mbox{for $j \geq 2$}
	\end{align}
	is symplectically reduced using the action of the $N-n$ dimensional subtorus 
	$K \subset (S^{1})^{N}$ whose action is
	given by the Hamiltonians $H_{n+1}, \dots, H_{N}$ on $\C^{N}$.
	The moment map for the action of $\T$ on $Z/K$ is given by
	\begin{equation*}
		\Phi_{\De}(z) = \sum_{j=1}^{n} (\abs{z_{j}}^{2} - \kappa_{j})\, v_{j} \in \ft^{*}.
	\end{equation*}
	For a point $q$ on the probe $Q$, since $v_{1} = v_{Q}$ and $\ip{\eta_{j}, v_{Q}} = 0$
	for $j=2, \dots, n$, it follows that
	$$
		Q =\left\{\R v_{1} + \sum_{k=2}^{n} \ip{\eta_{k}, q} v_{k}\right\} \cap \De \subset \ft^{*}
	$$
	and hence $\Phi^{-1}_{\De}(Q)$ is
	\begin{equation}\label{e:QLSNF}
		\left\{z \in Z : \abs{z_{2}}^{2} = \ip{\eta_{2}, q} + \kappa_{2}, \dots,
		\abs{z_{n}}^{2} = \ip{\eta_{n}, q} + \kappa_{n}\right\}/K \subset M_{\De}.
	\end{equation}
	
	Now the standard Hamiltonian $U(2)$ action on $\C e_{1} \times \C e_{n+1} \subset \C^{N}$
	preserves the level set $Z$ and commutes with the action of $K$, so it descends to
	a Hamiltonian $U(2)$ action on $M_{\De} = Z/K$.
	Consider the element $B \in U(2)$ so that $B(z_{1}, z_{n+1}) = (z_{n+1}, z_{1})$.
	Since
	$$
		A_{Q}(x) = x + \left\langle-2\eta_{1} + 
		\sum_{i=2}^{n} a_{n+1}^{i}\,\eta_{i}\,,\, x\right\rangle v_{1} + (\ka_{n+1} - \ka_{1})\,v_{1} 
	$$
	it follows for $z \in Z \subset \C^{N}$ that we have
	\begin{align*}
		(A_{Q} \circ \Phi_{\De})(z) &= A_{Q}\left(\sum_{j=1}^{n} (\abs{z_{j}}^{2} - \kappa_{j})\, v_{j}\right)\\
		&= \left(-\abs{z_{1}}^{2} + \ka_{n+1} + \sum_{i=2}^{n}a_{n+1}^{i}(\abs{z_{i}}^{2} - \ka_{i})\right)v_{1}
		+ \sum_{j=2}^{n} (\abs{z_{j}}^{2} - \ka_{j})\, v_{j}\\
		&= (\abs{z_{n+1}}^{2} - \ka_{1})\,v_{1} + \sum_{j=2}^{n} (\abs{z_{j}}^{2} - \ka_{j})\, v_{j}
		= (\Phi_{\De} \circ B)(z)
	\end{align*}
	where the second to last equality uses that $H_{n+1}(z) = c_{n+1}$ from 
	\eqref{e:Hn+1} for
	points on $Z$.
	Therefore up to applying a uniform rotation using the toric action, we have that $B \in U(2)$
	acts on $(M_{\De}, \w_{\De})$ as the Hamiltonian
	diffeomorphism $\Psi_{A_{Q}} \in \Ham(M_{\De}, \w_{\De})$ from \eqref{e:PsiLNF}.
	
	Now let $X \in \mathfrak{u}(2)$ be such that $\exp(X) = B$ and let $H(z_{1}, z_{n+1})$
	be the autonomous Hamiltonian whose corresponding Hamiltonian flow $\vp^{H}_{t}$ on $(\C^{N}, \w_{0})$
	is the action of $\exp(tX) \in U(2)$.  Since $H$ Poisson commutes with $\abs{z_{j}}^{2}$ for
	$j=2, \dots, n$, it follows that $\vp^{H}_{t}$ in $\Ham(M_{\De}, \w_{\De})$ preserves
	level sets of the form
	$$
		\left\{z \in Z : \abs{z_{2}}^{2} = b_{2}\,,\, \dots\,,\, \abs{z_{n}}^{2} = b_{n}\right\}/K \subset M_{\De}
	$$
	in particular $\Phi_{\De}^{-1}(Q)$ is preserved.  Now let $\cN \subset M_{\De}$ be any neighborhood
	of $\Phi_{\De}^{-1}(Q)$ and let $\r = \r(\abs{z_{2}}^{2}, \dots, \abs{z_{n}}^{2})$ be a bump function that 
	is a constant $1$ near the level set \eqref{e:QLSNF} and $\supp(\r) \subset \cN$.  Then the time 
	one flow $\Psi = \vp^{\r H}_{1}$ for the Hamiltonian $\r H\co M_{\De} \to \R$ is the desired
	element of $\Ham(M_{\De}, \w_{\De})$.
\end{proof}
\begin{proof}[Proof of general case of Proposition~\ref{p:LHI}]
	Let $\{\eta_{1}' = \eta_{F_{Q}}, \eta_{2}', \dots, \eta_{n}'\} \in \ft_{\Z}$ be a lattice basis
	satisfying \eqref{e:LB} with respect to $v_{Q}$.  	
	If $q$ is a point on $Q$, then for $\e > 0$ define 
	the rational half-spaces 
	$$
		\cH_{2k-1} = \{x \in \ft^{*} \mid \ip{\eta_{k}', x - q} + \eps \geq 0\}
		\quad\mbox{and}\quad
		\cH_{2k} = \{x \in \ft^{*} \mid \ip{-\eta_{k}', x - q} + \eps \geq 0\}
	$$
	for $k = 2, \dots, n$.  Since $Q$ starts and ends at points in the interior of the facets $F_{Q}$
	and $F_{Q}'$ of $\De$, respectively, for $\e > 0$ sufficiently small
	$$
		\De_{Q,\e} =  \{x \in \ft^{*} \mid \ip{\eta_{F_{Q}}, x} + \kappa \geq 0\}
		\cap \{x \in \ft^{*} \mid \ip{\eta_{F_{Q}'}, x} + \kappa' \geq 0\}
		\cap \bigcap_{j=3}^{2n} \cH_{j} \subset \De
	$$
	is a neighborhood of $\De_{Q,0} = Q \subset \De$.  Furthermore $\De_{Q, \e}$ is the moment
	polytope for a symplectic toric manifold $(Y^{2n}_{\e}, \w_{\e}, \T, \Phi)$, that satisfies the 
	special condition that all facets except $F_{Q}$ and $F_{Q}'$ are parallel to $v_{Q}$.
	By the special case of Propositon~\ref{p:LHI} we can build the desired Hamiltonian isotopy $\Psi$
	in $\Ham(Y_{\e}^{2n}, \w_{\e})$ that is generated by an autonomous Hamiltonian
	supported in $\cN_{\e} = \Phi^{-1}(\De_{Q, \e/2}) \subset Y^{2n}_{\e}$.  Since $\cN_{\e}$ 
	canonically embeds into $(M, \w)$, preserving the toric structure, we can see $\Psi \in \Ham(M, \w)$
	as our desired Hamiltonian isotopy.
\end{proof}


\subsection{Proving Theorem~\ref{t:pdeflected}: Parallel extended probes with flags}

Let $\cP = P \cup Q \cup \cF$ be a parallel extended probe with flag. 
Since the direction $v_{P}$ of $P$ is parallel to the facet $F_{Q}$,
we can pick dual lattice bases $\{\eta_{1}, \dots, \eta_{n}\}$ for $\ft_{\Z}$ and  
$\{e_{1}, \dots, e_{n}\}$ for $\ft_{\Z}^{*}$ so that
$$ 
	v_{P} = e_{1}\,,\quad v_{Q} = e_{2}\,,\quad 
	\eta_{F_{P}} = \eta_{1} + \sum_{k=2}^{n}a_{k}\eta_{k}\,,\quad
	\eta_{F_{Q}} = \eta_{2}.
$$ 
Picking action-angle coordinates $(x,\th)$ on $\ft^{*} \times \T$ with respect to these bases, we can let 
the points on the probe $P \subset \ft^{*}$ have coordinates
$$ 
	b_{P} = (0, r_{PQ}, b_{3}, \dots, b_{n})\,, \quad
	u = (a, r_{PQ}, b_{3}, \dots, b_{n})\,, \quad 
	x_{PQ} = (\ell_{P}, r_{PQ}, b_{3}, \dots, b_{n})
$$ 
where $\ell_{P}$ is the length of $P$ and $0 < a < \ell_{P}$ is $a = d_{\aff}(u, F_{P}) = d_{v_{P}}(u, b_{P})$.
Besides $x_{PQ}$, let the other points on $Q \subset \ft^{*}$ be
\begin{equation}\label{e:ptQ}
	b_{Q} = (\ell_{P}, 0, b_{3}, \dots, b_{n})\,,\,\,
	x_{\cF} = (\ell_{P}, r_{\cF}, b_{3}, \dots, b_{n})\,,\,\,
	x_{\cF}' = (\ell_{P}, r_{\cF}', b_{3}, \dots, b_{n})
\end{equation}	
where $r_{PQ}, r_{\cF}, r_{\cF}'$ are positive and $r_{\cF} < r_{\cF}'$.
By \eqref{e:ptQ}, the end points of $\cF$ are
$$ 
	e_{\cF} = (\ell, r_{\cF}, b_{3}, \dots, b_{n}) \quad\mbox{and}\quad 
	e_{\cF}' = (\ell, r_{\cF}', b_{3}, \dots, b_{n})
$$
where $\ell = \ell_{P} + \ell_{\cF}$ is the length of the extended probe $\cP$ and $\ell_{\cF}$ is the length of the flag $\cF$.
In our coordinates for $\ft^{*}$ the flag $\cF$ is given by
\begin{equation*}
	\cF = \big\{(x_{1}, x_{2}, b_{3}, \dots, b_{n}) \in \ft^{*} \mid 
	\ell_{P} \leq x_{1} \leq \ell\,,\,\, r_{\cF} \leq x_{2} \leq r_{\cF}'\big\}.
\end{equation*}

\subsubsection{Stage 1 for Theorem~\ref{t:pdeflected}}

In the action-angle coordinates $(x, \th)$ for $(M, \w)$, the embedding for the 
probe $P$
$$ 
	\psi_{P}\co\bD(\ell_{P}) \times \T^{n-1} \to (M^{2n}, \w)
$$ 
from \eqref{e:EAAC} has the form
\begin{equation}\label{e:ECpsiP}
	\psi_{P}(s, \phi_{1}, \phi_{2}, \dots, \phi_{n}) = 
	\begin{pmatrix}
	s & r_{PQ} & b_{3} & \cdots & b_{n}\\
	\phi_{1} & a_{2}\phi_{1} + \phi_{2}& a_{3}\phi_{1} + \phi_{3}& \cdots &	a_{n}\phi_{1} + \phi_{n}
	\end{pmatrix}.
\end{equation}

\subsubsection{Stage 2 for Theorem~\ref{t:pdeflected}}

It follows from \eqref{e:pflag} that 
$r_{PQ} < r_{\cF}' - r_{\cF}$ and therefore by Lemma~\ref{l:APD} there is a Hamiltonian isotopy $\r$ of 
$(\bD(r_{\cF}'), \w_{0})$ supported in the interior so that
\begin{equation}\label{e:rho}
	\r\big(S^{1}(r_{PQ})\big) \subset \bA(r_{\cF}, r_{\cF}').
\end{equation}
If $(x_{2},\, \th_{2}) \in [0, r_{\cF}'] \times \T^{1}$ are action-angle coordinates on $(\bD(r_{\cF}'), \w_{0})$,
then write this Hamiltonian diffeomorphisms as $\r = (\r_{x_{2}},\, \r_{\th_{2}})$.

Now let $(s, \phi_{1}) \in [\ell_{P}, \ell] \times \T^{1}$ be action-angle coordinates on 
$(\bA(\ell_{P}, \ell), \w_{0})$ and using the action-angle coordinates $(x, \th)$ on $(M, \w)$ define the embedding
\begin{equation}\label{e:EpsiF}
	\psi_{\cF}\co\bA(\ell_{P}, \ell) \times \T^{n-1} \to (M^{2n}, \w)
\end{equation}
$$
	\psi_{\cF}(s, \phi_{1}, \phi_{2}, \dots, \phi_{n}) =  
	\begin{pmatrix}
	s & \r_{x_{2}}(r_{PQ}\,,a_{2}\phi_{1} + \phi_{2}) & b_{3} 
	& \cdots & b_{n}\\
	\phi_{1} & \r_{\th_{2}}(r_{PQ}\,,a_{2}\phi_{1} + \phi_{2}) & a_{3}\phi_{1} + \phi_{3}
	& \cdots & a_{n}\phi_{1} + \phi_{n}
	\end{pmatrix}.
$$
Observe that the formula for $\psi_{\cF}$ is just the result of applying $\r$ to the $(x_{2}, \th_{2})$
coordinates in the formula \eqref{e:ECpsiP} for $\psi_{P}$.  It is straightforward to check that this embedding $\psi_{\cF}$ satisfies the conditions for \eqref{e:psiE}.

\subsubsection{Stage 3 for Theorem~\ref{t:pdeflected}}

In the standard toric structure $(\C^{n}, \w_{0}, (S^{1})^{n}, \Phi_{0})$, consider the probe 
$Q_{0} \subset \R^{n}_{+}$ given by 
$$
Q_{0} = \{(b_{1}, x_{2}, b_{3}, \dots, b_{n}) \mid x_{2} \in [0, q]\}
$$ 
where $b_{k}$ are positive, then
\begin{equation}\label{e:diskmodel}
	\Phi_{0}^{-1}(Q_{0}) = S^{1}(b_{1}) \times \bD(q) \times \prod_{k\ge 3} (S^{1}(b_k)) 
	\subset \C \times \C \times \C^{n-2}.
\end{equation}
For small $\eps\gg\de\ge0$ we define the subsets of $\R^{n}_{+}$
\begin{gather*}
\cN^{\eps,\de}: = [b_{1}-\eps, b_{1}+ \eps] \times [0,q-\de] \times \prod_{k\ge 3} [b_k-\eps,b_k+\eps],\\
X^\eps: = [b_1-\eps,b_1+\eps] \times [0,q] \times (b_3,\dots,b_n).
\end{gather*}
\begin{lem}\label{l:deflecting} 
	Let $\r$ be any area preserving diffeomorphism of $\bD(q)$ that is the identity near
	$\partial \bD(q)$.
	If $\cN_0: = \cN^{\eps,\de}$ for $\de > 0$ sufficiently small, then
	there exists a Hamiltonian isotopy of $\C^{n}$ supported in 
	${\rm int\,}\bigl(\Phi_{0}^{-1}(\cN_0)\bigr)$ such that
	the time one map of the isotopy $\Psi$ in 
	a small neighborhood $\mathcal{U}$ of $\Phi_{0}^{-1}(Q_{0}) \subset \C^{n}$ 
	is given by
	\begin{equation}\label{e:LFPSI}
		(\Psi^{\r})|_{\mathcal{U}} = (\id_{\C} \times \r \times \id_{\C^{n-2}})|_{\mathcal{U}}
	\end{equation}
	in terms of the decomposition in \eqref{e:diskmodel} and in particular
	$\Psi^{\r}(\Phi_{0}^{-1}(X^\eps)) = \Phi_{0}^{-1}(X^\eps)$.
\end{lem}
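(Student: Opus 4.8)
The plan is to produce $\Psi^\rho$ as the time-one map of a Hamiltonian flow on $\C^n$ that near the fibre moves only the $z_2$-coordinate, obtained by cutting off a lift of the generator of $\rho$ along functions that the flow preserves. First I would extend $\rho$ to all of $\C$ by declaring it the identity outside $\bD(q)$; since $\rho$ is the identity on a neighbourhood of $\partial\bD(q)$, this extension is a smooth, area-preserving diffeomorphism supported in $\bD(q-\delta_0)$ for some $\delta_0>0$. Because $\Symp_c(\C,\w_0)$ is connected and $H^1_c(\C;\R)=0$, the flux of $\rho$ vanishes, so $\rho$ is the time-one map of a compactly supported Hamiltonian isotopy with generator $H_t\co\C\to\R$ satisfying $\supp(H_t)\subset\bD(q-\delta_0)$; compare Lemma~\ref{l:APD}. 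I then fix $\delta\in(0,\delta_0)$, so that in the $z_2$-factor the generator is supported in ${\rm int\,}\bD(q-\delta)$.

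Next I would transplant $H_t$ to $\C^n$ and cut it off in the remaining directions. Choose a bump function $\beta$ of the $n-1$ variables $(s_1,s_3,\dots,s_n)$ that equals $1$ on a neighbourhood $N$ of $(b_1,b_3,\dots,b_n)$ and is supported in the open box $(b_1-\eps,b_1+\eps)\times\prod_{k\ge3}(b_k-\eps,b_k+\eps)$, and set
$$
\hat H_t(z):=\beta\bigl(|z_1|^2,|z_3|^2,\dots,|z_n|^2\bigr)\,H_t(z_2)\co\C^n\to\R .
$$
By construction $\supp(\hat H_t)\subset{\rm int\,}\bigl(\Phi_0^{-1}(\cN_0)\bigr)$, so its Hamiltonian flow $\Psi^\rho_t$ is supported there; put $\Psi^\rho:=\Psi^\rho_1$.

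The key point — and the reason to let $\beta$ depend only on the moment components — is that each of $|z_1|^2,|z_3|^2,\dots,|z_n|^2$ Poisson-commutes with $\hat H_t$: it commutes with $\beta$ (a function of the pairwise-commuting $|z_j|^2$) and with $H_t(z_2)$ (which depends on a disjoint coordinate), so $\{|z_k|^2,\hat H_t\}=0$ for $k\ne2$. Hence these functions are constant along $\Psi^\rho_t$, and two consequences finish the proof. On the neighbourhood $\cU$ of $\Phi_0^{-1}(Q_0)$ cut out by $(|z_1|^2,|z_3|^2,\dots,|z_n|^2)\in N$, the conservation keeps $\beta\equiv1$ along every trajectory, so there $X_{\hat H_t}=X_{H_t(z_2)}$ acts only on $z_2$ and its time-one map is $\id_\C\times\rho\times\id_{\C^{n-2}}$, which is \eqref{e:LFPSI}. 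Moreover the level sets $\{|z_1|^2=c_1,\ |z_k|^2=b_k\ (k\ge3)\}$ are preserved while the $z_2$-coordinate stays inside $\bD(q)$ (its flow is generated by a constant multiple of $H_t$, supported in $\bD(q-\delta)$), so the toric-invariant set $\Phi_0^{-1}(X^\eps)$ is mapped to itself.

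Once the conservation laws are in place the geometry is easy, and the main obstacle is really bookkeeping. I would need to arrange that $\supp(\hat H_t)$ genuinely lies in the \emph{interior} of $\Phi_0^{-1}(\cN_0)$ — which is what forces both the choice $\delta<\delta_0$ in the $z_2$-direction and the open-box support of $\beta$ in the others — and then to check that the cutoff $\beta$, although not globally equal to $1$, leaves the flow undisturbed near $\Phi_0^{-1}(Q_0)$; this is exactly what the conservation of $|z_1|^2,|z_3|^2,\dots,|z_n|^2$ guarantees.
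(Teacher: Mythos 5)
Your proof is correct and follows essentially the same route as the paper's: write $\rho$ as the time-one map of a Hamiltonian isotopy of the disc and cut off its generator by a bump function of the action coordinates $|z_{k}|^{2}$, $k\neq 2$, which Poisson-commute with the generator and are therefore conserved, so that near $\Phi_{0}^{-1}(Q_{0})$ the flow acts only on the $z_{2}$-factor and gives \eqref{e:LFPSI}. The paper's cutoff $\alpha(y)$ with $y=\sum_{k\neq 2}(x_{k}-b_{k})^{2}$ plays exactly the role of your box-shaped bump $\beta$; your flux argument and the explicit conservation-law bookkeeping merely spell out details the paper leaves implicit.
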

\begin{proof}
	The map $\r$ is the time one map of a Hamiltonian isotopy,
	generated by some time-dependent Hamiltonian $H$ with support in $\Int \bD(q)$.
	Simply multiply $H$ by a cut-off function $\alpha\co\C^{n} \to [0,1]$ 
	that is a function of the variable $y: = \sum_{k\not= 2}(x_k - b_{k})^2$
	and $\a(y) \equiv 1$ near $y=0$.
	The time one map $\Psi^{\r}$ of the Hamiltonian isotopy generated by $\alpha H$ has the 
	desired properties.
\end{proof}

Take now $Q_{0} \subset \R^{n}_{+}$ with $b_{1} = \ell_{P}$ and $q = r_{\cF}'$ 
to be a local model for our probe $Q \subset \De$.  
By applying Lemma~\ref{l:deflecting} to the Hamiltonian diffeomorphism $\r$ of $\bD(r_{\cF}')$ from \eqref{e:rho} in Stage~2, the resulting Hamiltonian diffeomorphism $\Psi^{\r}$ can be extended by the identity
outside its support to be an element of $\Ham(M, \w)$.  It follows from \eqref{e:LFPSI} that
near $\Phi^{-1}(Q)$ the Hamiltonian diffeomorphism $\Psi^{\r}$ has the form
\begin{equation}\label{e:PsiD}
	\Psi^{\r}\begin{pmatrix} x \\ \th \end{pmatrix} =
	\begin{pmatrix}
	x_{1} & \r_{x_{2}}(x_{2}\,,\th_{2}) & x_{3} & \cdots & x_{n}\\
	\th_{1} & \r_{\th_{2}}(x_{2}\,,\th_{2}) & \th_{3} & \cdots & \th_{n}
	\end{pmatrix}
\end{equation}
in our action-angle coordinates $(x, \th)$.  

Comparing \eqref{e:ECpsiP} and \eqref{e:EpsiF}, it is clear that
$$
	\Psi^{\r} \circ \psi_{P}\co\bD(\ell_{P}) \times \T^{n-1} \to (M, \w)
	\quad\mbox{and}\quad
	\psi_{\cF} \co\bA(\ell_{P}, \ell) \times \T^{n-1} \to (M, \w)
$$
glue together to form an embedding $\psi_{\cP}$ as in \eqref{e:FE}.  


\subsection{Proving Theorem~\ref{t:deflected}: Extended probes with flags}

Let $\cP = P \cup Q \cup \cF_{\mu}$ be a extended probe with flag. 
We can pick dual lattice bases $\{\eta_{1}, \dots, \eta_{n}\}$ for $\ft_{\Z}$ and  
$\{e_{1}, \dots, e_{n}\}$ for $\ft_{\Z}^{*}$ so that
$$ 
	v_{P} = c_{1}e_{1} + c_{2}e_{2}\,,\quad v_{Q} = e_{2}\,,\quad 
	\eta_{F_{P}} = \sum_{k=1}^{n}a_{k}\eta_{k}\,,\quad
	\eta_{F_{Q}} = \eta_{2}
$$ 
where $c_{1}, c_{2} \in \Z$ are relatively prime and without loss of generality $c_{1} > 0$.
Picking action-angle coordinates $(x,\th)$ on $\ft^{*} \times \T$ with respect to these bases, we can let 
the points on the probe $P \subset \ft^{*}$ have coordinates
\begin{equation*}
\begin{split}	
	b_{P} = (0, b_{2}, b_{3}, \dots, b_{n})\,, \quad
	u = b_{P} + a(c_{1}, c_{2}, 0, \dots, 0)\,, \quad \\
	x_{PQ} = b_{P} + \ell_{P}(c_{1}, c_{2}, 0, \dots, 0) = (\ell_{P}c_{1}, r_{PQ}, b_{3}, \dots, b_{n})
\end{split}
\end{equation*}
where $\ell_{P}$ is the length of the probe $P$.
The points on $Q \subset \ft^{*}$ are
\begin{equation}\label{e:ptQnp}
\begin{split}
	b_{Q} = (\ell_{P}c_{1}, 0, b_{3}, \dots, b_{n})\,,\quad\quad
	x_{PQ} = (\ell_{P}c_{1}, r_{PQ}, b_{3}, \dots, b_{n})\\
	x_{\cF} = (\ell_{P}c_{1}, r_{\cF}, b_{3}, \dots, b_{n})\,,\quad\quad
	x_{\cF}' = (\ell_{P}c_{1}, r_{\cF}', b_{3}, \dots, b_{n})
\end{split}
\end{equation}	
where $r_{PQ}, r_{\cF}, r_{\cF}'$ are positive and $r_{\cF} < r_{\cF}'$.
By \eqref{e:ptQnp}, the end points of $\cF_{\mu}$ are
\begin{align*}
	e_{\cF_{\mu}} = (\ell c_{1}\,,\, r_{\cF} -\ell_{\cF} \mu c_{2}\,,\, b_{3}, \dots, b_{n}) \quad\mbox{and}\quad \\
	e_{\cF_{\mu}}' = (\ell c_{1}\,,\, r_{\cF}' + \ell_{\cF} (1-\mu)c_{2}\,,\, b_{3}, \dots, b_{n})
\end{align*}
where $\ell = \ell_{P} + \ell_{\cF}$ is the length of the extended probe $\cP$
and $\ell_{\cF}$ is the length of the flag $\cF$.

\subsubsection{Stage 1 for Theorem~\ref{t:deflected}}

In action-angle coordinates $(x, \th)$ for $(M, \w)$ the embedding associated to the 
probe $P$
$$ 
	\psi_{P}\co\bD(\ell_{P}) \times \T^{n-1} \to (M^{2n}, \w)
$$ 
from \eqref{e:EAAC} has the form
\begin{equation}\label{e:ECpsiPmu}
	\psi_{P}(s, \phi_{1}, \phi_{2}, \dots, \phi_{n}) = 
	\begin{pmatrix}
	x\\ \th
	\end{pmatrix}
	=
	\begin{pmatrix}
	s\,c_{1} & b_{2} + s\,c_{2} & b_{3} & \cdots & b_{n}\\
	\th_{1}(\phi) & \th_{2}(\phi) & \th_{3}(\phi)& \cdots &	\th_{n}(\phi)
	\end{pmatrix}.
\end{equation}

\subsubsection{Stage 2 for Theorem~\ref{t:deflected}}

Assume now that $c_{2} \leq 0$, so that $v_{P}$ points towards $F_{Q}$.
By Lemma~\ref{l:APD} there is a 
compactly supported Hamiltonian isotopy $\r$ of $(\bD(r_{\cF}'), \w_{0})$ such that
$$
	\r(\bD(t)) \subset \bA(a_{\mu}(t), a_{\mu}(t) + t + \e) \quad\mbox{for $\e \leq t \leq r_{\cF}' - r_{\cF}-\e$}
$$
where 
$$
	a_{\mu}(t) = r_{\cF} - \mu t + \mu(r_{\cF}' - r_{\cF}).
$$
By \eqref{e:flag}, we have that $r_{PQ} < r_{\cF}' - r_{\cF}$ and hence
\begin{equation}\label{e:fitflag}
	\r(S^{1}(r_{PQ} + \l c_{2})) \subset \bA\big(r_{\cF} - \l c_{2}\mu\,,\,\, r_{\cF}' + \l (1-\mu)c_{2}\big)
	\quad\mbox{for $0 \leq \l \leq \ell_{\cF}$}.
\end{equation}

Assume that $c_{2} \geq 0$, so that $v_{P}$ points away from $F_{Q}$.
By Lemma~\ref{l:APD} there is a compactly supported Hamiltonian isotopy $\r$ of 
$(\bD(r_{\cF}'+ \ell_{\cF}(1-\mu)c_{2}), \w_{0})$ such that
$$
	\r(\bD(t)) \subset \bA(a_{\mu}(t), a_{\mu}(t) + t + \e) \quad\mbox{for $\e \leq t \leq 
	r_{\cF}' - r_{\cF} + \ell_{\cF}c_{2}-\e$}
$$
where 
$$
	a_{\mu}(t) = r_{\cF} - \mu t + \mu(r_{\cF}' - r_{\cF}).
$$
By \eqref{e:flag}, we have that $r_{PQ} < r_{\cF}' - r_{\cF}$ and hence
\begin{equation}\label{e:fitflag+}
	\r(S^{1}(r_{PQ} + \l c_{2})) \subset \bA\big(r_{\cF} - \l c_{2}\mu\,,\,\, r_{\cF}' + \l (1-\mu)c_{2}\big)
	\quad\mbox{for $0 \leq \l \leq \ell_{\cF}$}.
\end{equation}

Now let $(s, \phi_{1}) \in [\ell_{P}, \ell] \times \T^{1}$ be action-angle coordinates on 
$(\bA(\ell_{P}, \ell), \w_{0})$ and using the action-angle coordinates $(x, \th)$ on $(M, \w)$ define the embedding
\begin{equation*} 
\begin{split}
	\psi_{\cF_{\mu}}\co\bA(\ell_{P}, \ell) \times \T^{n-1} \to (M^{2n}, \w) \hspace{2in}\\
	\psi_{\cF_{\mu}}(s, \phi_{1}, \phi_{2}, \dots, \phi_{n}) =  
	\begin{pmatrix}
	s\,c_{1} & \r_{x_{2}}(b_{2} + s\,c_{2}\,, \th_{2}(\phi)) & b_{3} 
	& \cdots & b_{n}\\
	\th_{1}(\phi) & \r_{\th_{2}}(b_{2} + s\,c_{2}\,, \th_{2}(\phi)) & \th_{3}(\phi)
	& \cdots & \th_{n}(\phi)
	\end{pmatrix}.
\end{split}
\end{equation*}
Observe that the formula for $\psi_{\cF_{\mu}}$ is just the result of applying $\r$ to the $(x_{2}, \th_{2})$
coordinates in the formula \eqref{e:ECpsiPmu} for $\psi_{P}$.  It is straightforward to check that this embedding $\psi_{\cF_{\mu}}$ satisfies the conditions for \eqref{e:psiE}, in particular
$\im (\psi_{\cF_{\mu}}) \subset \Phi^{-1}(\cF_{\mu})$ follows from \eqref{e:fitflag} and \eqref{e:fitflag+}.

\begin{remark}
	When $c_{2} < 0$, note that 
	$-r_{PQ}/c_{2} = d_{v_{P}}(x_{PQ}, F_{Q})$.  So the second assumption in 
	\eqref{e:flag}, i.e. $\ell_{\cF} < d_{v_{P}}(x_{PQ}, F_{Q})$, ensures that 
	$S^{1}(r_{PQ} +\l c_{2})$ in \eqref{e:fitflag} does not collapse to a point.
	If it did collapse to a point, then $\psi_{\cF_{\mu}}$ would no longer be an embedding
	and this is necessary for our proof.	
\end{remark}

\subsubsection{Stage 3 for Theorem~\ref{t:deflected}}

The rest of the proof is now the same as in the parallel case. 
By applying Lemma~\ref{l:deflecting} to the Hamiltonian diffeomorphism $\r$ of $\bD(r_{\cF}')$ from
in Stage 2, the resulting Hamiltonian diffeomorphism $\Psi^{\r}$ can be extended by the identity
outside its support to be an element of $\Ham(M, \w)$.  It follows from \eqref{e:LFPSI} that
near $\Phi^{-1}(Q)$ the Hamiltonian diffeomorphism $\Psi^{\r}$ has the form
$$ 
	\Psi^{\r}\begin{pmatrix} x \\ \th \end{pmatrix} =
	\begin{pmatrix}
	x_{1} & \r_{x_{2}}(x_{2}\,,\th_{2}) & x_{3} & \cdots & x_{n}\\
	\th_{1} & \r_{\th_{2}}(x_{2}\,,\th_{2}) & \th_{3} & \cdots & \th_{n}
	\end{pmatrix}
$$ 
in our action-angle coordinates $(x, \th)$.  

Comparing \eqref{e:ECpsiP} and \eqref{e:EpsiF}, it is clear that
$$
	\Psi^{\r} \circ \psi_{P}\co\bD(\ell_{P}) \times \T^{n-1} \to (M, \w)
	\quad\mbox{and}\quad
	\psi_{\cF} \co\bA(\ell_{P}, \ell) \times \T^{n-1} \to (M, \w)
$$
glue together to form an embedding $\psi_{\cP}$ as in \eqref{e:FE}. 

\subsubsection{Hamiltonian diffeomorphisms of the disk and the associated flags}

The area preserving diffeomorphisms of a disk to which we applied Lemma~\ref{l:deflecting}
come from the following lemma.

For real numbers $0<A < B$, pick a smooth function $a\co [0, B-A] \to [A, B]$
that is non-increasing, is such that
$$
A \leq a(s) + s \leq B\,,\,\, a(B-A) = A\,,
$$
and the function $b(s) := a(s) + s$ is non-decreasing for $s \in [0, B-A]$.
Above we picked $a$ to have the form
$$ 
	a_{\mu}(s) = A -\mu s + \mu (B-A) \quad\mbox{for $\mu \in [0,1]$}
$$
where the parameter $\mu$ corresponds with the flag parameter.

\begin{figure}[h]

	\begin{center} 
	\leavevmode 
	\includegraphics[width=4in]{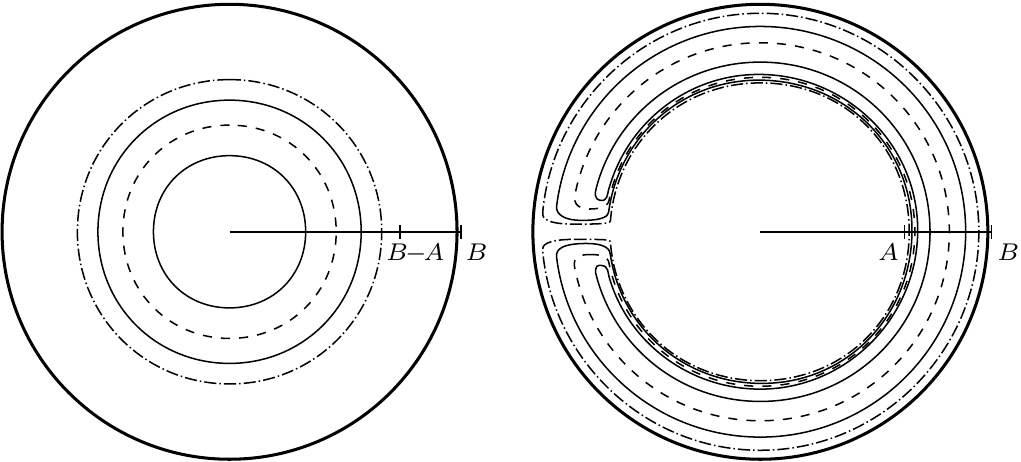}
	\end{center} 
	
	\caption{The family of circles that build the Hamiltonian diffeomorphism $\r$ associated to
	$a_{0}$, drawn with $A = 4$ and $B=9$.}	
	\label{f:APD}
\end{figure}

\begin{lem}\label{l:APD}
For any function $a(s)$ as above and any $\eps>0$, there is a 
compactly supported Hamiltonian diffeomorphism 
$\r:\Int\bD(B)\to \Int\bD(B)$
such that 
$$
	\r(\bD(s))\subset \bA(a(s),\, a(s)+s+\eps) \quad\mbox{for all $\eps \leq s \leq B-A-\eps$.}
$$
\end{lem}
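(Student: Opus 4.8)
The plan is to first reduce the statement to the purely two-dimensional problem of producing a compactly supported \emph{area-preserving} diffeomorphism with the stated property, and then to build that diffeomorphism by prescribing a nested family of regions. For the reduction I would work in action-angle coordinates $(r,\phi)\in(0,B)\times(\R/\Z)$ on $\Int\bD(B)\setminus\{0\}$, in which $\w_{0}=dr\wedge d\phi$, so that $\bD(s)=\{r\le s\}$ and $\bA(c,d)=\{c\le r\le d\}$ and all relevant areas are computed by integrating $dr\wedge d\phi$. Since $\Int\bD(B)$ is symplectomorphic to $(\R^{2},\w_{0})$ and the group of compactly supported area-preserving diffeomorphisms of $\R^{2}$ is connected, any such diffeomorphism is automatically the time-one map of a compactly supported Hamiltonian isotopy; thus it suffices to construct $\r$ as an area-preserving diffeomorphism, and its Hamiltonian nature comes for free.

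To build $\r$ I would specify the images $C_{s}=\r(\partial\bD(s))$ of the concentric circles as a nested family of embedded circles foliating $\Int\bD(B)$ minus a point, chosen so that the region $\r(\bD(s))$ enclosed by $C_{s}$ has area exactly $s$ and is contained in the target annulus $\bA(a(s),a(s)+s+\eps)$. The monotonicity hypotheses are exactly what make this family consistent: since $a$ is non-increasing the inner radii $a(s)$ of the target annuli shrink as $s$ grows, while $b(s)=a(s)+s$ non-decreasing makes the outer radii grow, so the target annuli are nested in the correct sense and admit a common foliation. Concretely each $C_{s}$ should be taken to hug the inner boundary $\{r=a(s)\}$ and spiral up to near $\{r=b(s)\}$, and $\r$ is then defined on the region swept out by these circles by sending $\partial\bD(s)$ to $C_{s}$ while matching the area measured from the center, which forces $\r$ to preserve $dr\wedge d\phi$. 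Cutting off near $\partial\bD(B)$ and near the origin keeps $\r$ compactly supported in $\Int\bD(B)$ and equal to the identity there.

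The main obstacle is the tension between topology and area: $\r(\bD(s))$ is necessarily a \emph{topological disk}, hence simply connected, yet it must be squeezed into an \emph{annulus}, whose complement it cannot fill. The resolution, and the reason the extra $\eps$ of area is indispensable, is that $C_{s}$ is allowed to wind almost all the way around the annulus $\bA(a(s),b(s))$ while leaving a thin radial gap; the region it bounds is then a topological disk of area $s$ that fits inside the slightly fattened annulus $\bA(a(s),b(s)+\eps)$, the $\eps$ accommodating the connecting strip that keeps the image simply connected. The genuinely delicate point is to carry out this spiral construction \emph{simultaneously for all} $s\in[\eps,B-A-\eps]$ with a single smooth $\r$, keeping the circles $C_{s}$ nested and the enclosed areas exactly right; here I would use the monotonicity of $a$ and $b$ together with the slack $\eps$ to interpolate smoothly between the extreme profiles, and then verify the containment $\r(\bD(s))\subset\bA(a(s),a(s)+s+\eps)$ directly from the construction.
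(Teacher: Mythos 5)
Your proposal is correct and is essentially the paper's own argument: the paper likewise chooses a smooth family of disjoint contractible curves $\g_s \subset \bA(a(s),\, a(s)+s+\eps)$, each enclosing area exactly $s$ (possible precisely because of the $\eps$ of slack --- your spiral-with-a-thin-gap picture), maps each circle $\d\bD(s)$ to $\g_s$ by a compactly supported diffeomorphism, and then corrects this to an area-preserving map by Moser's method, noting that equality of enclosed areas makes the Moser flow tangent to the curves; the upgrade from area-preserving to Hamiltonian is automatic, as you say. One small slip: you cannot also arrange $\r$ to be the identity near the origin --- since $a(s)\ge A>0$, the image $\r(\bD(s))$ must avoid a neighborhood of $0$ --- nor is this needed, because compact support only requires $\r=\id$ near $\d\bD(B)$.
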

\begin{proof}
Choose a smooth family of disjoint, contractible closed curves 
$$
	\g_s \subset \bA(a(s),\, a(s)+s+\eps) \quad s \in [\eps, B-A-\eps]
$$
that each enclose a region of area $s$.  This is possible because $\eps>0$.  
Next pick a compactly supported diffeomorphism $\psi$ of $\Int\bD(B)$ 
such that each circle $\d \bD(s)$ is mapped to $\g_s$ for all $s\in (\eps, B-A-\eps)$.
Finally isotope $\psi$ to an area preserving $\r$ via Moser's method.
Using that $\g_s$ encloses the same amount of area as $\d \bD(s)$,
it is not hard to check that the isotopy is given by flowing along a vector field $X_t$ that at each time $t$ is tangent to the curves $\g_s$.
\end{proof}






\begin{thebibliography}{FOOO11a}

\bibitem[AM13]{AbM11}
M.~Abreu and L.~Macarini.
\newblock Remarks on {L}agrangian intersections in toric manifolds.
\newblock {\em Trans. Amer. Math. Soc.}, 365(7):3851--3875, 2013.

\bibitem[BC09]{BC09}
P.~Biran and O.~Cornea.
\newblock Rigidity and uniruling for {L}agrangian submanifolds.
\newblock {\em Geom. Topol.}, 13(5):2881--2989, 2009.

\bibitem[BEP04]{BEP04}
P.~Biran, M.~Entov, and L.~Polterovich.
\newblock Calabi quasimorphisms for the symplectic ball.
\newblock {\em Commun. Contemp. Math.}, 6(5):793--802, 2004.

\bibitem[Bo13]{Bo11}
M.~S. Borman.
\newblock Quasi-states, quasi-morphisms, and the moment map.
\newblock {\em Int. Math. Res. Not. IMRN}, 2013(11):2497--2533, 2013.

\bibitem[CS04]{CS04}
D. M. J. Calderbank and M. A. Singer. 
\newblock Einstein metrics and complex singularities. 
\newblock {\em Invent. Math.}, 156(2):405--443, 2004.

\bibitem[CS10]{CS10}
Yu. Chekanov and F. Schlenk.
\newblock Notes on monotone {L}agrangian twist tori.
\newblock {\em Electron. Res. Announc. Math. Sci.}, 17:104--121, 2010.

\bibitem[Ch04]{Ch04}
C.-H. Cho.
\newblock Holomorphic discs, spin structures, and {F}loer cohomology of the
  {C}lifford torus.
\newblock {\em Int. Math. Res. Not. IMRN}, (35):1803--1843, 2004.

\bibitem[Ch08]{Ch08}
C.-H. Cho.
\newblock Nondisplaceable {L}agrangian submanifolds and {F}loer cohomology
  with non-unitary line bundle.
\newblock {\em J. Geom. Phys.}, 58(11):1465--1476, 2008.

\bibitem[CP12]{Ch11}
C.-H. Cho and M. Poddar. 
\newblock Holomorphic orbidiscs and Lagrangian Floer cohomology of compact toric orbifolds.
\newblock arXiv:1206.3994v4, 2012.

\bibitem[ChO06]{ChO06}
C.-H. Cho and Y.-G. Oh.
\newblock Floer cohomology and disc instantons of {L}agrangian torus fibers in {F}ano toric manifolds.
\newblock {\em Asian J. Math.}, 10(4):773--814, 2006.

\bibitem[D88]{D88}
T.~Delzant.
\newblock Hamiltoniens p{\'e}riodiques et images convexes de l'application
  moment.
\newblock {\em Bull. Soc. Math. France}, 116(3):315--339, 1988.

\bibitem[EP06]{EP06}
M.~Entov and L.~Polterovich.
\newblock Quasi-states and symplectic intersections.
\newblock {\em Comment. Math. Helv.}, 81(1):75--99, 2006.

\bibitem[EP09]{EP09}
M.~Entov and L.~Polterovich.
\newblock Rigid subsets of symplectic manifolds.
\newblock {\em Compos. Math.}, 145(3):773--826, 2009.

\bibitem[FOOO10a]{FOOO10a}
K.~Fukaya, Y.-G. Oh, H.~Ohta, and K.~Ono.
\newblock Lagrangian {F}loer theory on compact toric manifolds. {I}.
\newblock {\em Duke Math. J.}, 151(1):23--174, 2010.

\bibitem[FOOO10b]{FOOO10}
K.~Fukaya, Y.-G. Oh, H.~Ohta, and K.~Ono.
\newblock Lagrangian {F}loer theory on compact toric manifolds: survey.
\newblock arXiv:1011.4044v1, 2010.

\bibitem[FOOO11]{FOOO11}
K.~Fukaya, Y.-G. Oh, H.~Ohta, and K.~Ono.
\newblock Lagrangian {F}loer theory on compact toric manifolds {II} : Bulk deformations.
\newblock {\em Selecta Math. (N.S.)}, 17(3):609--711, 2011.

\bibitem[FOOO11a]{FOOO11a}
K.~Fukaya, Y.-G. Oh, H.~Ohta, and K.~Ono.
\newblock Spectral invariants with bulk quasimorphisms and {L}agrangian {F}loer theory.
\newblock arXiv:1105.5123v1, 2011.

\bibitem[KL09]{KL09}
Y.~Karshon and E.~Lerman.
\newblock Non-compact symplectic toric manifolds.
\newblock arXiv:0907.2891v2, 2009.

\bibitem[LMTW98]{LMTW98}
E.~Lerman, E.~Meinrenken, S.~Tolman, and C.~Woodward.
\newblock Nonabelian convexity by symplectic cuts.
\newblock {\em Topology}, 37(2):245--259, 1998.

\bibitem[LT97]{LT97}
E.~Lerman and S.~Tolman.
\newblock Hamiltonian torus actions on symplectic orbifolds and toric varieties.
\newblock {\em Trans. Amer. Math. Soc.}, 349(10):4201--4230, 1997.

\bibitem[Mc11]{Mc11}
D.~McDuff.
\newblock Displacing {L}agrangian toric fibers via probes.
\newblock {\em Low-dimensional and symplectic topology}, volume 82 of 
{\em Proc. Sympos. Pure Math.}, pages 131--160. Amer. Math. Soc., Providence, 
RI, 2011.

\bibitem[MT10]{MT10}
D.~McDuff and S.~Tolman.
\newblock Polytopes with mass linear functions. I.
\newblock {\em Int. Math. Res. Not. IMRN}, 2010(8):1506--1574, 2010.

\bibitem[OR70]{OR70}
P.~Orlik and F.~Raymond. 
\newblock Actions of the torus on 4-manifolds. I. 
\newblock {\em Trans. Amer. Math. Soc.}, 152:531--559, 1970.

\bibitem[WW13]{WW11}
G.~Wilson and C.~Woodward.
\newblock Quasimap {F}loer cohomology for varying symplectic quotients.
\newblock {\em Canad. J. Math.}, 65(2):467--480, 2013.

\bibitem[Wd11]{Wd11}
C.~Woodward.
\newblock Gauged {F}loer theory of toric moment fibers.
\newblock {\em Geom. Funct. Anal.}, 21(3):680--749, 2011.

\end{thebibliography}
\end{document}